\newtheorem{thm}{Theorem}[section]
\newtheorem*{thm*}{Theorem}
\newtheorem*{cor*}{Corollary}
\newtheorem*{prop*}{Proposition}
\newtheorem{cor}[thm]{Corollary}
\newtheorem{prop}[thm]{Proposition}
\newtheorem{lem}[thm]{Lemma}
\theoremstyle{definition}
\newtheorem{defn}[thm]{Definition}
\newtheorem*{notn*}{Notation}
\theoremstyle{remark}
\newtheorem{rem}[thm]{Remark}
\newtheorem*{idea*}{Idea}
\newcommand{\midwd}{\,\middle\vert\,}
\newcommand\smvee{\raise0.9ex\hbox{$\scriptscriptstyle\vee$}}
\let\origsection\section
\renewcommand\section{\@ifstar{\starsection}{\nostarsection}}
\newcommand\nostarsection[1]
\sectionprelude\origsection{#1}\sectionpostlude}
\newcommand\starsection[1]
\newcommand\sectionprelude{%
  \vspace{1em}
}
\newcommand\sectionpostlude{%
  \vspace{1em}
}
\newcommand*\mybackmatter{%
\startcontents
\phantomsection
\addcontentsline{toc}{part, thebibliography}{}%
\@endpart}
\let\c@equation\c@thm
\numberwithin{thm}{section}
\numberwithin{equation}{section}
\title[Symmetric spaces for groups over involutive algebras]{Symmetric spaces for groups over involutive algebras and applications to Higgs bundles}
\author{Pengfei Huang, Georgios Kydonakis, Eugen Rogozinnikov, and Anna Wienhard}
\begin{document}

\pagenumbering{arabic}

\begin{abstract}

We study symplectic groups and indefinite orthogonal groups over involutive, possibly noncommutative, algebras $(A, \sigma)$. In the case when the algebra $(A, \sigma)$ is Hermitian, or the complexification $(A_{\mathbb{C}}, \sigma_{\mathbb{C}})$ of a Hermitian involutive algebra, one can identify maximal compact subgroups of such groups, and consider their associated Riemannian symmetric spaces. This new perspective allows for the realization of various geometric models for the symmetric space. We describe explicitly the complexified tangent space for each of the models, as well as the diffeomorphisms between them and their differentials.

In the second part of the article, we give a number of applications of this theory. 
The geometric realizations of the Riemannian symmetric spaces described in the first part provide new geometric interpretations of Higgs bundle data that can be used for the study of fundamental group representations into symplectic or into indefinite orthogonal groups over Hermitian involutive algebras. 
We give an exact component count for the moduli spaces of $\mathrm{Sp}_2(A_{\mathbb{C}}, \sigma_{\mathbb{C}})$-Higgs bundles and of $\mathrm{O}(A_{\mathbb{C}}, \sigma_{\mathbb{C}})$-Higgs bundles, using the topology of the corresponding
maximal compact subgroups rather than Morse--Bott theory techniques. 
Furthermore, we use the noncommutative symmetric-space models to construct a factorization of the Hitchin morphism for $\mathrm{Sp}_2(A_{\mathbb C},\sigma_{\mathbb C})$-Higgs bundles, together with analogous factorizations for the real groups
$\mathrm{Sp}_2(A,\sigma)$ and $\mathrm{O}_{(1,1)}(A,\sigma)$. These factorizations are induced by quadratic norm maps from the corresponding tangent models to Jordan-algebraic targets and
pass through intermediate affine GIT quotients. As a consequence, they reduce the algebraic complexity required in order to characterize the Hitchin base explicitly.
\end{abstract}

\maketitle

\flushbottom

\tableofcontents

\section{Introduction}

The general theory of Lie groups and their Lie algebras over noncommutative rings presents itself as a highly non-trivial generalization of the extensively studied commutative case. For instance, the non-canonical choice for a notion of the determinant of a matrix in $\mathrm{GL}_n(R)$, when $R$ is noncommutative, prevents an appropriate definition for the group $\mathrm{SL}_n(R)$ as a subgroup of $\mathrm{GL}_n(R)$ (see \cite{BR1}, \cite{BR2}). However, for a possibly noncommutative algebra with an anti-involution $(A, \sigma)$ symplectic groups $\mathrm{Sp}_2(A,\sigma)$ over $(A, \sigma)$ have been introduced in \cite{ABRRW}, \cite{Rog}. Here we further extend this work and provide applications to the theory of Higgs bundles. 

The motivation to realize classical Lie groups as symplectic or orthogonal Lie groups over noncommutative rings arose from the generalizations of total positivity introduced in \cite{GuiWie}. We expect the noncommutative perspective on Higgs bundles, we introduce here, to be fruitful when using Higgs bundle methods to study the space of positive representations of the fundamental groups of closed surfaces.

For an involutive algebra $(A,\sigma)$ and a standard symplectic form $\omega$ on $A^2$, the \emph{symplectic group} $\mathrm{Sp}_2(A,\sigma)$ over $(A, \sigma)$ is defined to be the automorphism group $\mathrm{Aut}(\omega)$. Similarly, if $\omega$ is the standard indefinite orthogonal form on $A^2$, then the \emph{indefinite orthogonal group} $\mathrm{O}_{(1,1)}(A,\sigma)$ over $(A, \sigma)$ is defined to be the automorphism group $\mathrm{Aut}(\omega)$. Several classical groups can be realized this way, including  the groups $\mathrm{Sp}_{2n}(\mathbb{R})$, $\mathrm{O}(n,n)$, $\mathrm{U}(n,n)$, $\mathrm{Sp}_{2n}(\mathbb{C})$, $\mathrm{O}_{2n}(\mathbb{C})$, $\mathrm{SO}^*(4n)$, $\mathrm{Sp}(n,n)$, $\mathrm{GL}_{2n}(\mathbb{C})$, among others.

In the case when the involutive algebra $(A, \sigma)$ is Hermitian, or the complexification $(A_{\mathbb{C}}, \sigma_{\mathbb{C}})$ of a Hermitian involutive algebra, one can identify maximal compact subgroups of symplectic and of indefinite orthogonal groups, and consider Cartan decompositions of their Lie algebras. For Hermitian algebras, the associated Riemannian symmetric spaces of such groups are considered. In fact, the Shilov boundary of the symmetric space is well-defined, thus the symmetric space of most classical tube-type Hermitian Lie groups can be realized in this way. In \cite{ABRRW}, \cite{Rog},  various different geometric interpretations of the symmetric space $\mathcal{X}_{G}$ of the groups $G=\mathrm{Sp}_{2}(A,\sigma )$ and  $G=\mathrm{Sp}_2(A_{\mathbb{C}}, \sigma_{\mathbb{C}})$ have been given. 
In fact, these models for $\mathcal{X}_{\mathrm{Sp}_{2}(A,\sigma)}$ generalize the various models of the hyperbolic plane viewed as the symmetric space associated to the group $\mathrm{SL}_2( \mathbb{R})=\mathrm{Sp}_2( \mathbb{R})$. Analogously, the models obtained for the symmetric space  $\mathcal{X}_{\mathrm{Sp}_{2}(A_{\mathbb{C}},\sigma_{\mathbb{C}} )}$ of the complexified Hermitian algebra $(A_{\mathbb{C}},\sigma_{\mathbb{C}})$ generalize models of the three-dimensional hyperbolic space considered as the symmetric space for the group $\mathrm{SL}_2( \mathbb{C})=\mathrm{Sp}_2( \mathbb{C})$.

In this work we review these models and provide new models for the Riemannian symmetric spaces $\mathcal{X}_{\mathrm{O}_{(1,1)}(A,\sigma )}$, $\mathcal{X}_{A^{\times}}$, $\mathcal{X}_{\mathrm{O}(A_{\mathbb{C}},\sigma_{\mathbb{C}} )}$  as well, where $A^{\times}$ is the group of all invertible elements of $A$, and $\mathrm{O}(A_{\mathbb{C}},\sigma_{\mathbb{C}}):=\{ a\in A_{\mathbb{C}}^{\times}\mid \sigma_{\mathbb{C}}(a)=a^{-1}\}$ is the \emph{orthogonal group} of $A_{\mathbb{C}}$. The spaces $\mathcal{X}_G$ all give  \emph{models} for the symmetric space $G/K$, where $K$ is a maximal compact subgroup of $G$, in the sense that the group $G$ acts smoothly and transitively on $\mathcal{X}$ by diffeomorphisms, and the stabilizer $\mathrm{Stab}_{G}(p_0) \cong K$, for any $p_0 \in \mathcal{X}$; then the map $gK \mapsto g(p_0)$ gives a diffeomorphism between $G/K$ and $\mathcal{X}$. 
We describe in this article explicitly the (complexified) tangent space for each of the models, as well as the differential for each of the diffeomorphisms between them. This allows us to give applications of the noncommutative perspective to the theory of Higgs bundles. 

We obtain geometric interpretations of $G$-Higgs bundles using the particular geometry of the models of the symmetric space of the group $G$. Since all the groups we consider here are Lie groups of non-compact type, their associated Riemannian symmetric spaces are non-positively curved. 
Thus, given a reductive representation $\rho\colon \pi_1(X) \to G$, where $X$ is a compact Riemann surface, there is a $\rho$-equivariant map 
$f_{\mathcal{X}}\colon \tilde{X} \to \mathcal{X}$, from the universal cover $\tilde{X}$ of $X$ to $\mathcal{X}$ \cite{Corlette, Sagman}, which is harmonic. 
The pull-back bundle
$f^{*}_{\mathcal{X}} T^{\mathbb{C}}\mathcal{X}$ of the complexified tangent bundle, is a flat principal bundle. Thus, a reductive fundamental group representation gives rise to a $G$-Higgs bundle, which is a pair $(\mathcal{E}, \varphi)$, where $\mathcal{E}:=(E, \bar{\partial}_{E})$ is a holomorphic principal bundle over $X$ and $\varphi$ is a $(1,0)$-form with values in $f^{*}_{\mathcal{X}}T^{\mathbb{C}}\mathcal{X}$, such that $\bar{\partial}_{E} \varphi=0$. The different incarnations of the symmetric space $\mathcal{X}$ described in the first part of the article thus give new geometric models for $G$-Higgs bundles in the case when the group $G$ is a Hermitian Lie group of tube type or its complexification, thus offering a new viewpoint for the study of such Higgs bundles. 

As an application we give an exact component count for the moduli spaces of polystable $G$-Higgs bundles, for groups of the form $G=\mathrm{Sp}_2(A_{\mathbb{C}}, \sigma_{\mathbb{C}})$ and $G=\mathrm{O}(A_{\mathbb{C}}, \sigma_{\mathbb{C}})$, without using the standard Morse--Bott theory techniques. 

As a final application, we show that this noncommutative perspective allows us to construct a factorization of the Hitchin morphism for moduli spaces of polystable $\mathrm{Sp}_2(A_{\mathbb C},\sigma_{\mathbb C})$-Higgs bundles. We also obtain analogous factorizations for the real groups $\mathrm{Sp}_2(A,\sigma)$ and $\mathrm{O}_{(1,1)}(A,\sigma)$. These factorizations are governed by quadratic norm maps on the corresponding tangent models of the symmetric space. After passing to invariant theory, the norm maps induce morphisms between intermediate affine GIT quotients, which are then identified with the usual Hitchin bases.

\vspace{3mm}

The article is divided into two main parts. In the first part, which spans Sections \ref{sec:symplectic and indefinite}-\ref{sec:models for compact}, we explicitly describe the models of the Riemannian symmetric spaces for the indefinite orthogonal groups of the form $\mathrm{O}_{(1,1)}(A, \sigma)$, $A^{\times}$, $\mathrm{O}(A_{\mathbb{C}}, \sigma_{\mathbb{C}})$, as well as for the symplectic groups $\mathrm{Sp}_{2}(A, \sigma)$ and $\mathrm{Sp}_{2}(A_{\mathbb{C}}, \sigma_{\mathbb{C}})$. We describe their (complexified) tangent spaces and the diffeomorphisms between them in detail. This analysis is of independent interest for the development of the theory of Lie groups over noncommutative rings and for this reason is presented separately from the rest of this work. 

In the second part, covering Sections \ref{sec:Higgs-Data}-\ref{sec:Hitchin morphism},
 we apply this theory to nonabelian Hodge theory. In particular, we give the geometric description  of polystable $G$-Higgs bundles  with respect to each model of the symmetric space studied in Part 1. For a considerable number of Lie groups, we describe the Higgs bundle data explicitly. 
 
 We count the number of connected components of the moduli spaces of polystable $G$-Higgs bundles in the cases when the group is of the form $\mathrm{Sp}_2(A_{\mathbb{C}}, \sigma_{\mathbb{C}})$ or $\mathrm{O}(A_{\mathbb{C}}, \sigma_{\mathbb{C}})$ (Section~\ref{sec:components}).  
 
In Section \ref{sec:Hitchin morphism} we introduce a factorization of the Hitchin morphism for moduli spaces of polystable
$\mathrm{Sp}_2(A_{\mathbb C},\sigma_{\mathbb C})$-Higgs bundles, and analogous factorizations for the real groups $\mathrm{Sp}_2(A,\sigma)$ and $\mathrm{O}_{(1,1)}(A,\sigma)$. Central to this result is the identification of the intermediate GIT quotient space with the Hitchin base. This identification is mediated by a certain formally real Jordan algebra, which provides the underlying algebraic structure for our symmetric space models. 

This perspective offers two primary advantages. First, it provides a geometric dictionary for the Hitchin base, interpreting abstract holomorphic differentials as invariants obtained from quadratic norm maps on the corresponding Jordan-algebraic targets. Second, this factorization serves as a method of dimensional compression. Indeed, the target spaces of these norm maps are the complexifications of natural formally real Jordan algebras: the algebra $A_{\mathbb H}^{\sigma_1}$ of quaternionic Hermitian elements in the complex symplectic case, the algebra $A_{\mathbb C}^{\bar\sigma_{\mathbb C}}$ of complex Hermitian elements in the real symplectic case, and the algebra $A^\sigma$ of symmetric elements in the indefinite
orthogonal case. In the classical matrix examples of interest, these complexified Jordan algebras have dimensions strictly smaller than 
the complexified tangent models of the symmetric spaces in which
the Higgs fields originally take their values. Consequently, the intermediate GIT spaces reduce the algebraic complexity of describing the Hitchin base explicitly and enable a more efficient and explicit characterization of the image of the
Hitchin morphism.

 \vspace{5mm}

\textbf{Acknowledgements}.
We would like to warmly thank Arkady Berenstein, Qiongling Li, Vladimir Retakh and Nathaniel Sagman for useful discussions. P. H. and G. K. are grateful to the Tianyuan Mathematics Research Center for its hospitality and for providing a fertile working environment. P. H. and E. R. would like to express deep gratitude to the Institut f\"ur Mathematik, Universit\"at Heidelberg, and the Max Planck Institute for Mathematics in the Sciences for their kind hospitality and support. P. H. was partially supported by the Deutsche Forschungsgemeinschaft (DFG, Projektnummer 547382045). G. K. was supported by the Alexander von Humboldt Foundation and the Scientific Committee of the University of Patras through the program ``Medicos''. E. R. was supported by a postdoc scholarship of the German Academic Exchange Service (DAAD) and the KIAS Individual Grant (MG100901) at Korea Institute for Advanced Study.
P. H., E. R., and A. W. received funding from the European Research Council (ERC) under the European Union’s Horizon 2020 research and innovation programme (grant agreement No 101018839). A. W. thanks the Hector Fellow Academy for support. 

\part{Models of the symmetric space for groups over involutive algebras}

\section{Symplectic and indefinite orthogonal groups over involutive algebras}\label{sec:symplectic and indefinite}
In this first Section, we set the main definitions regarding involutive algebras and consider the families of Lie groups over Hermitian algebras that will be our main objects of study. We also demonstrate a considerable number of examples of classical Lie groups that can be seen as Lie groups over Hermitian involutive algebras.

\subsection{Involutive algebras and Hermitian algebras} 

We recall the definition of involutive and Hermitian algebras, and review some of their important properties. For a more detailed discussion, we refer to~\cite[Section~2]{ABRRW}.

Let $A$ be a unital associative, possibly noncommutative, finite-dimensional semisimple algebra over $\mathbb{R}$.

\begin{defn}\label{def:antiinv}
An \emph{anti-involution} on $A$ is an $\mathbb{R}$-linear map $\sigma\colon A\to A$ such that
\begin{itemize}
\item $\sigma(ab)=\sigma(b)\sigma(a)$;
\item $\sigma^2=\mathrm{Id}$.
\end{itemize}
An \emph{involutive $\mathbb{R}$-algebra} is a pair $(A,\sigma)$, where $A$ is an $\mathbb{R}$-algebra as above and $\sigma$ is an~anti-involution on $A$.
\end{defn}

\begin{rem}
Sometimes in the literature, the maps that satisfy Definition~\ref{def:antiinv} are called just \emph{involutions}. We add the prefix ``anti'' in order to emphasize that they exchange factors.
\end{rem}

\begin{defn} An element $a\in A$ is called \emph{$\sigma$-normal} if $\sigma(a)a=a\sigma(a)$; it is called \emph{$\sigma$-symmetric} if $\sigma(a)=a$. Lastly, $a$ is called \emph{$\sigma$-anti-symmetric} if $\sigma(a)=-a$. We denote:
$$A^{\sigma}:=\mathrm{Fix}_A(\sigma)=\{a\in A\mid \sigma(a)=a\},$$
$$A^{-\sigma}:=\mathrm{Fix}_A(-\sigma)=\{a\in A\mid \sigma(a)=-a\}.$$
\end{defn}

\begin{rem}
    The space $A^{-\sigma}$ is a Lie algebra with respect to the commutator. The space $A^{\sigma}$ is a triple system with respect to the commutator and a Jordan algebra with respect to the anti-commutator.
\end{rem}

\begin{rem}
Since the algebra $A$ is unital, we always have the canonical copy of $\mathbb{R}$ in the center of $A$, namely, $\mathbb{R}\cdot 1$, where $1$ is the unit of $A$. We always identify $\mathbb{R}\cdot 1$ with $\mathbb{R}$. Moreover, since $\sigma$ is linear, for all $k\in \mathbb{R}$, $\sigma(k\cdot 1)=k\sigma(1)=k\cdot 1$, in other words, $\mathbb{R}\cdot 1\subseteq A^\sigma$.
\end{rem}

We denote by $A^\times$ or $\mathrm{GL}_1(A)$ the group of all invertible elements of $A$. If $V \subset A$ is a vector subspace, we denote by $V^\times = A^\times \cap V$
the set of invertible elements in $V$.

\begin{defn}
The closed subgroup 
$$\mathrm{O}_1{(A,\sigma)}:=\mathrm{O}{(A,\sigma)}:=\{a\in A^\times\mid \sigma(a)=a^{-1}\}$$
of $A^\times$ is called the \emph{orthogonal group} of $A$. The Lie algebra of $\mathrm{O}{(A,\sigma)}$ agrees with $A^{-\sigma}$.
\end{defn}

\begin{defn}
\label{df:cone}
Let $(A,\sigma)$ be an involutive algebra. We define the sets of \emph{$\sigma$-positive} and  \emph{$\sigma$-negative} elements by
$$A^\sigma_+:=\left\{ a^2  \mid a\in (A^\sigma)^\times\right\},\;A^\sigma_-:=-A^\sigma_+,$$
and the set of \emph{$\sigma$-non-negative} and \emph{$\sigma$-non-positive} elements by
$$A^\sigma_{\geq 0}:=\left\{ a^2 \mid a\in A^\sigma\right\},\;A^\sigma_{\leq 0}:=-A^\sigma_{\geq 0}.$$
\end{defn}

The following definition of a Hermitian algebra will be one of the key notions in this paper.

\begin{defn}\label{Herm_A}
An involutive algebra $(A,\sigma)$ is called \emph{Hermitian} if for all $x,y\in A^\sigma$, the relation $x^2+y^2=0$ implies $x=y=0$. For an anti-involution $\sigma$ on an algebra $A$, we say that $\sigma$ is \emph{Hermitian} if $(A,\sigma)$ is a Hermitian algebra. 
\end{defn}

For further considerations the following definition will be important:
\begin{defn}
Let $V$ be an $\mathbb{R}$-vector space. A subset $\Omega\subseteq V$ is called a \emph{cone} in $V$ if $\lambda x\in \Omega$, for all $\lambda\in (0,\infty)$, $x\in\Omega$. A cone $\Omega$ is called \emph{convex} if $x+y\in\Omega$, for all $x,y\in\Omega$. Lastly, a cone $\Omega$ is called \emph{proper} if it does not contain affine lines.
\end{defn}

\begin{rem}[{\cite[Section~2]{ABRRW}}]
Let $(A,\sigma)$ be a Hermitian algebra. The spaces $A^\sigma_{\geq 0}$ and $A^\sigma_+$ are proper convex cones in $A^\sigma$. The space $A^\sigma_{\geq 0}$ is the topological closure of $A^\sigma_+$, and $A^\sigma_+$ is, in turn, the interior of $A^\sigma_{\geq 0}$ in $A^\sigma$. In this case, the Jordan algebra $A^\sigma$ is formally real,  the group $\mathrm{O}{(A,\sigma)}$ is compact, and it acts on $A^\sigma$ by conjugation preserving $A^\sigma_+$ and $A^\sigma_{\geq 0}$.
\end{rem}

\begin{defn}
Let $x\in A$ and $a\in A^\times$. The element $ax\sigma(a)$ is called \emph{congruent} to $x$ by $a$. The group $A^\times$ acts on $A$ by \emph{congruence} preserving $A^\sigma$ and $A^\times$.
\end{defn}

Notice that the action of $A^\times$ by congruence on $A^\sigma$ restricted to $\mathrm{O}{(A,\sigma)}$ agrees with the action by conjugation of $\mathrm{O}{(A,\sigma)}$ on $A^\sigma$.

\begin{rem}[{\cite[Corollary~2.65]{ABRRW}}]
Let $(A,\sigma)$ be a Hermitian algebra. The action of $A^\times$ by congruence preserves $A^\sigma_{\geq 0}$ and $A^\sigma_{+}$. 
\end{rem}

\subsection{Conjugations on algebras}\label{sec:conjugations}

In this subsection, we introduce a convenient notation for conjugations in algebras with several imaginary units, i.e. elements that square to $-1$.

Let $A$ be an extension of a real algebra $A_\mathbb R$ by adding imaginary units $i_1,\dots,i_k$, for some $k\in\mathbb N$, that commute with elements of $A_\mathbb R$ and for every $m,l\in\{1,\dots,k\}$, they satisfy $i_m i_l=\pm i_l i_m$. Moreover, we assume that the imaginary units are independent, i.e. there are no algebraic relations between them but the relations that follow from the commutation relations and relations $i_m^2=-1$, for all $m\in\{1,\dots,k\}$. For simplicity, we also assume $A$ to be associative, since in the sequel only this case will be needed.

Every element of $A$ can be written uniquely as a linear combination 
$$\sum_{j_1,\dots j_k=0}^1 a_{j_1,\dots,j_k} i_1^{j_1}\dots i_k^{j_k},$$ 
where all $a_{j_1,\dots,j_k}\in A_\mathbb R$. 

Notice that for every imaginary unit $i_m$, every element of $A$ can be uniquely written as $a_1+a_2i_m$, where $a_1,a_2\in A'$, for $A'$ the subalgebra of $A$ generated by $A_\mathbb R$ and all $i_s$, for $s\neq m$.  We define the conjugation $\theta_{i_m}\colon A\to A$ as follows: 
$$\theta_{i_m}(a_1+a_2i_m)=a_1-a_2i_m.$$ 
This is an automorphism of $A$ that preserves $A'=\mathrm{Fix}_A(\theta_{i_m})$, which we call the \emph{$i_m$-real locus}.

If $i_m$ is central in $A$, then the conjugation $\theta_{i_m}$ is never inner. However, if there exists $i_s$ such that $i_mi_s=-i_si_m$, then  $\theta_{i_m}(x)$ might be inner. For instance, if $A$ is the quaternionic extension of $A_\mathbb R$ by $i$ and $j$, then $\theta_i(x)=jxj^{-1}$, for all $x\in A$.

Notice that $A$ and $A_\mathbb R$ are given as algebras, yet there are many different ways to choose an independent collection of imaginary units inside $A$, so that $A$ is an extension of $A_\mathbb R$ in the sense of the discussion above. In this case, for every imaginary unit $i_m$, the involution $\theta_{i_m}$ is not determined by only $i_m$, but also by all other imaginary units in the chosen independent collection. For instance, to obtain the quaternionic extension, we can extend $A_\mathbb R$ by $i$ and $j$ or by $i$ and $k=ij$. In both ways, we obtain the same algebra $A=A_\mathbb R\otimes_{\mathbb R} \mathbb H$. However, in the first case it is $\theta_i(x)=jxj^{-1}$, and in the second case $\theta_i(x)=kxk^{-1}$, for $x\in A$, which are different maps. 

Finally, if $(A_\mathbb R,\sigma)$ is an involutive algebra and $A$ is an extension of $A_\mathbb R$ by commuting imaginary units $i_1,\dots,i_k$, there is a unique extension $\sigma_\mathbb C$ of $\sigma$ to $A$ such that $\sigma_\mathbb C(i_j)=i_j$ for all $j\in\{1,\dots, k\}$. However, there are also several extensions of $\sigma$ to $A$ which inverses the signs of some imaginary units. We denote by $\sigma^{i_{j_1},\dots, i_{j_s}}$ the extension of $\sigma$ such that $\sigma(i_{j_r})=-i_{j_r}$, for $r\in\{1,\dots, s\}$, and $\sigma(i_m)=i_m$, for all other imaginary units. If $k=1$, we denote $\bar\sigma_\mathbb C:=\sigma^{i_1}$.

\subsection{Sesquilinear forms on \texorpdfstring{$A$}{A}-modules and their groups of symmetries}\label{sec:sesq_forms}

Let $A$ be an involutive algebra and $V$ be a right $A$-module. An element $v\in V$ is called \emph{weakly regular} if the map $a\mapsto va$ is a homomorphism of right $A$-modules $A$ and $vA$. In this case, the right $A$-module $vA$ is called a \emph{weak $A$-line}. The space of all weak $A$-lines is denoted by $\mathbb P(V)$.

For $l\in\mathbb P(V)$, the tangent space at $l$ can be described as follows:
\begin{equation}\label{eq:projective.tangent}
    T_l\mathbb P(V)=\{[v,w]\mid l=vA,\; w\in V/l\},    
\end{equation}
where $[v,w]$ denotes the equivalence class under the following equivalence relation: $(v,w)\sim (v',w')$, if $v'=va$ and $w'=wa$, for some $a\in A^\times$. The space $T_l\mathbb{P}(V)$ is a right $A$-module. Indeed, we define 
$$
[v,w_1]a_1+[v,w_2]a_2:=[v,w_1a_1+w_2a_2],
$$ 
for $vA=l$ and $w_1,w_2\in w\in V/l$ with $a_1,a_2\in A$. It is easy to see that this definition does not depend on the choice of representatives. Sometimes it is convenient to choose a generator $v$ of $l$ and identify $T_l\mathbb P(V)$ with $V/l$. However, this identification is non-canonical.

We can also identify $T_l\mathbb P(V)$ with the space of $A$-linear operators $\mathrm{Hom}(l,V/l)$. For a tangent vector $[v,w]$, the corresponding operator maps $va\in l$ to $wa\in V/l$, for all $a\in A$. This identification is canonical. Therefore, we will sometimes identify (slightly abusing the notation) $[v,w]$ with an $A$-linear map $l\to V/l$ mapping $v\mapsto w$.

Let $(A,\sigma)$ be an involutive algebra. We begin with the following:

\begin{defn}\label{osp}
A \emph{$\sigma$-sesquilinear form} $\omega$ on a right $A$-module $V$ is a map
$$\omega\colon V\times V\to A,$$
such that for all $x,y,z\in V$ and for all $r_1,r_2\in A$:
\begin{align*} \omega(x+y,z) & =\omega(x,z)+\omega(y,z),\\
\omega(x,y+z) & =\omega(x,y)+\omega(x,z),\\
\omega(xr_1,yr_2) & =\sigma(r_1)\omega(x,y)r_2.
\end{align*}

\noindent We denote by $$\mathrm{Aut}(\omega):=\{f\in\mathrm{Aut}(V)\mid \text{ for every } x,y\in V, \omega(f(x),f(y))=\omega(x,y)\},$$ the \emph{group of symmetries} of $\omega$. We also define the corresponding Lie algebra:
$$\mathrm{End}(\omega):=\{f\in\mathrm{End}(V)\mid \text{ for every } x,y\in V,\omega(f(x),y)+\omega(x,f(y))=0\}$$
with the usual Lie bracket $[f,g]=fg-gf$.
\end{defn}

A form $\omega$ is called \emph{$\sigma$-skew symmetric} if $\omega(x,y)=-\sigma(\omega(y,x))$, for all $x,y\in V$.
It is called \emph{$\sigma$-symmetric} if $\omega(x,y)=\sigma(\omega(y,x))$, for all $x,y\in V$. If $(A,\sigma)$ is Hermitian, $\omega$ is called a \emph{$\sigma$-inner product} if it is $\sigma$-symmetric and for all regular elements $x\in V$, $\omega(x,x)\in A^\sigma_{\geq 0}$ and $\omega(x,x)=0$ if and only if $x=0$. Notice that the space of $\sigma$-inner products is an open proper convex cone inside the space of $\sigma$-symmetric forms.

\begin{prop}
    Let $\omega$ be a $\sigma$-inner product. An element $x\in V$ is weakly regular if and only if $\omega(x,x)\in A_+^\sigma$.  
\end{prop}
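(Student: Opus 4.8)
The plan is to prove the equivalence by unwinding the definitions of weak regularity and of the cone $A^\sigma_+$, using the $\sigma$-symmetry of $\omega$ and the hypothesis that $\omega$ is a $\sigma$-inner product. Recall $x\in V$ is weakly regular precisely when $a\mapsto xa$ is an isomorphism (injective homomorphism of right $A$-modules) onto $xA$; equivalently, since it is automatically surjective onto $xA$, when $xa=0$ implies $a=0$. The content to extract is that this injectivity is controlled by the single element $q:=\omega(x,x)\in A^\sigma_{\geq 0}$.

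First I would prove the forward direction. Suppose $x$ is weakly regular. Since $\omega$ is a $\sigma$-inner product, $q=\omega(x,x)\in A^\sigma_{\geq 0}$, and $q=0$ would force $x=0$ (hence not weakly regular, as $0A=0$ is not a weak $A$-line, or at any rate the map $a\mapsto 0$ is not injective unless $A=0$); so $q\neq 0$. By definition of $A^\sigma_{\geq 0}$ we can write $q=b^2$ with $b\in A^\sigma$, and I claim weak regularity forces $b\in (A^\sigma)^\times$, i.e. $q\in A^\sigma_+$. The key computation: for any $a\in A$, $\omega(xa,xa)=\sigma(a)\,q\,a$, so if $qa=0$ then $\omega(xa,xa)=0$, and since $\omega$ is a $\sigma$-inner product this gives $xa=0$; weak regularity (injectivity of $a\mapsto xa$) then yields $a=0$. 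Hence $q$ has trivial right annihilator in $A$, and since $q=\sigma(q)$ it has trivial left annihilator as well. In a finite-dimensional algebra an element with trivial one-sided annihilator is invertible, so $q\in A^\times\cap A^\sigma = (A^\sigma)^\times$; writing $q=b^2$ with $b\in A^\sigma$ then forces $b\in(A^\sigma)^\times$ (as $b$ divides $q$ on both sides), so $q=b^2\in A^\sigma_+$.

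Conversely, suppose $q=\omega(x,x)\in A^\sigma_+$, say $q=b^2$ with $b\in(A^\sigma)^\times$, so in particular $q\in A^\times$. To show $x$ is weakly regular I must show $a\mapsto xa$ is injective. If $xa=0$ then $\omega(x, xa)=\omega(x,x)a=qa$, but also $\omega(x,xa)=\omega(x,0)=0$, hence $qa=0$, and since $q$ is invertible, $a=0$. This gives injectivity, so $x$ is weakly regular.

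I expect the main (really the only) subtlety to be the algebraic input that in a finite-dimensional associative algebra an element with trivial right annihilator is invertible — this is where finite-dimensionality and the standing hypotheses on $A$ enter — together with keeping straight the bookkeeping that lets one pass between "$\omega(x,x)$ invertible" and "$\omega(x,x)\in A^\sigma_+$" (i.e. that an invertible square of a $\sigma$-symmetric element has an invertible $\sigma$-symmetric square root, which is immediate since the chosen $b$ with $b^2=q$ satisfies $bb = q\in A^\times$, forcing $b\in A^\times$). Everything else is a direct substitution using the sesquilinearity relation $\omega(xr_1,xr_2)=\sigma(r_1)\omega(x,x)r_2$ and the defining positivity/nondegeneracy property of a $\sigma$-inner product.
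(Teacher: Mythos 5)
Your proof is correct, and the backward direction coincides with the paper's (you argue directly that $xa=0$ gives $\omega(x,x)a=0$, hence $a=0$ by invertibility; the paper phrases the same computation contrapositively). The forward direction, however, takes a genuinely different route. The paper deduces non-invertibility of $q:=\omega(x,x)$ would produce, by an external result (Proposition~2.27 of the cited work on Hermitian algebras), a nonzero $c$ with $\sigma(c)qc=0=\omega(xc,xc)$, hence $xc=0$, contradicting weak regularity; it then tacitly uses that an invertible element of $A^\sigma_{\geq 0}$ lies in $A^\sigma_+$. You instead stay self-contained: from $qa=0$ you get $\omega(xa,xa)=\sigma(a)qa=0$, so $xa=0$ and $a=0$, giving trivial right (and, via $\sigma(q)=q$, left) annihilator, and then invoke finite-dimensionality of $A$ to conclude $q\in A^\times$; finally you make explicit the bookkeeping the paper leaves implicit, namely that an invertible $q=b^2$ with $b\in A^\sigma$ forces $b\in(A^\sigma)^\times$ and hence $q\in A^\sigma_+$. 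What your version buys is independence from the cited Proposition~2.27 at the modest cost of using the standing finite-dimensionality hypothesis (one-sided annihilator trivial $\Rightarrow$ invertible); what the paper's version buys is brevity by outsourcing exactly that algebraic input. One cosmetic caveat, shared with the paper's own proof: the definiteness clause of a $\sigma$-inner product is stated for regular elements, and both arguments apply it to $xa$ (resp.\ $xc$) without checking regularity, so you are reading the definition the same way the authors do.
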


\begin{proof}
    Assume, $\omega(x,x)$ is not invertible, then by~\cite[Proposition~2.27]{ABRRW}, there exists $0
    \neq c\in A$ such that 
    $$0=\sigma(c)\omega(x,x)c=\omega(xc,xc),$$ 
    i.e., $xc=0$. This contradicts to the fact that the map $A\to xA$, $a\mapsto xa$ is an isomorphism.

    Assume $x$ is not weakly regular. Therefore, there exists $0 \neq c\in A$ such that $xc=0$. Therefore,  $0=\omega(x,xc)=\omega(x,x)c$, thus $\omega(x,x)$ is not invertible. 
\end{proof}

We now set $V=A^2$. We view $V$ as the set of columns and endow it with the structure of a right $A$-module. The anti-involution $\sigma$ extends to an involutive map on $A^2$ and on the space $\mathrm{Mat}_2(A)$ of $2\times2$-matrices with coefficients in $A$ componentwisely.

\begin{defn}
We make the following definitions:
    \begin{enumerate}
        \item A pair $(x,y)$, for $x,y\in A^2$, is called a \emph{basis} of $A^2$ if the map $A^2\to A^2$, $(a,b)\mapsto xa+yb$ is an isomorphism of right $A$-modules.
        
        \item An element $x\in A^2$ is called \emph{regular} if there exists $y\in A^2$ such that $(x,y)$ is a basis of $A^2$.

        \item A subset $l\subseteq A^2$ is called an \emph{$A$-line} (or just a line) if $l=xA$ for a regular $x\in A^2$.

        \item Two regular elements $x,y\in A^2$ are called \emph{linearly independent} if $(x,y)$ is a basis of $A^2$.
        
        \item Two lines $l,m$ are called \emph{transverse} if $l=xA$, $m=yA$ for linearly independent $x,y\in A^2$.

        \item A form $\omega$ is called \emph{non-degenerate} if for every regular $x\in V$ there exists $y\in V$ such that $\omega(x,y)\in A^\times$.
        
        \item An element $x\in A^2$ is called \emph{$\omega$-isotropic} (or just isotropic if $\omega$ is fixed) if $\omega(x,x)=0$. The set of all $\omega$-isotropic regular elements of $(A^2,\omega)$ is denoted by $\mathrm{Is}(\omega)$.
        
        \item A line $l$ is called \emph{$\omega$-isotropic} if $l=xA$, for a regular $\omega$-isotropic $x\in A^2$. The set of all isotropic lines of $(A^2,\omega)$ is denoted by $\mathbb{P}\mathrm{Is}(\omega)$.
    \end{enumerate}
\end{defn}

A regular element is clearly weakly regular. Moreover, if $x$ is regular and $y$ is weakly regular and there exists $a_0\in A$ such that $y=xa_0$, then $a_0\in A^\times$, i.e. $y$ is regular. Indeed, otherwise there exists $0 \neq c\in A$ such that $a_0c=0$ and the map $a\mapsto ya$ cannot be bijective. In particular, the space of lines is embedded into the space of weak lines. Since $A^\times$ is dense in $A$ (cf.~\cite[Proposition~2.30]{ABRRW}), every element in $A^2$ is a limit of a~sequence of regular elements, and therefore, the closure of the space of lines in the space of weak lines is the entire space of weak lines. However, if $A$ admits a Hermitian anti-involution, the space of all lines is compact (cf.~\cite[Corollary~4.2]{ABRRW}). This means that in this case the space of weak lines agrees with the space of lines. From now on, we only consider algebras that admit a Hermitian anti-involution and, therefore, we will not distinguish between lines and weak lines as well as between regular and weakly regular elements of $A^2$. The space of all (weak) lines of $A^2$ is denoted by $\mathbb P(A^2)$. The description of the tangent space of $T_l\mathbb P(A^2)$ for $l\in \mathbb P(A^2)$ agrees with~\eqref{eq:projective.tangent}.

Now we compute the tangent spaces of $\mathrm{Is}(\omega)$ and $\mathbb{P}\mathrm{Is}(\omega)$. First, we introduce some notation: Let $x\in \mathrm{Is}(\omega)$; we denote 
$$x^{\perp_{\omega}}:=\{y\in A^2\mid \omega(x,y)+\omega(y,x)=0\}.$$
Note that, in general, $x^{\perp_{\omega}}$ is not an $A$-module, but it is an $\mathbb R$-vector subspace of $A^2$ and $xA\subseteq x^{\perp_{\omega}}$. Moreover, if $a\in A^\times$, then $(xa)^{\perp_{\omega}}=x^{\perp_{\omega}}a$.

\begin{rem}\label{prop:tangent_skew}Given the definitions above, one immediately sees that 
if $\omega$ is $\sigma$-skew symmetric, and $x\in \mathrm{Is}(\omega)$, then 
$$
x^{\perp_{\omega}}=\left\{ y\in A^{2}\mid {{\omega }}(y,x)\in {A^{{\sigma}}}  \right\}.
$$ 
On the other hand, if $\omega$ is $\sigma$-symmetric, and $x\in \mathrm{Is}(\omega)$, then 
$$
x^{\perp_{\omega}}=\left\{ y\in A^{2}\mid {{\omega }}(y,x)\in {A^{{-\sigma}}}  \right\}.
$$   
\end{rem}

\begin{prop}\label{prop:tangent_gen_omega}
    Let $\omega$ be a $\sigma$-sesquilinear form and $x\in \mathrm{Is}(\omega)$ and $l=xA$. Then, the tangent spaces of $\mathrm{Is}(\omega)$ and $\mathbb{P}\mathrm{Is}(\omega)$ are described by
\begin{align}
\begin{aligned}
T_x\mathrm{Is}\left(\omega\right) & =x^{\perp_\omega};\\
T_l\mathbb{P}\mathrm{Is}\left(\omega\right) & =\{Q\in\mathrm{Hom}(l,A^2/l)\mid \omega(Q(x),x)+\omega(x,Q(x))=0\}\\
&=\{[v,w] \mid  vA=l,\;w\in v^{\perp_\omega}/l\}\\ 
&\cong x^{\perp_{\omega}}/l.
\end{aligned}
\end{align}
The group $\mathrm{Aut}(\omega)$ acts on $T\mathrm{Is}(\omega)$ and on $T\mathbb P\mathrm{Is}(\omega)$ by linear transformations: For $g\in \mathrm{Aut}(\omega)$ and for $(x,v)\in T\mathrm{Is}(\omega)$, then 
\[g.(x,v)=(gx,gv),\]
and 
\[g.(xA,v+xA)=(g(x)A,g(v)+g(x)A)\]
respectively.
\end{prop}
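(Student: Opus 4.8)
The plan is to deal with $\mathrm{Is}(\omega)$ first, then obtain the statement for $\mathbb{P}\mathrm{Is}(\omega)$ by quotienting out the $A^{\times}$-action, and finally record the group action, which is formal. Recall that $\mathrm{Is}(\omega)$ is the open subset of regular elements inside the zero set of the quadratic map $F\colon A^2\to A$, $F(z)=\omega(z,z)$. Given a smooth curve $z(t)$ in $\mathrm{Is}(\omega)$ with $z(0)=x$, differentiating $\omega(z(t),z(t))=0$ at $t=0$ and using additivity of $\omega$ in each argument yields $\omega(\dot z(0),x)+\omega(x,\dot z(0))=0$, so every tangent vector lies in $\ker dF_x=\{v\in A^2\mid\omega(v,x)+\omega(x,v)=0\}=x^{\perp_{\omega}}$; this gives the inclusion $T_x\mathrm{Is}(\omega)\subseteq x^{\perp_{\omega}}$.

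For the reverse inclusion I would show that $F$ has locally constant rank near each $x\in\mathrm{Is}(\omega)$, so that the constant-rank theorem exhibits $\mathrm{Is}(\omega)$ near $x$ as a submanifold with $T_x\mathrm{Is}(\omega)=\ker dF_x=x^{\perp_{\omega}}$. Completing the regular element $x$ to a basis $(x,y)$ of $A^2$ and writing $v=xa+yb$, one computes
\[
dF_x(v)=\omega(v,x)+\omega(x,v)=\sigma(a)\,\omega(x,x)+\omega(x,x)\,a+\sigma(b)\,\omega(y,x)+\omega(x,y)\,b,
\]
which, since $\omega(x,x)=0$, reduces to the linear map $b\mapsto\sigma(b)\,\omega(y,x)+\omega(x,y)\,b$. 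Using nondegeneracy of $\omega$ to choose $y$ with $\omega(y,x)=1$, so $\omega(x,y)=-1$ in the $\sigma$-skew case and $\omega(x,y)=1$ in the $\sigma$-symmetric case, this becomes $b\mapsto\sigma(b)-b=-2b^{-}$, respectively $b\mapsto\sigma(b)+b=2b^{+}$, with $b=b^{+}+b^{-}$ the decomposition into $A^{\sigma}$- and $A^{-\sigma}$-parts; hence $dF_x$ is surjective onto $A^{-\sigma}$, respectively $A^{\sigma}$, which is exactly the subspace of $A$ in which $F$ takes values, so $F$ is a submersion onto it at every point of $\mathrm{Is}(\omega)$. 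Equivalently, and more functorially, one may invoke that $\mathrm{Aut}(\omega)$ acts on $\mathrm{Is}(\omega)$ with open orbits, together with the fact that the differential $f\mapsto fx$ of an orbit map ($f\in\mathrm{End}(\omega)$) has image exactly $x^{\perp_{\omega}}$: the inclusion into $x^{\perp_{\omega}}$ is the defining property of $\mathrm{End}(\omega)$, and for the reverse one builds, for each $v\in x^{\perp_{\omega}}$, an $f\in\mathrm{End}(\omega)$ with $fx=v$ by prescribing its values on the basis $(x,y)$ and solving using nondegeneracy of $\omega$.

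To pass to $\mathbb{P}\mathrm{Is}(\omega)$, note that $\omega(xa,xa)=\sigma(a)\,\omega(x,x)\,a$ shows isotropy descends to lines, so that $\mathrm{Is}(\omega)=\pi^{-1}(\mathbb{P}\mathrm{Is}(\omega))$ and $\pi\colon\mathrm{Is}(\omega)\to\mathbb{P}\mathrm{Is}(\omega)$, $x\mapsto xA$, is a well-defined submersion with fibre $xA^{\times}$, whose tangent space at $x$ is $T_x(xA^{\times})=xA=l$. Since $\omega(x,x)=0$ forces $l\subseteq x^{\perp_{\omega}}$, taking differentials gives $T_l\mathbb{P}\mathrm{Is}(\omega)=d\pi_x(x^{\perp_{\omega}})\cong x^{\perp_{\omega}}/l$. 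Transporting this through the canonical identification $T_l\mathbb{P}(A^2)=\mathrm{Hom}(l,A^2/l)$ of \eqref{eq:projective.tangent}, the class of $v\in x^{\perp_{\omega}}$ corresponds to the operator $Q$ with $Q(x)=v+l$, and the membership $v\in x^{\perp_{\omega}}$ becomes $\omega(Q(x),x)+\omega(x,Q(x))=0$; this expression is well posed modulo $l$ precisely because $\omega(xa,x)+\omega(x,xa)=0$ by isotropy, and is independent of the chosen generator of $l$ by sesquilinearity. Finally, the identity $(xa)^{\perp_{\omega}}=x^{\perp_{\omega}}a$ recorded before the proposition gives $\{[v,w]\mid vA=l,\ w\in v^{\perp_{\omega}}/l\}=\{[x,w]\mid w\in x^{\perp_{\omega}}/l\}$, so all three descriptions agree.

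For the action, if $g\in\mathrm{Aut}(\omega)$ then $\omega(gx,gx)=\omega(x,x)=0$ and $gx$ is regular, so $g$ maps $\mathrm{Is}(\omega)$ into itself $A$-linearly; its differential at $x$ is again $v\mapsto gv$, and for $v\in x^{\perp_{\omega}}$ one has $\omega(gv,gx)+\omega(gx,gv)=\omega(v,x)+\omega(x,v)=0$, i.e.\ $gv\in(gx)^{\perp_{\omega}}=T_{gx}\mathrm{Is}(\omega)$, which is the formula $g.(x,v)=(gx,gv)$; applying the $\mathrm{Aut}(\omega)$-equivariant submersion $\pi$ then yields the corresponding formula on $T\mathbb{P}\mathrm{Is}(\omega)$. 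The main obstacle is the middle step: establishing that $\mathrm{Is}(\omega)$ is a genuine smooth submanifold with the predicted tangent space, i.e.\ the constant-rank/submersion statement for $F(z)=\omega(z,z)$ over the possibly noncommutative base $A$, or equivalently the surjectivity of $f\mapsto fx$ from $\mathrm{End}(\omega)$ onto $x^{\perp_{\omega}}$; once this is in place, all remaining assertions are formal consequences of the structure of $\mathbb{P}(A^2)$ and the fact that the $\mathrm{Aut}(\omega)$-action is linear.
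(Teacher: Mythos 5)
Your proposal is correct and follows essentially the same route as the paper: differentiate the isotropy condition $\omega(x(t),x(t))=0$ along curves to see that tangent vectors at $x$ lie in $x^{\perp_\omega}$, then pass to $\mathbb{P}\mathrm{Is}(\omega)$ via the projection $x\mapsto xA$, whose vertical tangent space at $x$ is $l=xA$, and transport the quotient $x^{\perp_\omega}/l$ through the canonical identification $T_l\mathbb{P}(A^2)=\mathrm{Hom}(l,A^2/l)$, the equivariance of the $\mathrm{Aut}(\omega)$-action being formal. The one genuine addition on your side is the verification of the reverse inclusion, i.e.\ that $T_x\mathrm{Is}(\omega)$ is \emph{all} of $x^{\perp_\omega}$: the paper's proof stops after the curve computation and asserts equality, whereas you supply a constant-rank/submersion argument for $F(z)=\omega(z,z)$ (computing $dF_x(v)=\sigma(b)\omega(y,x)+\omega(x,y)b$ and checking surjectivity onto $A^{-\sigma}$, resp.\ $A^{\sigma}$) or, alternatively, the surjectivity of the orbit map differential $\mathrm{End}(\omega)\to x^{\perp_\omega}$, $f\mapsto fx$. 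Note only that this extra step uses nondegeneracy and the $\sigma$-skew or $\sigma$-symmetric dichotomy, which the proposition as stated does not assume for a general sesquilinear $\omega$; since these are exactly the forms used in the sequel and the paper itself leaves this direction implicit, this is a strengthening in the relevant cases rather than a defect.
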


\begin{proof}
Let $x(t)\in \mathrm{Is}\left( {\omega } \right)$ be a smooth path. We have 
\[
 \omega(x(t),x(t))=0,
\]
thus the derivation by $t$ gives 
\[
\omega(\dot{x}(t),x(t))+\omega(x(t),\dot{x}(t))=0.
\]
\noindent This implies the tangent space of the space of isotropic elements is given by	
\begin{align*}
    {{T}_{x}}\mathrm{Is}\left( {{\omega }} \right)&=\left\{ y\in A^{2}\mid \omega(y,x) + \omega(x,y)=0  \right\}\\
    &=x^{\perp_{\omega}}.
\end{align*}
For the projection $\varphi\colon\mathrm{Is}\left( {{\omega }} \right)\to \mathbb{P}\mathrm{Is}\left( {{\omega }} \right)$, we have the differential 
$$d\varphi\colon T\mathrm{Is}\left( {{\omega }} \right)\to T\mathbb{P}\mathrm{Is}(\omega)$$ 
and the kernel
\begin{align*}
\ker d_x\varphi & =\left\{ y\in {{T}_{x}}\mathrm{Is}(\omega)\mid d_x\varphi(y)=0 \right\}\\
& =\left\{ y\in {{T}_{x}}\mathrm{Is}\left( {{\omega }} \right) \mid yA\subseteq x{{A}} \right\}\\
& =l.
\end{align*}
Therefore, we obtain that 
$$
{{T}_l}\mathbb{P}\mathrm{Is}\left( {{\omega }} \right)= \{(x,y)\mid xA=l,\; y\in A^2/l,\; \omega(x,y)+\omega(y,x)=0\}/A^\times,
$$ 
where $A^\times$ acts by simultaneous right multiplication in both  components. As before, we can see elements of ${{T}_l}\mathbb{P}\mathrm{Is}(\omega)$ as $A$-linear maps $Q\in\mathrm{Hom}(l,A^2/l)$ such that $$
\omega(Q(x),x)+\omega(x,Q(x))=0.
$$ 
If a generator $x$ of $l$ is fixed, we can identify this space with the quotient 
\begin{equation*}
    \{ y\in A^{2}\mid \omega(y,x)+\omega(x,y)=0\}/xA.\qedhere
\end{equation*}
\end{proof}

One can describe  the tangent space ${{T}_l}\mathbb{P}\mathrm{Is}\left( {{\omega }} \right)$ more explicitly in the cases when $\omega$ is $\sigma$-symmetric or $\sigma$-skew symmetric:
\begin{cor}\label{cor:tangent_skew_invariant}
We have the following description of the tangent space of the space of all isotropic lines:
\begin{enumerate}
    \item If $\omega$ is $\sigma$-skew symmetric, then for $l\in \mathbb P\mathrm{Is}(\omega)$, the tangent space ${{T}_l}\mathbb{P}\mathrm{Is}\left( {{\omega }} \right)$ is isomorphic to
    $$\{L\in\mathrm{Hom}(l,A^2/l)\mid \omega(L(x),x)\in A^\sigma\}.$$
    \item If $\omega$ is $\sigma$-symmetric, then for $l\in \mathbb P\mathrm{Is}(\omega)$, the tangent space ${{T}_l}\mathbb{P}\mathrm{Is}\left( {{\omega }} \right)$ is isomorphic to
    $$\{L\in\mathrm{Hom}(l,A^2/l)\mid \omega(L(x),x)\in A^{-\sigma}\}.$$
\end{enumerate}    
\end{cor}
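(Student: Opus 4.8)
The plan is to read off both descriptions by combining the general formula of Proposition~\ref{prop:tangent_gen_omega} with the explicit forms of $x^{\perp_\omega}$ recorded in Remark~\ref{prop:tangent_skew}; essentially no new computation is needed. First I would fix a generator $x$ of the isotropic line $l=xA$ and use the identification from Proposition~\ref{prop:tangent_gen_omega} of $T_l\mathbb{P}\mathrm{Is}(\omega)$ with the set of $A$-linear maps $Q\in\mathrm{Hom}(l,A^2/l)$ for which $\omega(Q(x),x)+\omega(x,Q(x))=0$, equivalently with $x^{\perp_\omega}/l$ via $Q\mapsto Q(x)\bmod l$. In particular the defining condition on $Q$ is precisely that $Q(x)$ lies in $x^{\perp_\omega}$, a condition depending only on $Q(x)$ modulo $l$ since $l\subseteq x^{\perp_\omega}$.

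Next I would substitute the two descriptions of $x^{\perp_\omega}$ from Remark~\ref{prop:tangent_skew}. When $\omega$ is $\sigma$-skew symmetric we have $x^{\perp_\omega}=\{y\in A^2\mid \omega(y,x)\in A^\sigma\}$, so the condition on $Q$ becomes $\omega(Q(x),x)\in A^\sigma$, which is part~(1). When $\omega$ is $\sigma$-symmetric we have $x^{\perp_\omega}=\{y\in A^2\mid \omega(y,x)\in A^{-\sigma}\}$, so the condition on $Q$ becomes $\omega(Q(x),x)\in A^{-\sigma}$, which is part~(2).

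The only subtlety, and the closest thing to an obstacle, is to verify that these conditions are independent of the chosen generator $x$ of $l$, since the maps $Q$ and the target descriptions are intrinsic to $l$. Replacing $x$ by $x'=xa$ with $a\in A^\times$ and using $A$-linearity of $Q$ together with $\sigma$-sesquilinearity gives $\omega(Q(x'),x')=\omega(Q(x)a,xa)=\sigma(a)\,\omega(Q(x),x)\,a$; that is, the value is transformed by congruence. Since $\sigma(\sigma(a)\,b\,a)=\sigma(a)\,\sigma(b)\,a$, congruence by $a$ preserves both $A^\sigma$ and $A^{-\sigma}$, so the membership conditions are unchanged; this is also exactly the identity $(xa)^{\perp_\omega}=x^{\perp_\omega}a$ noted just before Remark~\ref{prop:tangent_skew}. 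Assembling these observations yields the two stated descriptions of $T_l\mathbb{P}\mathrm{Is}(\omega)$.
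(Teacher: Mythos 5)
Your proposal is correct and follows exactly the route the paper intends: the corollary is stated as an immediate consequence of Proposition~\ref{prop:tangent_gen_omega} combined with Remark~\ref{prop:tangent_skew}, which is precisely your substitution of the two descriptions of $x^{\perp_\omega}$ into the condition $\omega(Q(x),x)+\omega(x,Q(x))=0$. Your additional check that congruence by $a\in A^\times$ preserves $A^\sigma$ and $A^{-\sigma}$ (so the description is independent of the generator of $l$) is a welcome but standard verification already implicit in the identity $(xa)^{\perp_\omega}=x^{\perp_\omega}a$ noted in the paper.
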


\begin{rem}
    Since the description of the tangent space provided in Proposition~\ref{prop:tangent_gen_omega} is independent of the generator of the line $l$, we may identify the tangent space $T_l\mathbb{P}\mathrm{Is}\left( {{\omega }} \right)$ with the space 
    \[\{Q\in\mathrm{Hom}(l,A^2/l)\mid \omega(Q(x),x)+\omega(x,Q(x))=0\}.\]
\end{rem}

\subsection{Duality of \texorpdfstring{$A$}{A}-modules}\label{sec:duality}

Let $A$ be a real algebra and let $V$ be a right $A$-module. A map $f\colon V\to A$ is called \emph{right $A$-linear} if $f(v_1a_1+v_2a_2)=f(v_1)a_1+f(v_2)a_2$, for all $v_1,v_2\in V$, $a_1,a_2\in A$. We denote by $V^*$ the space of all right $A$-linear maps on $V$. This is naturally a left $A$-module. Indeed, for $f\in V^*$ and $a\in A$, $af\colon V\to A$ is defined as $(af)(v)=a(f(v))$. We call $V^*$ the right dual module to $V$.

Similarly, for a left $A$-module $V$, the left dual module ${}^*V$ is well-defined and is a right $A$-module.

In the following consideration, we assume $V$ to be a right $A$-module. However, a similar consideration holds for left $A$-modules.

If $(A,\sigma)$ is an involutive algebra, then every right $A$-module $V$ can be turned into a left $A$-module and vice versa as follows: if $V$ is a right $A$-module, for $a\in A$ and $v\in V$, we define the left $\sigma$-twisted multiplication $a\cdot v:=v\sigma(a)$. We denote by $\sigma(V)$ the corresponding left $A$-module. However, $V$ with the right and $\sigma$-twisted left $A$-multiplications is in general not an $A$-bimodule because in general it is $a\cdot(va')=va'\sigma(a)\neq v\sigma(a)a'=(a\cdot v)a'$. 

\begin{rem}
    We always use $\cdot$ to emphasize the $\sigma$-twisted multiplication, whereas we omit the multiplication sign for the multiplication in $V$.
\end{rem}

The anti-involution $\sigma$ induces maps $V^*\to {}^*\sigma(V)$ and ${}^*\sigma(V)\to V^*$ mapping $f\in V^*\cup {}^*\sigma(V)$ to $\sigma\circ f$. Indeed, if $f\in V^*$ then $\sigma\circ f$ is additive and for every $v\in\sigma(V)$, $a\in A$
$$(\sigma\circ f)(a\cdot v)=(\sigma\circ f)(v\sigma(a))=\sigma(f(v)\sigma(a))=a(\sigma\circ f(v)),$$
i.e. $\sigma\circ f\in {}^*\sigma(V)$.

Further, every $A$-morphism between two left $A$-modules $f\in\mathrm{Hom}_A(V,W)$ naturally determines an element in $\mathrm{Hom}_A(\sigma(V),\sigma(W))$ which by a slight abuse of notation will be also denoted by $f$. Indeed, for $v\in\sigma(V)$, $a\in A$ we have:
$$f(a\cdot v)=f(v\sigma(a))=f(v)\sigma(a)=a\cdot f(v).$$
In other words, a map $f\colon V\to V$ preserves the structure of a right $A$-module on $V$ if and only if it preserves the induced structure of a $\sigma$-twisted left $A$-module on $V$. 

\begin{prop}
    Let $V=A^2$ and $\omega$ be a non-degenerate $\sigma$-sesquilinear form. Let $\ell_1$ and $\ell_2$ be two transverse $\omega$-isotropic $A$-lines in $V$. Then $\omega$ provides an isomorphism of right $A$-modules:
    $$\begin{array}{rcl}
    \Phi_\omega\colon \ell_j & \to & {}^*\sigma(\ell_i)\\
    v & \mapsto & v_\omega:=\omega(\cdot,v)
    \end{array}$$
    for $i,j\in\{1,2\}$ and $i\neq j$, i.e., for $v\in\ell_j$ and $w\in\sigma(\ell_i)$, $v_\omega(w):=\omega(w,v)$. 
\end{prop}

\begin{proof}
    First, the map $\Phi_\omega$ is well-defined: for $v\in\ell_j$, the map $v_\omega\in {}^*\sigma(\ell_i)$ because for $w_1,w_2\in\sigma(\ell_i)$ and $a_1,a_2\in A$ we have
    $$v_\omega(a_1\cdot w_1+a_2\cdot w_2)=\omega(w_1\sigma(a_1)+w_2\sigma(a_2),v)=a_1\omega(w_1,v)+a_2\omega(w_2,v)=a_1v_\omega(w_1)+a_2v_\omega(w_2).$$
    Further, the map $\Phi_\omega$ is additive and for $a\in A$ and $w\in \sigma(\ell_i)$ we have $$(va)_\omega(w)=\omega(w,va)=\omega(w,v)a=v_\omega(w)a.$$
    Thus, $\Phi_\omega$ is a homomorphism. Finally, since $\omega$ is non-degenerate and $\ell_1$, $\ell_2$ are isotropic, the kernel of $\Phi_\omega$ is trivial.     
\end{proof}
 
In this case, we say that \emph{$\omega$ identifies $\ell_j$ as dual to $\ell_i$}.

\subsection{Groups over Hermitian algebras} 

In the sequel of the article, we will be only considering algebras $A$ admitting a Hermitian anti-involution $\sigma$.
From now on, we assume 
\[\omega(x,y):=\sigma(x)^t\Omega y,\] with either $\Omega=\begin{pmatrix}0 & 1 \\ -1 & 0\end{pmatrix}$ or $\Omega=\begin{pmatrix}0 & 1 \\ 1 & 0\end{pmatrix}$, for $x,y\in A^2$. In the first case, the form $\omega$ is called the \emph{standard symplectic form} on $A^2$; in the second case, it is called the \emph{standard indefinite orthogonal form} on $A^2$.

\begin{rem}
    More generally, we say that a $\sigma$-sesquilinear form $\omega$ on a right $A$-module $V$ is \emph{$\sigma$-symplectic}, resp. \emph{$\sigma$-indefinite orthogonal}, if there is a basis $v=(v_1,v_2)$ of $V$ such that the matrix $(\omega(v_i,v_j))_{i,j}$ agrees with $\begin{pmatrix}0 & 1 \\ -1 & 0\end{pmatrix}$, resp. $\begin{pmatrix}0 & 1 \\ 1 & 0\end{pmatrix}$.
\end{rem}

\begin{defn}
Let $(A,\sigma)$ be an involutive algebra. If $\omega$ is the standard symplectic form, then the group $\mathrm{Sp}_2(A,\sigma):=\mathrm{Aut}(\omega)$ is called the \emph{symplectic group} $\mathrm{Sp}_2$ over $(A,\sigma)$.
    If $\omega$ is the standard indefinite orthogonal form, then the group $\mathrm{O}_{(1,1)}(A,\sigma):=\mathrm{Aut}(\omega)$ is called the \emph{indefinite orthogonal group} $\mathrm{O}_{(1,1)}$ over $(A,\sigma)$.
\end{defn}

We have
\begin{align}\label{eq:groups}
    \begin{aligned}
    \mathrm{Sp}_2(A,\sigma)&=\left\{\begin{pmatrix}
    a & b \\
    c & d
    \end{pmatrix}\in\mathrm{GL}_2(A)\midwd \sigma(a)c,\,\sigma(b)d\in A^\sigma,\,\sigma(a)d-\sigma(c)b=1\right\}, \text{ and }\\[2pt]
    \mathrm{O}_{(1,1)}(A,\sigma)&=\left\{\begin{pmatrix}
    a & b \\
    c & d
    \end{pmatrix}\in \mathrm{GL}_2(A)\midwd \sigma(a)c,\,\sigma(b)d\in A^{-\sigma},\,\sigma(a)d+\sigma(c)b=1\right\}.
    \end{aligned}
\end{align}
We can determine the Lie algebras $\mathfrak{sp}_2(A,\sigma)$ of $\mathrm{Sp}_2(A,\sigma)$ and $\mathfrak o_{(1,1)}(A,\sigma)$ of $\mathrm O_{(1,1)}(A,\sigma)$:
\begin{align}\label{eq:Lie_algebras}
    \begin{aligned}
	    \mathfrak{sp}_2(A,\sigma)& =\left\{g\in \mathrm{Mat}_2(A)\midwd \sigma(g)^t\begin{pmatrix}
        0 & 1 \\
        -1 & 0
      \end{pmatrix}+ \begin{pmatrix}
        0 & 1 \\
        -1 & 0
      \end{pmatrix} g=0\right\}\\
    & =\left\{\begin{pmatrix}
		x & z \\
		y & -\sigma(x)
	\end{pmatrix}\midwd x\in A,\;y,z\in A^\sigma\right\}, \text{ and }\\[2pt]
        \mathfrak o_{(1,1)}(A,\sigma)& =\left\{g\in \mathrm{Mat}_2(A)\midwd \sigma(g)^t\begin{pmatrix}
        0 & 1 \\
        1 & 0
      \end{pmatrix}+ \begin{pmatrix}
        0 & 1 \\
        1 & 0
      \end{pmatrix} g=0\right\}\\
      & =\left\{\begin{pmatrix}
		x & z \\
		y & -\sigma(x)
	\end{pmatrix}\midwd x\in A,\;y,z\in A^{-\sigma}\right\}.
    \end{aligned}
\end{align}

Let now $(A,\sigma)$ be Hermitian. We consider the subgroup 
$$\mathrm{O}_2(A,\sigma):=\{g\in\mathrm{GL}_2(A)\mid \sigma(g)^tg=\mathrm{Id}\},$$ 
which is compact. Then according to {\cite[Proposition 3.18, Theorem 3.19]{ABRRW}}, one has that the groups 
\begin{align}\label{max_cpt_real}
    \begin{aligned}
    \mathrm{KSp}_2(A,\sigma):=&\mathrm{O}_2(A,\sigma)\cap\mathrm{Sp}_2(A,\sigma)\\
    =&\left\{\begin{pmatrix}
    a&b\\
    -b&a
    \end{pmatrix}\in\mathrm{GL}_2(A)\midwd \sigma(a)a+\sigma(b)b=1,\ \sigma(a)b\in A^{\sigma}\right\}, \text{ and }\\[2pt]
    \mathrm{KO}_{(1,1)}(A,\sigma):=&\mathrm{O}_2(A,\sigma)\cap\mathrm{O}_{(1,1)}(A,\sigma)\\
    =&\left\{\begin{pmatrix}
    a&b\\
    b&a
    \end{pmatrix}\in\mathrm{GL}_2(A)\midwd \sigma(a)a+\sigma(b)b=1,\ \sigma(a)b\in A^{-\sigma}\right\}
\end{aligned}
\end{align}
are  maximal compact subgroups of $\mathrm{Sp}_2(A,\sigma)$, resp. of $\mathrm{O}_{(1,1)}(A,\sigma)$. From \eqref{eq:Lie_algebras} and \eqref{max_cpt_real}, we obtain their Lie algebras
\begin{align}\label{eq2.2}
    \begin{aligned}
    \mathfrak{ksp}_2(A,\sigma):=&\left\{\begin{pmatrix}
    a&b\\
    -b&a
    \end{pmatrix}\in\mathrm{Mat}_2(A)\midwd \sigma(a)=-a,\ \sigma(b)=b\right\}, \text{ and }\\[2pt]
    \mathfrak{ko}_{(1,1)}(A,\sigma):=&\left\{\begin{pmatrix}
    a&b\\
    b&a
    \end{pmatrix}\in\mathrm{Mat}_2(A)\midwd \sigma(a)=-a,\ \sigma(b)=-b\right\}.
    \end{aligned}
\end{align}
This gives the Cartan decompositions of the Lie algebras $\mathfrak{sp}_2(A,\sigma)$ and $\mathfrak{o}_{(1,1)}(A,\sigma)$ as
\begin{align}\label{Cartan_real}
\begin{aligned}
    \mathfrak{sp}_2(A,\sigma)= & \,\,\mathfrak{ksp}_2(A,\sigma)\oplus\mathfrak{msp}_2(A,\sigma), \text{ and }\\
    \mathfrak{o}_{(1,1)}(A,\sigma)=& \,\,\mathfrak{ko}_{(1,1)}(A,\sigma)\oplus\mathfrak{mo}_{(1,1)}(A,\sigma),
\end{aligned}
\end{align}
where
\begin{align}\label{eq1.6}
\begin{aligned}
    \mathfrak{msp}_2(A,\sigma):=&\left\{\begin{pmatrix}
    a & b\\
    b & -a
    \end{pmatrix}\in\mathrm{Mat}_2(A)\midwd \sigma(a)=a,\ \sigma(b)=b\right\}, \text{ and }\\[2pt]
    \mathfrak{mo}_{(1,1)}(A,\sigma):=&\left\{\begin{pmatrix}
    a & b\\
    -b & -a
    \end{pmatrix}\in\mathrm{Mat}_2(A)\midwd \sigma(a)=a,\ \sigma(b)=-b\right\}.
\end{aligned}
\end{align}

\subsection{Groups over complexified Hermitian algebras}\label{group_complexified_hermitian}

Let $(A,\sigma)$ be a semisimple Hermitian $\mathbb{R}$-algebra as above, and let $A_\mathbb{C}$ be the complexification of $A$, together with the $\mathbb{C}$-linear and $\mathbb{C}$-anti-linear anti-involutions $\sigma_\mathbb{C}$ and $\bar\sigma_{\mathbb{C}}$, respectively. Then,  $(A_\mathbb{C},\bar\sigma_\mathbb{C})$ is a Hermitian and semisimple $\mathbb{C}$-algebra (see \cite[Corollary 2.79]{ABRRW}). 

Further, consider the group $\mathrm{Sp}_2(A_\mathbb{C},\sigma_\mathbb{C})$. By {\cite[Proposition 3.22, Theorem 3.23]{ABRRW}}, the group 
\begin{align}\label{max_cpt_cplx}
    \begin{aligned}
    K^c :=&\mathrm{KSp}^c_2(A_\mathbb C,\sigma_\mathbb C)\\
    :=&\ \mathrm{O}_2(A_\mathbb{C},\bar \sigma_\mathbb{C})\cap\mathrm{Sp}_2(A_\mathbb{C},\sigma_\mathbb{C})\\
    =&\left\{\begin{pmatrix}
    a&b\\
    -\bar b&\bar a
    \end{pmatrix}
    \in\mathrm{GL}_2(A_\mathbb{C})\midwd \bar\sigma_\mathbb{C}(a)a+\sigma_\mathbb{C}(b)\bar b=1,\ \bar\sigma_\mathbb{C}(a)b-\sigma_\mathbb{C}(b)\bar a=0\right\}\\
    = & \left\{\begin{pmatrix}
    a & b\\
    -\bar b & \bar a
    \end{pmatrix}    \in\mathrm{GL}_2(A_\mathbb{C})
    \midwd a\bar\sigma_\mathbb{C}(a)+b\bar\sigma_\mathbb{C}(b)=1,\ b\sigma_\mathbb{C}(a)-a\sigma_\mathbb{C}(b)=0\right\}
\end{aligned}
\end{align}
is a maximal compact subgroup of $\mathrm{Sp}_2(A_\mathbb{C},\sigma_\mathbb{C})$, where 
$$\mathrm{O}_2(A_\mathbb{C},\bar \sigma_\mathbb{C}):=\{M\in\mathrm{Mat}_2(A_\mathbb{C})\mid \bar \sigma_\mathbb{C}(M)^tM=\mathrm{Id}\}.$$ Note that, we are adopting the notation $K^c$ here, in contrast to the notation $K$ for the maximal compact subgroup of $\mathrm{Sp}_2(A,\sigma)$. From \eqref{eq:Lie_algebras} and \eqref{max_cpt_cplx}, we obtain the following expression of the Lie algebra of $K^c$
\begin{align}\label{eq1.8}
    \mathfrak{k}^c:=\mathfrak{ksp}^c_2(A_\mathbb C,\sigma_\mathbb C):=&\ \mathrm{Lie}(K^c)\\
    =& \left\{\begin{pmatrix}
    a&b\\
    -\bar b&\bar a
    \end{pmatrix}\in\mathrm{Mat}_2(A_\mathbb{C})\midwd \bar\sigma_\mathbb{C}(a)=-a,\ \sigma_\mathbb{C}(b)=b\right\}.
\end{align}
This gives the Cartan decomposition of the Lie algebra $\mathfrak{sp}_2(A_\mathbb{C},\sigma_\mathbb{C})$ as
\begin{align}\label{Cartan_cplx}
    \mathfrak{sp}_2(A_\mathbb{C},\sigma_\mathbb{C})=\mathfrak{k}^c\oplus\mathfrak{m}^c,
\end{align}
where
\begin{align}\label{eq1.10}
    \mathfrak{m}^c=\left\{\begin{pmatrix}
    a&b\\
    \bar b&-\bar a
    \end{pmatrix}\in\mathrm{Mat}_2(A_\mathbb{C})\midwd \bar\sigma_\mathbb{C}(a)=a,\ \sigma_\mathbb{C}(b)=b\right\}.
\end{align}
We should stress here the fact that the decomposition \eqref{Cartan_cplx} is never the complexification of the decomposition \eqref{Cartan_real}. The spaces $\mathfrak m^c$ and $\mathfrak k^c$ are $\mathbb R$-vector spaces, but not over $\mathbb C$. 

Finally, we discuss the group $\mathrm{O}_{(1,1)}(A_\mathbb C,\sigma_{\mathbb C})$. This group is isomorphic to $\mathrm{O}(A'_\mathbb C,\sigma^t_{\mathbb C})$ where $A'=\mathrm{Mat}_2(A)$ and $\sigma^t$ is the transposition composed with componentwise application of $\sigma$. This isomorphism is given by the following base change: 
\[e_1:=\frac{\exp(-i\pi/4)}{\sqrt{2}i}(1,i)^t, \quad  e_2:=\frac{\exp(-i\pi/4)}{\sqrt{2}i}(i,1)^t,\] 
in other words, for 
 $T=\frac{\exp(-i\pi/4)}{\sqrt{2}i}\begin{pmatrix}
    1 & i \\
    i & 1
\end{pmatrix}$, then
$$T^{-1}\mathrm{O}_{(1,1)}(A_\mathbb C,\sigma_{\mathbb C})T=\mathrm{O}(A'_\mathbb C,\sigma^t_{\mathbb C}).$$
Its maximal compact subgroup is $\mathrm{O}(A',\sigma^t)=\mathrm{O}(A'_\mathbb C,\sigma^t_{\mathbb C})\cap \mathrm{O}(A'_\mathbb C,\bar\sigma^t_{\mathbb C})$.

\subsection{Incarnations of \texorpdfstring{$\mathrm{Sp}_{2}(A,\sigma)$ and $\mathrm{O}_{(1,1)}(A,\sigma)$}{}}\label{sec:incarnations}

The elements 
$$e_1=\frac{1}{\sqrt{2}}(1,-1)^t \quad \text{and} \quad  e_2=\frac{1}{\sqrt{2}}(1,1)^t$$
form a basis of $A^2$. In this basis, the standard indefinite orthogonal form $\omega$ is represented by the matrix $\Omega=\begin{pmatrix}
    -1 & 0 \\
    0 & 1
\end{pmatrix}$. This change of basis conjugates the group $\mathrm{O}_{(1,1)}(A,\sigma)$ by the matrix $R=\frac{1}{\sqrt{2}}\begin{pmatrix}
    1 & 1 \\
    -1 & 1
\end{pmatrix}$.

Then, one sees
\begin{align}\label{eq:conj_orth}
\begin{aligned}
    R^{-1}\mathrm{O}_{(1,1)}(A,\sigma)R&=\left\{\begin{pmatrix}
    a & b \\
    c & d
    \end{pmatrix}\midwd
    \begin{aligned}
    &\ \sigma(a)b-\sigma(c)d=0,\\
    &\ \sigma(a)a-\sigma(c)c=\sigma(d)d-\sigma(b)b=1
    \end{aligned}\right\}\\[2pt]
    & \subseteq \mathrm{GL}_2(A),\\[2pt]
    R^{-1}\mathrm{KO}_{(1,1)}(A,\sigma)R&=\left\{\begin{pmatrix}
    a & 0 \\
    0 & d
    \end{pmatrix}\midwd a,d\in \mathrm{O}{(A,\sigma)}\right\}\\[2pt]
    & \cong \mathrm O{(A,\sigma)}\times \mathrm O{(A,\sigma)}, \text{ and }\\[2pt]
    R^{-1}\mathfrak{mo}_{(1,1)}(A,\sigma)R&=\left\{\begin{pmatrix}
    0 & b \\
    \sigma(b) & 0
    \end{pmatrix}\midwd b\in A \right\} \cong A.
\end{aligned}
\end{align}

From the above descriptions, we now obtain the following:
\begin{prop}\label{prop:O_(1,1)-connected}
    Let $(A,\sigma)$ be Hermitian. If $A^\times$ has $n$ connected components, then $\mathrm{O}_{(1,1)}(A,\sigma)$ has $n^2$ connected components. In particular, $\mathrm{O}_{(1,1)}(A,\sigma)$ is connected if and only if $A^\times$ is connected.
\end{prop}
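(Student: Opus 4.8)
The plan is to reduce the count of connected components of $\mathrm{O}_{(1,1)}(A,\sigma)$ to that of $A^\times$ via the Cartan (polar) decomposition, using the explicit model for $R^{-1}\mathrm{O}_{(1,1)}(A,\sigma)R$ in \eqref{eq:conj_orth}. Recall that for any real reductive Lie group $G$ with maximal compact $K$ and Cartan decomposition $\mathfrak g=\mathfrak k\oplus\mathfrak m$, the map $K\times\mathfrak m\to G$, $(k,\xi)\mapsto k\exp(\xi)$, is a diffeomorphism; since $\mathfrak m$ is contractible, $G$ and $K$ are homotopy equivalent, so in particular $\pi_0(G)\cong\pi_0(K)$. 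Applying this to $G=\mathrm{O}_{(1,1)}(A,\sigma)$ with maximal compact $K=\mathrm{KO}_{(1,1)}(A,\sigma)$, it suffices to count the connected components of $\mathrm{KO}_{(1,1)}(A,\sigma)$.

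First I would invoke the conjugation by $R$ from \eqref{eq:conj_orth}, which is a diffeomorphism of $\mathrm{O}_{(1,1)}(A,\sigma)$ onto its $R$-conjugate and hence preserves $\pi_0$; under it the maximal compact subgroup becomes $R^{-1}\mathrm{KO}_{(1,1)}(A,\sigma)R\cong \mathrm{O}(A,\sigma)\times\mathrm{O}(A,\sigma)$, again by \eqref{eq:conj_orth}. Therefore $\pi_0(\mathrm{O}_{(1,1)}(A,\sigma))\cong\pi_0(\mathrm{O}(A,\sigma))\times\pi_0(\mathrm{O}(A,\sigma))$, and the claim reduces to showing that $\mathrm{O}(A,\sigma)$ has exactly $n$ connected components, where $n=|\pi_0(A^\times)|$.

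The remaining step is to relate $\pi_0(\mathrm{O}(A,\sigma))$ to $\pi_0(A^\times)$. Here $\mathrm{O}(A,\sigma)=\{a\in A^\times\mid\sigma(a)=a^{-1}\}$ is the compact orthogonal group of the Hermitian algebra, and its Lie algebra is $A^{-\sigma}$. The cleanest route is again a Cartan-type / polar decomposition argument internal to $A^\times$: for $(A,\sigma)$ Hermitian, every $g\in A^\times$ decomposes uniquely as $g=u\,p$ with $u\in\mathrm{O}(A,\sigma)$ and $p$ in the $\sigma$-positive cone $A^\sigma_+$ (or $p=\exp(\xi)$ with $\xi\in A^\sigma$), using that $\sigma(g)g\in A^\sigma_+$ has a unique square root in $A^\sigma_+$ by the formally-real-Jordan-algebra structure recalled after Definition~\ref{Herm_A}. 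Since $A^\sigma$ is contractible, this exhibits $A^\times$ as homotopy equivalent to $\mathrm{O}(A,\sigma)$, hence $\pi_0(\mathrm{O}(A,\sigma))\cong\pi_0(A^\times)$ has $n$ elements. Combining the two reductions gives $|\pi_0(\mathrm{O}_{(1,1)}(A,\sigma))|=n^2$, and in particular connectedness of $\mathrm{O}_{(1,1)}(A,\sigma)$ is equivalent to $n=1$, i.e. to connectedness of $A^\times$.

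The main obstacle is justifying the polar decomposition $A^\times\cong\mathrm{O}(A,\sigma)\times A^\sigma_+$ rigorously in the noncommutative setting: one needs existence and uniqueness of a positive square root of $\sigma(g)g$ inside $A^\sigma_+$, and smoothness of the resulting splitting. This should follow from the results on the formally real Jordan algebra $A^\sigma$ and its symmetric cone recalled from \cite[Section~2]{ABRRW} (the remark after Definition~\ref{Herm_A}); alternatively, if \cite{ABRRW} already records that $A^\times$ deformation retracts onto $\mathrm{O}(A,\sigma)$, one simply cites it. Everything else — the $R$-conjugation, the product structure of the maximal compact, and the general Cartan-decomposition homotopy equivalence — is formal.
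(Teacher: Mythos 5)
Your proposal is correct and follows essentially the same route as the paper: the paper's proof also passes to maximal compact subgroups as deformation retracts, identifies the maximal compact of $\mathrm{O}_{(1,1)}(A,\sigma)$ with $\mathrm{O}(A,\sigma)\times\mathrm{O}(A,\sigma)$ via \eqref{eq:conj_orth}, and uses that $\mathrm{O}(A,\sigma)$ is a strong deformation retract of $A^\times$. The only difference is that you spell out the polar-decomposition justification of these retractions, which the paper simply asserts.
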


\begin{proof}
    A maximal compact subgroup of $A^\times$ is $\mathrm{O}{(A,\sigma)}$ which has the same number of connected components as $A^\times$ because it is a strong deformation retract of $A^\times$. A maximal compact subgroup of $\mathrm{O}_{(1,1)}(A,\sigma)$ is isomorphic to $\mathrm{O}{(A,\sigma)}\times \mathrm{O}{(A,\sigma)}$ which has $n^2$ connected components. Thus, $\mathrm{O}_{(1,1)}(A,\sigma)$ has the same number of connected components.
\end{proof}

Similarly, after the complexification of $A^2$, elements $e_1=\frac{1}{\sqrt{2}}(1,i)^t$, $e_2=\frac{1}{\sqrt{2}}(i,1)^t$ form a basis of $A_\mathbb C^2$. In this basis, the complex linear extension $\omega_{\mathbb C}$ of the standard symplectic form $\omega$ is represented by the matrix $\Omega=\begin{pmatrix}
    0 & 1 \\
    -1 & 0
\end{pmatrix}$. This change of basis conjugates the group $\mathrm{Sp}_{2}(A,\sigma)$ by the matrix $T=\frac{1}{\sqrt{2}}\begin{pmatrix}
    1 & i \\
    i & 1
\end{pmatrix}$. In particular, a maximal compact subgroup conjugates as follows:

\begin{align}\label{eq:conj_symp_real}
\begin{aligned}
     T^{-1}\mathrm{KSp}_{2}(A,\sigma)T&=\left\{\begin{pmatrix}
    u & 0 \\
    0 & \bar u
    \end{pmatrix}\midwd u\in \mathrm{O}{(A_\mathbb C,\bar\sigma_{\mathbb C})}\right\}\cong \mathrm O{(A_\mathbb C,\bar\sigma_{\mathbb C})}.
\end{aligned}
\end{align}

We then have the following:
\begin{prop}\label{prop:Sp_2-connected}
    Let $(A,\sigma)$ be Hermitian. The group $\mathrm{Sp}_{2}(A,\sigma)$ is connected.
\end{prop}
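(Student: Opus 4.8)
The plan is to reduce connectedness of $\mathrm{Sp}_2(A,\sigma)$ to that of its maximal compact subgroup $K=\mathrm{KSp}_2(A,\sigma)$, then to identify $K$ with the orthogonal group of the \emph{complexified} Hermitian algebra $(A_\mathbb{C},\bar\sigma_\mathbb{C})$, and finally to observe that the unit group of any finite-dimensional \emph{complex} algebra is automatically connected, being the complement of a complex hypersurface.

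First, exactly as in the proof of Proposition~\ref{prop:O_(1,1)-connected}, I would use that $\mathrm{Sp}_2(A,\sigma)$ is a real Lie group admitting the Cartan decomposition~\eqref{Cartan_real}; by the associated polar decomposition from \cite{ABRRW}, the map $K\times\mathfrak{msp}_2(A,\sigma)\to\mathrm{Sp}_2(A,\sigma)$, $(k,X)\mapsto k\exp(X)$, is a diffeomorphism, where $K$ is as in~\eqref{max_cpt_real}. Consequently $\mathrm{Sp}_2(A,\sigma)$ deformation retracts onto $K$ and has the same number of connected components as $K$, so it suffices to prove that $K$ is connected.

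Next, by the base change recorded in~\eqref{eq:conj_symp_real}, conjugation by $T$ identifies $K$ with the orthogonal group $\mathrm{O}(A_\mathbb{C},\bar\sigma_\mathbb{C})$ of the complexified algebra. By \cite[Corollary~2.79]{ABRRW} the pair $(A_\mathbb{C},\bar\sigma_\mathbb{C})$ is again a Hermitian semisimple algebra, so applying the argument in the proof of Proposition~\ref{prop:O_(1,1)-connected} to $(A_\mathbb{C},\bar\sigma_\mathbb{C})$ in place of $(A,\sigma)$ shows that the compact group $\mathrm{O}(A_\mathbb{C},\bar\sigma_\mathbb{C})$ is a strong deformation retract of $A_\mathbb{C}^\times=\mathrm{GL}_1(A_\mathbb{C})$. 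Thus it is enough to see that $A_\mathbb{C}^\times$ is connected. For this, note that $x\in A_\mathbb{C}$ is invertible if and only if the left-multiplication operator $L_x\colon A_\mathbb{C}\to A_\mathbb{C}$ is bijective, i.e. if and only if $p(x):=\det_{\mathbb{C}}(L_x)\ne 0$; since $p(1)=1$, this $p$ is a nonzero polynomial on $A_\mathbb{C}\cong\mathbb{C}^N$, so $A_\mathbb{C}^\times$ is the complement in $\mathbb{C}^N$ of a complex hypersurface, hence path-connected. Chaining the identifications, $\mathrm{Sp}_2(A,\sigma)$ is connected.

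The only genuinely non-formal ingredient is the polar/Cartan decomposition of $\mathrm{Sp}_2(A,\sigma)$ onto $K$ --- that is, that these noncommutative symplectic groups really behave like reductive Lie groups --- but this is already available in \cite{ABRRW} and is implicitly invoked in Proposition~\ref{prop:O_(1,1)-connected}. I would also flag the structural reason behind the result: for $\mathrm{O}_{(1,1)}(A,\sigma)$ the maximal compact is $\mathrm{O}(A,\sigma)\times\mathrm{O}(A,\sigma)$, built from the \emph{real} algebra, whereas for $\mathrm{Sp}_2(A,\sigma)$ it is the orthogonal group of the \emph{complexified} algebra, and a complex algebra always has connected unit group --- which is precisely why $\mathrm{Sp}_2(A,\sigma)$ is unconditionally connected while $\mathrm{O}_{(1,1)}(A,\sigma)$ need not be.
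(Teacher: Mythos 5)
Your proof is correct and follows essentially the same route as the paper: reduce to the maximal compact subgroup via the deformation retraction, identify it with $\mathrm{O}(A_\mathbb{C},\bar\sigma_\mathbb{C})$ using \eqref{eq:conj_symp_real}, and conclude from the connectedness of that group. The only difference is that you prove the connectedness of $\mathrm{O}(A_\mathbb{C},\bar\sigma_\mathbb{C})$ directly (via the retraction from $A_\mathbb{C}^\times$ and the complement-of-a-complex-hypersurface argument) instead of citing \cite[Corollary~2.7.24]{Rog}, which rests on the same fact that the unit group of a finite-dimensional complex algebra is connected.
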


\begin{proof}
    By~\cite[Corollary~2.7.24]{Rog}, $\mathrm{O}{(A_\mathbb C,\bar\sigma_\mathbb C)}$ is connected. A maximal compact subgroup of $\mathrm{Sp}_{2}(A,\sigma)$ is isomorphic to $\mathrm{O}{(A_\mathbb C,\bar\sigma_\mathbb C)}$. Thus $\mathrm{Sp}_{2}(A,\sigma)$ is connected because $\mathrm{O}{(A_\mathbb C,\bar\sigma_\mathbb C)}$ is a strong deformation retract of $\mathrm{Sp}_{2}(A,\sigma)$.
\end{proof}

Finally, when we quaternionify $A^2$, the elements $e_1=\frac{1}{\sqrt{2}}(1,j)^t$, $e_2=\frac{1}{\sqrt{2}}(j,1)^t$ form a basis of $A_\mathbb H^2$. In this basis, the quaternionic $\sigma_0$-sesquilinear extension $\omega_{\mathbb H}$ of the standard symplectic form $\omega$ is represented by the matrix $\Omega=\begin{pmatrix}
    0 & 1 \\
    -1 & 0
\end{pmatrix}$. This change of basis conjugates the group $\mathrm{Sp}_{2}(A_\mathbb C,\sigma_\mathbb C)$ by the matrix $Q=\frac{1}{\sqrt{2}}\begin{pmatrix}
    1 & j \\
    j & 1
\end{pmatrix}$. In particular, a maximal compact subgroup conjugates as follows:

\begin{align}\label{eq:conj_symp_cpx}
\begin{aligned}
    Q^{-1}\mathrm{KSp}^c_{2}(A_\mathbb C,\sigma_\mathbb C)Q&=\left\{\begin{pmatrix}
    u & 0 \\
    0 & (ij)u(ij)^{-1}
    \end{pmatrix}\midwd u\in \mathrm{O}{(A_\mathbb H,\sigma_1)}\right\}\cong \mathrm{O}{(A_\mathbb H,\sigma_1)}.
\end{aligned}
\end{align}

Now we assume the involutive algebra $(A,\sigma)$ is Hermitian and semisimple. From the Artin--Wedderburn Theorem, it follows that
\begin{equation}\label{Artin_wed_A}
A \cong \bigoplus_{i=1}^p \mathrm{Mat}_{l_i}\mathbb{R} \oplus \bigoplus_{j=1}^q \mathrm{Mat}_{m_j}\mathbb{C} \oplus \bigoplus_{k=1}^r \mathrm{Mat}_{n_k}\mathbb{H}.
\end{equation}

We then have the following:

\begin{thm}\label{prop:max_semisimple_simply_con}
Let $(A, \sigma)$ be a Hermitian semisimple algebra, and consider the integers $q$ and $r$ from \eqref{Artin_wed_A}. For the group $\mathrm{Sp}_2(A_\mathbb{C}, \sigma_{\mathbb{C}})$, we have the following statements: 
\begin{enumerate}
\item The number of connected components of the group $\mathrm{Sp}_2(A_\mathbb{C}, \sigma_{\mathbb{C}})$ is $2^r$.
\item For $\mathrm{Sp}_2(A_\mathbb{C}, \sigma_{\mathbb{C}})_0$, the connected component of the identity of the group $\mathrm{Sp}_2(A_\mathbb{C}, \sigma_{\mathbb{C}})$, its fundamental group is described by
\[\pi_1\left(\mathrm{Sp}_2(A_\mathbb{C}, \sigma_{\mathbb{C}})_0\right)\cong \mathbb{Z}^q\times (\mathbb{Z}/2\mathbb{Z})^r.\] 
\item For $\mathrm{Sp}_2(A_\mathbb{C}, \sigma_{\mathbb{C}})_0^{ss}$, the maximal semisimple subgroup of $\mathrm{Sp}_2(A_\mathbb{C}, \sigma_{\mathbb{C}})_0$, its fundamental group is described by
\[\pi_1\left(\mathrm{Sp}_2(A_\mathbb{C}, \sigma_{\mathbb{C}})_0^{ss}\right)\cong  (\mathbb{Z}/2\mathbb{Z})^r.\]
\end{enumerate}
\end{thm}

\begin{proof}
(1) It follows from \eqref{eq:conj_symp_cpx} that a maximal compact subgroup of $\mathrm{Sp}_2(A_\mathbb{C}, \sigma_{\mathbb{C}})$ is the group $\mathrm{O}(A_{\mathbb{H}}, \sigma_1)$. At the same time, the group $\mathrm{O}(A_{\mathbb{H}}, \sigma_1)$ is a maximal compact subgroup of the group $A^{\times}_{\mathbb{H}}$, which is the group of invertible elements of the quaternionic algebra $A_{\mathbb{H}}$, which is semisimple because $(A_{\mathbb{H}},\sigma_1)$ is Hermitian and semisimple (see \cite[Corollary 2.93]{ABRRW}).

\noindent From \eqref{Artin_wed_A}, we now get
\[A_{\mathbb{H}} \cong \bigoplus_{i=1}^p \mathrm{Mat}_{l_i}\mathbb{H} \oplus \bigoplus_{j=1}^q \mathrm{Mat}_{2m_j}\mathbb{C} \oplus \bigoplus_{k=1}^r \mathrm{Mat}_{4n_k}\mathbb{R},\]
where we used the isomorphisms 
\[\mathrm{Mat}_{m_j}\mathbb{H}\otimes_{\mathbb{R}}\mathbb{C} \cong \mathrm{Mat}_{2m_j}\mathbb{C} \quad \text{and} \quad 
\mathrm{Mat}_{n_k}\mathbb{H}\otimes_{\mathbb{R}}\mathbb{H} \cong \mathrm{Mat}_{4n_k}\mathbb{R},\] for any factor $j$ and any factor $k$ respectively 
(see \cite[Appendix B.1]{ABRRW}).
A maximal compact subgroup of $A_{\mathbb{H}}^{\times}$ is then more explicitly described by the  group 
\[\mathrm{O}(A_{\mathbb{H}}, \sigma_1) \cong \prod_{i=1}^p \mathrm{Sp}(l_i) \times \prod_{j=1}^q \mathrm{U}(2m_j) \times\prod_{k=1}^r \mathrm{O}(4n_k).\]
Then, as topological spaces, $A_{\mathbb{H}}^{\times}$ and $\mathrm{Sp}_2(A_\mathbb{C}, \sigma_{\mathbb{C}})$ are homotopy equivalent to the space $\mathrm{O}(A_{\mathbb{H}}, \sigma_1)$. Therefore, the number of connected components of the group $\mathrm{Sp}_2(A_\mathbb{C}, \sigma_{\mathbb{C}})$ is equal to $2^r$.

(2) The preceding argument applies step by step for the case of the component of the identity $\mathrm{Sp}_2(A_\mathbb{C}, \sigma_{\mathbb{C}})_0$, which is homotopy equivalent to its maximal compact subgroup \[\mathrm{O}(A_{\mathbb{H}}, \sigma_1)_0 \cong \prod_{i=1}^p \mathrm{Sp}(l_i) \times \prod_{j=1}^q \mathrm{U}(2m_j) \times\prod_{k=1}^r \mathrm{SO}(4n_k).\]  
We then analogously get 
\begin{align*}
\pi_1\left( \mathrm{Sp}_2(A_\mathbb{C}, \sigma_{\mathbb{C}})_0\right) & \cong \pi_1\left(\mathrm{O}(A_{\mathbb{H}}, \sigma_1)_0\right)\\
& \cong \pi_1\left( \prod_{i=1}^p \mathrm{Sp}(l_i) \times \prod_{j=1}^q \mathrm{U}(2m_j) \times\prod_{k=1}^r \mathrm{SO}(4n_k)\right)\\
& \cong \mathbb{Z}^q \times (\mathbb{Z}/2\mathbb{Z})^r.
\end{align*}

(3) For the third part of the theorem, we similarly get that the maximal semisimple subgroup $\mathrm{Sp}_2(A_\mathbb{C}, \sigma_{\mathbb{C}})_0^{ss}$ is homotopy equivalent to its maximal compact subgroup \[\mathrm{O}(A_{\mathbb{H}}, \sigma_1)_0^{ss} \cong \prod_{i=1}^p \mathrm{Sp}(l_i) \times \prod_{j=1}^q \mathrm{SU}(2m_j) \times\prod_{k=1}^r \mathrm{SO}(4n_k),\]  
and so we analogously get 
\begin{align*}
\pi_1\left( \mathrm{Sp}_2(A_\mathbb{C}, \sigma_{\mathbb{C}})_0^{ss}\right) & \cong \pi_1\left(\mathrm{O}(A_{\mathbb{H}}, \sigma_1)_0^{ss}\right)\\
& \cong \pi_1\left( \prod_{i=1}^p \mathrm{Sp}(l_i) \times \prod_{j=1}^q \mathrm{SU}(2m_j) \times\prod_{k=1}^r \mathrm{SO}(4n_k)\right)\\
& \cong (\mathbb{Z}/2\mathbb{Z})^r. \qedhere
\end{align*}
\end{proof}

\begin{thm}\label{prop:max_semisimple_simply_con_O}
Let $(A, \sigma)$ be a Hermitian semisimple algebra, and consider the integers $p$ and $q$ from \eqref{Artin_wed_A}, and denote by $p_2:=\#\{ i\mid l_i=2\}$ and $p_{\geq3}:=\#\{i\mid l_i\geq3\}$. For the group $\mathrm{O}(A_\mathbb{C}, \sigma_{\mathbb{C}})$, we have the following statements: 
\begin{enumerate}
\item The number of connected components of the group $\mathrm{O}(A_\mathbb{C}, \sigma_{\mathbb{C}})$ is $2^p$.
\item For $\mathrm{O}(A_\mathbb{C}, \sigma_{\mathbb{C}})_0$, the connected component of the identity of the group $\mathrm{O}(A_\mathbb{C}, \sigma_{\mathbb{C}})$, its fundamental group is described by
\[\pi_1\left(\mathrm{O}(A_\mathbb{C}, \sigma_{\mathbb{C}})_0\right)\cong (\mathbb{Z}/2\mathbb{Z})^{p_{\geq3}}\times \mathbb{Z}^{q+p_2}.\] 
\item For $\mathrm{O}(A_\mathbb{C}, \sigma_{\mathbb{C}})_0^{ss}$, the maximal semisimple subgroup of $\mathrm{O}(A_\mathbb{C}, \sigma_{\mathbb{C}})_0$, its fundamental group is described by
\[\pi_1\left(\mathrm{O}(A_\mathbb{C}, \sigma_{\mathbb{C}})_0^{ss}\right)\cong  (\mathbb{Z}/2\mathbb{Z})^{p_{\geq3}}.\]
\end{enumerate}
\end{thm}

\begin{proof}
(1) A maximal compact subgroup of $\mathrm{O}(A_\mathbb{C}, \sigma_{\mathbb{C}})$ is the group $\mathrm{O}(A, \sigma)$. At the same time, the group $\mathrm{O}(A, \sigma)$ is a maximal compact subgroup of the group $A^{\times}$, which is the group of invertible elements of the algebra $A$, which is semisimple.

\noindent A maximal compact subgroup of $A^{\times}$ is then more explicitly described by the  group 
\[\mathrm{O}(A, \sigma) \cong \prod_{i=1}^p \mathrm{O}(l_i) \times \prod_{j=1}^q \mathrm{U}(m_j) \times\prod_{k=1}^r \mathrm{Sp}(n_k).\]
Then, as topological spaces, $A^{\times}$ and $\mathrm{O}(A_\mathbb{C}, \sigma_{\mathbb{C}})$ are homotopy equivalent to the space $\mathrm{O}(A, \sigma)$. Therefore, the number of connected components of the group $\mathrm{O}(A_\mathbb{C}, \sigma_{\mathbb{C}})$ is equal to $2^p$.

(2) The preceding argument applies step by step for the case of the component of the identity $\mathrm{O}(A_\mathbb{C}, \sigma_{\mathbb{C}})_0$, which is homotopy equivalent to its maximal compact subgroup 
\[\mathrm{O}(A, \sigma)_0 \cong \prod_{i=1}^p \mathrm{SO}(l_i) \times \prod_{j=1}^q \mathrm{U}(m_j) \times\prod_{k=1}^r \mathrm{Sp}(n_k).\]  
We then analogously get 
\begin{align*}
\pi_1\left( \mathrm{O}(A_\mathbb{C}, \sigma_{\mathbb{C}})_0\right) & \cong \pi_1\left(\mathrm{O}(A, \sigma)_0\right)\\
& \cong \pi_1\left( \prod_{i=1}^p \mathrm{SO}(l_i) \times \prod_{j=1}^q \mathrm{U}(m_j) \times\prod_{k=1}^r \mathrm{Sp}(n_k)\right)\\
& \cong (\mathbb{Z}/2\mathbb{Z})^{p_{\geq3}}\times \mathbb{Z}^{q+p_2},
\end{align*}
where the last identification is due to the following fact
\begin{align*}
    \pi_1(\mathrm{SO}(l_i))\cong\left\{
   \begin{aligned}
    &0,\hspace{39pt} l_i=1,\\
    &\mathbb{Z},\hspace{38pt} l_i=2,\\
    &\mathbb{Z}/2\mathbb{Z},\hspace{20pt} l_i\geq3.
    \end{aligned}
    \right.
\end{align*}

(3) For the third part of the theorem, we similarly get that the maximal semisimple subgroup $\mathrm{O}(A_\mathbb{C}, \sigma_{\mathbb{C}})_0^{ss}$ is homotopy equivalent to its maximal compact subgroup \[\mathrm{O}(A, \sigma)_0^{ss} \cong \prod_{i=1}^p \mathrm{SO}(l_i) \times \prod_{j=1}^q \mathrm{SU}(m_j) \times\prod_{k=1}^r \mathrm{Sp}(n_k),\]  
and so we analogously get 
\begin{align*}
\pi_1\left( \mathrm{O}(A_\mathbb{C}, \sigma_{\mathbb{C}})_0^{ss}\right) & \cong \pi_1\left(\mathrm{O}(A, \sigma)_0^{ss}\right)\\
& \cong \pi_1\left( \prod_{i=1}^p \mathrm{SO}(l_i) \times \prod_{j=1}^q \mathrm{SU}(m_j) \times\prod_{k=1}^r \mathrm{Sp}(n_k)\right)\\
& \cong (\mathbb{Z}/2\mathbb{Z})^{p_{\geq3}}. \qedhere
\end{align*}
\end{proof}

\begin{prop}\label{prop:incarn}
    Let $(A,\sigma)$ be a real involutive algebra. Then, the following assertions hold:
    \begin{enumerate}    
        \item The group $\mathrm{O}_{(1,1)}(A_\mathbb C,\bar\sigma_\mathbb C)$ is isomorphic to 
        $\mathrm{Sp}_{2}(A_\mathbb C,\bar\sigma_\mathbb C)$. Moreover,
        \begin{align*}
        \mathrm{Sp}_2(A,\sigma)&=\mathrm{Sp}_{2}(A_\mathbb C,\sigma_\mathbb C)\cap \mathrm{Sp}_{2}(A_\mathbb C,\bar\sigma_\mathbb C), \text{ and }\\
         T^{-1}\mathrm{Sp}_2(A,\sigma)T&=\mathrm{Sp}_{2}(A_\mathbb C,\sigma_\mathbb C)\cap R^{-1}\mathrm{O}_{(1,1)}(A_\mathbb C,\bar\sigma_\mathbb C)R,
        \end{align*}
        where $T=\frac{1}{\sqrt{2}}\begin{pmatrix}
        1 & i \\
        i & 1 \end{pmatrix}$ and $R=\frac{1}{\sqrt{2}}\begin{pmatrix}
        1 & 1 \\
        -1 & 1 \end{pmatrix}$.
        \item The group $\mathrm{O}_{(1,1)}(A_\mathbb H,\sigma_1)$ is isomorphic to $\mathrm{Sp}_{2}(A_\mathbb H,\sigma_0)$.
        \item The group $\mathrm{O}_{(1,1)}(A_\mathbb H,\sigma_0)$ is isomorphic to $\mathrm{Sp}_{2}(A_\mathbb H,\sigma_1)$.
        \item $\mathrm{Sp}_{2}(A_{\mathbb C\{i\}},\sigma_\mathbb C)=\mathrm{Sp}_{2}(A_\mathbb H,\sigma_0)\cap \mathrm{Sp}_{2}(A_\mathbb H,\sigma_3)$, where $\sigma_3(x+yj)={\sigma}_{\mathbb{C}}(x)-\bar{\sigma}_{\mathbb{C}}(y)j$, for $x,y\in A_{\mathbb C\{i\}}$.    
    \end{enumerate}
\end{prop}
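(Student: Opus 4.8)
\emph{Approach.} The plan is to recast each assertion in terms of the defining matrix identity $\sigma(g)^{t}\Omega g=\Omega$ for the two standard form matrices $\Omega_{s}=\begin{pmatrix}0&1\\-1&0\end{pmatrix}$ and $\Omega_{o}=\begin{pmatrix}0&1\\1&0\end{pmatrix}$ (as in \eqref{eq:groups}), and then to use two elementary moves. \emph{Congruence:} for $P\in\mathrm{GL}_{2}(B)$ the map $g\mapsto PgP^{-1}$ carries $\{g:\tau(g)^{t}Mg=M\}$ isomorphically onto $\{g:\tau(g)^{t}M'g=M'\}$ with $M'=\tau(P)^{t}MP^{-1}$, and $M$, $zM$ give the same group for any \emph{central} unit $z$. \emph{Inner twist:} if $v\in B^{\times}$ with $v^{2}=-1$ and $\tau(v)=\pm v$, then $\tau^{v}:=\mathrm{Ad}(v)\circ\tau$ is again an anti-involution, and since applying $\tau^{v}$ entrywise to a matrix $g$ gives $(vI)\,\tau(g)\,(vI)^{-1}$, the identity $\tau^{v}(g)^{t}\Omega_{o}g=\Omega_{o}$ is \emph{equivalent} to $\tau(g)^{t}(v\Omega_{o})g=v\Omega_{o}$ with $v\Omega_{o}=\begin{pmatrix}0&v\\v&0\end{pmatrix}$; so $\mathrm{O}_{(1,1)}(B,\tau^{v})=\{g:\tau(g)^{t}(v\Omega_{o})g=v\Omega_{o}\}$, and an explicit congruence (by a diagonal matrix if $\tau(v)=v$, an off-diagonal one if $\tau(v)=-v$) turns $v\Omega_{o}$ into a central multiple of $\Omega_{s}$, identifying this group with $\mathrm{Sp}_{2}(B,\tau)$.

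\emph{The isomorphisms.} For the isomorphism in (1), take $B=A_{\mathbb{C}}$, $\tau=\bar\sigma_{\mathbb{C}}$; the relevant unit is the \emph{central} complex unit $i$, with $\bar\sigma_{\mathbb{C}}(i)=-i$, so no entrywise twist occurs and $\mathrm{O}_{(1,1)}(A_{\mathbb{C}},\bar\sigma_{\mathbb{C}})=\{g:\bar\sigma_{\mathbb{C}}(g)^{t}\Omega_{o}g=\Omega_{o}\}$ directly; one checks that $P=\begin{pmatrix}0&1\\ i&0\end{pmatrix}$ satisfies $\bar\sigma_{\mathbb{C}}(P)^{t}\Omega_{s}P^{-1}=\Omega_{o}$, so $g\mapsto PgP^{-1}$ realises $\mathrm{Sp}_{2}(A_{\mathbb{C}},\bar\sigma_{\mathbb{C}})\cong\mathrm{O}_{(1,1)}(A_{\mathbb{C}},\bar\sigma_{\mathbb{C}})$. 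For (2) and (3), one reads off from the definitions of $\sigma_{0},\sigma_{1}$ on $A_{\mathbb{H}}$ that $\sigma_{1}=\mathrm{Ad}(v)\circ\sigma_{0}$ for a quaternion unit $v$ with $v^{2}=-1$, hence also $\sigma_{0}=\mathrm{Ad}(v)\circ\sigma_{1}$ (as $\mathrm{Ad}(v)^{2}=\mathrm{Ad}(-1)=\mathrm{id}$); the inner-twist move with $(B,\tau)=(A_{\mathbb{H}},\sigma_{0})$ gives $\mathrm{O}_{(1,1)}(A_{\mathbb{H}},\sigma_{1})\cong\mathrm{Sp}_{2}(A_{\mathbb{H}},\sigma_{0})$, and with $(B,\tau)=(A_{\mathbb{H}},\sigma_{1})$ it gives $\mathrm{O}_{(1,1)}(A_{\mathbb{H}},\sigma_{0})\cong\mathrm{Sp}_{2}(A_{\mathbb{H}},\sigma_{1})$.

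\emph{The intersection identities.} The equality $\mathrm{Sp}_{2}(A,\sigma)=\mathrm{Sp}_{2}(A_{\mathbb{C}},\sigma_{\mathbb{C}})\cap\mathrm{Sp}_{2}(A_{\mathbb{C}},\bar\sigma_{\mathbb{C}})$ in (1) and the identity in (4) have one common proof: if $g\in\mathrm{GL}_{2}$ preserves the symplectic form for two anti-involutions that differ only by the sign on one imaginary unit ($i$ for $\sigma_{\mathbb{C}}$ vs.\ $\bar\sigma_{\mathbb{C}}$, resp.\ $j$ for $\sigma_{0}$ vs.\ $\sigma_{3}$), then subtracting the identities $\sigma_{\bullet}(g)^{t}\Omega_{s}g=\Omega_{s}$ and cancelling the invertible factor $\Omega_{s}g$ forces the two values $\sigma_{\bullet}(g)$ to coincide, hence (entrywise) forces the component along that imaginary unit of each entry of $g$ to vanish; so $g$ has entries in the smaller algebra, and in fact $g\in\mathrm{GL}_{2}$ of it (the conjugation $\theta_{i}$, resp.\ $\theta_{j}$, is an algebra automorphism, so it fixes $g^{-1}$ as well), on which the symplectic condition is literally the defining one --- the reverse inclusions being trivial. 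The remaining identity $T^{-1}\mathrm{Sp}_{2}(A,\sigma)T=\mathrm{Sp}_{2}(A_{\mathbb{C}},\sigma_{\mathbb{C}})\cap R^{-1}\mathrm{O}_{(1,1)}(A_{\mathbb{C}},\bar\sigma_{\mathbb{C}})R$ follows by transporting the previous one through $T$: the basis change of Section~\ref{sec:incarnations} fixes the complexified standard symplectic form (hence $\mathrm{Sp}_{2}(A_{\mathbb{C}},\sigma_{\mathbb{C}})$), so one only needs $T^{-1}\mathrm{Sp}_{2}(A_{\mathbb{C}},\bar\sigma_{\mathbb{C}})T=R^{-1}\mathrm{O}_{(1,1)}(A_{\mathbb{C}},\bar\sigma_{\mathbb{C}})R$; writing both sides as automorphism groups of $\bar\sigma_{\mathbb{C}}$-sesquilinear forms, their Gram matrices in the relevant bases of Section~\ref{sec:incarnations} are $i\,\mathrm{diag}(1,-1)$ and $\mathrm{diag}(-1,1)$, which differ by the central unit $-i$.

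\emph{Main obstacle.} The delicate part is the bookkeeping of which anti-involution each matrix identity is phrased with --- concretely, identifying exactly the inner automorphism $\mathrm{Ad}(v)$ relating $\sigma_{0}$ and $\sigma_{1}$ and computing $\sigma_{0}(v)$ (this decides whether the congruence in (2)--(3) is by a diagonal or an off-diagonal matrix), together with the parallel fact in (1) that the relevant unit is the central $i$ with $\bar\sigma_{\mathbb{C}}(i)=-i$, which is exactly why that conjugating matrix cannot be diagonal. A recurring minor subtlety is that one repeatedly replaces a form matrix $M$ by $zM$ for units $z\in\{-1,\pm i,\pm ij\}$, which is legitimate only because these $z$ are central in the algebra in question.
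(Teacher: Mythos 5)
Your proposal is correct and follows essentially the same route as the paper: both arguments realize each group as the automorphism group of a sesquilinear form with an explicit Gram matrix, exploit the inner relation between the anti-involutions (your $\sigma_1=\mathrm{Ad}(v)\circ\sigma_0$ with $v=ij$ is exactly the paper's identity $(ij)^{-1}\sigma_0(g)\,ij=\sigma_1(g)$, and the paper reaches (2) through the auxiliary form $h=j\hat\omega_{\mathbb H}$ and the conjugation $\theta_j$ interchanging $\sigma_3$ and $\sigma_0$) together with a change of basis bringing the Gram matrix to the standard symplectic or indefinite shape, and both prove the intersection statements in (1) and (4) by cancelling the invertible factor $\Omega g$ so that the two anti-involutions agree on $g$, forcing its entries into $A$, resp.\ $A_{\mathbb C\{i\}}$; your explicit verification that $T\in\mathrm{Sp}_2(A_\mathbb C,\sigma_\mathbb C)$ and that the transported Gram matrices $i\,\mathrm{diag}(1,-1)$ and $\mathrm{diag}(-1,1)$ differ by the central unit $-i$ correctly supplies the step for $T^{-1}\mathrm{Sp}_2(A,\sigma)T=\mathrm{Sp}_{2}(A_\mathbb C,\sigma_\mathbb C)\cap R^{-1}\mathrm{O}_{(1,1)}(A_\mathbb C,\bar\sigma_\mathbb C)R$ that the paper leaves implicit. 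Three cosmetic slips do not affect the concrete matrices you exhibit (they all check out) but should be corrected: under $g\mapsto PgP^{-1}$ the transported Gram matrix is $M'=\tau(P^{-1})^{t}MP^{-1}$, i.e.\ it is characterized by $\tau(P)^{t}M'P=M$, not $M'=\tau(P)^{t}MP^{-1}$ (your $P$ in (1) still works, since the correct transport gives $i\Omega_o$, a central multiple of $\Omega_o$); the diagonal/off-diagonal dichotomy is backwards, since for $\tau(v)=-v$ the diagonal congruence by $\mathrm{diag}(1,-v)$ already turns $\left(\begin{smallmatrix}0&v\\ v&0\end{smallmatrix}\right)$ into $\Omega_s$; and $\pm ij$ is not central in $A_{\mathbb H}$, although no step of your chain actually needs that, because the inner-twist rewriting is an exact equivalence and the final congruence constant there is real.
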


\begin{proof}
(1). In the basis $e_1:=(1,0)^t$, $e_2:=(0,i)^t$ of $A_\mathbb C^2$, the standard indefinite form $\omega$ is represented by the matrix $\Omega=\begin{pmatrix}
    0 & i \\
    -i & 0
\end{pmatrix}$. This change of basis conjugates the group $\mathrm{O}_{(1,1)}(A_\mathbb C,\bar\sigma_\mathbb C)$ by the matrix $S=\frac{1}{\sqrt{2}}\begin{pmatrix}
    1 & 0 \\
    0 & i
\end{pmatrix}$.
The form $\Omega$ is proportional to the standard symplectic form. Therefore, the conjugated group $S^{-1}\mathrm{O}_{(1,1)}(A_\mathbb C,\bar\sigma_\mathbb C)S$ agrees with $\mathrm{Sp}_{2}(A_\mathbb C,\bar\sigma_\mathbb C)$.  Moreover, if $g\in \mathrm{Sp}_{2}(A_\mathbb C,\bar\sigma_\mathbb C)\cap \mathrm{Sp}_{2}(A_\mathbb C,\sigma_\mathbb C)$, then $\sigma_\mathbb C(g)=\bar\sigma_\mathbb C(g)$, i.e. $g\in\mathrm{Mat}_2(A)$.
\vspace{2mm}

\noindent (2)--(4). We consider the following indefinite form on $A_{\mathbb{H}}^{2}$:
\[h(x,y):={{\sigma }_{1}}{{(x)}^{t}}\begin{pmatrix}
   0 & j  \\
   -j & 0  \\
\end{pmatrix}y.\]
We now check that the form $h$ is ${{\sigma }_{1}}$--sesquilinear. Indeed, one sees: 
\begin{align*}
h(y,x)  & =\sigma_1(y)^t\begin{pmatrix}
   0 & j  \\
   -j & 0  \\
\end{pmatrix}x\\
 & =\sigma_1\left( \sigma_1(x)^t\begin{pmatrix}
   0 & j  \\
   -j & 0  \\
\end{pmatrix}y \right)\\
& =\sigma_1\left( h(x,y) \right).
\end{align*}
Then, with respect to the basis $e_1:=\left(1,0\right)^t$, $e_2:=\left(0,-j\right)^t$, we have $h\left( {{e}_{1}},{{e}_{1}} \right)=h\left( {{e}_{2}},{{e}_{2}} \right)=0$, and $h\left( {{e}_{1}},{{e}_{2}} \right)=1$, that is, $h$ is indefinite. Thus, 
$$S^{-1}\mathrm{Aut}(h)S=\mathrm{O}_{(1,1)}(A_\mathbb H,\sigma_1),$$
for $S=\begin{pmatrix}
    1 & 0 \\
    0 & -j 
\end{pmatrix}$.

 \noindent Further, $h=j\hat\omega_\mathbb H$, where $\hat\omega_\mathbb H(x,y)=\sigma_3(x)\begin{pmatrix}
   0 & 1  \\
   -1 & 0  \\
\end{pmatrix} y$ where $\sigma_3(x+yj)={\sigma}_{\mathbb{C}}(x)-\bar{\sigma}_{\mathbb{C}}(y)j$. This means that $\mathrm{Sp}_2(A_\mathbb H,\sigma_3)=\mathrm{Aut}(h)$. Finally, the involution $\theta_i\colon \mathbb H\to\mathbb H$, $q\mapsto i^{-1}qi$, for $q\in\mathbb H$ fixing $\mathbb C\{i\}\subset\mathbb H$, induces an~involution $\theta_j\colon\mathrm{Mat}_2(A_\mathbb H)\to \mathrm{Mat}_2(A_\mathbb H)$ fixing $\mathrm{Mat}_2(A_{\mathbb C\{i\}})$ such that $\theta_j\circ\sigma_3\circ\theta_j=\sigma_0$. Thus, 
$$
\theta_j\mathrm{Sp}_2(A_\mathbb H,\sigma_3)\theta_j=\mathrm{Sp}_2(A_\mathbb H,\sigma_0).$$

\noindent  Similarly, because $(ij)^{-1}\sigma_0(g)ij=\sigma_1(g)$, we obtain $\mathrm{Sp}_2(A_\mathbb H,\sigma_1)=\mathrm{Aut}(\hat h)$, where 
$$\hat h(x,y)=\sigma_0(x)^t\begin{pmatrix}
    0 & ij \\
    -ij & 0
\end{pmatrix}y.$$
In the basis $e_1=(1,0)^t,\;e_2=(0,ij)^t$, this form agrees with the standard indefinite form.

 \noindent Finally, if $g\in \mathrm{Sp}_{2}(A_\mathbb H,\sigma_0)\cap \mathrm{Sp}_{2}(A_\mathbb H,\sigma_3)$, then $\sigma_3(g)=\sigma_0(g)$, i.e. $g\in\mathrm{Mat}_2(A_{\mathbb C\{i\}})$.

\end{proof}

\subsection{Examples}\label{sec:real_examples}

The following involutive algebras provide examples of Hermitian algebras:
\begin{enumerate}
    \item Let $A=\mathrm{Mat}_n(\mathbb{R})$ be the space of real $n\times n$-matrices and $\sigma(r):=r^t$ for $r\in A$. Then $A^\sigma=\mathrm{Sym}_n(\mathbb{R})$ is the space of symmetric matrices. The algebra $(A,\sigma)$ is Hermitian with $A^{\sigma}_+=\mathrm{Sym}^+_n(\mathbb{R})$, the space of real symmetric positive definite matrices. Then,
    \begin{align*}\mathrm{Sp}_2(A,\sigma) & \cong \mathrm{Sp}_{2n}(\mathbb R),\\
    \mathrm{O}_{(1,1)}(A,\sigma) & \cong\mathrm{O}(n,n).
    \end{align*}

    \item Let $A=\mathrm{Mat}_n(\mathbb{C})$ be the space of complex $n\times n$-matrices. We consider two anti-involutions $\sigma(r)=r^t$ and $\bar\sigma(r):=\bar r^t$, for $r\in A$. Then $A^{\bar\sigma}=\mathrm{Herm}_n(\mathbb{C})$ is the space of complex Hermitian matrices. The algebra $(A,\bar\sigma)$ is Hermitian with $A^{\bar\sigma}_+=\mathrm{Herm}^+_n(\mathbb{C})$, the space of complex Hermitian positive definite matrices. In this case, 
    \begin{equation}\label{eq:U(n,n)}
    \mathrm{Sp}_2(A,\bar\sigma)\cong\mathrm{O}_{(1,1)}(A,\bar\sigma)\cong\mathrm{U}(n,n).\end{equation}
    Notice that this group is not semisimple, but it has a compact center, namely $\mathrm U(1)$, so it does not affect the symmetric space. Further, the algebra $(A,\sigma)$ is not Hermitian, and 
    \begin{align*}
    \mathrm{Sp}_2(A,\sigma) & \cong\mathrm{Sp}_{2n}(\mathbb C),\\
    \mathrm{O}_{(1,1)}(A,\sigma) & \cong\mathrm{O}_{2n}(\mathbb C).
    \end{align*}
    These groups are the complexifications of the groups from Example (1).
    \vspace{2mm}
    
    \item Let $A=\mathrm{Mat}_n(\mathbb{H})$ be the space of quaternionic $n\times n$-matrices. We consider two anti-involutions 
    \begin{align*}
        \sigma_0(r_0+r_1j)&=r_0^t+\bar r_1^tj,\ \text{and}\\
        \sigma_1(r_0+r_1j)&=\bar r_0^t- r_1^tj,
    \end{align*} 
    where $r_0,r_1\in\mathrm{Mat}_n(\mathbb C)$. Then $\sigma_0$ is not Hermitian, $\sigma_1$ is Hermitian, and we have $A^{\sigma_1}=\mathrm{Herm}_n(\mathbb{H})$ is the space of quaternionic Hermitian matrices, $A^{\sigma_1}_+=\mathrm{Herm}^+_n(\mathbb{H})$ is the space of quaternionic Hermitian positive definite matrices, and we obtain the following isomorphisms:
    $$\mathrm{Sp}_2(A,\sigma_1)\cong\mathrm{O}_{(1,1)}(A,\sigma_0)\cong\mathrm{SO}^*(4n),$$    \begin{equation}\label{eq:Sp(n,n)}
    \mathrm{O}_{(1,1)}(A,\sigma_1)\cong\mathrm{Sp}_2(A,\sigma_0)\cong\mathrm{Sp}(n,n).
    \end{equation}
    
    \item Let $A:=\mathrm{Mat}_n(\mathbb C\{I\})$ with the Hermitian anti-involution $\sigma^I$ given by transposition and complex conjugation. Let $A_{\mathbb C\{i\}}=\mathrm{Mat}_n(\mathbb C\{I\})\otimes_\mathbb R\mathbb C\{i\}$. We extend the anti-involution $\sigma^I$ in $\mathbb C\{i\}$-linear way and (slightly abusing the notation) denote it again by $\sigma^I:=\sigma^I\otimes\mathrm{Id}$. Then,
    $$\mathrm{Sp}_2(A_{\mathbb C\{i\}},\sigma^I)\cong\mathrm{O}_{(1,1)}(A_{\mathbb C\{i\}},\sigma^I)\cong\mathrm{GL}_{2n}(\mathbb C).$$
    This group is the complexification of the group $\mathrm{U}(n,n)$ (cf. Example (2)).
    \vspace{2mm}
        
    \item Let $A:=\mathrm{Mat}_n(\mathbb H\{I,J,K\})$ with the Hermitian anti-involution $\sigma_1$ given by transposition and quaternionic conjugation. Let $A_{\mathbb{C}\{i\}}=\mathrm{Mat}_n(\mathbb H\{I,J,K\})\otimes_\mathbb R\mathbb C\{i\}.$ We extend the anti-involutions $\sigma_0$ and $\sigma_1$ in $\mathbb C\{i\}$-linear way and (slightly abusing the notation) denote them again by $\sigma_0:=\sigma_0\otimes\mathrm{Id}$ and $\sigma_1:=\sigma_1\otimes\mathrm{Id}$. Then
    \begin{align*}
    \mathrm{Sp}_2(A_{\mathbb C\{i\}},\sigma_1)&\cong\mathrm{O}_{(1,1)}(A_{\mathbb C\{i\}},\sigma_0)\cong\mathrm{O}_{4n}(\mathbb C), \text{ and }\\[1pt]
    \mathrm{Sp}_2(A_{\mathbb C\{i\}},\sigma_0)&\cong\mathrm{O}_{(1,1)}(A_{\mathbb C\{i\}},\sigma_1)\cong\mathrm{Sp}_{4n}(\mathbb C).
    \end{align*}
    These groups are the complexifications of the groups from Example (3).

\end{enumerate}

\section{Models for the symmetric space of \texorpdfstring{$\mathrm{O}_{(1,1)}(A,\sigma)$}{} and their tangent spaces}\label{sec:ort_models}

In this section, we assume $(A,\sigma)$ to be a real Hermitian algebra. Let $G:=\mathrm{O}_{(1,1)}(A,\sigma)$ and $K:=\mathrm{KO}_{(1,1)}(A,\sigma)$ the maximal compact subgroup of $G$. We construct several models of the symmetric space $\mathcal X_{G}:=G/K$. 

\subsection{Space of indefinite involutive operators}\label{sec:indefinite_O}

We call an $A$-linear operator $J\colon A^2\to A^2$ \emph{involutive} if $J^2=\mathrm{Id}$. An involutive operator $J$ is called \emph{indefinite} if there are two $A$-lines $l_J^+$ and $l_J^-$ such that $J|_{l_J^+}=\mathrm{Id}$ and $J|_{l_J^-}=-\mathrm{Id}$. These two lines are necessarily transverse. Each $A$-linear map $J$ on $A^2$ defines a $\sigma$-sesquilinear form $h_J\colon A^2\times A^2\to A$ by
$$h_J(\bullet,\bullet):=h(J(\bullet),\bullet),$$ 
where $h$ is the standard indefinite orthogonal form on $A^2$ introduced in Section~\ref{sec:sesq_forms}. An indefinite involutive operator is called \emph{compatible} with $h$ if $h_J$ is non-degenerate.

We consider the following space: 
\begin{align*}
\mathfrak{C}_G := &\; \{J\text{ is indefinite involutive on $A^2$} \mid h_J\text{ is a $\sigma$-inner product}\}\\
 = &\; \{J\text{ is indefinite involutive on $A^2$} \mid h(x,x)\in A^\sigma_+\text{ for all regular $x\in l_J^+$}\}.
\end{align*}
\begin{rem}
Notice that for all $J\in\mathfrak{C}_G$, it is $h(x,x)\in A^\sigma_-$, for all $x\in l_J^-=( l_J^+)^{\perp_h}$. Moreover, for every $J\in\mathfrak{C}_G$, it is also $J\in G$.
\end{rem}

The group $G$ acts on $\mathfrak{C}_G$ transitively by conjugation. Indeed, let $J\in \mathfrak{C}_G$, $x\in l^+_J$, $y\in l^-_J$. We can choose $x,y$ such that $-h(x,x)=h(y,y)=1$. Then $(x,y)$ is an $h$-orthonormal basis of $A^2$. Since $G$ acts transitively on the space of all $h$-orthonormal bases, there exists $g\in G$ such that 
\[g.(x,y)=\left(\frac{1}{\sqrt{2}}(1,-1)^t,\frac{1}{\sqrt{2}}(1,1)^t\right).\] 
It then follows that $gJg^{-1}=J_0$ where  $J_0=\begin{pmatrix}
0 & 1\\
1 & 0
\end{pmatrix}\in\mathfrak{C}_G$, thus the action is transitive. Furthermore, the stabilizer $J_0$ is $K$, and so $\mathfrak{C}_G$ is a model of the symmetric space $G/K$. 

The topological closure of the space $\mathfrak{C}_G$ is 
\begin{align*}
\overline{\mathfrak{C}}_G=\left\{J\colon A^2\to A^2\midwd
\begin{aligned}
&\ J\text{ is indefinite involutive, and }\\
&\ h(x,x)\in A^\sigma_{\geq 0} \text{ for all regular $x\in l_J^+$}
\end{aligned}
\right\}.
\end{align*}
Note that this space is not compact. For example, the sequence $J_n=\begin{pmatrix}
    0 & n \\
    n^{-1}  & 0
\end{pmatrix}\in\mathfrak{C}_G$ 
does not have a convergent subsequence as $n\to\infty$. In the next section we will construct a compactification of $G/K$ using embeddings of it as an open precompact domain.

\begin{prop}\label{prop:tangent_sio}
    The tangent space $T_J\mathfrak{C}_G$ at $J\in \mathfrak{C}_G$ is described by
    $$T_J\mathfrak{C}_G=\{L\colon A^2\to A^2\mid \text{$L$ is $A$-linear, and }LJ+JL=0\}.$$
    The group $G$ acts on $T\mathfrak{C}_G$  by conjugation, i.e. 
    $$
    g.(J,L)=(gJg^{-1},gLg^{-1}).
    $$ 
    In particular, 
    $$
    T_{gJg^{-1}}\mathfrak{C}_G = gT_J\mathfrak{C}_Gg^{-1}.
    $$
\end{prop}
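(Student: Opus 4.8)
The plan is to compute the tangent space by differentiating the defining conditions of $\mathfrak{C}_G$ along a smooth path. A point $J \in \mathfrak{C}_G$ satisfies two types of constraints: the algebraic equation $J^2 = \mathrm{Id}$, and the open condition that $h_J$ be a $\sigma$-inner product. Since the second condition is open (the space of $\sigma$-inner products is an open convex cone inside the $\sigma$-symmetric forms, as noted after Definition~\ref{osp}, and indefiniteness of an involution is likewise an open condition), it imposes no constraint on the tangent space; only the equation $J^2 = \mathrm{Id}$ does. So first I would take a smooth path $J(t) \in \mathfrak{C}_G$ with $J(0) = J$, differentiate $J(t)^2 = \mathrm{Id}$ at $t=0$ to get $\dot J(0) J + J \dot J(0) = 0$, and conclude $T_J\mathfrak{C}_G \subseteq \{L : A^2 \to A^2 \text{ $A$-linear} \mid LJ + JL = 0\}$.

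For the reverse inclusion I would exhibit, for any $A$-linear $L$ with $LJ + JL = 0$, an honest curve in $\mathfrak{C}_G$ through $J$ with velocity $L$. The natural candidate is $J(t) := e^{tL} J e^{-tL}$: since conjugation preserves the equation $M^2 = \mathrm{Id}$, indefiniteness, and (because $e^{tL}$ need not lie in $G$) one must check $h_{J(t)}$ stays a $\sigma$-inner product — but for small $t$ this holds by openness, which suffices for computing the tangent space at $J$. The velocity of $e^{tL}Je^{-tL}$ at $t=0$ is $LJ - JL = LJ + JL - 2JL$; using $LJ = -JL$ this equals $2LJ = -2JL$, which is not quite $L$. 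The cleaner curve is $J(t) := J \exp(tJL)$ or equivalently one uses that $L$ anticommuting with $J$ means $JL$ commutes with nothing special — so instead I would use $J(t) := (\mathrm{Id} + tL)J(\mathrm{Id}+tL)^{-1}$ type deformations, or most transparently: write the tangent space abstractly. Actually the slickest route is to note $G$ acts transitively by conjugation with stabilizer $K$ at $J_0$, so $T_{J_0}\mathfrak{C}_G \cong \mathfrak{g}/\mathfrak{k} \cong \mathfrak{mo}_{(1,1)}(A,\sigma)$ via $X \mapsto XJ_0 - J_0X = [X, J_0]$, and one checks directly from \eqref{eq1.6} and the explicit $J_0 = \begin{pmatrix} 0 & 1 \\ 1 & 0 \end{pmatrix}$ that $[\mathfrak{mo}_{(1,1)}, J_0]$ is exactly the set of $A$-linear maps anticommuting with $J_0$; then transport by $G$-equivariance.

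Next I would establish the $G$-action claim. The action of $g \in G$ on $\mathfrak{C}_G$ is by conjugation $J \mapsto gJg^{-1}$; its differential sends a tangent vector $(J, L)$ to $(gJg^{-1}, gLg^{-1})$, which is immediate by differentiating $g J(t) g^{-1}$. That $gLg^{-1}$ indeed anticommutes with $gJg^{-1}$ when $L$ anticommutes with $J$ is a one-line conjugation identity. The statement $T_{gJg^{-1}}\mathfrak{C}_G = g\, T_J\mathfrak{C}_G\, g^{-1}$ then follows, and consistency of this with the intrinsic description $\{L \mid L(gJg^{-1}) + (gJg^{-1})L = 0\}$ is another short check. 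I would also remark that every such $L$ is genuinely realized as a velocity — the exponential curve argument above, combined with transitivity, takes care of surjectivity onto the full anticommutant.

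The main obstacle is the reverse inclusion: producing, for each $L$ in the algebraic anticommutant of $J$, a curve \emph{that stays inside $\mathfrak{C}_G$} — one must ensure the deformed operator keeps $h_J$ positive (a $\sigma$-inner product) and keeps the indefinite-involutive structure. The equation $M^2 = \mathrm{Id}$ is preserved by a correctly chosen curve (conjugation by $e^{tX}$ for the right $X \in \mathfrak{g}$, not an arbitrary $A$-linear map), and the positivity is then automatic for small $t$ by openness; the subtlety is just matching the velocity exactly, which is why routing through $\mathfrak{g}/\mathfrak{k} \cong \mathfrak{mo}_{(1,1)}(A,\sigma)$ and the explicit computation with $J_0$ is the safest path. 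Everything else — the forward inclusion and the equivariance of the $G$-action — is routine differentiation.
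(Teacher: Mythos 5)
Your first paragraph reproduces what is in fact the paper's entire argument: differentiate $J(t)^2=\mathrm{Id}$ to get $LJ+JL=0$, and observe that the equivariance statements follow by differentiating $gJ(t)g^{-1}$. The paper stops there. The trouble begins exactly where you go further, and your own proposed check is the place where the argument breaks. The identification $T_{J_0}\mathfrak{C}_G=[\mathfrak{g},J_0]=[\mathfrak{mo}_{(1,1)}(A,\sigma),J_0]$ is correct, but the claim that this bracket equals the full anticommutant of $J_0$ is false. With $J_0=\begin{pmatrix}0&1\\1&0\end{pmatrix}$ and $X=\begin{pmatrix}a&b\\-b&-a\end{pmatrix}\in\mathfrak{mo}_{(1,1)}(A,\sigma)$ (so $\sigma(a)=a$, $\sigma(b)=-b$ by \eqref{eq1.6}), one gets $[X,J_0]=\begin{pmatrix}2b&2a\\-2a&-2b\end{pmatrix}$, i.e.\ only those anticommuting operators $L=\begin{pmatrix}p&q\\-q&-p\end{pmatrix}$ with $p\in A^{-\sigma}$ and $q\in A^{\sigma}$ — a space of real dimension $\dim_{\mathbb R}A$. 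The full anticommutant allows arbitrary $p,q\in A$ and has dimension $2\dim_{\mathbb R}A$, whereas $\dim\mathfrak{C}_G=\dim G-\dim K=\dim_{\mathbb R}A$. So the reverse inclusion you set out to prove cannot hold: the tangent space is a proper subspace of the anticommutant.

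The missing constraint comes from the part of the definition of $\mathfrak{C}_G$ that you (and the displayed statement) treat as open: ``$h_J$ is a $\sigma$-inner product'' contains the \emph{closed} condition that $h_J$ be $\sigma$-symmetric (only the positivity is open, and only inside the $\sigma$-symmetric forms, as you yourself half-note). Differentiating it gives the extra condition that $h_L$ is $\sigma$-symmetric, equivalently that $L$ is $h$-self-adjoint, and with this condition the anticommutant cuts down exactly to $[\mathfrak{mo}_{(1,1)}(A,\sigma),J_0]$, matching the dimension count. This is consistent with the analogous statements elsewhere in the paper — Proposition~\ref{prop:tangent_scs} and Propositions~\ref{prop:tangent_sio_A}, \ref{prop:tangent_sio_OC} all carry such an extra symmetry/compatibility condition, and the example of Section~\ref{sec:exam_A_real_mat_ortog} explicitly requires $L^t\Omega$ symmetric — so the description in Proposition~\ref{prop:tangent_sio} should be read with the condition ``$h_L$ is $\sigma$-symmetric'' added, and the paper's one-line proof only establishes the inclusion of $T_J\mathfrak{C}_G$ into the anticommutant. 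In short: your forward half agrees with the paper; your reverse-inclusion plan, carried out honestly, does not confirm the stated equality but instead exposes the omitted $\sigma$-symmetry condition, and your justification via ``openness'' of the $\sigma$-inner-product condition is not valid.
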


\begin{proof}
    Let $J(t)\in\mathfrak{C}_G$ be a smooth curve defined on an interval $I\ni 0$ and $J(0)=J$. Since $J(t)^2=\mathrm{Id}$ for all $t\in I$, the derivation of this equality at $t=0$ is $\dot J(0) J(0)+ J(0)\dot J(0)=0$.

    \noindent The claim about the action on the tangent bundle is trivial. 
\end{proof}

Let $J\in\mathfrak{C}_G$. The action of $J$ on $A^2$ extends naturally to the action on $A_\mathbb C^2$. The complex linear extension of this operator is denoted by $J_{\mathbb C}$. The complexification of the tangent space of $\mathfrak C_G$ at $J$ is given by
$$T_J^\mathbb C\mathfrak{C}_G=\{L'\colon A_\mathbb C^2\to A_\mathbb C^2\mid \text{$L'$ is $A_\mathbb C$-linear, and }L'J_\mathbb C+J_\mathbb CL'=0\}.$$

\subsection{Projective space model}\label{sec:projective_O}

We consider the spaces
\begin{align*}
    \mathfrak{P}_G^\pm:=\left\{xA\midwd\; x\in A^{2}\text{ regular such that }h(x,x)\in A^\sigma_{\pm} \right\},
\end{align*}
where $h$ is the standard indefinite orthogonal form. By~\cite[Section~5.3]{ABRRW}, $G$ acts transitively  by linear transformations on $\mathfrak{P}_G^\pm$. The stabilizers of lines $(1,1)^tA\in\mathfrak{P}_G^+$ and $(1,-1)^tA\in\mathfrak{P}_G^-$ agree with the group $K$. In particular, these spaces are models of the symmetric space $G/K$.

There exists a natural homeomorphism between $\mathfrak{P}_G^+$ and $\mathfrak{P}_G^-$, which maps $l\in\mathfrak{P}_G^+$ to $l^{\perp_h}\in \mathfrak{P}_G^-$ and vice versa. These two lines are always transverse and, therefore, $l\oplus l^{\perp_h}= A^2$. 

Notice that $\mathfrak{P}_G^\pm$ are open domains in $\mathbb {P}(A^2)=\{xA\mid x\text{ regular}\}$. In particular, the tangent space at $l\in\mathfrak{P}_G^\pm$ can be seen as the space of $A$-linear operators 
$$T_l\mathfrak{P}_G^\pm=\mathrm{Hom}(l,A^2/l)\cong \mathrm{Hom}(l,l^{\perp_h}).$$

The topological closure of $\mathfrak{P}_G^\pm$ in the space $\mathbb {P}(A^2)$ of $A$-lines is
$$\overline{\mathfrak{P}}_G^\pm=\left\{xA\midwd\; x\in A^{2}\text{ regular such that }\pm h(x,x)\in A^\sigma_{\geq 0} \right\};$$
in each case, this space is compact because $\mathbb{P}(A^2)$ is compact. The boundaries $\partial\mathfrak{P}_G^\pm$ contain a common compact locus 
\[\mathrm{Is}( h):=\left\{xA\midwd\; x\in A^{2}\text{ regular such that } h(x,x)=0 \right\}\] 
of isotropic lines.

The maps 
\begin{equation}\label{eq:ort_CP}
    F_{\mathfrak{C}_G,\mathfrak{P}_G^\pm}\colon \mathfrak{C}_G\to\mathfrak{P}_G^\pm    
\end{equation}
associating to an operator $J\in\mathfrak{C}_G$ the eigenline $l_J^\pm\in\mathfrak{P}_G^\pm$ provide diffeomorphisms between $\mathfrak{C}_G$ and $\mathfrak{P}_G^\pm$ which are equivariant with respect to the actions of $G$ on $\mathfrak{C}_G$ and $\mathfrak{P}_G^\pm$.

\begin{prop}
    Let $J\in\mathfrak{C}_G$, $L\in T_J\mathfrak{C}_G$ and $x\in l_J^\pm$ be a generator of $l_J^\pm$. Then 
    $$
    d_JF_{\mathfrak{C}_G,\mathfrak{P}_G^\pm}(L)=[x,\pm\frac{1}{2}L(x)+l_J^\pm]=\pm\frac{1}{2}[L]\in T_{l_J^\pm}\mathfrak{P}_G^\pm,
    $$
    where $[L]\colon l_J^\pm\to A^2/l_J^\pm$ is the natural projection of the linear operator $L|_{l_J^\pm}\colon l_J^\pm\to A^2$.
\end{prop}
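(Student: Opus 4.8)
The plan is to compute the differential of the map $F:=F_{\mathfrak{C}_G,\mathfrak{P}_G^\pm}$ directly, by differentiating a smooth curve and tracking how the eigenline of the involutive operator moves. First I would fix $J\in\mathfrak{C}_G$ and a tangent vector $L\in T_J\mathfrak{C}_G$, so that $LJ+JL=0$ by Proposition~\ref{prop:tangent_sio}. Choose a smooth curve $J(t)\in\mathfrak{C}_G$ with $J(0)=J$ and $\dot J(0)=L$. The key observation is that the $\pm1$-eigenline $l_{J(t)}^\pm$ is the image of the spectral projector $P^\pm(t):=\tfrac12(\mathrm{Id}\pm J(t))$, which is $A$-linear, satisfies $P^\pm(t)^2=P^\pm(t)$ since $J(t)^2=\mathrm{Id}$, and has image exactly $l_{J(t)}^\pm$. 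Thus $F(J(t))=\operatorname{im}P^\pm(t)$.

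Next I would produce an explicit generator of the moving line: if $x\in l_J^\pm$ generates $l_J^\pm=\operatorname{im}P^\pm(0)$, then $x(t):=P^\pm(t)(x)$ is a curve in $A^2$ with $x(0)=P^\pm(0)(x)=x$, and $x(t)A=\operatorname{im}P^\pm(t)=l_{J(t)}^\pm$ for $t$ near $0$ (regularity is open, and $x(t)$ stays regular). Differentiating, $\dot x(0)=\dot P^\pm(0)(x)=\pm\tfrac12\dot J(0)(x)=\pm\tfrac12 L(x)$. By the description of the tangent space of $\mathbb P(A^2)$ in~\eqref{eq:projective.tangent}, the derivative of the curve of lines $t\mapsto x(t)A$ at $t=0$ is the class $[x,\dot x(0)+l_J^\pm]=[x,\pm\tfrac12 L(x)+l_J^\pm]\in T_{l_J^\pm}\mathbb P(A^2)$, and since $\mathfrak P_G^\pm$ is open in $\mathbb P(A^2)$ this is precisely $d_JF(L)$. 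Finally, identifying $T_{l_J^\pm}\mathbb P(A^2)$ with $\mathrm{Hom}(l_J^\pm,A^2/l_J^\pm)$ canonically as in Section~\ref{sec:sesq_forms}, the class $[x,\pm\tfrac12 L(x)+l_J^\pm]$ corresponds to the $A$-linear map sending $x\mapsto \pm\tfrac12 L(x)+l_J^\pm$, which is $\pm\tfrac12[L]$ where $[L]$ is the composition of $L|_{l_J^\pm}$ with the projection $A^2\to A^2/l_J^\pm$; note $L|_{l_J^\pm}$ indeed lands in $(l_J^\mp)$-direction after projecting, consistent with $LJ+JL=0$ forcing $L$ to swap eigenlines, but this is not needed for the formula. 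This gives the asserted identity.

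The step I expect to require the most care is verifying that $[x,\pm\tfrac12 L(x)+l_J^\pm]$ is well-defined independently of the choice of generator $x$ of $l_J^\pm$, and that the identification with $\pm\tfrac12[L]$ is legitimate — i.e. that $L(x)\bmod l_J^\pm$ only depends on $x\bmod$ (scaling by $A^\times$) in the right equivariant way. This is handled by the equivalence relation $(v,w)\sim(va,wa)$ underlying~\eqref{eq:projective.tangent}: replacing $x$ by $xa$ replaces $L(x)$ by $L(xa)=L(x)a$ since $L$ is $A$-linear, so the pair transforms correctly, and the resulting $A$-linear map $l_J^\pm\to A^2/l_J^\pm$ is the same. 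The only genuinely analytic point, that $t\mapsto x(t)A$ is a smooth curve in $\mathbb P(A^2)$ with the stated derivative, follows from smoothness of $t\mapsto P^\pm(t)$ (polynomial in $J(t)$) together with the local triviality of the tautological bundle over $\mathbb P(A^2)$ near $l_J^\pm$; alternatively one invokes that $F$ is already known to be a diffeomorphism equivariant for the $G$-actions, so it suffices to compute $d_JF$ at the single basepoint $J_0$ and propagate by equivariance, which reduces the computation to the explicit matrix $J_0=\begin{pmatrix}0&1\\1&0\end{pmatrix}$.
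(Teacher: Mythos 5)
Your proposal is correct and follows essentially the same route as the paper: differentiate a curve $J(t)$ in $\mathfrak{C}_G$ and track a generator of the moving eigenline, identifying the result with a class $[x,\dot x(0)+l_J^\pm]$. The only (minor) difference is that the paper takes an arbitrary smooth generator path $x(t)$, differentiates $J(t)x(t)=\pm x(t)$ to get $(\pm\mathrm{Id}-J)\dot x(0)=L(x)$, and then verifies that $\pm\tfrac12 L(x)$ solves this equation modulo $l_J^\pm$, whereas you construct the generator path explicitly as $x(t)=\tfrac12(\mathrm{Id}\pm J(t))(x)$, which both supplies the smooth path the paper leaves implicit and yields $\dot x(0)=\pm\tfrac12 L(x)$ directly.
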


\begin{proof}
    Let $J(t)$ be a smooth path in $\mathfrak{C}_G$ such that $J(0)=J$ and $\dot J(0)=L$. Let $x(t)$ be a smooth path in $A^2$ such that $x(t)$ generates $l^\pm_{J(t)}$ and $x(0)=x$. Then $J(t)x(t)=\pm x(t)$, for all $t$. Differentiating this equality at $t=0$, we obtain 
    \begin{align*}
    L(x)+J(\dot x(0)) & =\pm\dot x(0), \text{ and }\\
    (\pm\mathrm{Id}-J)\dot x(0) & =L(x).
    \end{align*}
    Notice that, if $\dot x(0)$ satisfies the equality above, then for any $a\in A$, $\dot x(0)+xa$ satisfies it too. Now it is enough to check that $\pm\frac{1}{2}L(x)$ satisfies the equation above: 
    \begin{align*}
        (\pm\mathrm{Id}-J)\left(\pm\frac{1}{2}L(x)\right)&=\frac{1}{2}(L(x)\mp JL(x))\\
        &=\frac{1}{2}(L(x)\pm LJ(x))\\
        &=L(x).
    \end{align*}
    This shows that 
    $$d_JF_{\mathfrak{C}_G,\mathfrak{P}_G^\pm}(L)=[x,\pm\frac{1}{2}L(x)+l_J^\pm]=\pm\frac{1}{2}[L].$$
\end{proof}

Let $l\in\mathfrak{P}_G^\pm$. Its complexification is denoted by $l_\mathbb C\subset A_\mathbb C^2$. The complexification of the tangent space of $\mathfrak P_G^\pm$ at $l$ is given by the space of all $A_\mathbb C$-linear morphisms:
\begin{equation}\label{eq:o(1,1)-comp.tang}
T_l^\mathbb C\mathfrak{P}_G^\pm=\mathrm{Hom}(l_\mathbb C,A_\mathbb C^2/l_\mathbb C)\cong \mathrm{Hom}(l_\mathbb C,l_\mathbb C^{\perp_{ h_\mathbb C}}).    
\end{equation}

The spaces $\mathfrak{P}_G^\pm$ are open domains in  the projective space $\mathbb{P}(A^2)$ of all $A$-lines. In the following we will construct affine charts for these spaces.

\subsection{Half-space models}\label{sec:halfspace_O}
We obtain a half-space model by considering the spaces $\mathfrak{P}_G^\pm$ in an appropriate affine chart. For this we fix two transverse isotropic lines $\ell,\ell'\in \mathrm{Is}( h)$ spanned by the vectors $e_1=(1,0)^t$ and $e_2=(0,1)^t$ respectively. Every $l\in \mathfrak{P}_G^\pm$ is transverse to $\ell$. Therefore, it can be seen as a map $L\colon\ell'\to\ell$, such that for $v\in\ell'$, $L(v):=w$ where $w\in\ell$ is the unique element of $\ell$ such that $v+w\in l$. In the basis $(e_1,e_2)$, $L$ can be identified with an element of $a_l\in A$ such that $L(e_2)= e_1a_l$. Due to the condition $ h(x,x)\in A^\sigma_\pm$, for all regular $x\in l$, we obtain 
\[h(e_1+e_2a,e_1+e_2a)=\sigma(a)+a\in A^\sigma_\pm.\] Thus we can write affine charts for $\mathfrak{P}_G^\pm$ as follows:
\begin{align*}
    \mathfrak{U}_G^\pm:=&\ \{a\in A\mid \sigma(a)+a\in A^\sigma_\pm\}\\
    =&\ \{a_0+a_1\mid a_0\in A^\sigma_\pm,\;a_1\in A^{-\sigma}\}\\
    \cong&\ A^\sigma_\pm\oplus A^{-\sigma}.
\end{align*}
This is an unbounded open subspace in $A$. In particular, the tangent space at every $a\in \mathfrak{U}_G^\pm$ is isomorphic to $A$. Moreover, every element of $\mathfrak{U}_G^\pm$ is invertible. Indeed, if $a\in A$ is not invertible, then there exists $0\neq b\in A$ such that $ab=0$. Then,  
$$
\sigma(b)(\sigma(a)+a)b=\sigma(ab)b+\sigma(b)ab=0,
$$
which contradicts the fact that $\sigma(a)+a\in A^\sigma_\pm$.

We denote the map
\begin{equation}\label{eq:ort_PU}
F_{\mathfrak{P}_G^\pm,\mathfrak{U}_G^\pm}\colon\mathfrak{P}_G^\pm\to\mathfrak{U}_G^\pm,
\end{equation} 
mapping $l\mapsto a_l$. Notice, if $x=e_1x_1+e_2x_2$ and $l=xA$, then $a_l=x_1x_2^{-1}$. This map does not extend to the boundary of $\mathfrak{P}_G^\pm$. However, it is possible to extend it to the subset of $\mathrm{Is}( h)$ which is transverse to $\ell$. Its image under $F_{\mathfrak{P}_G^\pm,\mathfrak{U}_G^\pm}$ is $A^{-\sigma}$.

The group $G$ acts on $\mathfrak{U}_G^\pm$ by M\"obius transformations as follows: For $x\in \mathfrak{U}_G^\pm$, and for $g:=\begin{pmatrix}
    a & b\\
    c & d
\end{pmatrix}\in G$, then \[g.x=(ax+b)(cx+d)^{-1}.\] 
This action is transitive and equivariant with respect to the map $F_{\mathfrak{P}_G^\pm,\mathfrak{U}_G^\pm}$ and the linear action of $G$ on $\mathfrak{P}_G^\pm$. The stabilizer of the point $\pm 1\in \mathfrak{U}_G^\pm$ agrees with $K$. As $\mathfrak{U}_G^\pm$ are open in $A$, the tangent space $T_x\mathfrak{U}_G^\pm=A$. In particular, $\mathfrak{U}_G^\pm$ are models of the symmetric space $G/K$.

Since $\mathfrak U_G^\pm$ is an open subspace of $A$, then for every $x\in \mathfrak U_G^\pm$, $T_x\mathfrak U_G^\pm = A$. The action of $G$ extends naturally to the action on $T\mathfrak U_G^\pm=\mathfrak U_G^\pm\times A$. For $g$ as above and $(x,v)\in \mathfrak U_G^\pm\times A^{\sigma_\mathbb C}_\mathbb C$, the action is given by 
\begin{equation}\label{eq:diff_moebius_U_ortog}
    g.(x,v)=\left((ax+b)(cx+d)^{-1}, av(cx+d)^{-1}-(ax+b)(cx+d)^{-1}cv(cx+d)^{-1}\right).  
\end{equation}

For every $x\in\mathfrak U_G^+$, the stabilizer $\mathrm{Stab}_G(x)$ is conjugated to the stabilizer of $x=1$ which is $K$. We are now going to understand the action of $K$ and its Lie algebra $\mathfrak k$ on $T_1\mathfrak U_G^+= A$. For $g=\begin{pmatrix}
    a & b \\
    b & a
\end{pmatrix}\in K$ and $v\in T_1\mathfrak U_G^+$, then
$$g.v
=(a+b)v(b+a)^{-1}
=(a+b)v(\sigma(a)+\sigma(b)),$$
because $(b+a)^{-1}=\sigma(a)+\sigma(b)$.

The map $F_{\mathfrak{P}_G^\pm,\mathfrak{U}_G^\pm}$ is a diffeomorphism which is equivariant with respect to the action of $G$. Its~differential can be computed immediately:

\begin{prop}
    Let $x=e_1x_1+e_2x_2$ be such that $l=xA\in\mathfrak{P}_G^\pm$, and $v=[x,e_1v_1+e_2v_2+l]\in T_l\mathfrak{P}_G^\pm$. Then 
    $$d_{l}F_{\mathfrak{P}_G^\pm,\mathfrak{U}_G^\pm}(v)=(v_1-F_{\mathfrak{P}_G^\pm,\mathfrak{U}_G^\pm}(l) v_2)x_2^{-1}.$$
\end{prop}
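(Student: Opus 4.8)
The plan is to compute the differential of $F_{\mathfrak{P}_G^\pm,\mathfrak{U}_G^\pm}$ directly from its explicit formula $l = xA \mapsto x_1 x_2^{-1}$, where $x = e_1 x_1 + e_2 x_2$. First I would choose a smooth path $l(t) = x(t)A$ in $\mathfrak{P}_G^\pm$ with $l(0) = l$ and velocity vector represented by $[x, e_1 v_1 + e_2 v_2 + l]$; concretely, one can take $x(t) = x + t(e_1 v_1 + e_2 v_2) = e_1(x_1 + t v_1) + e_2(x_2 + t v_2)$, so that $x(0) = x$ and $\dot x(0) = e_1 v_1 + e_2 v_2$. Note $x_2$ is invertible since $l \in \mathfrak{P}_G^\pm$ is transverse to $\ell = e_1 A$ (equivalently, by the discussion preceding the proposition, every element of $\mathfrak{U}_G^\pm$ is invertible, and $a_l = x_1 x_2^{-1}$), so $x_2(t)$ is invertible for small $t$ and the curve $F_{\mathfrak{P}_G^\pm,\mathfrak{U}_G^\pm}(l(t)) = (x_1 + t v_1)(x_2 + t v_2)^{-1}$ is well-defined and smooth near $t = 0$.

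Next I would differentiate this product at $t = 0$, using the standard formula $\frac{d}{dt}\big|_{t=0}(x_2 + t v_2)^{-1} = -x_2^{-1} v_2 x_2^{-1}$ (valid in any associative unital algebra). This gives
\[
\frac{d}{dt}\Big|_{t=0}(x_1 + t v_1)(x_2 + t v_2)^{-1} = v_1 x_2^{-1} - x_1 x_2^{-1} v_2 x_2^{-1} = (v_1 - x_1 x_2^{-1} v_2) x_2^{-1}.
\]
Since $x_1 x_2^{-1} = a_l = F_{\mathfrak{P}_G^\pm,\mathfrak{U}_G^\pm}(l)$, the right-hand side equals $(v_1 - F_{\mathfrak{P}_G^\pm,\mathfrak{U}_G^\pm}(l)\, v_2) x_2^{-1}$, which is the claimed formula.

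The only point requiring a little care — and the main (mild) obstacle — is checking that the answer is well-defined: the tangent vector $[x, e_1 v_1 + e_2 v_2 + l] \in T_l\mathfrak{P}_G^\pm$ does not determine $(v_1, v_2)$ uniquely, since one may replace $x$ by $xa$ and simultaneously the representative of the tangent vector by $(e_1 v_1 + e_2 v_2)a$ for $a \in A^\times$, and one may also add to $e_1 v_1 + e_2 v_2$ any element of $l$. Under $x \mapsto xa$ we have $(x_1, x_2, v_1, v_2) \mapsto (x_1 a, x_2 a, v_1 a, v_2 a)$, so $(v_1 - x_1 x_2^{-1} v_2) x_2^{-1} \mapsto (v_1 a - x_1 x_2^{-1} v_2 a)(x_2 a)^{-1} = (v_1 - x_1 x_2^{-1} v_2) x_2^{-1}$, unchanged; and adding $xb = e_1 x_1 b + e_2 x_2 b$ to the second slot changes $(v_1, v_2)$ by $(x_1 b, x_2 b)$, so the expression changes by $(x_1 b - x_1 x_2^{-1} x_2 b) x_2^{-1} = 0$. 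Hence the formula descends to $T_l\mathfrak{P}_G^\pm$, completing the proof. (One should also remark that equivariance of $F_{\mathfrak{P}_G^\pm,\mathfrak{U}_G^\pm}$ with respect to the $G$-actions, already noted in the text, makes it suffice in principle to verify the formula at a single base point, but the direct computation above is cleaner and needs no such reduction.)
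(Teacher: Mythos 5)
Your proof is correct, and since the paper states this proposition without proof (``Its differential can be computed immediately''), your argument — differentiating $F_{\mathfrak{P}_G^\pm,\mathfrak{U}_G^\pm}$ along the path $x(t)=x+t(e_1v_1+e_2v_2)$ via the identity $\frac{d}{dt}(x_2+tv_2)^{-1}=-x_2^{-1}v_2x_2^{-1}$, together with the check that the result is independent of the representative of $[x,e_1v_1+e_2v_2+l]$ — is exactly the intended computation, in the same style as the proofs given for $d_JF_{\mathfrak{C}_G,\mathfrak{P}_G^\pm}$ elsewhere in the paper. The only cosmetic remark is that the parenthetical appeal to the invertibility of elements of $\mathfrak{U}_G^\pm$ is not what gives $x_2\in A^\times$; the correct (and first) reason you give, transversality of $l$ to $\ell=e_1A$, suffices.
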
 

\subsection{Precompact model}\label{sec:precompact_O}
Taking a different affine chart for $\mathfrak{P}_G^+$, we obtain a bounded model, which we call the precompact model. 

For this, we fix one line $\ell'\in \mathfrak{P}_G^-$ spanned by the vector $e_1=\frac{1}{\sqrt{2}}(1,-1)^t$. Its orthogonal complement $\ell\in \mathfrak{P}_G^+$ is spanned by $e_2=\frac{1}{\sqrt{2}}(1,1)^t$. Every $l\in \overline{\mathfrak{P}}_G^+$ is transverse to $\ell'$. Therefore, it can be seen as a map $L\colon\ell\to\ell'$, such that for $v\in \ell$, $L(v):=w$, where $w\in\ell'$ is the unique element of $\ell'$ such that $v+w\in l$. In the basis $(e_1,e_2)$, $L$ can be identified with an element of $a_l\in A$ such that $L(e_2)=a_l e_1$. Because of the condition $ h(x,x)\in A^\sigma_+$, for all regular $x\in l$, we obtain \[ h(e_1a_l+e_2,e_1a_l+e_2)=1-\sigma(a_l)a_l\in A^\sigma_+.\] 
So we can write an affine chart for  $\overline{\mathfrak{P}}_G^+$ as follows:
$$\overline{\mathfrak{B}}_G:=\{a\in A\mid 1-\sigma(a)a\in A^\sigma_{\geq 0}\}.$$
This is a compact subspace in $A$ with interior 
$$\mathfrak{B}_G:=\{a\in A\mid 1-\sigma(a)a\in A^\sigma_+\}.$$ 
In particular, the tangent space at every $a\in \mathfrak{B}_G$ is isomorphic to $A$. 

We denote the map 
\begin{equation}\label{eq:ort_PB}
    F_{\mathfrak{P}_G^+,\mathfrak{B}_G}\colon\overline{\mathfrak{P}}_G^+\to\overline{\mathfrak{B}}_G,
\end{equation}
mapping $l\mapsto a_l$. Notice, if $x=e_1x_1+e_2x_2$ and $l=xA$, then $a_l=x_1x_2^{-1}$. Under this map ${\mathfrak{P}_G}^+$ maps to $\mathfrak{B}_G$ and $\mathrm{Is}( h)$ to $\mathrm O{(A,\sigma)}$.

The group $G':=R^{-1}GR$ (cf.~\eqref{eq:conj_orth}) acts on $\overline{\mathfrak{B}}_G$ by M\"obius transformations as follows: For $x\in \mathfrak{B}_G$, and $g:=\begin{pmatrix}
    a & b\\
    c & d
\end{pmatrix}\in G'$, then 
\[g.x=(ax+b)(cx+d)^{-1}.\] 
The restriction of this action to $\mathfrak{B}_G$ is transitive and equivariant with respect to the map $F_{\mathfrak{P}_G^+,\mathfrak{B}_G}$ and the linear action of $G$ on $\mathfrak{P}_G^+$. The stabilizer of the point $0\in \mathfrak{B}_G$ agrees with $K':=R^{-1}KR$ (cf.~\eqref{eq:conj_orth}). In~particular, $\mathfrak{B}_G$ is a model of the symmetric space $G'/K'$ which is isometric to $G/K$ because $G$ and $G'$ are conjugate. 

Since $\mathfrak{B}_G$ is an open subspace of $A$, then for every $x\in \mathfrak{B}_G$, $T_x\mathfrak{B}_G = A$. The complexified tangent space at $x\in \mathfrak{B}_G$ is then $T^\mathbb C_x\mathfrak{B}_G = A_\mathbb C$.

The action of $G'$ extends naturally to the action on the tangent bundle $T\mathfrak{B}_G=\mathfrak{B}_G\times A$. For $g=\begin{pmatrix}
        a & b \\
        c & d
    \end{pmatrix}\in G'$
and $(x,v)\in \mathfrak{B}_G\times A$, the action is given by
$$g.(x,v)=\left((ax+b)(cx+d)^{-1}, av(cx+d)^{-1}-(ax+b)(cx+d)^{-1}cv(cx+d)^{-1}\right).$$

For every $x\in\mathfrak B_G$, the stabilizer $\mathrm{Stab}_{G'}(x)$ is conjugate to the stabilizer of $x=0$ which is $K'$. We are now going to understand the action of $K'$ on $T_0\mathfrak B_G= A$. For $g=\begin{pmatrix}
    a & 0 \\
    0 & d
\end{pmatrix}\in K'$ and $v\in T_0\mathfrak B_G$, then
$$g.v=avd^{-1}.$$

The map $F_{\mathfrak{P}_G^+,\mathfrak{B}_G}$ is a diffeomorphism which is equivariant with respect to the action of $G$. Its~differential can be computed immediately:

\begin{prop}
    Let $x=e_1x_1+e_2x_2$ be such that $l=xA\in\mathfrak{P}_G^+$, and $v=e_1v_1+e_2v_2+A\in T_{l}\mathfrak{P}_G^+$. Then, 
    $$d_{l}F_{\mathfrak{P}_G^+,\mathfrak{B}_G}(v)=(v_1-F_{\mathfrak{P}_G^+,\mathfrak{B}_G}(l) v_2)x_2^{-1}.$$
\end{prop}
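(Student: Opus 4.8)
The statement is the last Proposition, computing the differential of the map $F_{\mathfrak{P}_G^+,\mathfrak{B}_G}\colon\overline{\mathfrak{P}}_G^+\to\overline{\mathfrak{B}}_G$ on a tangent vector $v=e_1v_1+e_2v_2+A\in T_l\mathfrak{P}_G^+$, where $l=xA$ with $x=e_1x_1+e_2x_2$. I note first that the formula to be proved is formally identical to the one for $d_lF_{\mathfrak{P}_G^\pm,\mathfrak{U}_G^\pm}$ in the half-space model (the preceding Proposition), and this is no accident: both $F_{\mathfrak{P}_G^\pm,\mathfrak{U}_G^\pm}$ and $F_{\mathfrak{P}_G^+,\mathfrak{B}_G}$ are affine-chart maps obtained by writing a line $l=xA$ transverse to a fixed line (spanned by $e_1$ in each case) as $a_l=x_1x_2^{-1}$, as the paper explicitly records in both sections. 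So the computation is really a single computation done in two guises, and I would present it as such.

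First I would recall the explicit description of the map: given $l=xA$ with $x=e_1x_1+e_2x_2$ and $x_2\in A^\times$ (note $x_2$ is invertible since transversality to $\ell'$ forces it, just as invertibility of elements of $\mathfrak{B}_G$ was argued from the defining inequality), we have $F_{\mathfrak{P}_G^+,\mathfrak{B}_G}(l)=a_l=x_1x_2^{-1}$. Next I would pick a smooth path $l(t)=x(t)A$ in $\mathfrak{P}_G^+$ with $x(t)=e_1x_1(t)+e_2x_2(t)$, $x(0)=x$, and $\dot x(0)=e_1v_1+e_2v_2$ representing the tangent vector $v$ (the representative of $v$ in $A^2/l$ is defined only up to adding an element of $l$, but the final formula will be checked to be insensitive to this ambiguity). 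Then I would differentiate $a_{l(t)}=x_1(t)x_2(t)^{-1}$ at $t=0$ using the product rule together with $\tfrac{d}{dt}\big(x_2(t)^{-1}\big)\big|_{t=0}=-x_2^{-1}v_2x_2^{-1}$, obtaining
$$
\frac{d}{dt}\Big|_{t=0} a_{l(t)} = v_1 x_2^{-1} - x_1 x_2^{-1} v_2 x_2^{-1} = \big(v_1 - (x_1x_2^{-1})v_2\big)x_2^{-1} = \big(v_1 - F_{\mathfrak{P}_G^+,\mathfrak{B}_G}(l)\,v_2\big)x_2^{-1},
$$
which is exactly the claimed formula. Finally I would check well-definedness: replacing $(v_1,v_2)$ by $(v_1+x_1a,\,v_2+x_2a)$ for $a\in A$ (the change corresponding to adding $xa\in l$) changes the right-hand side by $\big(x_1a - x_1x_2^{-1}x_2a\big)x_2^{-1}=0$, so the expression descends to $T_l\mathfrak{P}_G^+=\mathrm{Hom}(l,A^2/l)$ as required.

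There is essentially no serious obstacle here; the only point requiring a little care is the bookkeeping of non-commutativity — one must keep all the $x_2^{-1}$ factors on the right and never commute anything past anything else, and one must use the correct noncommutative formula for the derivative of the inverse. Beyond that, the proof is a one-line differentiation plus the well-definedness check, and I would keep it brief, possibly even just writing ``the proof is identical to that of the previous Proposition, replacing $\mathfrak{U}_G^\pm$ by $\mathfrak{B}_G$,'' since the chart maps have the same functional form $l\mapsto x_1x_2^{-1}$ in both cases.
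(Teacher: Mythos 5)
Your computation is correct and is exactly the argument the paper intends: the paper omits a proof, stating only that the differential ``can be computed immediately,'' and the intended computation is precisely your path differentiation of $a_{l(t)}=x_1(t)x_2(t)^{-1}$ with the noncommutative rule $\tfrac{d}{dt}x_2^{-1}=-x_2^{-1}\dot x_2 x_2^{-1}$, together with the observation that the chart map has the same form $l\mapsto x_1x_2^{-1}$ as in the half-space model. Your well-definedness check (insensitivity to adding $xa\in l$ to the representative) covers the only ambiguity left once the generator $x$ is fixed, as it is in the statement, so nothing is missing.
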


\begin{rem}
    The isometric map $F_{\mathfrak{U}_G^+,\mathfrak{B}_G}$ from the upper half-space model to the precompact model can be described explicitly by the M\"obius transformation given by the matrix $R^{-1}=\begin{pmatrix}
        1 & -1 \\
        1 & 1
    \end{pmatrix}$, i.e. $x\mapsto (x-1)(x+1)^{-1}$. This map agrees with the composition $F_{\mathfrak{P}_G^+,\mathfrak{B}_G}\circ F_{\mathfrak{P}_G^+,\mathfrak{U}_G^+}^{-1}$, and, therefore, is $G$-equivariant.
\end{rem}

\subsection{Riemannian metric on the models}

In terms of the upper half-space model, it is particularly easy to write the Riemannian metric on the Riemannian symmetric space of $\mathrm{O}_{(1,1)}(A,\sigma)$. Namely, for $z=x+y\in \mathfrak U^+_G$ where $x\in A^{-\sigma}$, $y\in A^\sigma_+$, and $v\in T_z\mathfrak U^+_G=A$, the associated norm of this metric is given by:
\begin{align*}
    g_z(v,v):=&\mathrm{tr}\left(y^{-\frac{1}{2}} vy^{-1}vy^{-\frac{1}{2}}\right).    
\end{align*}
This metric is $\mathrm{O}_{(1,1)}(A,\sigma)$-invariant. Using the polarization, one can also write the expression of $g_z(v,w)$, for $v,w\in T_z\mathfrak U^+_G$. The isomorphisms between models allow us to express this metric in terms of all four models of the symmetric space of $\mathrm{O}_{(1,1)}(A,\sigma)$.

Notice that if $z=1$, then the metric becomes particularly easy: 
$$g_1(v,w)=\mathrm{tr}(v\circ w).$$

\subsection{Example: \texorpdfstring{$A=\mathrm{Mat}_n(\mathbb R)$}{}}\label{sec:exam_A_real_mat_ortog}
    We now present all four models, their tangent spaces and the differential of the diffeomorphisms between them explicitly in a certain important case. Let $A=\mathrm{Mat}_n(\mathbb R)$ and $\sigma$ be the transposition. Then $G:=\mathrm{O}_{(1,1)}(A,\sigma)$ is isomorphic to the real indefinite orthogonal group $\mathrm{O}(n,n)$ which is the group of isometries of the vector space $\mathbb R^{2n}$ equipped with the following symmetric bilinear form  $\Omega=\begin{pmatrix} 0 & \mathrm{Id} \\ \mathrm{Id} & 0 \end{pmatrix}$ and $\mathrm{Id}$ denotes the $n\times n$ identity matrix.

    The space of indefinite operators $\mathfrak{C}_G$ can be seen as the space of all linear operators $J\colon \mathbb R^{2n}\to\mathbb R^{2n}$ such that $J^2=\mathrm{Id}$ such that there are two $n$-dimensional subspaces $V_+$ and $V_-$ of $\mathbb R^{2n}$ such that $J|_{V_\pm}=\pm\mathrm{Id}$ and $J^t\Omega$ is a symmetric positive definite matrix. Then the tangent space $T_J\mathfrak{C}_G$ can be identified with the space of all linear maps $L\colon \mathbb R^{2n}\to\mathbb R^{2n}$ such that $JL=-LJ$ and $L^t\Omega$ is a symmetric matrix.

    The projective space model $\mathfrak{P}_G^\pm$ in this case can be seen as the space of all $n$-dimensional subspaces $V\subset \mathbb R^{2n}$ and the bilinear form $\omega|_V$ is positive-, resp. negative-definite. Their compactifications agree with the spaces of all $n$-dimensional subspaces $V\subset\mathbb R^{2n}$ such that the form $\omega|_V$ is non-negative, resp. non-positive definite.  The tangent space $T_V\mathfrak{P}_G^\pm$ can be then identified with the space of all linear maps $Q\colon V\to \mathbb{R}^{2n}/V$. 

    The half-space models $\mathfrak U_G^\pm$ in this case are the spaces 
    $$\mathfrak U_G^\pm=\{M\in\mathrm{Mat}_n(\mathbb R)\mid M+M^t \in\mathrm{Sym}^\pm_n(\mathbb R)\},$$
    with tangent spaces $T_M\mathfrak U_G^\pm=\mathrm{Mat}_n(\mathbb R)$, for every $M\in\mathfrak U_G^\pm$.

    Finally, the precompact model $\mathfrak{B}_G$ in this case is described by
    $$\mathfrak{B}_G=\{M\in\mathrm{Mat}_n(\mathbb R)\mid \mathrm{Id}-M^tM \in\mathrm{Sym}^+_n(\mathbb R)\}.$$
    Its compactification is
    $$\overline{\mathfrak{B}}_G=\{M\in\mathrm{Mat}_n(\mathbb R)\mid \mathrm{Id}-M^tM \in\mathrm{Sym}^{\geq 0}_n(\mathbb R)\};$$
    The tangent space at every point $M\in\mathfrak B_G$ is then $T_M\mathfrak B_G=\mathrm{Mat}_n(\mathbb R)$.
    
    The map $F_{\mathfrak{C}_G,\mathfrak{P}_G^\pm}$ maps an operator $J\colon \mathbb R^{2n}\to \mathbb R^{2n}$ to the subspace $V_J^\pm$ which is the $(\pm 1)$-eigenspace of $J$. If $M\colon \mathbb R^{2n}\to \mathbb R^{2n}$ is a tangent vector at $J$, then 
    \[d_JF_{\mathfrak{C}_G,\mathfrak{P}_G^\pm}(M)=\pm\frac{1}{2}[M],\] 
    where $[M]\colon V_J^\pm\to \mathbb R^{2n}/V_J^\pm$ which is the natural projection of $M|_{V_J^\pm}$.

    Let now $V\in\mathfrak{P}_G^\pm$ and $Q\colon V\to \mathbb R^{2n}/V$ be a tangent vector at $V$. Then $F_{\mathfrak{P}_G^\pm,\mathfrak U_G^\pm}$ maps $V$ to the following symmetric matrix. We first fix a basis of $V$ and write it as a $2n\times n$-matrix $A$; this matrix contains an upper $(n\times n)$-block $A_1$ and a lower one, say $A_2$, which is invertible because $V\in\mathfrak{P}_G^\pm$. Then $F_{\mathfrak{P}_G^\pm,\mathfrak U_G^\pm}(V)=A_1A_2^{-1}$ and this expression is clearly independent of the chosen basis. Applying $Q$ to the vectors of the basis, we obtain a family of $2n\times n$ matrices $\{B+Ax\mid x\in\mathrm{Mat}_n(\mathbb R)\}$. We choose one of them, say for $x=0$ we obtain $B$. Then we obtain in a similar vain two $n\times n$ matrices $B_1$ and $B_2$ which are upper and lower submatrices of $B$. Thus, 
    \[d_VF_{\mathfrak{P}_G^\pm,\mathfrak U_G^\pm}(Q)=(B_1-A_1A_2^{-1}B_2)A_2^{-1}.\] 
    An easy calculation shows that this expression does not depend on any of the choices made.

    One similarly describes the map $F_{\mathfrak{P}_G^+,\mathfrak{B}_G}$. Let $T:=\frac{1}{\sqrt{2}}\begin{pmatrix}
    \mathrm{Id} & \mathrm{Id}\\
    -\mathrm{Id} & \mathrm{Id}
    \end{pmatrix}$. We denote $A':=TA$, $B':=TB$, where $A$ and $B$ are the matrices introduced above. Let $A'_1$ and $A'_2$ be the lower and upper $n\times n$ submatrices of $A'$ respectively, and $B'_1$ and $B'_2$ be the lower and the upper $n\times n$ submatrices of $B'$ respectively. Then, $F_{\mathfrak{P}_G^+,\mathfrak{B}_G}(V)=A_1'(A_2')^{-1}$ and \[d_VF_{\mathfrak{P}_G^+,\mathfrak B_G}(Q)=(B'_1-A'_1(A'_2)^{-1}B'_2)(A'_2)^{-1}.\]

\section{Models for the symmetric space of \texorpdfstring{$A^\times$}{} and their tangent spaces}\label{sec:A_models}

In this section, we assume $(A,\sigma)$ to be a real Hermitian algebra. In Section~\ref{sec:ort_models}, we discussed models of the symmetric space of $\mathrm{O}_{(1,1)}(A,\sigma)$. This group contains as a subgroup the group $G:=A^\times$ embedded as the set of diagonal matrices with $g$ and $\sigma(g)^{-1}$ on the diagonal, where $g\in G$. Notice that $G$ is a reductive Lie group with a maximal compact subgroup $K:=\mathrm{O}(A,\sigma)$. In this section, we discuss models of the symmetric space of $A^\times$. 

\begin{rem}
    The center of $A^\times$ is always nontrivial and  noncompact because $\mathbb R^\times\cdot 1\subseteq A^\times$ is always central. Therefore, the symmetric space of $A^\times$ is always reducible. 
\end{rem}

We denote by $\hat G$ (resp. $\hat K$) the corresponding diagonal copy of $G$ (resp. $K$) in $\mathrm{O}_{(1,1)}(A,\sigma)$. For $g\in G$, we denote $\hat g:=\begin{pmatrix}
    g & 0 \\
    0 & \sigma(g)^{-1}
\end{pmatrix}\in \hat G$.

As $A^\times$ is a subgroup of $\mathrm{O}_{(1,1)}(A,\sigma)$, the models of the symmetric space of $A^\times$ will be subspaces of corresponding models of the symmetric space of $\mathrm{O}_{(1,1)}(A,\sigma)$. Therefore, we will not write explicitly the maps between models and the Riemannian metric because they are simply the restrictions of the corresponding ones from Section~\ref{sec:ort_models}.

\subsection{Space of indefinite involutive operators}\label{sec:indefinite_A}

Let $\omega$ be the standard symplectic form on $A^2$ and $h$ be the standard indefinite orthogonal form on $A^2$.

An $A$-linear operator $J\colon A^2\to A^2$ is called \emph{almost symplectic} if 
$$
\omega(J(x),J(y))=-\omega(x,y).
$$
We consider the following space: 
\begin{align*}
\mathfrak{C}_G :=\{J\colon A^2\to A^2 \mid J\text{ is almost symplectic}\}\cap \mathfrak{C}_{\mathrm{O}_{(1,1)}(A,\sigma)}.
\end{align*}
It consists of all indefinite involutive almost symplectic operators.

\begin{thm}\label{thm:involaopermodel_A}
    The group $\hat G$ acts on $\mathfrak{C}_G$ by conjugation. This action is transitive and the stabilizer of $J_0=\begin{pmatrix}
        0 & 1 \\
        1 & 0
    \end{pmatrix}$
    agrees with $\hat K$.
\end{thm}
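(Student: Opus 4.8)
The strategy is to transfer the problem to one already solved: namely the transitivity of $\hat{G}$-action on the intersection of $\mathfrak{C}_G$-type spaces should follow from combining (i) the already-established transitivity of $G=\mathrm{O}_{(1,1)}(A,\sigma)$ on $\mathfrak{C}_{\mathrm{O}_{(1,1)}(A,\sigma)}$ via conjugation (Section~\ref{sec:indefinite_O}), with (ii) an analysis of which group elements preserve the additional ``almost symplectic'' constraint. Concretely, I would first observe that $\hat{G}$ acts on $\mathfrak{C}_G$: for $\hat{g}\in\hat{G}$ and $J\in\mathfrak{C}_G$, the conjugate $\hat{g}J\hat{g}^{-1}$ is indefinite involutive (this is preserved by any conjugation in $\mathrm{GL}_2(A)$), lies in $\mathfrak{C}_{\mathrm{O}_{(1,1)}(A,\sigma)}$ because $\hat{g}\in G$ (as $\hat{g}$ lies in the diagonal copy inside $\mathrm{O}_{(1,1)}(A,\sigma)$), and remains almost symplectic because $\hat{g}\in\mathrm{Sp}_2(A,\sigma)$: indeed $\hat{g}=\mathrm{diag}(g,\sigma(g)^{-1})$ satisfies $\sigma(\hat{g})^t\Omega_{\mathrm{sympl}}\hat{g}=\Omega_{\mathrm{sympl}}$, so conjugation by $\hat{g}$ preserves the relation $\omega(Jx,Jy)=-\omega(x,y)$.

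\textbf{Transitivity.} The key step is transitivity. Given $J\in\mathfrak{C}_G$, I want to find $\hat{g}\in\hat{G}$ with $\hat{g}J\hat{g}^{-1}=J_0$. Starting from $J$, pick generators $x\in l_J^+$, $y\in l_J^-$ normalized so that $-h(x,x)=h(y,y)=1$, making $(x,y)$ an $h$-orthonormal basis. Since $J$ is almost symplectic and involutive, one can compute $\omega(x,y)$: the constraint that $J$ preserves $\omega$ up to sign, together with $Jx=x$, $Jy=-y$, forces $\omega(x,x)$ and $\omega(y,y)$ to be constrained and $\omega(x,y)$ to be a $\sigma$-unit (after possibly rescaling $x$ or $y$ by an element of $\mathrm{O}(A,\sigma)$, which does not disturb the $h$-normalization). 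The point is that the pair $(x,y)$ is then simultaneously $h$-orthonormal and $\omega$-normalized in a way matching the standard basis $\left(\tfrac{1}{\sqrt 2}(1,-1)^t,\tfrac{1}{\sqrt 2}(1,1)^t\right)$. One then shows the element $g\in\mathrm{O}_{(1,1)}(A,\sigma)$ produced by the transitivity argument of Section~\ref{sec:indefinite_O} (sending $(x,y)$ to the standard basis) actually lies in the diagonal subgroup $\hat{G}$, \emph{because} it also intertwines the symplectic structures: a linear map sending one $(\omega,h)$-compatible normalized basis to another must lie in $\mathrm{Sp}_2(A,\sigma)\cap\mathrm{O}_{(1,1)}(A,\sigma)$ intersected with the relevant normalizer, which by the descriptions in~\eqref{eq:conj_orth} and the incarnation results is exactly $\hat{G}$ (the diagonal torus-like copy of $A^\times$). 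Equivalently, one checks directly that $R^{-1}gR$ is diagonal.

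\textbf{Stabilizer.} For the stabilizer computation: $\mathrm{Stab}_{\hat{G}}(J_0)=\hat{G}\cap\mathrm{Stab}_{G}(J_0)=\hat{G}\cap K$, where the last equality uses that $\mathrm{Stab}_{\mathrm{O}_{(1,1)}(A,\sigma)}(J_0)=\mathrm{KO}_{(1,1)}(A,\sigma)=K$ as recalled in Section~\ref{sec:indefinite_O}. It remains to identify $\hat{G}\cap K$ with $\hat{K}$. An element $\hat{g}=\mathrm{diag}(g,\sigma(g)^{-1})$ lies in $K=\mathrm{KO}_{(1,1)}(A,\sigma)$, whose elements have the shape $\begin{pmatrix} a & b \\ b & a\end{pmatrix}$ with $\sigma(a)a+\sigma(b)b=1$; a diagonal such matrix forces $b=0$ and $a=\sigma(a)^{-1}$, i.e. $g\in\mathrm{O}(A,\sigma)=K$. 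Hence $\hat{G}\cap K=\hat{K}$, as claimed.

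\textbf{Main obstacle.} The delicate point is the transitivity step, specifically verifying that the group element conjugating $J$ to $J_0$ can be chosen \emph{diagonal} rather than merely in $\mathrm{O}_{(1,1)}(A,\sigma)$ — this requires carefully tracking how the almost-symplectic condition pins down the generators $x,y$ of the eigenlines (up to the $\mathrm{O}(A,\sigma)$-ambiguity that corresponds precisely to the stabilizer $\hat{K}$), and then matching against the basis change $R$ of~\eqref{eq:conj_orth}. One should also double-check that the normalization of $x,y$ to make $(x,y)$ both $h$-orthonormal and $\omega$-standard is achievable simultaneously; this is where the hypothesis that $J$ is almost symplectic (not just in $\mathfrak{C}_{\mathrm{O}_{(1,1)}(A,\sigma)}$) is essential, since it rigidifies the relative position of the two eigenlines with respect to $\omega$.
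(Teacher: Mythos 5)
Your proposal is correct, but it follows a genuinely different route from the paper. The paper argues by normal form: it shows that the three defining conditions force every $J\in\mathfrak{C}_G$ to be anti-diagonal, $J=\begin{pmatrix}0 & a\\ \sigma(a)^{-1} & 0\end{pmatrix}$ with $a\in A^\sigma_+$ (being almost symplectic means $J\in J_0\,\mathrm{Sp}_2(A,\sigma)$), then computes $\hat g J_0\hat g^{-1}=\begin{pmatrix}0 & g\sigma(g)\\ \sigma(g)^{-1}g^{-1} & 0\end{pmatrix}$ and gets transitivity directly from the fact that every element of $A^\sigma_+$ is of the form $g\sigma(g)$ (\cite[Corollary~2.65]{ABRRW}); the stabilizer statement is then immediate. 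You instead work synthetically with eigenline generators, normalizing $(x,y)$ so that the Gram matrices of both $h$ and $\omega$ match those of the standard basis, and conclude that the unique map carrying one normalized basis to the other preserves both forms, hence lies in $\mathrm{Aut}(\omega)\cap\mathrm{Aut}(h)=\mathrm{Sp}_2(A,\sigma)\cap\mathrm{O}_{(1,1)}(A,\sigma)=\hat G$ — a correct identification, and your stabilizer computation $\hat G\cap \mathrm{KO}_{(1,1)}(A,\sigma)=\hat K$ is also fine. The delicate step you flag does check out: almost-symplecticity forces $\omega(x,x)=\omega(y,y)=0$, and for $h$-normalized generators one finds $\omega(x,y)\in -\mathrm{O}(A,\sigma)$, so a further rescaling by an element of $\mathrm{O}(A,\sigma)$ (which preserves the $h$-normalization) puts $\omega$ in standard form. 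Be aware, though, that this normalization silently uses the very same square-root fact (\cite[Corollary~2.65]{ABRRW}) that powers the paper's argument, so you should cite it; and your "equivalently, $R^{-1}gR$ is diagonal" aside is off — $\hat G$ consists of matrices that are diagonal in the standard basis, of the form $\mathrm{diag}(u,\sigma(u)^{-1})$, whereas diagonality of $R^{-1}gR$ characterizes the conjugate subgroup $R(\text{diagonal})R^{-1}$, not $\hat G$. Net comparison: your approach recycles the geometric transitivity argument for $\mathrm{O}_{(1,1)}(A,\sigma)$ and avoids deriving a normal form for $J$, at the cost of the simultaneous $(h,\omega)$-normalization; the paper's computation is shorter and, as a by-product, yields the explicit description of $\mathfrak{C}_G$ and its eigenlines that is reused immediately afterwards for the tangent space.
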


\begin{proof}
    Every indefinite involutive operator $J$ is an element of $\mathrm{O}_{(1,1)}(A,\sigma)$. The condition to be almost symplectic implies that $J\in\begin{pmatrix}
        0 & 1 \\
        1 & 0
    \end{pmatrix}\mathrm{Sp}_2(A,\sigma)$.    
    These two conditions imply that $J=\begin{pmatrix}
        0 & a \\
        \sigma(a)^{-1} & 0
    \end{pmatrix}$, for $a\in G$.
    The condition $h(J(x),x)\in A^\sigma_+$, for all regular $x\in A^2$, implies that $a\in A^\sigma_+$.

\noindent Since for $g=G$, $\hat gJ_0\hat g^{-1}=\begin{pmatrix}
        0 & g\sigma(g) \\
        \sigma(g)^{-1}g^{-1} & 0
    \end{pmatrix}$, the action of $G$ on $\mathfrak{C}_G$ is transitive because every element in $A^\sigma_+$ can be written as $g\sigma(g)$, for some $g\in A^\times$ (cf.~\cite[Corollary 2.65]{ABRRW}). The stabilizer of $J_0$ is clearly $\hat K$.
\end{proof}

For an operator $J=\begin{pmatrix}
        0 & a \\
        a^{-1} & 0
    \end{pmatrix}$,  with $a\in A^\sigma_+$, we can explicitly describe its eigenlines, namely, \[l_J^+=(a,1)^tA \quad \text{ and } \quad l_J^-=(a,-1)^tA.\]  

\begin{prop}\label{prop:tangent_sio_A}
    The tangent space $T_J\mathfrak{C}_G$ at $J\in \mathfrak{C}_G$ is described by
    \begin{align*}
    T_J\mathfrak{C}_G=\left\{L\colon A^2\to A^2\midwd 
    \begin{aligned}
    &\ \text{$L$ is $A$-linear,}\\
    &\ LJ+JL=0,\text{ and}\\
    &\ \omega(L(\cdot),\cdot)+\omega(\cdot,L(\cdot))=0
    \end{aligned}
    \right\}.
    \end{align*}
    The group $\hat G$ acts on $T\mathfrak{C}_G$ by conjugation, i.e. for $g\in \hat G$,  
    $$
    g.(J,L)=(gJg^{-1},gLg^{-1}).
    $$ 
    In particular, 
    $$
    T_{gJg^{-1}}\mathfrak{C}_G = gT_J\mathfrak{C}_Gg^{-1}.
    $$
\end{prop}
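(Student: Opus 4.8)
The plan is to mirror the proof of Proposition~\ref{prop:tangent_sio}, adding the extra almost-symplectic constraint, and to use Theorem~\ref{thm:involaopermodel_A} to know exactly which equations cut out $\mathfrak{C}_G$ inside the space of $A$-linear endomorphisms of $A^2$. First I would take a smooth curve $J(t)\in\mathfrak{C}_G$ with $J(0)=J$ and $\dot J(0)=L$. By definition of $\mathfrak{C}_G$ every $J(t)$ satisfies $J(t)^2=\mathrm{Id}$ and $\omega(J(t)(x),J(t)(y))=-\omega(x,y)$ for all $x,y\in A^2$. Differentiating the first relation at $t=0$ gives $\dot J(0)J(0)+J(0)\dot J(0)=0$, that is, $LJ+JL=0$, exactly as in Proposition~\ref{prop:tangent_sio}. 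Differentiating the second relation at $t=0$ (the right-hand side is constant in $t$) gives $\omega(L(x),J(y))+\omega(J(x),L(y))=0$ for all $x,y$; replacing $x\mapsto J(x)$, $y\mapsto J(y)$ and using $J^2=\mathrm{Id}$ together with the almost-symplectic identity $\omega(J(\cdot),J(\cdot))=-\omega(\cdot,\cdot)$ converts this into $-\omega(L(J(x)),y)-\omega(x,L(J(y)))=0$; finally substituting $LJ=-JL$ (already established) and using $\omega(J(u),J(v))=-\omega(u,v)$ once more yields $\omega(L(x),y)+\omega(x,L(y))=0$ for all $x,y$, i.e. $\omega(L(\cdot),\cdot)+\omega(\cdot,L(\cdot))=0$. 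This shows that $T_J\mathfrak{C}_G$ is contained in the claimed set.

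For the reverse inclusion I would argue that the three stated linear conditions exactly match the linearization of the defining equations of $\mathfrak{C}_G$ at $J$, and that $\mathfrak{C}_G$ is a submanifold of the vector space $\mathrm{End}_A(A^2)$ whose tangent space therefore coincides with that linearization. The cleanest way to see this is via the transitive action: by Theorem~\ref{thm:involaopermodel_A}, $\mathfrak{C}_G=\hat G\cdot J_0$, so $\mathfrak{C}_G$ is a homogeneous space and in particular a smooth submanifold, and the tangent space at $J_0$ is $\{[\,\xi,J_0\,]\mid \xi\in\mathfrak{\hat g}\}$ where $\mathfrak{\hat g}$ is the Lie algebra of $\hat G$. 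One then checks that conjugation by $g\in\hat G$ carries $T_{J_0}\mathfrak{C}_G$ onto $T_{gJ_0g^{-1}}\mathfrak{C}_G$ and that the bracket description at $J_0$ matches the set $\{L\mid LJ_0+J_0L=0,\ \omega(L(\cdot),\cdot)+\omega(\cdot,L(\cdot))=0\}$; since by the first part this set always contains $T_J\mathfrak{C}_G$ and both are finite-dimensional $\mathbb{R}$-vector spaces with the same dimension (transported by the $\hat G$-action from $J_0$), they are equal. Alternatively, and perhaps more simply, one exhibits explicitly for each $L$ in the claimed set a curve: writing $J=\begin{pmatrix}0&a\\a^{-1}&0\end{pmatrix}$ with $a\in A^\sigma_+$ as in Theorem~\ref{thm:involaopermodel_A}, one parametrizes nearby elements of $\mathfrak{C}_G$ by $a(t)\in A^\sigma_+$ and shows that the derivative map $\dot a(0)\mapsto \dot J(0)$ surjects onto the claimed set.

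The statement about the $\hat G$-action is immediate: if $J(t)$ is a curve in $\mathfrak{C}_G$ through $J$ with velocity $L$, then $gJ(t)g^{-1}$ is a curve in $\mathfrak{C}_G$ (the set $\mathfrak{C}_G$ is $\hat G$-invariant by Theorem~\ref{thm:involaopermodel_A}) through $gJg^{-1}$ with velocity $gLg^{-1}$, which proves both $g.(J,L)=(gJg^{-1},gLg^{-1})$ is well-defined on $T\mathfrak{C}_G$ and the identity $T_{gJg^{-1}}\mathfrak{C}_G=gT_J\mathfrak{C}_Gg^{-1}$; one should note that conjugation by $g$ visibly preserves each of the three defining conditions (for the last one because $g\in\hat G\subseteq\mathrm{Sp}_2(A,\sigma)$ preserves $\omega$), which gives an independent verification. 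The main obstacle is the reverse inclusion: showing that the naive linearization is not strictly larger than the actual tangent space. Using the homogeneous-space structure from Theorem~\ref{thm:involaopermodel_A} sidesteps any implicit-function-theorem bookkeeping, so I would route the argument through that; the only genuine computation is identifying $\{[\xi,J_0]\mid \xi\in\mathfrak{\hat g}\}$ with the claimed set at the base point $J_0$, which is a short matrix check using the explicit forms \eqref{eq:Lie_algebras} and the description of $\mathfrak{\hat g}$ as block-diagonal matrices $\mathrm{diag}(\xi,-\sigma(\xi))$.
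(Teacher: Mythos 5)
Your first half reproduces the paper's proof exactly: take a curve $J(t)\in\mathfrak{C}_G$, differentiate $J(t)^2=\mathrm{Id}$ to get $LJ+JL=0$, differentiate the almost-symplectic identity and use $LJ=-JL$ together with $\omega(J\cdot,J\cdot)=-\omega(\cdot,\cdot)$ to strip the $J$'s and obtain $\omega(L\cdot,\cdot)+\omega(\cdot,L\cdot)=0$ (your intermediate display carries a harmless overall sign), and the equivariance claim is the same triviality as in the paper. In fact this forward inclusion is \emph{all} the paper proves; its proof consists of those two differentiations and the sentence that the action statement is trivial.

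The place where you try to do more, the reverse inclusion, contains a step that fails, and the failure is substantive. At $J_0=\begin{pmatrix}0&1\\1&0\end{pmatrix}$ the three displayed conditions cut out
\[
\left\{\begin{pmatrix}p&-r\\r&-p\end{pmatrix}\;:\;p,r\in A^\sigma\right\},
\]
since anticommutation with $J_0$ forces this shape and the condition $\omega(L\cdot,\cdot)+\omega(\cdot,L\cdot)=0$, i.e.\ $L\in\mathfrak{sp}_2(A,\sigma)$, forces $r\in A^\sigma$ and $p=\sigma(p)$; this space has dimension $2\dim A^\sigma$. By contrast, $\hat{\mathfrak g}=\{\mathrm{diag}(\xi,-\sigma(\xi))\mid\xi\in A\}$ gives $[\mathrm{diag}(\xi,-\sigma(\xi)),J_0]=\begin{pmatrix}0&\xi+\sigma(\xi)\\-(\xi+\sigma(\xi))&0\end{pmatrix}$, so the orbit tangent space $\{[\xi,J_0]\mid\xi\in\hat{\mathfrak g}\}$ is only $\bigl\{\begin{pmatrix}0&y\\-y&0\end{pmatrix}\mid y\in A^\sigma\bigr\}$, of dimension $\dim A^\sigma$ — which matches the explicit description $T_J\mathfrak C_G=\bigl\{\begin{pmatrix}0&-axa\\x&0\end{pmatrix}\mid x\in A^\sigma\bigr\}$ recorded in the paper right after the proposition, and is forced anyway because $\mathfrak{C}_G\cong A^\sigma_+$ consists of anti-diagonal matrices, so genuine tangent vectors are anti-diagonal. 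Hence your "short matrix check" at $J_0$ would not confirm the matching but refute it: elements such as $\mathrm{diag}(p,-p)$ with $0\neq p\in A^\sigma$ satisfy all three displayed conditions yet are not tangent to $\mathfrak{C}_G$, your dimension-transport argument collapses (the dimensions are not equal), and your alternative route via the parametrization $a(t)\in A^\sigma_+$ surjects onto the anti-diagonal space, not onto the displayed set. What the three conditions fail to record is the derivative of the requirement that $h_{J(t)}$ be $\sigma$-symmetric (equivalently $J(t)\in\mathrm{Aut}(h)$), namely that $h_L$ is $\sigma$-symmetric; adding that condition kills the diagonal block $p$ and yields the correct tangent space. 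So: the forward inclusion in your write-up is fine and coincides with the paper's argument, but the reverse inclusion as you propose it cannot be carried out for the set as displayed, and the homogeneous-space bookkeeping you planned is precisely where this surfaces.
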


\begin{proof}
    Let $J(t)\in\mathfrak{C}_G$ be a smooth curve defined on an interval $I\ni 0$ and $J(0)=J$. Since $J(t)^2=\mathrm{Id}$ for all $t\in I$, the derivation of this equality at $t=0$ is $\dot J(0) J(0)+ J(0)\dot J(0)=0$. Further, the differentiation of the equality $\omega(J(t)(x),J(t)(y))=-\omega(x,y)$ at $t=0$, gives the condition $\omega(\dot J(0)(x),y)+\omega(x,\dot J(0)(y))=0$, for all $x,y\in A^2$.

    \noindent The claim about the action on the tangent bundle is trivial. 
\end{proof}

The tangent space at $J=\begin{pmatrix}
        0 & a \\
        a^{-1} & 0
    \end{pmatrix}$, for $a\in A^\sigma_+$, can be  explicitly described, namely:
$$T_J\mathfrak C_G=\left\{\begin{pmatrix}
    0 & -axa \\
    x & 0
\end{pmatrix}\midwd x\in A^\sigma\right\}.$$
The action of $J$ on $A^2$ extends naturally to the action on $A_\mathbb C^2$. The complex linear extension of this operator is denoted by $J_{\mathbb C}$. The complexification of the tangent space of $\mathfrak C_G$ at $J$ is given by:
\begin{align*}
    T_J^\mathbb C\mathfrak{C}_G  & =\left\{L'\colon A_\mathbb C^2\to A_\mathbb C^2\midwd
    \begin{aligned}
        &\ L' \text{ is } A_\mathbb C\text{-linear},\\
        &\ L'J_\mathbb C+J_\mathbb CL'=0,\text{ and }\\
        &\ \omega_\mathbb C(L'(\cdot),\cdot)+\omega_\mathbb C(\cdot,L'(\cdot))=0
    \end{aligned}
    \right\},  \\
    & = \left\{\begin{pmatrix}
    0 & -axa \\
    x & 0
\end{pmatrix}\midwd x\in A_\mathbb C^\sigma\right\},  
\end{align*}
where $\omega_\mathbb C$ is the complex linear extension of $\omega$.

\subsection{Projective space model}\label{sec:projective_A}

We consider the spaces
\begin{align*}
    \mathfrak{P}_G^\pm:=&\;\left\{xA\midwd\; x\in A^{2}\text{ regular such that }h(x,x)\in A^\sigma_{\pm}\text{ and }\omega(x,x)=0 \right\}\\
    =&\;\mathfrak{P}_{\mathrm{O}_{(1,1)(A,\sigma)}}^\pm\cap\mathbb P\mathrm{Is}(\omega),
\end{align*}
where $h$ is the standard indefinite orthogonal form and $\omega$ is the standard symplectic form. This is the image of $\mathfrak{C}_{G}$ under the map~\eqref{eq:ort_CP}. Therefore, it is a model of the symmetric space of $G$. The group $\hat G$ acts by linear transformations on $\mathfrak{P}_{G}^\pm$.

Notice that $\mathfrak{P}_G^\pm$ are open domains in $\mathbb {P}\mathrm{Is}(\omega)$. In particular, the tangent space at $l\in\mathfrak{P}_G^\pm$ can be seen as the space of $A$-linear operators 
\begin{align*}
    T_l\mathfrak{P}_G^\pm & =\{Q\in\mathrm{Hom}(l,A^2/l)\mid \text{for all $x\in l$, }\omega(Q(x),x)\in A^\sigma\}\\
    & =\{Q\in\mathrm{Hom}(l,l)\mid \text{for all $x\in l$, } h(Q(x),x)\in A^\sigma\}.
\end{align*}
The last equality is given by the fact that using the non-degenerate forms $\omega$ and $h$, we can identify $l \to A^2/l$ as follows: for $x\in l$ we associate $y+l\in A^2/l$ such that $h(x,\cdot)=\omega(y+l,\cdot)=\omega(y,\cdot)$ as $A$-valued forms on $l$.

The complexification of the tangent space at $l\in\mathfrak{P}_G^\pm$ is
\begin{align*}
    T_l^\mathbb C\mathfrak{P}_G^\pm & =\{Q\in\mathrm{Hom}(l_\mathbb C,A_\mathbb C^2/l_\mathbb C)\mid \text{for all $x\in l_\mathbb C$, }\omega_\mathbb C(Q(x),x)\in A_\mathbb C^{\sigma_\mathbb C}\}\\
    &=\{Q\in\mathrm{Hom}(l_\mathbb C,l_\mathbb C)\mid \text{for all $x\in l_\mathbb C$, } h_\mathbb C(Q(x),x)\in A_\mathbb C^{\sigma_\mathbb C}\},    
\end{align*}
where $l_\mathbb C=l\otimes_\mathbb R\mathbb C$ and $h_\mathbb C$ is the complex linear extension of $h$.

The topological closure of $\mathfrak{P}_G^\pm$ in the space $\mathbb {P}(A^2)$ of $A$-lines is
$$\overline{\mathfrak{P}}_G^\pm=\left\{xA\midwd\; x\in A^{2}\text{ regular such that }\pm h(x,x)\in A^\sigma_{\geq 0}\text{ and }\omega(x,x)=0 \right\};$$
in each case, this space is compact because $\mathbb{P}\mathrm{Is}(\omega)$ is compact. The boundaries $\partial\mathfrak{P}_G^\pm$ contain a common compact locus 
$$
\mathrm{Is}(\omega)\cap \mathrm{Is}(h):=\left\{xA\midwd\; x\in A^{2}\text{ regular such that }\omega(x,x)=h(x,x)=0 \right\}
$$ 
of $h$- and $\omega$-isotropic lines.

\subsection{Half-space models}\label{sec:halfspace_A}

The image of $\mathfrak{P}_{G}^\pm$ under the map~\eqref{eq:ort_PU} is the space $\mathfrak{U}_G^\pm:= A^\sigma_\pm$. Therefore, it is a model of the symmetric space of $G$. The group $G$ acts by congruence on $\mathfrak{U}_{G}^\pm$, i.e. $g.a\mapsto ga\sigma(g)$ for $g\in G$ and $a\in \mathfrak{U}_{G}^\pm$.

This is an unbounded open subspace in $A^\sigma$. In particular, the tangent space at every $a\in \mathfrak{U}_G^\pm$ is isomorphic to $A^\sigma$, and the complexification of the tangent space is isomorphic to $A_\mathbb C^{\sigma_\mathbb C}$.

\subsection{Precompact model}\label{sec:precompact_A}

The image of $\mathfrak{P}_{G}^+$ under the map~\eqref{eq:ort_PB} is the space 
$$\mathfrak{B}_G:= \{a\in A^\sigma\mid 1-a^2\in A^\sigma_+\}.$$
Therefore, it is a model of the symmetric space of $G$. 
This is a precompact open subspace in $A^\sigma$. In particular, the tangent space at every $a\in \mathfrak{B}_G$ is isomorphic to $A^\sigma$, and the complexification of the tangent space is isomorphic to $A_\mathbb C^{\sigma_\mathbb C}$. 

The compact closure of $\mathfrak{B}_G$ is
$$\overline{\mathfrak{B}}_G:= \{a\in A^\sigma\mid 1-a^2\in A^\sigma_{\geq 0}\},$$
and the group $R^{-1}\hat G R$ acts by M\"obius transformations on $\overline{\mathfrak{B}}_{G}$, where $R=\frac{1}{\sqrt{2}}\begin{pmatrix} 1& 1\\
-1 & 1\end{pmatrix}$.

\section{Models for the symmetric space of \texorpdfstring{$\mathrm{O}(A_\mathbb C,\sigma_\mathbb C)$}{} and their tangent spaces}\label{sec:OC_models}

In this section, we assume $(A,\sigma)$ to be a real Hermitian algebra. Let $G:=\mathrm{O}(A_\mathbb C,\sigma_\mathbb C)$ which is a noncompact reductive Lie group with a maximal compact subgroup $K:=\mathrm{O}(A,\sigma)$. This group can be embedded into the groups $\mathrm{O}_{(1,1)}(A,\sigma)$ seen as $\mathrm{Aut}(h)$ for $h(x,y):=\sigma(x)^t\begin{pmatrix}
    0 & 1 \\
    1 & 0
\end{pmatrix}y$ and $\mathrm{Aut}(b)$, for $b(x,y):=\sigma(x)^t\begin{pmatrix}
    -1 & 0 \\
    0 & 1
\end{pmatrix}y$,
where $x,y\in A^2$, as follows: For an element $g=g_0
+g_1i\in G$, where $g_0,g_1\in A$, we associate a matrix $\hat g=\begin{pmatrix}
    g_0 & g_1\\
    -g_1 & g_0
\end{pmatrix}$. The direct computation shows that $\hat g$ preserves both forms $h$ and $b$. We denote $\hat G:=\{\hat g\mid g\in G\}$. In fact, $\hat G=\mathrm{Aut}(h)\cap \mathrm{Aut}(b)$. The maximal compact subgroup $\hat K$ of $\hat G$ corresponding to $K$ agrees with the set of diagonal matrices in $\Hat G$. 

\begin{rem}
    Since $A^\times$ and $\mathrm{O}(A_\mathbb C,\sigma_\mathbb C)$ have isomorphic maximal compact subgroups, namely the group $\mathrm{O}(A,\sigma)$, then they have the same number of connected components.
\end{rem}

As $\mathrm{O}(A_\mathbb C,\sigma_\mathbb C)$ is a subgroup of $\mathrm{O}_{(1,1)}(A,\sigma)$, the models of the symmetric space of $\mathrm{O}(A_\mathbb C,\sigma_\mathbb C)$ will be subspaces of corresponding models of the symmetric space of $\mathrm{O}_{(1,1)}(A,\sigma)$. Therefore, we will not write explicitly the maps between models and the Riemannian metric because they are simply the restrictions of the corresponding ones from Section~\ref{sec:ort_models}.

\subsection{Space of indefinite involutive operators}\label{sec:indefinite_OC}

We consider the following space: 
\begin{align*}
\mathfrak{C}_G :=\{J\colon A^2\to A^2 \mid b(J(\cdot),J(\cdot))=-b(\cdot,\cdot)\}\cap \mathfrak{C}_{\mathrm{Aut}(h)}.
\end{align*}
This means more precisely that, for all $x,y\in A^2$,  $b(J(x),J(y))=-b(x,y)$ and if $x$ is regular, then $h(J(x),x)\in A^{\sigma}_+$. The following theorem can be proven similarly to Theorem~\ref{thm:involaopermodel_A}.

\begin{thm}\label{thm:involaopermodel_OC}
    The group $\hat G$ acts on $\mathfrak{C}_G$ by conjugation. This action is transitive and the stabilizer of $J_0=\begin{pmatrix}
        0 & 1 \\
        1 & 0
    \end{pmatrix}$
    agrees with $\hat K$.
\end{thm}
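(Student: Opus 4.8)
I will follow the proof of Theorem~\ref{thm:involaopermodel_A} almost verbatim, with the ``almost symplectic'' condition replaced by the condition of anti-preserving $b$, and with the surjectivity of $g\mapsto g\sigma(g)$ onto $A^\sigma_+$ replaced by a corresponding surjectivity statement for the complexification $\mathrm{O}(A_\mathbb C,\sigma_\mathbb C)$.

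That $\hat G$ acts on $\mathfrak{C}_G$ by conjugation is immediate from $\hat G=\mathrm{Aut}(h)\cap\mathrm{Aut}(b)$: for $\hat g\in\hat G$ and $J\in\mathfrak{C}_G$ the operator $\hat gJ\hat g^{-1}$ is indefinite involutive with eigenlines $\hat g(l_J^+)$ and $\hat g(l_J^-)$, the condition $\hat g\in\mathrm{Aut}(h)$ keeps $h_{\hat gJ\hat g^{-1}}$ a $\sigma$-inner product, and $\hat g\in\mathrm{Aut}(b)$ keeps $b\bigl((\hat gJ\hat g^{-1})(\cdot),(\hat gJ\hat g^{-1})(\cdot)\bigr)=-b(\cdot,\cdot)$. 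One also checks directly that $J_0\in\mathfrak{C}_G$: it lies in $\mathfrak{C}_{\mathrm{Aut}(h)}$ by the discussion of Section~\ref{sec:indefinite_O}, and $J_0^t\begin{pmatrix}-1&0\\0&1\end{pmatrix}J_0=-\begin{pmatrix}-1&0\\0&1\end{pmatrix}$ gives $b(J_0(\cdot),J_0(\cdot))=-b(\cdot,\cdot)$.

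For the classification of $\mathfrak{C}_G$: if $J\in\mathfrak{C}_G$ then $J\in\mathrm{Aut}(h)$, and since $J$ and $J_0$ both anti-preserve $b$ and both preserve $h$, the product $J_0J$ lies in $\mathrm{Aut}(h)\cap\mathrm{Aut}(b)=\hat G$; writing $J_0J=\hat g$ with $g=g_0+g_1i\in\mathrm{O}(A_\mathbb C,\sigma_\mathbb C)$ yields $J=J_0\hat g=\begin{pmatrix}-g_1&g_0\\g_0&g_1\end{pmatrix}$. Now $J^2=\mathrm{Id}$ is equivalent to $\bar g g=1$, hence (using $g\in\mathrm{O}(A_\mathbb C,\sigma_\mathbb C)$) to $\bar\sigma_\mathbb C(g)=g$, i.e.\ to $g_0\in A^\sigma$ and $g_1\in A^{-\sigma}$ (which forces $g_0^2+g_1^2=1$ and $g_0g_1=g_1g_0$). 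A brief computation shows that $(1-g_1,g_0)^t$ is a $(+1)$-eigenvector of $J$ with $h((1-g_1,g_0)^t,(1-g_1,g_0)^t)=2g_0$; that $g_0$ is necessarily invertible, since otherwise $l_J^+$ would contain a nonzero $h$-isotropic element, contradicting the invertibility of $h(w,w)$ for a regular generator $w$ of $l_J^+$ together with the implication $\sigma(y)y=0\Rightarrow y=0$ valid in Hermitian algebras; hence $l_J^+=(1-g_1,g_0)^tA$, so the $\sigma$-inner-product condition reads exactly $g_0\in A^\sigma_+$. Conversely every such $J$ belongs to $\mathfrak{C}_G$; note that then $g_0^2=1-g_1^2\geq 1$.

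Finally, for transitivity and the stabilizer, a direct computation (using $\hat g_1^{-1}=\widehat{\sigma_\mathbb C(g_1)}$ and $J_0\hat g_1J_0=\widehat{\bar g_1}$) gives $\hat g_1J_0\hat g_1^{-1}=J_0\,\widehat{\bar g_1g_1^{-1}}$ for every $g_1\in\mathrm{O}(A_\mathbb C,\sigma_\mathbb C)$. Thus transitivity reduces to the statement that every $g\in\mathrm{O}(A_\mathbb C,\sigma_\mathbb C)$ with $\bar g g=1$ and $g_0\in A^\sigma_+$ has the form $\bar g_1g_1^{-1}$; this is the analogue of~\cite[Corollary~2.65]{ABRRW} used in Theorem~\ref{thm:involaopermodel_A}, and it is the step I expect to be the main obstacle. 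I would prove it from the Cartan (polar) decomposition of the complexification, $\mathrm{O}(A_\mathbb C,\sigma_\mathbb C)=\mathrm{O}(A,\sigma)\cdot\exp(iA^{-\sigma})$: for $\xi\in A^{-\sigma}$ the element $g_1:=\exp(-i\xi/2)$ lies in $\mathrm{O}(A_\mathbb C,\sigma_\mathbb C)$ and satisfies $\bar g_1g_1^{-1}=\exp(i\xi)$, so it remains to identify $\{\exp(i\xi)\mid\xi\in A^{-\sigma}\}$ with $\{g\in\mathrm{O}(A_\mathbb C,\sigma_\mathbb C)\mid\bar g g=1,\ g_0\in A^\sigma_+\}$, the requirement $g_0\in A^\sigma_+$ singling out the identity component of the solution set of $\bar g g=1$ because $\exp(iA^{-\sigma})$ lands in the positive cone of the Hermitian algebra $(A_\mathbb C,\bar\sigma_\mathbb C)$, whose intersection with $A^\sigma$ is $A^\sigma_+$. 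Alternatively, once one observes that $\mathrm{Stab}_{\hat G}(J_0)=\hat K$—since $\hat gJ_0=J_0\hat g$ forces $g_1=0$, i.e.\ $g=g_0\in\mathrm{O}(A_\mathbb C,\sigma_\mathbb C)\cap A=\mathrm{O}(A,\sigma)=K$, the diagonal subgroup of $\hat G$—a dimension count ($\dim\mathfrak{C}_G=\dim A^{-\sigma}=\dim\hat G/\hat K$) shows the $\hat G$-orbit of $J_0$ is open in $\mathfrak{C}_G$, and transitivity follows by a connectedness argument.
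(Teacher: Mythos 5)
Your plan does follow the route the paper itself prescribes (the paper's entire proof of this statement is the remark that it goes ``similarly to Theorem~\ref{thm:involaopermodel_A}''), and the computational core is correct: for $J\in\mathfrak{C}_G$ one gets $J_0J\in\mathrm{Aut}(h)\cap\mathrm{Aut}(b)=\hat G$, hence $J=J_0\hat g$ with $g_0\in A^{\sigma}$, $g_1\in A^{-\sigma}$, $g_0^2+g_1^2=1$, $g_0g_1=g_1g_0$; the eigenvector $(1-g_1,g_0)^t$ and the value $h=2g_0$ are right, so the inner-product condition reads $g_0\in A^{\sigma}_+$; the identity $\hat uJ_0\hat u^{-1}=J_0\widehat{\bar u\,u^{-1}}$ and the stabilizer computation are correct. (Two small remarks: in your invertibility argument you still must check the isotropic element you produce is nonzero, i.e.\ exclude $g_1c=c$; both this and the detour through isotropy can be avoided since $g_0^2=1-g_1^2=1+g_1\sigma(g_1)\in A^{\sigma}_+$ already forces $g_0$ invertible.)

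The genuine gap is in the transitivity step. What you need is the inclusion $\{g\in\mathrm{O}(A_\mathbb C,\sigma_\mathbb C)\mid\bar\sigma_\mathbb C(g)=g,\ \mathrm{Re}(g)\in A^\sigma_+\}\subseteq\{\bar u\,u^{-1}\mid u\in\mathrm{O}(A_\mathbb C,\sigma_\mathbb C)\}$, but the justification you offer—that $\exp(iA^{-\sigma})$ lands in the positive cone of $(A_\mathbb C,\bar\sigma_\mathbb C)$ and that this cone meets $A^\sigma$ in $A^\sigma_+$—proves only the reverse inclusion $\exp(iA^{-\sigma})\subseteq\{\dots\}$, while ``singling out the identity component of the solution set of $\bar gg=1$'' is not an argument: that solution set is not a group, and already for $A=\mathrm{Mat}_n(\mathbb R)$ it contains Hermitian complex-orthogonal matrices of every signature, so a positivity argument cannot be replaced by a connectedness remark. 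The missing step is to show that the classification conditions force $g\in(A_\mathbb C^{\bar\sigma_\mathbb C})_+$: since $\bar g=\sigma_\mathbb C(g)=g^{-1}$, one has $\mathrm{Re}(g)=\tfrac12(g+g^{-1})$, and writing the spectral decomposition $g=\sum_j\lambda_je_j$ in the formally real Jordan algebra $A_\mathbb C^{\bar\sigma_\mathbb C}$ (so $\lambda_j\in\mathbb R\setminus\{0\}$ and $\mathrm{Re}(g)=\sum_j\tfrac12(\lambda_j+\lambda_j^{-1})e_j$), the hypothesis $\mathrm{Re}(g)\in A^\sigma_+\subseteq(A_\mathbb C^{\bar\sigma_\mathbb C})_+$ forces $\tfrac12(\lambda_j+\lambda_j^{-1})>0$, hence $\lambda_j>0$. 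Once $g\in(A_\mathbb C^{\bar\sigma_\mathbb C})_+$, set $u:=g^{-1/2}$; uniqueness of positive square roots gives $\bar u=u^{-1}=g^{1/2}$ and $\sigma_\mathbb C(u)=u^{-1}$, so $u\in\mathrm{O}(A_\mathbb C,\sigma_\mathbb C)$ and $\bar u\,u^{-1}=g$. This is the exact analogue of \cite[Corollary~2.65]{ABRRW} used in Theorem~\ref{thm:involaopermodel_A} and makes your appeal to the global Cartan decomposition unnecessary. Your fallback argument does not close the gap either: openness of the single orbit of $J_0$ plus a ``connectedness argument'' requires knowing that $\mathfrak{C}_G$ is connected and that every orbit is open, neither of which you establish.
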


The following proposition is now immediate:

\begin{prop}\label{prop:tangent_sio_OC}
    The tangent space $T_J\mathfrak{C}_G$ at $J\in \mathfrak{C}_G$ is described as the following set of $A$-linear operators:
    $$T_J\mathfrak{C}_G=\{L\colon A^2\to A^2\mid LJ+JL=0,\text{ and } b(L(\cdot),\cdot)+b(\cdot,L(\cdot))=0\}.$$
    The group $\hat G$ acts on $T\mathfrak{C}_G$ by conjugation, i.e. for $g\in \hat G$,  
    $$
    g.(J,L)=(gJg^{-1},gLg^{-1}).
    $$ 
    In particular, 
    $$
    T_{gJg^{-1}}\mathfrak{C}_G = gT_J\mathfrak{C}_Gg^{-1}.
    $$
\end{prop}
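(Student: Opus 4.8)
The plan is to mimic the proof of Proposition~\ref{prop:tangent_sio_A} almost verbatim, with the standard symplectic form $\omega$ replaced by the bilinear form $b$: the inclusion of $T_J\mathfrak{C}_G$ into the right-hand side is obtained by differentiating curves in $\mathfrak{C}_G$, and the reverse inclusion by the exponential construction implicit in Proposition~\ref{prop:tangent_sio}. The equivariance statement is then immediate, since conjugation by an element of $\hat G$ preserves both defining relations of the right-hand side.

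For the inclusion ``$\subseteq$'', let $J(t)\in\mathfrak{C}_G$ be a smooth path with $J(0)=J$ and set $L:=\dot J(0)$. Since $\mathfrak{C}_G\subseteq\mathfrak{C}_{\mathrm{Aut}(h)}$ consists of involutive operators we have $J(t)^2=\mathrm{Id}$, and differentiating at $t=0$ gives $LJ+JL=0$, exactly as in Proposition~\ref{prop:tangent_sio}. Differentiating the identity $b(J(t)(x),J(t)(y))=-b(x,y)$ at $t=0$ yields $b(L(x),J(y))+b(J(x),L(y))=0$ for all $x,y\in A^2$; replacing $x$ by $J(x)$ and using $LJ=-JL$, $J^2=\mathrm{Id}$ and $b(J(\cdot),J(\cdot))=-b(\cdot,\cdot)$ rewrites this as $b(L(x),y)+b(x,L(y))=0$. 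Hence every tangent vector lies in the right-hand side.

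For the inclusion ``$\supseteq$'', let $L\colon A^2\to A^2$ be $A$-linear with $LJ+JL=0$ and $b(L(\cdot),\cdot)+b(\cdot,L(\cdot))=0$, i.e.\ $L\in\mathrm{End}(b)$, and put $M:=\tfrac12 LJ$. From $JL=-LJ$ one gets $MJ-JM=L$, and from $L\in\mathrm{End}(b)$ together with $b(J(u),v)=-b(u,J(v))$ (equivalent to $b(J(\cdot),J(\cdot))=-b(\cdot,\cdot)$ since $J^2=\mathrm{Id}$) and $JL=-LJ$ one checks that $M\in\mathrm{End}(b)$. Now set $J(t):=\exp(tM)\,J\,\exp(-tM)$. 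This operator is $A$-linear and involutive, with transverse eigenlines $\exp(tM)l_J^{\pm}$, hence indefinite; since $\exp(tM)\in\mathrm{Aut}(b)$ it satisfies $b(J(t)(\cdot),J(t)(\cdot))=-b(\cdot,\cdot)$; and since $h_{J}$ is a $\sigma$-inner product, which is an open condition, $h_{J(t)}$ remains a $\sigma$-inner product for $t$ small. Therefore $J(t)\in\mathfrak{C}_G$ for small $t$ and $\dot J(0)=MJ-JM=L$, so $L\in T_J\mathfrak{C}_G$. The claim about the $\hat G$-action, and the formula $T_{gJg^{-1}}\mathfrak{C}_G=gT_J\mathfrak{C}_Gg^{-1}$, follow as in Proposition~\ref{prop:tangent_sio_A}.

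The only step that is not pure bookkeeping is the verification that $M=\tfrac12 LJ\in\mathrm{End}(b)$, which is exactly where the two linear conditions on $L$ are combined; but this is a two-line manipulation of the identities $b(L(u),v)=-b(u,L(v))$, $b(J(u),v)=-b(u,J(v))$ and $JL=-LJ$, so no genuine obstacle arises. Everything else is identical to Proposition~\ref{prop:tangent_sio_A} and the material preceding it; an alternative to the exponential argument, should one prefer it, is to invoke the transitivity of Theorem~\ref{thm:involaopermodel_OC} to reduce to the base point $J_0$, identify $T_{J_0}\mathfrak{C}_G$ with the image of $\widehat{\mathfrak g}$ under $X\mapsto XJ_0-J_0X$, and observe that $L\mapsto\tfrac12 LJ_0$ provides a right inverse of this map on the right-hand side.
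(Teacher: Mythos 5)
Your proposal is correct, and its first half is exactly the argument the paper intends: the paper declares the proposition ``immediate'' after Theorem~\ref{thm:involaopermodel_OC}, meaning the same differentiation used for Proposition~\ref{prop:tangent_sio_A} (and~\ref{prop:tangent_sio}): differentiate $J(t)^2=\mathrm{Id}$ to get $LJ+JL=0$ and differentiate the $b$-anti-invariance to get the second relation. You are in fact more careful than the paper here, since differentiating $b(J(t)x,J(t)y)=-b(x,y)$ only gives $b(Lx,Jy)+b(Jx,Ly)=0$ directly, and you correctly supply the substitution $x\mapsto J(x)$ together with $LJ=-JL$ and $b(J\cdot,J\cdot)=-b(\cdot,\cdot)$ to convert it into $b(Lx,y)+b(x,Ly)=0$. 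Where you genuinely go beyond the paper is the reverse inclusion: the paper (here and in the analogous propositions) only records the ``$\subseteq$'' direction, whereas you realize every $L$ in the right-hand side as $\dot J(0)$ for the curve $J(t)=\exp(tM)J\exp(-tM)$ with $M=\tfrac12 LJ$; your computations $MJ-JM=L$ and $M\in\mathrm{End}(b)$ are correct, so the curve stays $b$-anti-invariant and involutive, and the positivity condition survives for small $t$. The only step I would tighten is the openness claim for $h_{J(t)}$ being a $\sigma$-inner product: it is cleanest to use the eigenline characterization of $\mathfrak{C}_{\mathrm{Aut}(h)}$ from Section~\ref{sec:indefinite_O}, namely that $h(x_t,x_t)\in A^\sigma_+$ for a generator $x_t=\exp(tM)x$ of $l_{J(t)}^+$, which follows by continuity since $A^\sigma_+$ is open and congruence by $A^\times$ preserves it. With that small justification your argument is complete, and it buys you the equality of the two sets rather than only the containment the paper writes down.
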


The action of $J$ on $A^2$ extends naturally to the action on $A_\mathbb{C}$. The complex linear extension of this operator is denoted by $J_{\mathbb C}$. The complexification of the tangent space of $\mathfrak C_G$ at $J$ is given by:
\begin{align*}
    T_J^\mathbb C\mathfrak{C}_G & =\{L'\colon A_\mathbb{C}^2\to A_\mathbb{C}^2\mid L'J_\mathbb C+J_\mathbb CL'=0,\text{ and } b_\mathbb C(L'(\cdot),\cdot)+b_\mathbb C(\cdot,L'(\cdot))=0\},
\end{align*}
where $b_\mathbb C$ is the complex linear extension of $b$.

\subsection{Projective space model}\label{sec:projective_OC}

We consider the spaces
\begin{align*}
    \mathfrak{P}_G^\pm:=&\;\left\{xA\midwd\; x\in A^{2}\text{ regular such that }h(x,x)\in A^{\sigma}_{\pm},\text{ and }b(x,x)=0 \right\}\\
    =&\;\mathfrak{P}_{\mathrm{Aut}(h)}^\pm\cap\mathbb P\mathrm{Is}(b).
\end{align*}
This is the image of $\mathfrak{C}_{G}$ under the map~\eqref{eq:ort_CP}. Therefore, it is a model of the symmetric space of $G$. The group $\hat G$ acts by linear transformations on $\mathfrak{P}_{G}^\pm$.

Notice that $\mathfrak{P}_G^\pm$ are open domains in $ \mathbb{P}\mathrm{Is}(b)$. In particular, the tangent space at $l\in\mathfrak{P}_G^\pm$ can be seen as the space of $A$-linear operators 
\begin{align*}
    T_l\mathfrak{P}_G^\pm&=\{Q\in\mathrm{Hom}(l,A^2/l)\mid \text{for all $x\in l$, }b(Q(x),x)\in A^{-\sigma}\}\\
    &=\{Q\in\mathrm{Hom}(l,l)\mid \text{for all $x\in l$, }h(Q(x),x)\in A^{-\sigma}\}.    
\end{align*}
The identification $l\to A^2/l$ is given again using the non-degenerate forms $h$ and $b$ (in exactly the same way as in the case of the group $A^\times$). Its complexification is 
\begin{align}\label{eq:O-proj-compl.tang}
    T^\mathbb C_l\mathfrak{P}_G^\pm&=\{Q\in\mathrm{Hom}(l_\mathbb C,A_{\mathbb C}^2/l_\mathbb C)\mid \text{for all $x\in l_\mathbb C^2$, }b_\mathbb C(Q(x),x)\in A_\mathbb C^{-\sigma_\mathbb C}\}\\
    &=\{Q\in\mathrm{Hom}(l_\mathbb C,l_\mathbb C)\mid \text{for all $x\in l_\mathbb C^2$, } h_\mathbb C(Q(x),x)\in A_\mathbb C^{-\sigma_\mathbb C}\}.    
\end{align}

\noindent The topological closure of $\mathfrak{P}_G^\pm$ in the space $\mathbb {P}(A^2)$ of $A$-lines is
$$\overline{\mathfrak{P}}_G^\pm=\left\{xA\midwd\; x\in A^{2}\text{ regular such that }\pm h(x,x)\in A^\sigma_{\geq 0},\text{ and }b(x,x)=0 \right\};$$
in each case, this space is compact because $\mathbb{P}\mathrm{Is}(b)$ is compact. The boundaries $\partial\mathfrak{P}_G^\pm$ contain a common compact locus 
\[\mathrm{Is}(b)\cap\mathrm{Is}(h):=\left\{xA\midwd\; x\in A^{2}\text{ regular such that }h(x,x)=b(x,x)=0 \right\}\] of $h$- and $b$-isotropic lines.

\subsection{Half-space models}\label{sec:halfspace_OC}

The image of $\mathfrak{P}_{G}^\pm$ under the map~\eqref{eq:ort_PU} is the space 
\begin{align*}
    \mathfrak{U}_G^\pm:=&\ \{a\in A\mid \sigma(a)+ a\in A^\sigma_\pm\text{ and }\sigma(a)a=1\}\\
    =& \ (A^{\sigma}_\pm\oplus A^{-\sigma})\cap \mathrm{O}(A,\sigma).
\end{align*}
Therefore, it is a model of the symmetric space of $G$. The group $\hat G$ acts by M\"obius transformations on $\mathfrak{U}_{G}^\pm$.

Unlike half-space models from previous sections, this space is not an open domain in a vector space. However, this is an open subspace in $\mathrm{O}(A,\sigma)$ which is compact. In particular, $ \mathfrak{U}_G^\pm$ is precompact. Its compact closure is
\begin{align*}
    \overline{\mathfrak{U}}_G^\pm:=&\ \{a\in A\mid \pm(\sigma(a)+ a)\in A^\sigma_{\geq 0}\text{ and }\sigma(a)a=1\}\\
    =&\ (\pm A^{\sigma}_{\geq 0}\oplus A^{-\sigma})\cap \mathrm{O}(A,\sigma).
\end{align*}
The boundaries $\partial \mathfrak{U}_G^\pm$ contain a common compact locus
$$\{a\in A^{-\sigma}\mid a^2=-1\}=A^{-\sigma}\cap \mathrm{O}(A,\sigma).$$

\noindent Further, the tangent space at $a\in \mathfrak{U}_G^\pm$ is
$$
T_a\mathfrak{U}_G^\pm=T_a\mathrm{O}(A,\sigma)=aA^{-\sigma}=\{v\in A \mid a^{-1}v\in A^{-\sigma}\},
$$
with complexification
$$
T_a^\mathbb C\mathfrak{U}_G^\pm=\{v\in A_{\mathbb{C}} \mid a^{-1}v\in A_{\mathbb{C}}^{-\sigma_\mathbb C}\}=aA_{\mathbb{C}}^{-\sigma_\mathbb C}.
$$

\subsection{Precompact model}\label{sec:precompact_OC}

The image of $\mathfrak{P}_{G}^+$ under the map~\eqref{eq:ort_PB} is the space 
$$\mathfrak{B}_G:= \{a\in A^{-\sigma}\mid 1+a^2\in A^\sigma_+\}.$$
Therefore, it is a model of the symmetric space of $G$. 

This is a precompact open subspace in $A^{-\sigma}$. In particular, the tangent space at every $a\in \mathfrak{B}_G$ is $A^{-\sigma}$. Its complexification is $A^{-\sigma}\otimes_\mathbb R\mathbb C=A_\mathbb C^{-\sigma_{\mathbb C}}$.

The compact closure of $\mathfrak{B}_G$ is
$$\overline{\mathfrak{B}}_G:= \{a\in A^{-\sigma}\mid 1+a^2\in A^\sigma_{\geq 0}\}.$$
The group $R^{-1}\hat G R$ acts by M\"obius transformations on $\overline{\mathfrak{B}}_{G}$, where $R=\frac{1}{\sqrt{2}}\begin{pmatrix} 1& 1\\
-1 & 1\end{pmatrix}$. However, the direct computation shows that, up to a scalar multiple, the matrix $R$ conjugates the form $h$ to the form $b$ and vice versa. Thus, $R^{-1}\hat G R=\hat G$.

\section{Models for the symmetric space of \texorpdfstring{$\mathrm{Sp}_2(A, \sigma)$}{} and their tangent spaces}\label{sec:real_models}

As in the previous section, we assume $(A,\sigma)$ to be a real Hermitian algebra. Let $G:=\mathrm{Sp}_2(A,\sigma)$ and $K:=\mathrm{KSp}_2(A,\sigma)$ the maximal compact subgroup of $G$. We will introduce several models for the symmetric space $\mathcal{X}_{G}:=G/K$; for more details, we refer the reader to {\cite[Section 5]{ABRRW}}. We provide here an explicit description of the tangent space for each of these models, as well as the description of the differential of the various diffeomorphisms between the models.

\subsection{Space of complex structures}\label{sec_cplx-str-mod}
A \emph{complex structure} on $A^2$ is an $A$-linear map $J\colon A^2\to A^2$ such that $J^2=-\mathrm{Id}$. Each $A$-linear map $J$ on $A^2$ defines a $\sigma$-sesquilinear form $h_J\colon A^2\times A^2\to A$ by
$$
h_J(\bullet,\bullet):=\omega(J(\bullet),\bullet),
$$ 
where $\omega$ is the standard symplectic form on $A^2$ introduced in Section~\ref{sec:sesq_forms}. 

\begin{defn}
    The \emph{space of complex structures} is defined as the space
\begin{align*}
    \mathfrak{C}_G:=\{J\text{ is a complex structure on }A^2 \mid h_J\text{ is a $\sigma$-inner product}\}.
\end{align*}
\end{defn}
In particular, $J_0\in\mathfrak{C}_G$, for $J_0(x,y):=(y,-x)$, the standard complex structure on $A^2$. The group $G$ acts on $\mathfrak{C}_G$ by conjugation, i.e. $J\xrightarrow{g} gJg^{-1}$, smoothly and transitively, and the stabilizer of $J_0$ agrees with $K$. Thus $\mathfrak C_G$ is a model of the symmetric space $G/K$ (cf. \cite[Theorem 6.7]{ABRRW}). 

\begin{rem}
    For every $J\in\mathfrak{C}_G$, we have that $J\in G$.
\end{rem}

\begin{prop}\label{prop:tangent_scs}
    The tangent space $T_J\mathfrak{C}_G$ at $J\in \mathfrak{C}_G$ is described by
    $$T_J\mathfrak{C}_G=\left\{L\colon A^2\to A^2\mid \text{$L$ is $A$-linear, $h_L$ is $\sigma$-symmetric and }LJ+JL=0\right\}.$$
    The group $\mathrm{Sp}_2(A,\sigma)$ acts on $T\mathfrak{C}_G$  by conjugation, i.e. $$g.(J,L)=(gJg^{-1},gLg^{-1}).$$ 
    In particular, 
    $$T_{gJg^{-1}}\mathfrak{C}_G = gT_J\mathfrak{C}_Gg^{-1}.$$
\end{prop}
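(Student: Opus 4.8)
The plan is to differentiate, at $t=0$, the defining conditions of a smooth curve $J(t)$ in $\mathfrak{C}_G$ with $J(0)=J$, exactly as was done in the proof of Proposition~\ref{prop:tangent_sio} and Proposition~\ref{prop:tangent_sio_A}. There are two defining conditions for a complex structure with $\sigma$-inner product form $h_J$: first, the algebraic identity $J(t)^2=-\mathrm{Id}$; second, the $\sigma$-symmetry of the form $h_{J(t)}(\bullet,\bullet)=\omega(J(t)(\bullet),\bullet)$ (the positivity condition cuts out an open subset and so imposes no constraint on the tangent space). Differentiating the first identity at $t=0$ gives $\dot J(0)J + J\dot J(0)=0$, i.e. $LJ+JL=0$ with $L=\dot J(0)$. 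Differentiating the $\sigma$-symmetry condition $\omega(J(t)x,y)=\sigma(\omega(J(t)y,x))$ at $t=0$ yields $\omega(Lx,y)=\sigma(\omega(Ly,x))$, which says precisely that $h_L(x,y)=\omega(L(x),y)$ is $\sigma$-symmetric. Hence every tangent vector lies in the set $\{L\ A\text{-linear}\mid h_L\ \sigma\text{-symmetric},\ LJ+JL=0\}$.

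For the reverse inclusion I would check that this linear space is exactly the tangent space by a dimension/surjectivity argument: the map $t\mapsto \exp(tLJ^{-1})\cdot J\cdot \exp(tLJ^{-1})^{-1}$, or more simply the curve $J(t):=(\mathrm{Id}+tL)J(\mathrm{Id}+tL)^{-1}$ suitably corrected, stays in $\mathfrak{C}_G$ for small $t$ and has derivative $L$ at $t=0$; alternatively one observes that $\mathfrak{C}_G$ is a single $G$-orbit (stated in the excerpt, citing \cite[Theorem~6.7]{ABRRW}), so its tangent space at $J_0$ is $\mathrm{ad}(\mathfrak{m})\cdot J_0$ for the Cartan complement $\mathfrak{m}=\mathfrak{msp}_2(A,\sigma)$ from \eqref{eq1.6}, and a direct matrix computation shows $\{[\xi,J_0]\mid \xi\in\mathfrak{m}\}$ coincides with the claimed set at $J_0$; translating by $G$ gives the statement at arbitrary $J$. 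Either route reduces to a short explicit calculation with $2\times 2$ matrices over $A$, entirely parallel to the computations already carried out for $\mathrm{O}_{(1,1)}(A,\sigma)$ and for $A^\times$.

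The claim about the $G$-action on $T\mathfrak{C}_G$ is immediate: conjugation by $g\in G$ sends $\mathfrak{C}_G$ to itself (as $G$ acts by conjugation), hence its differential sends $T_J\mathfrak{C}_G$ to $T_{gJg^{-1}}\mathfrak{C}_G$, and since the conjugation action $J\mapsto gJg^{-1}$ is linear in $J$, its differential at $J$ in the direction $L$ is simply $L\mapsto gLg^{-1}$; this also makes the identity $T_{gJg^{-1}}\mathfrak{C}_G = gT_J\mathfrak{C}_Gg^{-1}$ evident. One should double-check compatibility: if $L$ satisfies $LJ+JL=0$ and $h_L$ is $\sigma$-symmetric, then $gLg^{-1}$ satisfies the same relations with $J$ replaced by $gJg^{-1}$, using $g\in\mathrm{Sp}_2(A,\sigma)$ so that $\omega(gx,gy)=\omega(x,y)$; this is a one-line verification.

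The main obstacle, such as it is, will be the reverse inclusion — i.e. confirming that the candidate linear space is not strictly larger than the tangent space. I expect this to be routine given the orbit description from \cite{ABRRW}, but one must be a little careful that the $\sigma$-inner product (positivity) condition really is open and therefore drops out at the level of tangent spaces; this follows from the remark in the excerpt that the $\sigma$-inner products form an open cone inside the $\sigma$-symmetric forms, together with continuity of $J\mapsto h_J$.
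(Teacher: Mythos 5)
Your proposal is correct and follows essentially the same route as the paper: differentiate $J(t)^2=-\mathrm{Id}$ to get $LJ+JL=0$, note that the $\sigma$-inner-product condition (openness of the cone of $\sigma$-inner products inside the $\sigma$-symmetric forms) only contributes the linear condition that $h_{L}$ is $\sigma$-symmetric, and observe that the statement about the conjugation action is immediate. The reverse inclusion you discuss is left implicit in the paper's one-line proof, so your extra verification via the $G$-orbit description (or the explicit curve, which needs the factor $\tfrac12$, e.g. $t\mapsto\exp\bigl(\tfrac{t}{2}LJ^{-1}\bigr)\,J\,\exp\bigl(-\tfrac{t}{2}LJ^{-1}\bigr)$, using that $LJ^{-1}\in\mathfrak{sp}_2(A,\sigma)$) is harmless and correct.
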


\begin{proof}
    Let $J(t)\in\mathfrak{C}_G$ be a smooth curve defined on an interval $I\ni 0$ and $J(0)=J$. Since $J(t)^2=-\mathrm{Id}$ for all $t\in I$, the derivation of this equality at $t=0$ is $\dot J(0) J(0)+ J(0)\dot J(0)=0$.
  
    \noindent The condition that $h_{J(t)}$ is a $\sigma$-inner product induces that $h_{\dot J(0)}$ is $\sigma$-symmetric.

    \noindent The claim about the action on the tangent bundle is trivial. 
\end{proof}

Since $J$ is a symplectic operator, then 
\[\omega((1,0),(1,0))=\omega(J(1,0),J(1,0))=0.\] 
We denote 
\[r:=\omega(J(1,0),(1,0))=h_J((1,0),(1,0))\in A^\sigma_+,\] which is in particular an invertible element. This implies that $\{(1,0),J(1,0)\}$ is a basis of $A^2$ and allows us to consider the following isomorphism of right $A$-modules:
\[\begin{matrix}
    f_J\colon & A^2 & \to & A_\mathbb C\\
           & (a,0)+J(b,0) & \mapsto & a+ib.
\end{matrix}\]
In other words, $f_J$ allows us to equip the right $A$-module $A^2$ with a structure of $\mathbb C$-algebra.

Further, for $L\in T_J\mathfrak{C}_G$, we denote by 
$$\hat L:= f_J\circ L \circ f_J^{-1}\colon A_\mathbb C\to A_\mathbb C.$$ 
The condition $LJ=-JL$ is equivalent to the condition that $\hat L$ is anti-linear, i.e. $\hat L(z_1z_2)=\hat L(z_1)\bar z_2$. In particular, $\hat L$ is well-defined if we fix the value $\hat L(1)=:l=l_1+il_2\in A_\mathbb C$, where $l_1,l_2\in A$.

Finally, the condition that $h_L$ is $\sigma$-symmetric is equivalent to the condition that $rl\in A_\mathbb C^{\sigma_\mathbb{C}}$. Indeed, identifying $A^2$ with $A_\mathbb C$ using $f_J$, we can write:

$$h_L(1,1)=\omega(l_1+il_2,1)=\sigma(l_2)r\in A^\sigma,$$
$$h_L(i,i)=\omega(l_2-il_1,i)=-\sigma(l_2)r\in A^\sigma,$$
$$h_L(1,i)=\omega(l_1+il_2,i)=-\sigma(l_1)r,$$
$$h_L(i,1)=\omega(l_2-il_1,1)=-\sigma(l_1)r=\sigma(h_L(1,i)).$$
This is equivalent to having $rl_1\in A^\sigma$ and $rl_2\in A^\sigma$, in other words, $rl\in A^{\sigma_{\mathbb C}}_\mathbb C$.

Thus, the tangent space $T_J\mathfrak{C}_G$ can be identified with $A_\mathbb C^{\sigma_\mathbb C}$ as follows:
\begin{equation}\label{tangent_complex_simplified}
T_J\mathfrak{C}_G=\{l\in A_\mathbb C\mid rl\in A_\mathbb{C}^{\sigma_\mathbb C}\}=r^{-1}A_\mathbb{C}^{\sigma_\mathbb C}\cong A_\mathbb{C}^{\sigma_\mathbb C}.
\end{equation}

The complex structures model $\mathfrak{C}_G$ identifies with $G/K$ as follows:
\begin{align*}
F_{\mathfrak{C}_G} \colon G/K&\to\mathfrak{C}_G,\\
gK& \mapsto g(J_0)=gJ_0g^{-1}
\end{align*}
where $J_0$ is the standard complex structure as above. The tangent bundle $T(G/K)$ can be described as 
$$T(G/K)=\{[g,v+\mathrm{Ad}(g)\mathfrak k]\mid g\in G,\;v\in\mathfrak g\},$$
where $(g,v+\mathrm{Ad}(g)\mathfrak k)\sim (gk,v+\mathrm{Ad}(g)\mathfrak k)$. We now write the explicit identification of $T\mathfrak{C}_G$ and $T(G/K)$ given by the differential of $F_{\mathfrak{C}_G}$: 
$$dF_{\mathfrak{C}_G}([g,v+\mathrm{Ad}(g)\mathfrak k])=(J,L),$$ 
where $J:=gJ_0g^{-1}$, $L=vJ-Jv$. Direct computation shows that $(J,L)$ does not depend on the choice of a representative of the class $[g,v+\mathrm{Ad}(g)\mathfrak k]$.

We now describe the complex linear extensions of the operators $L$ and $J$. Let $L\in T_J\mathfrak{C}_G$ for some $J\in\mathfrak{C}_G$. The actions of $J$ and $L$ on $A^2$ extend naturally to the action on $A_\mathbb C^2$. The complex linear extensions of these operators are denoted as usual by $J_{\mathbb C}$ and $L_\mathbb C$. Then $J_\mathbb C$ has two isotropic $A_\mathbb C$-eigenlines $l_\pm\subset A^2_{\mathbb{C}}$ corresponding to eigenvalues $\pm i$ (cf.~\cite[Proposition~5.20]{ABRRW}).

\begin{lem}\label{lem:complexified_complex_structure}
    Let $J\in\mathfrak{C}_G$ and $L\in T_J\mathfrak{C}_G$. Then $J_\mathbb C\circ L_\mathbb C|_{l_\pm}=\mp iL_\mathbb C|_{l_\pm}$. In particular, $L_\mathbb C(l_\pm)\subseteq l_\mp$. 
\end{lem}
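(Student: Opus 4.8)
The plan is to reduce everything to the single algebraic identity $LJ+JL=0$, which holds for every $L\in T_J\mathfrak{C}_G$ by Proposition~\ref{prop:tangent_scs}. Since $L$ and $J$ are $A$-linear, their complex-linear extensions $L_\mathbb{C},J_\mathbb{C}$ are $A_\mathbb{C}$-linear, and passing to the complexification the identity becomes $L_\mathbb{C}J_\mathbb{C}+J_\mathbb{C}L_\mathbb{C}=0$ as operators on $A_\mathbb{C}^2$. This is the only input from the tangent-space description that is needed.

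Next I would recall that $J_\mathbb{C}$ has the two isotropic $A_\mathbb{C}$-eigenlines $l_\pm\subset A_\mathbb{C}^2$ with $J_\mathbb{C}|_{l_\pm}=\pm i\,\mathrm{Id}$, as provided by \cite[Proposition~5.20]{ABRRW}. Fix $v\in l_\pm$, so $J_\mathbb{C}(v)=v(\pm i)$, where we view the eigenvalue as right multiplication by the central element $i\in A_\mathbb{C}$. Then, using $A_\mathbb{C}$-linearity of $L_\mathbb{C}$ and the anticommutation relation,
\begin{align*}
J_\mathbb{C}\bigl(L_\mathbb{C}(v)\bigr) &= -L_\mathbb{C}\bigl(J_\mathbb{C}(v)\bigr) = -L_\mathbb{C}\bigl(v(\pm i)\bigr) = -\bigl(L_\mathbb{C}(v)\bigr)(\pm i) = \bigl(L_\mathbb{C}(v)\bigr)(\mp i).
\end{align*}
Hence $L_\mathbb{C}(v)$ is an eigenvector of $J_\mathbb{C}$ for the eigenvalue $\mp i$ (possibly zero), which by definition of $l_\mp$ means $L_\mathbb{C}(v)\in l_\mp$. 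This simultaneously gives $J_\mathbb{C}\circ L_\mathbb{C}|_{l_\pm}=\mp i\, L_\mathbb{C}|_{l_\pm}$ and the inclusion $L_\mathbb{C}(l_\pm)\subseteq l_\mp$, completing the proof.

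There is essentially no serious obstacle here; the only point that requires a little care is the bookkeeping of left versus right multiplication by the scalar $i$ together with the $A_\mathbb{C}$-linearity of $L_\mathbb{C}$, which is harmless because $i$ is central in $A_\mathbb{C}=A\otimes_\mathbb{R}\mathbb{C}$. One could alternatively phrase the computation entirely in terms of the eigenspace decomposition $A_\mathbb{C}^2=l_+\oplus l_-$ and read off that the anticommuting operator $L_\mathbb{C}$ is block-off-diagonal, but the direct eigenvector computation above is the most economical route.
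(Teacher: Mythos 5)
Your proof is correct and is essentially the same as the paper's: both reduce the statement to the anticommutation relation $L_\mathbb{C}J_\mathbb{C}+J_\mathbb{C}L_\mathbb{C}=0$ and evaluate on an eigenvector of $J_\mathbb{C}$ in $l_\pm$ to see that $L_\mathbb{C}(v)$ lies in the $\mp i$-eigenspace. The extra remarks about centrality of $i$ and the block-off-diagonal picture are fine but not needed beyond what the paper records.
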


\begin{proof}
    Since for all $L\in T_J\mathfrak{C}_G$, it is $LJ=-JL$, we get for all $v_\pm\in l_\pm$:
    $$J_\mathbb C(L_\mathbb C(v_\pm))=- L_\mathbb C(J_\mathbb C(v_\pm))=\mp iL_\mathbb C(v_\pm).$$
\end{proof}

\begin{rem}
    The map $L_\mathbb C$ gives rise to two maps $L_\pm\colon l_\pm\to A_\mathbb C^2/l_\pm$ mapping $v_\pm\mapsto Lv_\pm + l_\pm$. 
\end{rem}

Notice that $l_\pm=\bar l_\mp$ because $J$ is a real operator. Let $v_\pm$ be a generator of $l_\pm$ such that $v_-=i\bar v_+$ and $\omega_\mathbb C(v_+,v_-)=1$. Then there exist $a_\pm\in A_\mathbb C$ such that $L_\mathbb C v_\pm=v_\mp a_\pm$. Further, since $\omega_{\mathbb C}(L_\mathbb C v_\pm,v_\pm)\in A^{\sigma_\mathbb C}_\mathbb C$, we obtain 
$$\omega(L_\mathbb C v_\pm,v_\pm)=\omega(v_\mp a_\pm,v_\pm)=\mp\sigma_{\mathbb C}(a_\pm),$$
i.e. $a_\pm\in A^{\sigma_\mathbb C}_\mathbb C$. Moreover, 
$$v_+\bar a_+=i\overline{v_- a_+}=i\overline{L_\mathbb C(v_+)}=\overline{L_\mathbb C(-iv_+)}=\overline{L_\mathbb C(\bar v_-)}=L_\mathbb C(v_-)=v_+ a_-,$$ 
i.e. $a_\mp=\bar a_\pm$. Thus, in the basis $v=(v_+,v_-)$, we obtain:

\begin{equation}\label{eqn:form of J and L}
J_\mathbb C=\begin{bmatrix}
i & 0 \\
0 & -i
\end{bmatrix}_v,\; L_\mathbb C=\begin{bmatrix}
    0 & \bar a_+ \\
    a_+ & 0
\end{bmatrix}_v,\text{ where }a_+\in A_\mathbb C^{\sigma_\mathbb C}.
\end{equation}
Notice, that since the basis $(v_+,v_-)$ is not uniquely defined, the same holds for $a_+$. After complexification of $T\mathfrak{C}_G$, the variables $a_+$ and $\bar a_+$ become elements of $A_\mathbb C^{\sigma_{\mathbb C\{i\}}}\otimes_\mathbb R\mathbb C\{I\}$. Then the linear map 
\begin{equation}\label{splitting_lin_map}L':=\frac{1}{2}(L_\mathbb C+\theta_i(L_\mathbb C)-iL_\mathbb C+i\theta_i(L_\mathbb C))\in\mathrm{Mat}_2(A_{\mathbb C\{I\}})
\end{equation}
is antidiagonal with $A_\mathbb{C}^{\sigma_\mathbb C}$-elements on the antidiagonal which are independent. After identification of the imaginary units $i$ and $I$, we obtain the following descriptions:

\noindent For $J\in\mathfrak{C}_G$, 
\begin{align}
 T^{\mathbb C}_J\mathfrak{C}_G& =\left\{L'\colon A_\mathbb C^2\to A_\mathbb C^2 \mid L'\text{ is $\mathbb C$-linear}, L'J_\mathbb C+J_\mathbb CL'=0,\;h_{L'}\text{ is $\sigma_\mathbb C$-symmetric}\right\}  \nonumber \\
  & \cong \left\{(a_+,a_-)\in A_\mathbb C^{\sigma_\mathbb C}\times A_\mathbb C^{\sigma_\mathbb C}\right\}, \text{ and }\label{formula:complexified tangent bundle_cx str model} \\
 T^{\mathbb C}\mathfrak{C}_G& \cong\left\{(J,a_+,a_-)\mid J\in\mathfrak{C}_G,\;a_+,a_-\in A_\mathbb C^{\sigma_\mathbb C}\right\}. \nonumber
\end{align}

\subsection{Projective space model}\label{sec_proj-sp-mod}
To introduce the projective space model for $\mathcal{X}_{G}$, one first needs to consider the complexified algebras. Denote by $(A_\mathbb{C},\sigma_\mathbb{C})$ the complexification of the Hermitian and semisimple $\mathbb{R}$-algebra $(A,\sigma)$, that is, $A_\mathbb{C}=A\otimes_\mathbb{R}\mathbb{C}$, and the anti-involution $\sigma_\mathbb{C}$ is the $\mathbb{C}$-linear extension of $\sigma$ as follows
$$
\sigma_\mathbb{C}(X+iY):=\sigma(X)+\sigma(Y)i.
$$
Then $(A_\mathbb{C},\sigma_\mathbb{C})$ is never a Hermitian algebra, since
\begin{align*}
    \sigma_\mathbb{C}(1)\cdot 1+\sigma_\mathbb{C}(i)\cdot i=0.
\end{align*}
Note that $\sigma$ also extends $\mathbb{C}$-anti-linearly to an anti-involution $\bar \sigma_\mathbb{C}$ on $A_\mathbb{C}$ as
$$
\bar \sigma_\mathbb{C}(X+iY):=\sigma(X)-\sigma(Y)i;
$$
this makes $(A_\mathbb{C},\bar\sigma_\mathbb{C})$ a Hermitian and semisimple $\mathbb{C}$-algebra (see {\cite[Corollary 2.79]{ABRRW}}). 

The standard symplectic form $\omega$ on $A^2$ also extends to a symplectic form on $A_\mathbb{C}^2$ as
\begin{align}\label{com_symp}
    \omega_\mathbb{C}(x,y)=\sigma_\mathbb{C}(x)^t\begin{pmatrix}
        0&1\\
        -1&0
    \end{pmatrix}y.
\end{align}
This symplectic form $\omega_\mathbb{C}$ induces an indefinite form $h\colon A_\mathbb{C}^2\times A_\mathbb{C}^2\to A_\mathbb{C}$ as
\begin{align*}
    h(x,y):=i\omega_\mathbb{C}(\bar{x},y),
\end{align*}
which is easily seen to be $\bar\sigma_\mathbb{C}$-sesquilinear in the sense that
\begin{align*}
    h(y,x)=\bar\sigma_\mathbb{C}(h(x,y)).
\end{align*}

The action of $G$ on $A_\mathbb{C}^2$ preserves both the symplectic form $\omega_\mathbb{C}$ and the indefinite form $h$, which is proportional to the standard symplectic form for the algebra $(A_\mathbb C,\bar\sigma_\mathbb C)$. In fact, this means 
$$G = \mathrm{Sp}_{2}(A_\mathbb C,\sigma_\mathbb C)\cap\mathrm{Sp}_{2}(A_\mathbb C,\bar\sigma_\mathbb C).$$
However, as we have seen in Section~\ref{sec:incarnations}, the group $\mathrm{Sp}_{2}(A_\mathbb C,\bar\sigma_\mathbb C)$ is isomorphic to $\mathrm{O}_{(1,1)}(A_\mathbb C,\bar\sigma_\mathbb C)$. By Section~\ref{sec:projective_O}, the projective space models for its symmetric space are 
\begin{align*}
    \mathfrak{P}^\pm_{\mathrm{Sp}_{2}(A_\mathbb C,\bar\sigma_\mathbb C)}:=\left\{xA_\mathbb C\midwd\; x\in A_\mathbb C^{2}\text{ regular such that }h(x,x)\in (A_\mathbb C^{\bar\sigma_\mathbb C})_{\pm} \right\}.
\end{align*}

The intersection with the space of isotropic lines
\begin{align*}
   \mathfrak{P}_G^\pm=\mathfrak{P}^\pm_{\mathrm{Sp}_{2}(A_\mathbb C,\bar\sigma_\mathbb C)}\cap \mathbb{P}\mathrm{Is}\left( {{\omega}_{\mathbb{C}}} \right)
\end{align*}
gives models of the symmetric space $G/K$ (see \cite[Corollary 5.21]{ABRRW}). We thus have the following:

\begin{defn}
    Either space $\mathfrak{P}^+_G$ or $\mathfrak{P}^-_G$ is called the \emph{projective space model}  for the symmetric space $G/K$.
\end{defn}

\begin{prop}[{\cite[Proposition~5.18]{ABRRW}}]
    The linear action of $G<\mathrm{GL}_2(A_\mathbb C)$ on $A_\mathbb C^2$ extends naturally to the action to $\mathbb{P}\mathrm{Is}(\omega_\mathbb{C})$ which preserves $\mathfrak{P}_G^+$ and $\mathfrak{P}_G^-$. This action restricted to $\mathfrak{P}_G^+$ and $\mathfrak{P}_G^-$ is transitive. The stabilizer of $(i,1)^tA_\mathbb C$ agrees with $K$.
\end{prop}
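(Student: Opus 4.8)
The plan is to obtain the Proposition from the already-established complex-structures model $\mathfrak{C}_G$ of Section~\ref{sec_cplx-str-mod}: by construction $\mathfrak{P}_G^{\pm}$ is the image of $\mathfrak{C}_G$ under the eigenline map, so transitivity and the stabilizer computation will transport formally, and only one genuinely analytic input (bijectivity of that map) remains.

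\emph{Step 1: the action is well defined and preserves $\mathfrak{P}_G^\pm$.} Since $G=\mathrm{Sp}_2(A_\mathbb C,\sigma_\mathbb C)\cap\mathrm{Sp}_2(A_\mathbb C,\bar\sigma_\mathbb C)$, every $g\in G$ preserves $\omega_\mathbb C$, hence carries regular $\omega_\mathbb C$-isotropic vectors to regular $\omega_\mathbb C$-isotropic vectors, so the linear $G$-action descends to an action on $\mathbb{P}\mathrm{Is}(\omega_\mathbb C)$. Moreover $g$ has entries in $A$, so it commutes with complex conjugation on $A_\mathbb C^2$; together with $\omega_\mathbb C(gv,gw)=\omega_\mathbb C(v,w)$ this gives, for $h(x,y)=i\omega_\mathbb C(\bar x,y)$, the equality $h(gv,gv)=i\omega_\mathbb C(\overline{gv},gv)=i\omega_\mathbb C(g\bar v,gv)=i\omega_\mathbb C(\bar v,v)=h(v,v)$. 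Hence $g$ preserves each condition $h(x,x)\in(A_\mathbb C^{\bar\sigma_\mathbb C})_{\pm}$, so it preserves $\mathfrak{P}_G^+$ and $\mathfrak{P}_G^-$ separately.

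\emph{Step 2: transitivity and stabilizer.} By Lemma~\ref{lem:complexified_complex_structure}, for $J\in\mathfrak{C}_G$ the operator $J_\mathbb C$ has two transverse $\omega_\mathbb C$-isotropic eigenlines in $A_\mathbb C^2$ which are interchanged by conjugation; exactly one of them, say $\ell(J)$, lies in $\mathfrak{P}_G^+$, and $J\mapsto\ell(J)$ is a $G$-equivariant bijection $\mathfrak{C}_G\to\mathfrak{P}_G^+$ (this is \cite[Proposition~5.20, Corollary~5.21]{ABRRW}, and is what is meant by saying in this subsection that $\mathfrak{P}_G^\pm$ gives models of $G/K$). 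Equivariance is immediate: $J_\mathbb C v=\lambda v$ implies $(gJg^{-1})_\mathbb C(gv)=\lambda(gv)$, and by Step 1 the image line stays in $\mathfrak{P}_G^+$. A direct computation with $J_0(x,y)=(y,-x)$ shows that its $(-i)$-eigenline is $(1,-i)^tA_\mathbb C=(i,1)^tA_\mathbb C$ and that $h\bigl((i,1)^t,(i,1)^t\bigr)=2\in(A_\mathbb C^{\bar\sigma_\mathbb C})_{+}$, so $\ell(J_0)=(i,1)^tA_\mathbb C$. Since, by Section~\ref{sec_cplx-str-mod}, $G$ acts on $\mathfrak{C}_G$ transitively with $\mathrm{Stab}_G(J_0)=K$, pushing forward along the equivariant bijection gives at once that $G$ acts transitively on $\mathfrak{P}_G^+$ with $\mathrm{Stab}_G\bigl((i,1)^tA_\mathbb C\bigr)=K$. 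The assertions for $\mathfrak{P}_G^-$ follow identically using the other eigenline, or by transporting along the $G$-equivariant homeomorphism $l\mapsto l^{\perp_h}$ from Section~\ref{sec:projective_O}.

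\emph{Step 3: what remains, and the main obstacle.} The one non-formal ingredient is the bijectivity of $J\mapsto\ell(J)$, i.e.\ \cite[Corollary~5.21]{ABRRW}; I would either cite it or reprove it. Injectivity holds because a complex structure is recovered from its $\pm i$-eigenline pair, which is transverse since $h$ restricted to $\ell(J)$ is invertible. For surjectivity, given $l=xA_\mathbb C\in\mathfrak{P}_G^+$ one has $\bar x\in x^{\perp_h}$ because $h(\bar x,x)=i\omega_\mathbb C(x,x)=0$ by isotropy, hence $\bar l=l^{\perp_h}$; defining $J_\mathbb C$ to be $+i$ on $l$ and $-i$ on $\bar l$ yields a real symplectic operator for which $h_J:=\omega(J\,\cdot\,,\cdot)$ is a $\sigma$-inner product precisely because $h(x,x)\in(A_\mathbb C^{\bar\sigma_\mathbb C})_{+}$, so $J\in\mathfrak{C}_G$ with $\ell(J)=l$. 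A self-contained alternative bypassing complex structures is to prove transitivity on $\mathfrak{P}_G^+$ directly: rescale a generator $x$ of $l$ so that $h(x,x)=1$ — possible because $h(x,x)\in(A_\mathbb C^{\bar\sigma_\mathbb C})_{+}$ and $(A_\mathbb C,\bar\sigma_\mathbb C)$ is Hermitian, so $h(x,x)$ is an invertible square — and complete $x$ to a basis $(x,y)$ of $A_\mathbb C^2$ whose Gram matrices for \emph{both} $\omega_\mathbb C$ and $h$ coincide with those of the reference pair $\bigl(\tfrac{1}{\sqrt{2}}(i,1)^t,\tfrac{1}{\sqrt{2}}(i,-1)^t\bigr)$; the resulting change-of-basis operator then lies in $\mathrm{Aut}(\omega_\mathbb C)\cap\mathrm{Aut}(h)=G$ and moves the reference line onto $l$. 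The main obstacle in this direct route is exactly the simultaneous completion over the noncommutative algebra $A_\mathbb C$: one cannot freely ``halve and symmetrize'' to annihilate the off-diagonal entries for both forms at once, and the Hermitian hypothesis must be used to extract the square roots involved — which is why it is cleaner to route the proof through the complex-structures model, i.e.\ through \cite[Proposition~5.18]{ABRRW}.
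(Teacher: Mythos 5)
The paper itself contains no argument for this statement—it is quoted from \cite[Proposition~5.18]{ABRRW}—so there is no internal proof to compare against; judged on its own, your route is correct and is exactly the one the surrounding text supports. Step 1 is a valid direct check: $G$ preserves $\omega_\mathbb C$, has entries in $A$ and hence commutes with conjugation, so it preserves $h(x,y)=i\omega_\mathbb C(\bar x,y)$ and therefore the sign conditions cutting out $\mathfrak{P}_G^\pm$ inside $\mathbb P\mathrm{Is}(\omega_\mathbb C)$. Step 2 transports transitivity and the stabilizer from the complex-structures model through the $G$-equivariant eigenline map $J\mapsto l_J^\pm$, which is precisely the isometry $F_{\mathfrak{C}_G,\mathfrak{P}_G^\pm}$ the paper introduces right after the proposition; your computation that the $(-i)$-eigenline of $J_0$ is $(i,1)^tA_\mathbb C$ with $h\bigl((i,1)^t,(i,1)^t\bigr)=2$ is correct, and injectivity of the eigenline map is indeed what upgrades $\mathrm{Stab}_G\bigl((i,1)^tA_\mathbb C\bigr)\supseteq K$ to equality. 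You also correctly isolate the one non-formal input (bijectivity, i.e.\ \cite[Corollary~5.21]{ABRRW}) and either cite it or sketch it. Two small points. First, in Step 3 you define $J_\mathbb C$ to be $+i$ on $l\in\mathfrak{P}_G^+$ and $-i$ on $\bar l$, which contradicts the convention you use in Step 2 (and the paper's: $l_J^+$ is the $(-i)$-eigenline); with that sign $h_J=\omega(J\,\cdot\,,\cdot)$ comes out negative, i.e.\ you have constructed $-J\in\mathfrak{C}_G$ rather than $J$—swap the signs and the construction is fine. Second, for $\mathfrak{P}_G^-$ the map $l\mapsto l^{\perp_h}$ you borrow from the $\mathrm O_{(1,1)}$ section coincides here with $l\mapsto\bar l$ (as your own computation $h(\bar x,x)=i\omega_\mathbb C(x,x)=0$ shows), and plain conjugation is the cleaner choice since its $G$-equivariance is immediate from $G\subset\mathrm{Mat}_2(A)$.
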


The spaces $\mathfrak{P}_G^\pm$ are open in the space of isotropic lines $\mathbb{P}\mathrm{Is}\left( {{\omega }_{\mathbb{C}}}\right)$, which is a compact manifold. Therefore, the closure of $\mathfrak{P}_G^\pm$ which agrees with 
\begin{align*}
   \overline {\mathfrak{P}}_G^\pm=\overline{\mathfrak{P}}^\pm_{\mathrm{Sp}_{2}(A_\mathbb C,\bar\sigma_\mathbb C)}\cap \mathbb{P}\mathrm{Is}\left( {{\omega}_{\mathbb{C}}} \right)
\end{align*}
 is compact.

We can apply Proposition~\ref{prop:tangent_gen_omega}, Remark~\ref{prop:tangent_skew},  and Corollary~\ref{cor:tangent_skew_invariant} to compute the tangent spaces and the action of $G$ on them:

\begin{prop}\label{prop:tangent_pm}
    Let $l\in \mathfrak{P}_G^\pm$ and $x$ be a generator of $l$. Then the tangent space $T_l\mathfrak{P}_G^\pm$ is isomorphic to both
    \begin{equation}\label{first_descr_tangent_proj}
    {\left\{ [x,v]\mid v\in A^{2}_{\mathbb C}/l,\; \omega_\mathbb C(v,x)\in A_\mathbb C^{{\sigma_{\mathbb C}}} \right\}}\cong x^{\perp_{\omega_\mathbb C}}/l,
    \end{equation}
    and
    \begin{equation}\label{second_descr_tangent_proj}
    \left\{Q\in\mathrm{Hom}(l,A_\mathbb C^2/l)\mid \omega_\mathbb C(Q(x),x)\in A^{\sigma_\mathbb C}_\mathbb C\right\}.
    \end{equation}
    As before, the isomorphism between these two spaces is given by $[x,v]=[Q\colon xa\mapsto va]$.

\noindent The action of $\mathrm{Sp}_2(A,\sigma)$ on the first realization (\ref{first_descr_tangent_proj}) of the tangent bundle is linear. The action on the second realization (\ref{second_descr_tangent_proj}) is by conjugation (cf. Proposition \ref{prop:tangent_gen_omega}).
\end{prop}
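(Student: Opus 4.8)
The plan is to exploit the fact, noted immediately before the statement, that $\mathfrak{P}_G^\pm$ is an open submanifold of the space $\mathbb{P}\mathrm{Is}(\omega_\mathbb{C})$ of $\omega_\mathbb{C}$-isotropic lines in $A_\mathbb{C}^2$; consequently $T_l\mathfrak{P}_G^\pm = T_l\mathbb{P}\mathrm{Is}(\omega_\mathbb{C})$ for every $l\in\mathfrak{P}_G^\pm$, and it suffices to describe the latter. The point of this reduction is that $\mathbb{P}\mathrm{Is}(\omega_\mathbb{C})$ is precisely the object to which the general tangent space computation of Proposition~\ref{prop:tangent_gen_omega} applies, with the $\sigma_\mathbb{C}$-sesquilinear form taken to be $\omega_\mathbb{C}$ of \eqref{com_symp}.

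Carrying this out: $\omega_\mathbb{C}$ is $\sigma_\mathbb{C}$-skew symmetric, being the $\mathbb{C}$-linear extension of the standard symplectic form on $A^2$. Fixing a generator $x$ of $l$, Proposition~\ref{prop:tangent_gen_omega} gives
\[
T_l\mathbb{P}\mathrm{Is}(\omega_\mathbb{C}) = \{Q\in\mathrm{Hom}(l,A_\mathbb{C}^2/l)\mid \omega_\mathbb{C}(Q(x),x)+\omega_\mathbb{C}(x,Q(x))=0\}\cong x^{\perp_{\omega_\mathbb{C}}}/l .
\]
Since $\omega_\mathbb{C}$ is $\sigma_\mathbb{C}$-skew symmetric, Remark~\ref{prop:tangent_skew} rewrites $x^{\perp_{\omega_\mathbb{C}}}$ as $\{y\in A_\mathbb{C}^2\mid\omega_\mathbb{C}(y,x)\in A_\mathbb{C}^{\sigma_\mathbb{C}}\}$, which is exactly the first description~\eqref{first_descr_tangent_proj}; and Corollary~\ref{cor:tangent_skew_invariant}(1), applied to $\omega_\mathbb{C}$, turns the $\mathrm{Hom}$-side into $\{Q\in\mathrm{Hom}(l,A_\mathbb{C}^2/l)\mid\omega_\mathbb{C}(Q(x),x)\in A_\mathbb{C}^{\sigma_\mathbb{C}}\}$, which is~\eqref{second_descr_tangent_proj}. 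The identification of the two realisations is the canonical one $[x,v]\leftrightarrow[Q\colon xa\mapsto va]$ from Section~\ref{sec:sesq_forms}. The one thing requiring a moment of care is that none of these descriptions depends on the chosen generator of $l$; but this independence is already built into Proposition~\ref{prop:tangent_gen_omega} and Corollary~\ref{cor:tangent_skew_invariant} (it is what the $A^\times$-quotient in their statements encodes), so nothing new is needed.

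For the last assertion about the action, I would argue that $G=\mathrm{Sp}_2(A,\sigma)$ preserves $\omega_\mathbb{C}$ --- hence $G\subseteq\mathrm{Aut}(\omega_\mathbb{C})$ --- and preserves $\mathfrak{P}_G^\pm$, so the equivariance statement at the end of Proposition~\ref{prop:tangent_gen_omega} applies verbatim: on~\eqref{first_descr_tangent_proj} one has $g.[x,v]=[gx,gv]$, which is the linear action inherited from $A_\mathbb{C}^2$ (passed to the quotient $x^{\perp_{\omega_\mathbb{C}}}/l$), whereas on~\eqref{second_descr_tangent_proj} the corresponding operator is $gQg^{-1}$, i.e. the action is by conjugation. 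As the whole proof is a direct assembly of Proposition~\ref{prop:tangent_gen_omega}, Remark~\ref{prop:tangent_skew}, and Corollary~\ref{cor:tangent_skew_invariant}, there is no genuine obstacle; the only step I would take extra care with in the write-up is the identification $T_l\mathfrak{P}_G^\pm=T_l\mathbb{P}\mathrm{Is}(\omega_\mathbb{C})$ underlying the whole reduction, i.e. the openness of $\mathfrak{P}_G^\pm$ in $\mathbb{P}\mathrm{Is}(\omega_\mathbb{C})$, which however was already established just before the statement.
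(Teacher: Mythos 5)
Your proposal is correct and follows exactly the route the paper intends: the paper gives no separate proof of Proposition~\ref{prop:tangent_pm}, but introduces it with the sentence that one applies Proposition~\ref{prop:tangent_gen_omega}, Remark~\ref{prop:tangent_skew}, and Corollary~\ref{cor:tangent_skew_invariant} to the $\sigma_\mathbb{C}$-skew symmetric form $\omega_\mathbb{C}$, using the openness of $\mathfrak{P}_G^\pm$ in $\mathbb{P}\mathrm{Is}(\omega_\mathbb{C})$ and the inclusion $\mathrm{Sp}_2(A,\sigma)\subset\mathrm{Aut}(\omega_\mathbb{C})$ for the equivariance, which is precisely what you do.
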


\begin{rem}
    As before, we identify
    $$T_l\mathfrak{P}_G^\pm=\left\{Q\in\mathrm{Hom}(l,A_\mathbb C^2/l)\mid \omega_\mathbb C(Q(x),x)\in A^{\sigma_\mathbb C}_\mathbb C\right\}$$
    since it is a representative-independent description.
\end{rem}

The complex structures model and the projective space model are related by the following isometry: 
$$
\begin{array}{lccr}
    F_{\mathfrak{C}_G,\mathfrak{P}_G^\pm}\colon &  \mathfrak{C}_G & \to & \mathfrak{P}_G^\pm\\
     & J & \mapsto & l_J^\pm,
\end{array}
$$
where the isotropic line $l_J^\pm$ is the $(\mp i)$-eigenline of $J_\mathbb C$, that is, $J_\mathbb C(x)=\mp ix$, for all $x\in l_J^\pm$. This isometry preserves the structure of Hermitian symmetric spaces and is $G$-equivariant.

\begin{prop}\label{prop:differential_F}
    Let $J\in\mathfrak{C}_G$, $L\in T_J\mathfrak{C}_G$ and $x\in l_J^\pm$ be a generator of $l_J^\pm$. Then
    $$d_JF_{\mathfrak{C}_G,\mathfrak{P}_G^\pm}(L)=[x,\pm\frac{i}{2}L_\mathbb C(x)+l_J^\pm]=\pm\frac{i}{2}[L_\mathbb C]\in T_{l_J^\pm}\mathfrak{P}_G^\pm,$$ 
    where $[L_\mathbb C]\colon l_J^\pm\to A_\mathbb C^2/l_J^\pm$ is the natural projection of the linear operator $L_\mathbb C|_{l_J^\pm}\colon l_J^\pm\to A_\mathbb C^2$.
\end{prop}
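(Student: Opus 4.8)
The plan is to mimic the differential computation for $F_{\mathfrak{C}_G,\mathfrak{P}_G^\pm}$ carried out in the $\mathrm{O}_{(1,1)}(A,\sigma)$ setting in Section~\ref{sec:projective_O}, adjusting for the fact that a complex structure satisfies $J^2=-\mathrm{Id}$, so $J_\mathbb C$ has eigenvalues $\pm i$ rather than $\pm1$. Concretely, I would fix a smooth curve $J(t)\in\mathfrak{C}_G$ with $J(0)=J$ and $\dot J(0)=L$, together with a smooth curve $x(t)\in A_\mathbb C^2$ with $x(0)=x$ such that $x(t)$ generates the $(\mp i)$-eigenline $l_{J(t)}^\pm$ of $J(t)_\mathbb C$ for all $t$ near $0$. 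Such a lift exists by local triviality of the tautological line over $\mathbb P\mathrm{Is}(\omega_\mathbb C)$; concretely one may take $x(t):=\frac{1}{2}(\mathrm{Id}\pm iJ(t)_\mathbb C)x$, the spectral projector onto $l_{J(t)}^\pm$ applied to $x$, which depends polynomially on $J(t)$ and remains regular for small $t$. By the very definition of $F_{\mathfrak{C}_G,\mathfrak{P}_G^\pm}$ one then has the identity $J(t)_\mathbb C\,x(t)=\mp i\,x(t)$ for all such $t$.

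Next I would differentiate this identity at $t=0$ to get $L_\mathbb C(x)+J_\mathbb C(\dot x(0))=\mp i\,\dot x(0)$, that is,
\[
(\mp i\,\mathrm{Id}-J_\mathbb C)\,\dot x(0)=L_\mathbb C(x).
\]
By construction $d_JF_{\mathfrak{C}_G,\mathfrak{P}_G^\pm}(L)=[x,\dot x(0)+l_J^\pm]$, and this class depends on $\dot x(0)$ only modulo $\ker(\mp i\,\mathrm{Id}-J_\mathbb C)$, which is precisely the $(\mp i)$-eigenline $l_J^\pm$; so it suffices to exhibit one solution of the displayed equation. I claim $\dot x(0)=\pm\frac{i}{2}L_\mathbb C(x)$ is such a solution: using the relation $J_\mathbb C L_\mathbb C+L_\mathbb C J_\mathbb C=0$ (the complex-linear extension of the one in Proposition~\ref{prop:tangent_scs}) together with $J_\mathbb C(x)=\mp i x$, one computes
\[
(\mp i\,\mathrm{Id}-J_\mathbb C)\left(\pm\frac{i}{2}L_\mathbb C(x)\right)=\frac{1}{2}L_\mathbb C(x)\mp\frac{i}{2}J_\mathbb C L_\mathbb C(x)=\frac{1}{2}L_\mathbb C(x)\pm\frac{i}{2}L_\mathbb C(J_\mathbb C x)=\frac{1}{2}L_\mathbb C(x)+\frac{1}{2}L_\mathbb C(x)=L_\mathbb C(x),
\]
where I used $(\mp i)(\pm i)=1$ and the centrality of $i$ in $A_\mathbb C$. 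Hence $\dot x(0)\equiv\pm\frac{i}{2}L_\mathbb C(x)\pmod{l_J^\pm}$, which gives $d_JF_{\mathfrak{C}_G,\mathfrak{P}_G^\pm}(L)=[x,\pm\frac{i}{2}L_\mathbb C(x)+l_J^\pm]=\pm\frac{i}{2}[L_\mathbb C]$ after translating into the $\mathrm{Hom}(l_J^\pm,A_\mathbb C^2/l_J^\pm)$-description of the tangent space.

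Finally, the fact that the right-hand side lies in $T_{l_J^\pm}\mathfrak{P}_G^\pm$ is automatic, being the image of a tangent vector under the smooth map $F_{\mathfrak{C}_G,\mathfrak{P}_G^\pm}$; alternatively one checks it against the explicit description in Proposition~\ref{prop:tangent_pm}, using Lemma~\ref{lem:complexified_complex_structure} (so that $L_\mathbb C(l_J^\pm)\subseteq l_J^\mp\cong A_\mathbb C^2/l_J^\pm$) and the $\sigma_\mathbb C$-symmetry of $\omega_\mathbb C(L_\mathbb C(x),x)$ already established in this section. I do not anticipate a real obstacle: the computation is essentially a one-line sign chase. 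The only points requiring attention are bookkeeping the correspondence ``$l_J^\pm$ is the $(\mp i)$-eigenline'' consistently across all the signs, and justifying that the smooth generator lift $x(t)$ exists so that the relation $J(t)_\mathbb C x(t)=\mp i x(t)$ is available to differentiate.
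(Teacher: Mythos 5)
Your argument is correct and follows essentially the same route as the paper's proof: differentiate the eigenvalue identity $J_\mathbb C(t)x(t)=\mp i\,x(t)$ along a path, note that the resulting equation determines $\dot x(0)$ only modulo $l_J^\pm$, and verify by the anticommutation $L_\mathbb C J_\mathbb C=-J_\mathbb C L_\mathbb C$ that $\pm\frac{i}{2}L_\mathbb C(x)$ solves it. Your additional remarks (the explicit spectral-projector lift $x(t)=\frac{1}{2}(\mathrm{Id}\pm iJ(t)_\mathbb C)x$ and the identification of the kernel with $l_J^\pm$) are fine refinements of details the paper leaves implicit.
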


\begin{proof}
    Let $J(t)$ be a smooth path in $\mathfrak{C}_G$ such that $J(0)=J$ and $\dot J(0)=L$. Let $x(t)$ be a smooth path in $A^2_\mathbb C$ such that $x(t)$ generates $l_{J(t)}^\pm$ and $x(0)=x$. Then $J_\mathbb C(t)x(t)=\mp ix(t)$, for all $t$. Differentiating this equality at $t=0$, we obtain 
    \begin{align*}
    L_\mathbb C(x)+J_\mathbb C(\dot x(0)) & =\mp i \dot x(0),\\
    (J_\mathbb C\pm i\mathrm{Id})\dot x(0) & =-L_\mathbb C(x).
    \end{align*}
    Notice that, if $\dot x(0)$ satisfies the equality above, then for any $a\in A_\mathbb C$, $\dot x(0)+xa$ satisfies it too. Now it is enough to check that $\pm\frac{i}{2}L_\mathbb C(x)$ satisfies it:
    \[(J_\mathbb C\pm i\mathrm{Id})\left(\pm\frac{i}{2}L_\mathbb C(x)\right)
    =\frac{i}{2}(iL_\mathbb C(x) + iL_\mathbb C(x))=-L_\mathbb C(x).\]
    This shows that 
    $$d_JF_{\mathfrak{C}_G,\mathfrak{P}_G^\pm}(L)=[x,\pm\frac{i}{2}L(x)+l_J^\pm]=\pm\frac{i}{2}[L_\mathbb C].$$
\end{proof}

Let $l\in\mathfrak{P}_G^\pm$; its complexification is denoted by $l_\mathbb C:=l\otimes_\mathbb R\mathbb C\subset A_\mathbb C^2\otimes_\mathbb R \mathbb C$. The complexification of the tangent space of $\mathfrak P_G^\pm$ at $l$ is given by the space of all $A_\mathbb C\otimes_\mathbb R \mathbb C$-linear morphisms:
$$T_l^\mathbb C\mathfrak{P}_G^\pm=\left\{Q\in\mathrm{Hom}(l_\mathbb C,A_\mathbb C^2\otimes_\mathbb R\mathbb C/l_\mathbb C)\mid \omega_{\mathbb C\mathbb C}(Q(x),x)\in A^{\sigma_\mathbb C}_\mathbb C\otimes_\mathbb R\mathbb C\right\},$$
where $\omega_{\mathbb C\mathbb C}$ denotes the $\mathbb C$-linear extension of $\omega_\mathbb C$. Using the identification~\eqref{formula:complexified tangent bundle_cx str model}, we obtain another description of $T_l^\mathbb C\mathfrak{P}_G^\pm$ which does not require double complexification:
\begin{align}\label{eq:compl.tangent.P.Sp}
    \begin{aligned}
    T_l^\mathbb C\mathfrak{P}_G^\pm&=\left\{(Q,Q')\in\mathrm{Hom}(l,A_\mathbb C^2/l)\times \mathrm{Hom}(\bar l,A_\mathbb C^2/\bar l)\midwd
    \begin{aligned}
        &\ \omega_{\mathbb C}(Q(x),x)\in A^{\sigma_\mathbb C}_\mathbb C,\\
        &\ \omega_{\mathbb C}(Q'(x),x)\in A^{\sigma_\mathbb C}_\mathbb C
    \end{aligned}\right\}\\
    &=T_l\mathfrak{P}_G^\pm\oplus T_{\bar l}\mathfrak{P}_G^\mp. 
    \end{aligned}
\end{align}
Notice that if $l\in\mathfrak{P}_G^\pm$, then $\bar l\in\mathfrak{P}_G^\mp$.

The spaces $\mathfrak{P}_G^\pm$ are open domains in  the projective space $\mathbb{P}\mathrm{Is}(\omega_\mathbb C)$. Later on we will construct affine charts for these spaces. 

\subsection{Half-space models}\label{sec:upperhalf_real}

We next discuss the half-space models of the symmetric space $\mathcal X_{G}$. We denote as before by $A_\mathbb C$ the complexification of $A$, $A_\mathbb{C}:=A\otimes_\mathbb R\mathbb C$ and extend $\sigma$ to $A_\mathbb C$ complex linearly, that is, $\sigma_\mathbb C(x+yi):=\sigma(x)+\sigma(y)i$.
Every element of $z\in A_\mathbb{C}^{\sigma_\mathbb{C}}$ can be uniquely written as $z=x+yi$ where $x,y\in A^\sigma$. We denote by $x:=\mathrm{Re}(z)$, $y:=\mathrm{Im}(z)$. We also have a complex conjugation on $A_\mathbb{C}$ given by $\bar z=x-yi$.

\begin{defn}
    The \emph{upper (resp. lower) half-space model} is defined as the space
    $$\mathfrak U_G^\pm:=\{z\in A_\mathbb{C}^{\sigma_\mathbb{C}}\mid \mathrm{Im}(z)\in A^\sigma_\pm\}.$$
\end{defn}

\noindent In the following, we will only discuss the upper half-space model $\mathfrak U_G^+$. However, everything we prove for the upper half-space model can be appropriately adapted to the case of the lower half-space model as well. To simplify notation, we denote $\mathfrak U_G:=\mathfrak U_G^+$ (as is also done in~\cite{ABRRW}).  First of all, this is indeed a model of the symmetric space $\mathcal X_{G}$:

\begin{prop}[{\cite[Proposition~5.8]{ABRRW}}]
    The group $G$ acts smoothly and transitively on $\mathfrak{U}_G$ by M\"obius transformations, i.e. 
    $$g.z=(az+b)(cz+d)^{-1},$$ 
    where $g=\begin{pmatrix}
        a & b \\
        c & d
    \end{pmatrix}$. The stabilizer of $i\in\mathfrak{U}_G$ agrees with $K$.
\end{prop}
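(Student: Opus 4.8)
The plan is to verify, in order: (1) the M\"obius formula is well defined on $\mathfrak U_G$ and maps $\mathfrak U_G$ into itself; (2) it defines a smooth left $G$-action; (3) the action is transitive; (4) $\mathrm{Stab}_G(i)=K$. Steps (2)--(4) are short and essentially formal; the real work is step (1), and within it the two delicate points---that $cz+d$ is invertible, and that $\mathrm{Im}(g.z)$ again lies in $A^\sigma_+$---are where the Hermitian/cone structure of $(A,\sigma)$ and $(A_\mathbb C,\bar\sigma_\mathbb C)$ from~\cite{ABRRW} enters. This I expect to be the main obstacle.

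For step (1), fix $g=\begin{pmatrix}a&b\\c&d\end{pmatrix}\in G$ and $z=x+iy\in\mathfrak U_G$ with $x\in A^\sigma$, $y\in A^\sigma_+$. First I would record the symplectic relations coming from~\eqref{eq:groups}: $\sigma(a)c=\sigma(c)a$, $\sigma(b)d=\sigma(d)b$, $\sigma(a)d-\sigma(c)b=1$ and, applying $\sigma$ to the last, $\sigma(d)a-\sigma(b)c=1$. \emph{Invertibility of $cz+d$:} on $A_\mathbb C^2$ the $\bar\sigma_\mathbb C$-sesquilinear form $h(p,q)=i\,\omega_\mathbb C(\bar p,q)$ of Section~\ref{sec_proj-sp-mod} is $G$-invariant (the entries of $g$ lie in $A$, so $g$ commutes with complex conjugation and preserves $\omega_\mathbb C$), and a short computation gives $h\big((z,1)^t,(z,1)^t\big)=2y\in A^\sigma_+$, which is invertible. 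If $(cz+d)p=0$ for some $p\neq0$, then right-multiplying $g(z,1)^t$ by $p$ and using $h$-invariance forces $\bar\sigma_\mathbb C(sp)(sp)=0$, where $y=s^2$ with $s\in(A^\sigma)^\times$; the Hermitian property of $(A_\mathbb C,\bar\sigma_\mathbb C)$ (cf.~\cite[Proposition~2.27]{ABRRW}) then gives $sp=0$, hence $p=0$, a contradiction. \emph{$g.z\in\mathfrak U_G$:} using the four relations one checks $(z\sigma(c)+\sigma(d))(az+b)=(z\sigma(a)+\sigma(b))(cz+d)$, which is precisely $\sigma_\mathbb C(g.z)=g.z$; a parallel computation yields $\mathrm{Im}(g.z)=\bar\sigma_\mathbb C\big(s(cz+d)^{-1}\big)\,s(cz+d)^{-1}$, a $\bar\sigma_\mathbb C$-positive element of $A_\mathbb C$ which, being also $\sigma$-symmetric and invertible, lies in $A^\sigma_+$ by the cone structure (cf.~\cite[Corollary~2.65]{ABRRW}). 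Alternatively, one sidesteps this last point: $\mathrm{Im}(g.z)$ is invertible for every $g\in G$, $z\in\mathfrak U_G$, the set $A^\sigma_+$ is open and closed in $(A^\sigma)^\times$, and $G$ is connected (Proposition~\ref{prop:Sp_2-connected}), so a path from $g$ to $\mathrm{Id}$ keeps $g_t.z$ inside $\mathfrak U_G$.

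Step (2): $\mathrm{Id}.z=z$ is clear, and for $(gh).z=g.(h.z)$ with $h=\begin{pmatrix}a'&b'\\c'&d'\end{pmatrix}$, writing $w'=h.z$ and $u'=c'z+d'$ one has $a'z+b'=w'u'$, so the numerator and denominator of $(gh).z$ factor as $(aw'+b)u'$ and $(cw'+d)u'$ and the $u'$ cancels (all relevant elements invertible by step (1)). Smoothness of $G\times\mathfrak U_G\to\mathfrak U_G$ is automatic, the formula being polynomial in the entries of $g$ and in $z$, with inversion on $A_\mathbb C^\times$ smooth and $\mathfrak U_G$ open in $A_\mathbb C^{\sigma_\mathbb C}$. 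Step (3): for $z=x+iy$ with $y=s^2$, $s\in(A^\sigma)^\times$, the matrices $\begin{pmatrix}1&-x\\0&1\end{pmatrix}$ and $\begin{pmatrix}s^{-1}&0\\0&s\end{pmatrix}$ lie in $G$ by~\eqref{eq:groups} (using $x\in A^\sigma$, $\sigma(s^{-1})=s^{-1}$) and send $z\mapsto iy\mapsto s^{-1}(iy)s^{-1}=i$, since $i$ is central in $A_\mathbb C$; hence the action is transitive.

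Step (4): $g.i=i$ reads $ai+b=i(ci+d)=-c+di$, so comparing $A$- and $iA$-parts (the entries of $g$ lying in $A$) gives $b=-c$ and $a=d$; the defining relations of $G$ then reduce to $\sigma(a)a+\sigma(b)b=1$ and $\sigma(a)b\in A^\sigma$ (the condition $\sigma(b)a\in A^\sigma$ being equivalent, as $\sigma(\sigma(a)b)=\sigma(b)a$), which is exactly the description of $K=\mathrm{KSp}_2(A,\sigma)$ in~\eqref{max_cpt_real}. Thus $\mathrm{Stab}_G(i)=K$, and $gK\mapsto g.i$ identifies $G/K$ with $\mathfrak U_G$.
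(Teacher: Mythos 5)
Your argument is correct. Note that the paper itself offers no proof of this statement — it is quoted from \cite[Proposition~5.8]{ABRRW} — so there is nothing internal to compare against; your verification via the $G$-invariant $\bar\sigma_\mathbb C$-sesquilinear form $h(p,q)=i\,\omega_\mathbb C(\bar p,q)$, giving $h((z,1)^t,(z,1)^t)=2\,\mathrm{Im}(z)$ and hence both the invertibility of $cz+d$ and the identity $\mathrm{Im}(g.z)=\bar\sigma_\mathbb C\big((cz+d)^{-1}\big)\,\mathrm{Im}(z)\,(cz+d)^{-1}$, is exactly the standard route taken in that reference. Two small remarks: the anisotropy fact you attribute to \cite[Proposition~2.27]{ABRRW} (that $\bar\sigma_\mathbb C(q)q=0$ forces $q=0$) is indeed available there, though under a different numbering — it also follows in two lines from the Hermitian property, since $p:=q\bar\sigma_\mathbb C(q)$ satisfies $p^2=0$, hence $p=0$, and then $(q+\bar\sigma_\mathbb C(q))^2+\big(i(q-\bar\sigma_\mathbb C(q))\big)^2=0$ kills $q$. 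And you were right to be cautious about concluding $\mathrm{Im}(g.z)\in A^\sigma_+$ directly from $\bar\sigma_\mathbb C$-positivity plus $\sigma$-symmetry (that needs $A^\sigma\cap(A_\mathbb C^{\bar\sigma_\mathbb C})_+=A^\sigma_+$, which is true but not quoted in this paper); your fallback via connectedness of $\mathrm{Sp}_2(A,\sigma)$ (Proposition~\ref{prop:Sp_2-connected}) together with the observation that $A^\sigma_+$ is open and closed in $(A^\sigma)^\times$ (an invertible square $a^2$ with $a\in A^\sigma$ forces $a\in(A^\sigma)^\times$) closes the gap using only facts stated in the paper.
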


Since $\mathfrak U_G$ is an open subspace of the vector space $A_\mathbb{C}^{\sigma_\mathbb{C}}$, then for every $z\in \mathfrak U_G$, $T_z\mathfrak U_G = A_\mathbb{C}^{\sigma_\mathbb{C}}$. The action of $G$ extends naturally to the action on $T\mathfrak U_G=\mathfrak U_G\times A^{\sigma_\mathbb C}_\mathbb C$. For $g=\begin{pmatrix}
        a & b \\
        c & d
    \end{pmatrix}$
and $(z,v)\in \mathfrak U_G\times A^{\sigma_\mathbb C}_\mathbb C$, the action is given by 
\begin{equation}\label{eq:diff_moebius}
    g.(z,v)=\left((az+b)(cz+d)^{-1}, av(cz+d)^{-1}-(az+b)(cz+d)^{-1}cv(cz+d)^{-1}\right).  
\end{equation}

For every $z\in\mathfrak U_G$, the stabilizer $\mathrm{Stab}_G(z)$ is conjugate to the stabilizer of $z=i$ which is $K$. We are now going to understand the action of $K$ on $T_i\mathfrak U_G\cong A_\mathbb C^{\sigma_\mathbb C}$. By~\eqref{eq:diff_moebius}, for $g=\begin{pmatrix}
    a & b \\
    -b & a
\end{pmatrix}\in K$ and $v\in T_i\mathfrak U_G$,
$$g(v)
=(a+bi)v(-bi+a)^{-1}
=(a+bi)v(\sigma(a)+\sigma(b)i)=qv\sigma_\mathbb C(q)$$
because $(-bi+a)^{-1}=\sigma(a)+\sigma(b)i$ and where $q=a+bi\in\mathrm O(A_\mathbb C,\bar\sigma_\mathbb C)\cong K$ (cf. \eqref{eq:conj_symp_real}). Let $v=v_1+v_2i$, where $v_i\in A^\sigma$, then
$$g.v
=av_1\sigma(a)-bv_1\sigma(b)-av_2\sigma(b)-bv_2\sigma(a)
+(av_1\sigma(b)+bv_1\sigma(a)+av_2\sigma(a)-bv_2\sigma(b))i.$$

Note that $A_\mathbb C^{\sigma_\mathbb C}$ can be identified with $\mathfrak m$ via the map $v_1+v_2i\mapsto \begin{pmatrix}
    v_2 & v_1 \\
    v_1 & -v_2
\end{pmatrix}$. This action agrees with the isotropy action:
\begin{align*}
& \begin{pmatrix}
    a & b \\
    -b & a
\end{pmatrix}\begin{pmatrix}
    v_2 & v_1 \\
    v_1 & -v_2
\end{pmatrix}\begin{pmatrix}
    a & b \\
    -b & a
\end{pmatrix}^{-1}=\begin{pmatrix}
    a & b \\
    -b & a
\end{pmatrix}\begin{pmatrix}
    v_2 & v_1 \\
    v_1 & -v_2
\end{pmatrix}\begin{pmatrix}
    \sigma(a) & -\sigma(b) \\
    \sigma(b) & \sigma(a)
\end{pmatrix}\\
= & \begin{pmatrix}
    av_2+bv_1 & av_1-bv_2 \\
    -bv_2+av_1 & -bv_1-av_2
\end{pmatrix}\begin{pmatrix}
    \sigma(a) & -\sigma(b) \\
    \sigma(b) & \sigma(a)
\end{pmatrix} = \begin{pmatrix} \alpha & \beta \\ \gamma & \delta \end{pmatrix},
\end{align*}
where 
\begin{align*}
& \alpha:= av_2\sigma(a)+bv_1\sigma(a)+av_1\sigma(b)-bv_2\sigma(b),\\ 
& \beta := -av_2\sigma(b)-bv_1\sigma(b)+av_1\sigma(a)-bv_2\sigma(a), \\
& \gamma:= -av_2\sigma(b)-bv_1\sigma(b)+av_1\sigma(a)-bv_2\sigma(a),\\
& \delta:= -av_2\sigma(a)-bv_1\sigma(a)-av_1\sigma(b)+bv_2\sigma(b).    
\end{align*}

The projective space models and the half-space models are related by the following isometries: 
$$\begin{matrix}
F_{\mathfrak{P}_G^\pm,\mathfrak U_G^\pm} \colon & \mathfrak{P}_G^\pm & \to & \mathfrak U_G^\pm\\
 & (x_1,x_2)^tA_\mathbb C & \mapsto & x_1x_2^{-1},
\end{matrix}$$
which are $G$-equivariant. The differentials of these maps can be computed immediately:

\begin{prop}
    Let $x=(x_1,x_2)^t\in\mathrm{Is}(\omega_\mathbb C)$ such that $l=xA_\mathbb C\in\mathfrak{P}_G^\pm$. Let $v=[x,(v_1,v_2)^t+l]\in T_{l}\mathfrak{P}_G^\pm$. Then 
    $$d_lF_{\mathfrak{P}_G^\pm,\mathfrak U_G^\pm}(v)=(v_1-F_{\mathfrak{P}_G^\pm,\mathfrak U_G^\pm}(x) v_2)x_2^{-1}.$$ 
\end{prop}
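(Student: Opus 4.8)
\emph{Proof sketch.} The argument is a direct curve computation, formally identical to the companion statement for $\mathrm{O}_{(1,1)}(A,\sigma)$ in Section~\ref{sec:halfspace_O}. First recall that every line $l\in\mathfrak{P}_G^\pm$ is transverse to the fixed isotropic line $e_2A_\mathbb C$, so if $x=(x_1,x_2)^t$ generates $l$ then $x_2\in A_\mathbb C^\times$; hence $F_{\mathfrak{P}_G^\pm,\mathfrak U_G^\pm}(x)=x_1x_2^{-1}$ is well defined, and since $F_{\mathfrak{P}_G^\pm,\mathfrak U_G^\pm}$ is a diffeomorphism onto $\mathfrak U_G^\pm$ its differential automatically lands in $T_z\mathfrak U_G^\pm=A_\mathbb C^{\sigma_\mathbb C}$; we only need to identify it.

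Before computing, observe that the asserted right-hand side is independent of the data implicit in writing $v=[x,(v_1,v_2)^t+l]$: replacing $(v_1,v_2)^t$ by $(v_1,v_2)^t+xa$ for $a\in A_\mathbb C$ changes $(v_1-x_1x_2^{-1}v_2)x_2^{-1}$ by $(x_1a-x_1x_2^{-1}x_2a)x_2^{-1}=0$, and rescaling $(x,(v_1,v_2)^t)\mapsto(xa,(v_1,v_2)^ta)$ by $a\in A_\mathbb C^\times$ leaves both $x_1x_2^{-1}$ and $(v_1-x_1x_2^{-1}v_2)x_2^{-1}$ unchanged. Thus we may fix the representative $(v_1,v_2)^t\in x^{\perp_{\omega_\mathbb C}}$ of $v$ and, using the description of $T_l\mathfrak{P}_G^\pm$ in Proposition~\ref{prop:tangent_gen_omega} (which realizes tangent vectors by curves through isotropic lines), choose a smooth path $x(t)=(x_1(t),x_2(t))^t$ with $x(0)=x$, $\dot x(0)=(v_1,v_2)^t$, and $x(t)A_\mathbb C$ a path in $\mathfrak{P}_G^\pm$ representing $v$; shrink the interval so that $x_2(t)\in A_\mathbb C^\times$ throughout.

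Then $F_{\mathfrak{P}_G^\pm,\mathfrak U_G^\pm}(x(t)A_\mathbb C)=x_1(t)x_2(t)^{-1}$, and differentiating at $t=0$ with the Leibniz rule together with $\tfrac{d}{dt}\big|_{t=0}x_2(t)^{-1}=-x_2^{-1}\dot x_2(0)x_2^{-1}$ (the derivative of inversion in the noncommutative algebra $A_\mathbb C$) yields
\begin{align*}
d_lF_{\mathfrak{P}_G^\pm,\mathfrak U_G^\pm}(v)
&=\dot x_1(0)x_2^{-1}-x_1x_2^{-1}\dot x_2(0)x_2^{-1}\\
&=v_1x_2^{-1}-x_1x_2^{-1}v_2x_2^{-1}
=\bigl(v_1-F_{\mathfrak{P}_G^\pm,\mathfrak U_G^\pm}(x)\,v_2\bigr)x_2^{-1},
\end{align*}
which is the claim.

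There is no serious obstacle here: the only genuinely noncommutative step is the order of the factors in $\tfrac{d}{dt}x_2(t)^{-1}$, and the one thing to verify carefully is that the formula is independent of the chosen representative of $v$ — both handled above. Smoothness of $x\mapsto x_1x_2^{-1}$ where $x_2$ is invertible is immediate from smoothness of inversion on $A_\mathbb C^\times$, and transversality of lines in $\mathfrak{P}_G^\pm$ to $e_2A_\mathbb C$ is built into the definition of the half-space chart.
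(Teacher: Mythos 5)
Your proof is correct and follows exactly the direct curve computation the paper has in mind: the paper states this proposition without proof, remarking only that the differential ``can be computed immediately,'' and your calculation (Leibniz rule plus the noncommutative derivative of inversion) is precisely that immediate computation, matching the analogous explicit formulas in the $A=\mathrm{Mat}_n(\mathbb R)$ example of Section~\ref{sec:exam_A_real_mat}. The added checks --- independence of the representative of $v$ and realizability of a representative in $x^{\perp_{\omega_\mathbb C}}$ by a curve in $\mathrm{Is}(\omega_\mathbb C)$ via Proposition~\ref{prop:tangent_gen_omega} --- are exactly the details the paper leaves implicit, so nothing is missing.
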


\subsection{Precompact model}\label{sec:precompact_real}

Finally, we discuss the precompact model of the symmetric space of $G$. In this section, we denote
    $G':=T^{-1}GT$ where $T=\frac{1}{\sqrt{2}}\begin{pmatrix}
        1 & i\\
        i & 1
    \end{pmatrix}$.
We also denote $K'=T^{-1}KT$ a maximal compact subgroup of $G'$ (cf. Section~\ref{sec:incarnations}). Since $G$ and $G'$ are conjugate, their symmetric spaces are isometric Hermitian manifolds.

\begin{defn}
    The space
    $$\mathfrak{B}_G:=\{z\in A^{\sigma_\mathbb C}_\mathbb C\mid 1-\bar zz\in (A^{\bar\sigma_\mathbb C}_\mathbb  C)_+\}$$
    is called the \emph{precompact model}.
\end{defn}

\noindent This is also a model of the symmetric space $\mathcal X_{G}$:

\begin{prop}[{\cite[Proposition~5.23]{ABRRW}}]
    The group $G'$ acts smoothly and transitively on $\mathfrak{B}_G$ by M\"obius transformations as
    \[g.z=(az+b)(cz+d)^{-1},\] 
    where $T^{-1}gT=\begin{pmatrix}
        a & b \\
        c & d
    \end{pmatrix}$ and $T=\frac{1}{\sqrt{2}}\begin{pmatrix}
        1 & i\\
        i & 1
    \end{pmatrix}$. The stabilizer of $0\in\mathfrak{B}_G$ agrees with $K'$.
\end{prop}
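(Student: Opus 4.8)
The plan is to transport the upper half-space model $(\mathfrak U_G,G)$, already shown to be a model of $\mathcal X_G$ in \cite[Proposition~5.8]{ABRRW}, to $\mathfrak B_G$ via the Cayley transform attached to the matrix $T^{-1}=\tfrac{1}{\sqrt 2}\begin{pmatrix}1&-i\\-i&1\end{pmatrix}$; explicitly, set $\Phi(z):=(z-i)(1-iz)^{-1}$, with putative inverse $\Psi(w):=(w+i)(1+iw)^{-1}$, the M\"obius map attached to $T$. First I would check that $\Phi$ is well defined on $\mathfrak U_G$: if $\mathrm{Im}(z)\in A^\sigma_+$ then $\bar\sigma_\mathbb C(1-iz)(1-iz)=1+\bar z z+2\,\mathrm{Im}(z)$ is a sum of $1$, the non-negative element $\bar z z$, and the positive element $2\,\mathrm{Im}(z)$, hence lies in $(A_\mathbb C^{\bar\sigma_\mathbb C})_+$ and in particular is invertible; since moreover $\sigma_\mathbb C(\Phi(z))=\Phi(z)$, one has $\Phi(z)\in A_\mathbb C^{\sigma_\mathbb C}$.

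The computational core is the identity
\begin{equation*}
1-\bar w w\;=\;4\,\bar\sigma_\mathbb C\bigl((1-iz)^{-1}\bigr)\,\mathrm{Im}(z)\,(1-iz)^{-1},\qquad w=\Phi(z).
\end{equation*}
I would derive it by noting that $z-i$ and $1-iz$ commute (both products equal $-i(1+z^2)$), so $w=(1-iz)^{-1}(z-i)$ as well; writing $q:=1-iz$ this gives $\bar w w=\bar\sigma_\mathbb C(q)^{-1}\,\overline{(z-i)}(z-i)\,q^{-1}$, while $\bar\sigma_\mathbb C(q)q-\overline{(z-i)}(z-i)=4\,\mathrm{Im}(z)$ (using $\bar\sigma_\mathbb C(q)=1+i\bar z$ and $\mathrm{Re}(z),\mathrm{Im}(z)\in A^\sigma$). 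Since $\mathrm{Im}(z)\in A^\sigma_+\subseteq(A_\mathbb C^{\bar\sigma_\mathbb C})_+$ and the congruence action preserves this proper convex cone (\cite[Corollary~2.65]{ABRRW}), the identity forces $1-\bar w w\in(A_\mathbb C^{\bar\sigma_\mathbb C})_+$, i.e.\ $\Phi(\mathfrak U_G)\subseteq\mathfrak B_G$. Conversely the identity reads $\mathrm{Im}(z)=\tfrac14\,\bar\sigma_\mathbb C(1-iz)\,(1-\bar w w)\,(1-iz)$, so once $\Psi$ is known to be defined on $\mathfrak B_G$ the same congruence argument (the right-hand side is real and lies in the positive cone, hence in $A^\sigma_+$) gives $\Psi(\mathfrak B_G)\subseteq\mathfrak U_G$.

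The remaining input is that $1+iw$ is invertible for every $w\in\mathfrak B_G$. I would argue by contradiction: if $(1+iw)c=0$ with $c\neq 0$, then $wc=ic$, and applying $\bar\sigma_\mathbb C$ to $(1+iw)c=0$ gives $\bar\sigma_\mathbb C(c)\bar w=-i\,\bar\sigma_\mathbb C(c)$, whence
\begin{equation*}
\bar\sigma_\mathbb C(c)\,(1-\bar w w)\,c=\bar\sigma_\mathbb C(c)c-\bigl(\bar\sigma_\mathbb C(c)\bar w\bigr)(wc)=\bar\sigma_\mathbb C(c)c-\bigl(-i\,\bar\sigma_\mathbb C(c)\bigr)(i\,c)=0 .
\end{equation*}
Writing $1-\bar w w=a^2$ with $a\in(A_\mathbb C^{\bar\sigma_\mathbb C})^{\times}$, this reads $\bar\sigma_\mathbb C(ac)(ac)=0$ with $ac\neq 0$, contradicting the Hermitian property of $(A_\mathbb C,\bar\sigma_\mathbb C)$. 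Together with the elementary identity $1-i\,\Psi(w)=2(1+iw)^{-1}$, this proves $\Psi$ is defined on $\mathfrak B_G$, and $\Phi,\Psi$ are mutually inverse M\"obius transformations, so $\Phi\colon\mathfrak U_G\to\mathfrak B_G$ is a diffeomorphism.

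To conclude, because M\"obius transformations compose, $\Phi=\mu_{T^{-1}}$ intertwines the $G$-action on $\mathfrak U_G$ with the $G'=T^{-1}GT$-action on $\mathfrak B_G$; smoothness and transitivity of the latter then transfer from \cite[Proposition~5.8]{ABRRW}, and since $\Phi(i)=0$ and $\mathrm{Stab}_G(i)=K$ we obtain $\mathrm{Stab}_{G'}(0)=T^{-1}KT=K'$. I expect the main obstacle to be the noncommutative bookkeeping behind the displayed identity and, above all, the invertibility of $1+iw$ on $\mathfrak B_G$, where one must keep careful track of the asymmetry between $\bar w w$ and $w\bar w$.
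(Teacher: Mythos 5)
Your Cayley-transform argument is correct, and it is essentially the expected proof: the paper itself does not prove this statement but imports it verbatim from \cite[Proposition~5.23]{ABRRW}, where the precompact model is likewise obtained from the half-space model by conjugating the M\"obius action by $T^{-1}$, exactly as you do via the congruence identity $1-\bar w w=4\,\bar\sigma_\mathbb C\bigl((1-iz)^{-1}\bigr)\,\mathrm{Im}(z)\,(1-iz)^{-1}$ and the invertibility of $1+iw$. The only steps you should anchor with explicit citations are the two facts used implicitly: that an element of $A^\sigma$ lying in $(A_\mathbb C^{\bar\sigma_\mathbb C})_+$ already lies in $A^\sigma_+$ (needed for $\Psi(\mathfrak B_G)\subseteq\mathfrak U_G$), and that $\bar\sigma_\mathbb C(x)x=0$ forces $x=0$ in the Hermitian algebra $(A_\mathbb C,\bar\sigma_\mathbb C)$ (needed for the invertibility of $1+iw$); both are available in \cite[Section~2]{ABRRW}.
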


The closure of the space $\mathfrak{B}_G$ is compact:
$$\overline{\mathfrak{B}}_G:=\{z\in A^{\sigma_\mathbb C}_\mathbb C\mid 1-\bar zz\in (A^{\bar\sigma_\mathbb C}_\mathbb  C)_{\geq 0}\}.$$

Since $\mathfrak{B}_G$ is an open subspace of the vector space $A_\mathbb{C}^{\sigma_\mathbb{C}}$, then for every $z\in \mathfrak{B}_G$, $T_z\mathfrak{B}_G = A_\mathbb{C}^{\sigma_\mathbb{C}}$.  The~action of $G'$ extends naturally to the action on the tangent bundle $T\mathfrak{B}_G=\mathfrak{B}_G\times A^{\sigma_\mathbb C}_\mathbb C$. For 
    $g=\begin{pmatrix}
        a & b \\
        c & d
    \end{pmatrix}$
and $(z,v)\in \mathfrak{B}_G\times A^{\sigma_\mathbb C}_\mathbb C$, the action is given by
$$g.(z,v)=\left((az+b)(cz+d)^{-1}, av(cz+d)^{-1}-(az+b)(cz+d)^{-1}cv(cz+d)^{-1}\right).$$

For every $z\in\mathfrak B_G$, the stabilizer $\mathrm{Stab}_{G'}(z)$ is conjugate to the stabilizer of $x=0$ which is $K'$. We are now going to understand the action of $K'$ on $T_0\mathfrak B_G= A_\mathbb{C}^{\sigma_\mathbb{C}}$. For $u\in \mathrm O(A_\mathbb C,\bar\sigma_\mathbb C)$,
    $g=\begin{pmatrix}
        u & 0 \\
        0 & \bar u
    \end{pmatrix}\in K'$ 
(cf.~\eqref{eq:conj_symp_real}) and $v\in T_0\mathfrak B_G$, then
$$g.v=uv\bar u^{-1}=uv\sigma(u).$$

The projective space model and the precompact model are related by the following $G$-equivariant isometry: 
$$\begin{matrix}
F_{\mathfrak{P}_G^+,\mathfrak{B}_G} \colon & \mathfrak{P}_G^+ & \to & \mathfrak{B}_G\\
 & (x_1,x_2)^tA_\mathbb C & \mapsto & y_1y_2^{-1},
\end{matrix}$$
where $y=\begin{pmatrix}y_1 \\y_2\end{pmatrix}=\frac{1}{\sqrt{2}}\begin{pmatrix}
1 & -i \\
-i & 1
\end{pmatrix}\begin{pmatrix}x_1 \\x_2\end{pmatrix}$. Its differential can be obtained as follows:

\begin{prop}
    Let $x=(x_1,x_2)^t\in\mathrm{Is}(\omega_\mathbb C)$ such that $l=xA_\mathbb C\in\mathfrak{P}_G^+$, and $v=[x,(v_1,v_2)^t+l]\in T_l\mathfrak{P}_G^+$. Then 
    $$d_{l}F_{\mathfrak{P}_G^+,\mathfrak{B}_G}(v)=(w_1-F_{\mathfrak{P}_G^+,\mathfrak{B}_G}(x) w_2)y_2^{-1},$$
    where $y=\begin{pmatrix}y_1 \\y_2\end{pmatrix}=\frac{1}{\sqrt{2}}\begin{pmatrix}
1 & -i \\
-i & 1
\end{pmatrix}\begin{pmatrix}x_1 \\x_2\end{pmatrix}$, and  $w=\begin{pmatrix}w_1 \\w_2\end{pmatrix}=\frac{1}{\sqrt{2}}\begin{pmatrix}
1 & -i \\
-i & 1
\end{pmatrix}\begin{pmatrix}v_1 \\v_2\end{pmatrix}$.
\end{prop}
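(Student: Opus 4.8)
The plan is to recognize $F_{\mathfrak{P}_G^+,\mathfrak{B}_G}$ as a composition of an elementary linear map on lines followed by an affine chart, and to differentiate each factor. Writing $S:=\frac{1}{\sqrt{2}}\begin{pmatrix}1 & -i\\ -i & 1\end{pmatrix}=T^{-1}$ for $T=\frac{1}{\sqrt{2}}\begin{pmatrix}1 & i\\ i & 1\end{pmatrix}$, the very definition of $F_{\mathfrak{P}_G^+,\mathfrak{B}_G}$ exhibits it as $\psi\circ S_*$, where $S_*\colon xA_\mathbb C\mapsto (Sx)A_\mathbb C$ is the map on $A_\mathbb C$-lines induced by the linear automorphism $S$ of $A_\mathbb C^2$, and $\psi$ is the chart $\psi\big((y_1,y_2)^tA_\mathbb C\big)=y_1y_2^{-1}$, defined on lines transverse to $(1,0)^tA_\mathbb C$. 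In particular $S_*$ sends $l=xA_\mathbb C$ to $l'=yA_\mathbb C$ with $y=Sx=(y_1,y_2)^t$, and $y_2\in A_\mathbb C^\times$ because $F_{\mathfrak{P}_G^+,\mathfrak{B}_G}$ is already known to be a well-defined diffeomorphism $\mathfrak{P}_G^+\to\mathfrak{B}_G$, so its defining formula $y_1y_2^{-1}$ makes sense at every $l\in\mathfrak{P}_G^+$.

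First I would differentiate $S_*$. Since $S$ is a linear isomorphism of $A_\mathbb C^2$, it induces a diffeomorphism of the projective space of all (weak) $A_\mathbb C$-lines, and by naturality of the description~\eqref{eq:projective.tangent} its differential at $l$ sends $[x,(v_1,v_2)^t+l]$ to $[Sx,\,S(v_1,v_2)^t+Sl]=[y,(w_1,w_2)^t+l']$ with $(w_1,w_2)^t=S(v_1,v_2)^t=\frac{1}{\sqrt{2}}\begin{pmatrix}1 & -i\\ -i & 1\end{pmatrix}(v_1,v_2)^t$; here the fact that $S\notin G$ plays no role, only that $S$ is linear. Next I would differentiate $\psi$, which is the very same computation as in the proof of the preceding Proposition (there written for $F_{\mathfrak{P}_G^\pm,\mathfrak U_G^\pm}$, which is literally the map $(y_1,y_2)^tA_\mathbb C\mapsto y_1y_2^{-1}$): taking a smooth path $y(t)=(y_1(t),y_2(t))^t$ generating a curve of lines with $y(0)=y$ and $\dot y(0)\equiv(w_1,w_2)^t\pmod{l'}$ and differentiating $y_1(t)y_2(t)^{-1}$ at $t=0$ yields $\dot y_1 y_2^{-1}-y_1y_2^{-1}\dot y_2 y_2^{-1}=(w_1-(y_1y_2^{-1})w_2)y_2^{-1}$; replacing $(w_1,w_2)^t$ by $(w_1,w_2)^t+(y_1,y_2)^ta$ leaves this unchanged, so it descends to $T_{l'}$, and independence of the chosen generator of $l'$ is checked the same way.

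Composing the two differentials then gives
\[
d_lF_{\mathfrak{P}_G^+,\mathfrak{B}_G}(v)=d_{l'}\psi\big(d_lS_*(v)\big)=(w_1-(y_1y_2^{-1})w_2)y_2^{-1}=(w_1-F_{\mathfrak{P}_G^+,\mathfrak{B}_G}(x)w_2)y_2^{-1},
\]
with $w=\frac{1}{\sqrt{2}}\begin{pmatrix}1 & -i\\ -i & 1\end{pmatrix}v$ and $y=\frac{1}{\sqrt{2}}\begin{pmatrix}1 & -i\\ -i & 1\end{pmatrix}x$, which is precisely the asserted formula; as a consistency check one can verify it at the base point $l=(i,1)^tA_\mathbb C\mapsto 0\in\mathfrak{B}_G$ and propagate by the $G$-equivariance of $F_{\mathfrak{P}_G^+,\mathfrak{B}_G}$. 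I do not expect any genuine obstacle here: the only points requiring care are the bookkeeping of the change of basis — making sure $w$ is obtained from $v$ by exactly the same matrix $T^{-1}$ that produces $y$ from $x$ — and noting that the differential of $\psi$ computed in the preceding Proposition depends only on $y_2$ being invertible, hence applies verbatim at $l'=S_*l$ even though $l'$ itself need not lie in $\mathfrak{P}_G^+$; the representative-independence of the final expression reduces to the same verification as before.
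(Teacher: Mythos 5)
Your proposal is correct, and it is essentially the computation the paper has in mind: the paper leaves the proof as an ``immediate'' calculation, namely differentiating the composition of the base change $x\mapsto T^{-1}x$ with the affine chart $(y_1,y_2)^tA_\mathbb C\mapsto y_1y_2^{-1}$, which is exactly your factorization $F_{\mathfrak{P}_G^+,\mathfrak{B}_G}=\psi\circ S_*$ together with the quotient-rule computation already used for $F_{\mathfrak{P}_G^\pm,\mathfrak U_G^\pm}$. Your added care about representative/generator independence and about $\psi$ only requiring $y_2\in A_\mathbb C^\times$ (not $S_*l\in\mathfrak{P}_G^+$) is exactly the right bookkeeping.
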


\subsection{Riemannian metric on the models}

In terms of the upper half-space model, it is particularly easy to write the Riemannian metric on the Riemannian symmetric space of $\mathrm{Sp}_2(A,\sigma)$ (cf.~\cite[Exercise~8, Chapter~XIII]{FK}). Namely, for $z=x+yi\in \mathfrak U^+_G$, $v=v_x+v_yi\in T_z\mathfrak U^+_G=A^{\sigma_\mathbb C}_\mathbb C$,  the associated norm of this metric is given by:
\begin{align*}
    g_z(v,v):=&\mathrm{tr}\left(y^{-\frac{1}{2}}\bar vy^{-1}vy^{-\frac{1}{2}}\right)\\
    =&\mathrm{tr}\left(y^{-\frac{1}{2}}(v_xy^{-1}v_x+v_yy^{-1}v_y)y^{-\frac{1}{2}}\right).    
\end{align*}
This metric is $\mathrm{Sp}_2(A,\sigma)$-invariant. Using the polarization, one can also write the expression of $g_z(v,w)$, for $v,w\in T_z\mathfrak U^+_G$. The isomorphisms between models allow us to express this metric in terms of all four models of the symmetric space of $\mathrm{Sp}_2(A,\sigma)$.

Notice that if $z=i$, then the metric becomes particularly easy: 
$$g_i(v,w)=\mathrm{tr}(v_x\circ w_x+v_y\circ w_y).$$

\subsection{Example: \texorpdfstring{$A=\mathrm{Mat}_n(\mathbb R)$}{}}\label{sec:exam_A_real_mat}
We now present all four models, their tangent spaces and the differential of the diffeomorphisms between them explicitly in a certain important case. Let $A=\mathrm{Mat}_n(\mathbb R)$ and $\sigma$ be the transposition. Then $G:=\mathrm{Sp}_2(A,\sigma)$ is isomorphic to the real symplectic group $\mathrm{Sp}_{2n}(\mathbb R)$ which is the group of isometries of the symplectic space $(\mathbb R^{2n}, \Omega)$, where $\Omega=\begin{pmatrix} 0 & \mathrm{Id} \\ -\mathrm{Id} & 0 \end{pmatrix}$ and $\mathrm{Id}$ denotes the $n\times n$ identity matrix.

    As shown in~\cite{ABRRW}, the space of complex structures $\mathfrak{C}_G$ can be seen as the space of all linear operators $J\colon \mathbb R^{2n}\to\mathbb R^{2n}$ such that $J^2=-\mathrm{Id}$ and $J^t\Omega$ is a symmetric positive definite matrix. Then the tangent space $T_J\mathfrak{C}_G$ can be identified with the space of all linear maps $L\colon \mathbb R^{2n}\to\mathbb R^{2n}$ such that $JL=-LJ$ and $L^t\Omega$ is a symmetric matrix.

    The projective space models $\mathfrak{P}_G^\pm$ in this case can be seen as the space of all complex Lagrangian subspaces of $(\mathbb C^{2n},\omega)$, that is, $n$-dimensional subspaces $L$ such that $\omega|_L\equiv 0$ and the sesquilinear form $h(x,y):=i\omega(\bar x,y)$, for $x,y\in L$, is positive-, resp. negative-definite. Its compactification agrees with the space of all complex Lagrangian subspaces of $(\mathbb C^{2n},\omega)$ such that the sesquilinear form $h$ is non-negative, resp. non-positive definite. The tangent space $T_L\mathfrak{P}_G^\pm$ can be then identified with the space of all linear maps $Q\colon L\to \mathbb{C}^{2n}/L$ such that the bilinear form $\omega(Q(\cdot),\cdot)$ on $L$ is symmetric. 

    The upper half-space models $\mathfrak U_G^\pm$ in this case are the well-known Siegel half spaces, i.e. 
    $$\mathfrak U_G^\pm=\{z\in\mathrm{Sym}_n(\mathbb C)\mid\mathrm{Im}(z)\in\mathrm{Sym}^\pm_n(\mathbb R)\},$$
    and their tangent spaces are $T_z\mathfrak U_G^\pm=\mathrm{Sym}_n(\mathbb C)$, for every $z\in\mathfrak U_G^\pm$.

    Finally, the precompact model $\mathfrak{B}_G$ in this case is described by
    $$\mathfrak{B}_G=\{z\in\mathrm{Sym}_n(\mathbb C)\mid \mathrm{Id}-\bar zz\in\mathrm{Herm}^+_n(\mathbb C)\}.$$
    Its compactification is 
    $$\overline{\mathfrak{B}}_G=\{z\in\mathrm{Sym}_n(\mathbb C)\mid \mathrm{Id}-\bar zz\in\mathrm{Herm}^{\geq 0}_n(\mathbb C)\}.$$
    The tangent space at every $z\in\mathfrak B_G$ is then $T_z\mathfrak B_G=\mathrm{Sym}_n(\mathbb C)$.
    
    The map $F_{\mathfrak{C}_G,\mathfrak{P}_G^\pm}$ maps a complex structure $J\colon \mathbb R^{2n}\to \mathbb R^{2n}$ to the complex Lagrangian subspace $L_J$ which is the $(\mp i)$-eigenspace of $J_\mathbb C$. If $M\colon \mathbb R^{2n}\to \mathbb R^{2n}$ is a tangent vector at $J$, then 
    \[d_JF_{\mathfrak{C}_G,\mathfrak{P}_G^\pm}(M)=\pm\frac{i}{2}[M_\mathbb C],\] 
    where $[M_\mathbb C]\colon L_J\to \mathbb C^{2n}/L_J$, which is the natural projection of $M_\mathbb C|_{L_J}$.

    Let now $L\in\mathfrak{P}_G^\pm$ and $Q\colon L\to \mathbb C^{2n}/L$ be a tangent vector at $L$. Then $F_{\mathfrak{P}_G^\pm,\mathfrak U_G^\pm}$ maps $L$ to the following symmetric matrix. We first fix a basis of $L$ and write it as a $2n\times n$-matrix $A$; this matrix contains an upper $(n\times n)$-block $A_1$ and a lower one, say $A_2$, which is invertible because $L\in\mathfrak{P}_G^\pm$. Then $F_{\mathfrak{P}_G^\pm,\mathfrak U_G^\pm}(L)=A_1A_2^{-1}$ and this expression is clearly independent of the chosen basis. Applying $Q$ to the vectors of the basis, we obtain a family of $2n\times n$ matrices $B+Ax$, where $x\in\mathrm{Mat}_n(\mathbb R)$. We choose one of them, say for $x=0$ we obtain $B$, then we also obtain in a similar vain two $n\times n$ matrices $B_1$ and $B_2$ which are upper and lower submatrices of $B$. Thus, 
    \[d_LF_{\mathfrak{P}_G^\pm,\mathfrak U_G^\pm}(Q)=(B_1-A_1A_2^{-1}B_2)A_2^{-1}.\] Easy calculation shows that this expression does not depend on any of the choices made.

    One similarly describes the map $F_{\mathfrak{P}_G^+,\mathfrak{B}_G}$: Let $T:=\frac{1}{\sqrt{2}}\begin{pmatrix}
    \mathrm{Id} & -i\mathrm{Id}\\
    -i\mathrm{Id} & \mathrm{Id}
    \end{pmatrix}$, denote $A':=TA$, $B':=TB$, and let $A'_1$ and $A'_2$ be the lower and the upper $n\times n$ submatrices of $A'$ respectively, and $B'_1$ and $B'_2$ be the lower and the upper $n\times n$ submatrices of $B'$ respectively. Then, 
    \[F_{\mathfrak{P}_G^+,\mathfrak{B}_G}(L)=A_1'(A_2')^{-1}\] 
    and \[d_LF_{\mathfrak{P}_G^+,\mathfrak B_G}(Q)=(B'_1-A'_1(A'_2)^{-1}B'_2)(A'_2)^{-1}.\]

\section{Models for the symmetric space of \texorpdfstring{$\mathrm{Sp}_2(A_{\mathbb{C}}, \sigma_{\mathbb{C}})$}{} and their tangent spaces}\label{sec:complex_models}

Let $(A,\sigma)$ be a Hermitian algebra. We denote by $G:=\mathrm{Sp}_2(A_{\mathbb{C}},\sigma_\mathbb C)$ and $K^c:=\mathrm{KSp}_2(A_{\mathbb{C}},\sigma_\mathbb C)$. In this section, we discuss models of the symmetric space $G/K^c$ introduced in~\cite{ABRRW}, compute the tangent space for each model and describe equivariant diffeomorphisms between them.

\subsection{Space of quaternionic structures}\label{sec:quatstructures}
A \emph{quaternionic structure} on $A_\mathbb{C}^2$ is an additive map $J\colon A_\mathbb{C}^2\to A_\mathbb{C}^2$ such that $J^2=-\mathrm{Id}$ and $J(xa)=J(x)\bar{a}$, for all $x\in A_\mathbb{C}^2$ and $a\in A_\mathbb{C}$. Each quaternionic structure $J$ defines a $\bar\sigma_\mathbb{C}$-sesquilinear form on $A_{\mathbb{C}}^2$ by
$$
h_J(\bullet,\bullet):=\omega_\mathbb{C}(J(\bullet),\bullet),
$$ 
where $\omega_\mathbb{C}$ is the standard symplectic form on $A_\mathbb{C}^2$ given in Section~\ref{sec:incarnations}.

\begin{defn}
    The \emph{space of quaternionic structures} on $A_\mathbb{C}^2$ is the space
\begin{align*}
    \mathfrak{C}_G:=\{ J \ \text{is a quaternionic structure on}\ A_{\mathbb{C}}^2\ |\ h_J \ \text{is a }\ \bar\sigma_{\mathbb{C}}\text{-inner product}\}.
\end{align*}
\end{defn}
In particular, $J_0\in\mathfrak{C}_G$, where $J_0(x,y):=(\bar y,-\bar x)$ is the standard quaternionic structure on $A_{\mathbb{C}}^2$.  The group $G$ acts on $\mathfrak{C}_G$ by conjugation, that is, $g.J= gJg^{-1}$ smoothly and transitively, and the stabilizer of $J_0$ agrees with $K^c$. Thus, $\mathfrak C_G$ is a model of the symmetric space $G/K^c$ (cf. \cite[Theorem~6.7]{ABRRW}).

\begin{prop}\label{prop:tangent_sqs}
    Let $J\in\mathfrak{C}_G$, then the tangent space of $\mathfrak{C}_G$ at $J$ is described by
    $$T_J\mathfrak{C}_G=\left\{L\colon A_\mathbb{C}^2\to A_\mathbb{C}^2 \midwd \begin{aligned}
    &\ L(xa)=L(x)\bar a\;\;\text{for $x\in A_\mathbb{C}^2$, $a\in A_\mathbb{C}$},\\
    &\ \text{$h_L$ is $\bar\sigma_{\mathbb C}$-symmetric},\text{ and }\\
    &\ LJ+JL=0
    \end{aligned}\right\}.$$
    The group $\mathrm{Sp}_2(A_\mathbb C,\sigma_\mathbb C)$ acts on $T\mathfrak{C}_G$  by conjugation, i.e. 
    $$g.(J,L)=(gJg^{-1},gLg^{-1}).$$ 
    In particular, 
    $$T_{gJg^{-1}}\mathfrak{C}_G = g T_J\mathfrak{C}_Gg^{-1}.$$
\end{prop}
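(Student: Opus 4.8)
The plan is to mirror the proofs of Propositions~\ref{prop:tangent_scs} and \ref{prop:tangent_sio_A}: differentiate the three defining conditions of $\mathfrak{C}_G$ along a smooth curve of quaternionic structures through $J$, and check conversely that these linearized conditions cut out exactly the tangent directions. First I would let $J(t)\in\mathfrak{C}_G$ be a smooth curve with $J(0)=J$ and set $L:=\dot J(0)$. Since $J(t)(xa)=J(t)(x)\bar a$ for all $t$ (additivity and $\mathbb{C}$-antilinearity over $A_\mathbb{C}$ are linear constraints on $J(t)$), differentiating at $t=0$ gives $L(xa)=L(x)\bar a$, so $L$ is again an $A_\mathbb{C}$-antilinear additive map. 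From $J(t)^2=-\mathrm{Id}$, differentiating yields $\dot J(0)J(0)+J(0)\dot J(0)=LJ+JL=0$, exactly as in the complex-structures case. Finally, the condition that $h_{J(t)}=\omega_\mathbb{C}(J(t)(\cdot),\cdot)$ is a $\bar\sigma_\mathbb{C}$-inner product is an open condition inside the $\mathbb{R}$-vector space of $\bar\sigma_\mathbb{C}$-symmetric forms; hence the derivative $h_{\dot J(0)}=\omega_\mathbb{C}(L(\cdot),\cdot)=h_L$ must lie in the tangent space of that affine subspace, i.e.\ $h_L$ is $\bar\sigma_\mathbb{C}$-symmetric. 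This shows $T_J\mathfrak{C}_G$ is contained in the asserted set.

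For the reverse inclusion I would argue that $\mathfrak{C}_G$ is a smooth submanifold (it is an orbit of $G$ acting smoothly by conjugation, by the discussion preceding the statement) and that the asserted set has the right dimension, or — more cleanly — exhibit for each candidate $L$ an explicit curve. One convenient way: use the $G$-equivariance to reduce to $J=J_0$, so that conjugation $J\mapsto \exp(tX)J_0\exp(-tX)$ for $X\in\mathfrak{m}^c$ (the Cartan complement of \eqref{Cartan_cplx}) sweeps out $\mathfrak{C}_G$ near $J_0$, and the derivative at $t=0$ is $XJ_0-J_0X=-2J_0X$ (using $XJ_0=-J_0X$ for $X\in\mathfrak{m}^c$, which follows from \eqref{eq1.10} and the form of $J_0$). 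Checking that $X\mapsto XJ_0-J_0X$ maps $\mathfrak{m}^c$ isomorphically onto $\{L\mid L \text{ $A_\mathbb{C}$-antilinear}, LJ_0+J_0L=0, h_L \text{ $\bar\sigma_\mathbb{C}$-symmetric}\}$ is then a direct computation in $2\times 2$ matrices over $A_\mathbb{C}$, entirely analogous to the identification done in Proposition~\ref{prop:tangent_scs}. The statement about the action on $T\mathfrak{C}_G$ is immediate: conjugation is linear in $J$, so its differential at $J$ is conjugation by the same $g$, giving $g.(J,L)=(gJg^{-1},gLg^{-1})$ and hence $T_{gJg^{-1}}\mathfrak{C}_G=gT_J\mathfrak{C}_Gg^{-1}$.

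The only genuine subtlety — and the step I expect to require the most care — is the claim that $h_L$ being $\bar\sigma_\mathbb{C}$-symmetric really is the linearization of "$h_{J(t)}$ is a $\bar\sigma_\mathbb{C}$-inner product'' and that no further constraint appears. This rests on the fact, recorded in Section~\ref{sec:sesq_forms}, that the $\bar\sigma_\mathbb{C}$-inner products form an \emph{open} proper convex cone inside the $\mathbb{R}$-vector space of $\bar\sigma_\mathbb{C}$-symmetric forms; openness is what guarantees that the tangent cone is the full ambient space of symmetric forms rather than a proper subcone. Combined with the fact that $J\mapsto h_J$ is $\mathbb{R}$-linear, the constraint on $L$ is precisely $\bar\sigma_\mathbb{C}$-symmetry of $h_L$, with nothing lost. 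Everything else is a routine transcription of the complex-structure argument with "$J^2=-\mathrm{Id}$, $A$-linear'' replaced by "$J^2=-\mathrm{Id}$, $A_\mathbb{C}$-antilinear'' and $\sigma_\mathbb{C}$-symmetry replaced by $\bar\sigma_\mathbb{C}$-symmetry.
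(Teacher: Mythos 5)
Your proposal is correct and follows essentially the same route as the paper, which simply differentiates a smooth curve $J(t)\in\mathfrak{C}_G$ to obtain $LJ+JL=0$, the antilinearity of $L$, and the $\bar\sigma_\mathbb{C}$-symmetry of $h_L$ (the paper literally says the proof is "exactly the same" as that of Proposition~\ref{prop:tangent_scs}). Your extra sketch of the reverse inclusion via the orbit map $X\mapsto XJ_0-J_0X$ on $\mathfrak{m}^c$ is sound and in fact supplies a detail the paper's proof leaves implicit.
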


\begin{proof} The proof is exactly the same as the proof of Proposition~\ref{prop:tangent_scs}.
\end{proof}

\begin{rem}
    Notice that the condition for a quaternionic structure $J$ that $h_J$ is $\bar\sigma_\mathbb C$-symmetric is equivalent to the condition that $J$ is an anti-symplectic operator, i.e. $\omega(J\cdot,J\cdot)=\overline{\omega(\cdot,\cdot)}$.
\end{rem}

We now consider the complexification $A_{\mathbb{CC}}=A_\mathbb C\otimes_\mathbb R \mathbb C$ of the algebra $A_{\mathbb{C}}$. To clarify the notation, we denote by $i$ the imaginary unit in $A_\mathbb C$ and by $I$  the imaginary unit in $\mathbb C$. Notice that $A_\mathbb C\otimes \mathbb C$ is naturally isomorphic to the algebra $A_\mathbb C\times A_\mathbb C$. This isomorphism maps $a_1+a_2I$ to $(a_1+a_2i,a_1-a_2i)$, where $a_1,a_2\in A_\mathbb C$ (for more details we refer to~\cite[Appendix~B.1.1]{ABRRW}). The real locus $A_{\mathbb C}$ of $A_\mathbb{CC}$ maps under this natural isomorphism to the diagonal $\{(a,a)\mid a\in A_\mathbb C\}$, in particular the image of $i$ is $(i,i)$. The image of $I$ is $(i,-i)$. 

There are two complex conjugation maps on $A_\mathbb {CC}$: $\theta_i$ and $\theta_I$ (cf. the notation of Section~\ref{sec:conjugations}). Under the identification by the isomorphism above, $\theta_i$ maps to the conjugation in both factors composed with the permutation of the components; we shall denote this conjugation by $\theta_{(i,i)}$. The map $\theta_I$ can be also identified with the permutation of components; this shall be denoted as usual by $\theta_{(i,-i)}$. 

The anti-involution $\sigma_\mathbb C$ on $A_\mathbb C$ extends to the following anti-involution $\sigma_0\colon A_{\mathbb C}\times A_{\mathbb C} \to A_{\mathbb C}\times A_{\mathbb C}$ with 
$$\sigma_0\left((m_1,m_2)\right)=(\sigma_\mathbb C(m_1),\sigma_\mathbb C(m_2)),$$
for $m_1,m_2\in A_\mathbb C$.  On the other hand, the anti-involution $\bar\sigma_\mathbb C$ on $A_\mathbb C$ extends to the following anti-involution $\sigma_1\colon A_{\mathbb C}\times A_{\mathbb C} \to A_{\mathbb C}\times A_{\mathbb C}$ with $$\sigma_1\left((m_1,m_2)\right)= (\bar\sigma_\mathbb C(m_2),\bar\sigma_\mathbb C(m_1)),$$ for $m_1,m_2\in A_\mathbb C$. 

Remember from Section~\ref{sec:quatstructures}, the description of the tangent of the quaternionic model space at a~quaternionic structure $J\in\mathfrak{C}_G$,  given by 
\begin{align*}
T_J\mathfrak{C}_G=\left\{L\colon A_\mathbb{C}^2\to A_\mathbb{C}^2  \midwd
\begin{aligned} 
&\ L(xa)=L(x)\bar a \text{ for } x\in A_\mathbb{C}^2,\;a\in A_\mathbb{C}, \\
&\ h_L \text{ is }\bar\sigma_{\mathbb C}\text{-symmetric},\text{ and}\\
&\ LJ+JL=0
\end{aligned}
\right\}.
\end{align*}
Using the identifications introduced above, we may now give the complexification of this tangent space at $J$ as 
\begin{align}\label{eq:complex_tang_quater_str_model} 
T^\mathbb C_J\mathfrak{C}_G=\left\{L\colon (A_{\mathbb C}\times A_{\mathbb C})^2 \to (A_{\mathbb C}\times A_{\mathbb C})^2 \midwd 
\begin{aligned}  
&\ L(xa)=L(x)\theta_{(i,i)}(a),\\ &\text{ for } x\in (A_{\mathbb C}\times A_{\mathbb C})^2 \text{ and } a\in A_{\mathbb C}\times A_{\mathbb C},\\
&\ h_L \text{ is } \sigma^{(i,i)}\text{-symmetric}, \text{ and } \\
&\ LJ+JL=0
\end{aligned}
\right\}.
\end{align}

\subsection{Projective space model}\label{sec:projstructure_cpx} To introduce the projective space model for $\mathcal{X}_{\mathrm{Sp}_{2}(A_{\mathbb{C}},\sigma_{\mathbb{C}})}$, one first needs to consider quaternionified algebras. We shall denote the complexified algebra by $A_\mathbb{C}=A\otimes_\mathbb{R}\mathbb{C}\{i\}$, where $i$ is put here to emphasize the imaginary unit of $\mathbb{C}$. Now we consider the quaternionification $A_\mathbb{H}:=A\otimes_\mathbb{R}\mathbb{H}\{i,j,k\}$, where $i,j,k$ are the imaginary units of the quaternion skew-field $\mathbb{H}$ that fit into the relation $ij=-ji=k$. 

Given the classification of anti-involutions on $\mathbb{H}$, from \cite[Proposition A.37]{ABRRW} we may obtain the following anti-involutions on $A_{\mathbb{H}}$ (though note that the following does not provide a classification of anti-involutions of $A_{\mathbb{H}}$): 
\begin{enumerate}
        \item $ [\sigma = {\bar{\cdot}} ] \qquad \sigma_1(x+yj)=\bar{\sigma}_{\mathbb{C}}(x)-\sigma_{\mathbb{C}}(y)j$ 
        \item $ [v=k] \qquad \sigma_0(x+yj)={\sigma}_{\mathbb{C}}(x)+\bar{\sigma}_{\mathbb{C}}(y)j$
        \item $ [v=i] \qquad \sigma_2(x+yj)=\bar{\sigma}_{\mathbb{C}}(x)+\sigma_{\mathbb{C}}(y)j$
        \item $[v=j] \qquad \sigma_3(x+yj)={\sigma}_{\mathbb{C}}(x)-\bar{\sigma}_{\mathbb{C}}(y)j$.
\end{enumerate}

Note that since $({{A}_{\mathbb{C}}},\bar{\sigma}_\mathbb{C})$ is Hermitian, thus the algebras $(\mathrm{Mat}_2(A_\mathbb{C}),\bar\sigma_\mathbb{C}^t)$ and $({{A}_{\mathbb{H}}},{{\sigma }_{1}})$ are Hermitian as well.

The anti-involutions $\sigma_0$ and $\sigma_3$ on $A_\mathbb H$ extend $\sigma_\mathbb C$, so we may extend on $A_{\mathbb{H}}^{2}$ the standard symplectic form $\omega_\mathbb{C}$ on $A_{\mathbb{C}}^{2}$ in two ways as
\begin{align}
    {{\omega }_{\mathbb{H}}}(x,y):={{\sigma }_{0}}{(x)}^t\left( \begin{matrix}
   0 & 1  \\
   -1 & 0  \\
\end{matrix} \right)y,
\end{align}
and as
\begin{align}
    {\omega'_{\mathbb{H}}}(x,y):={{\sigma }_{3}}{(x)}^t\left( \begin{matrix}
   0 & 1  \\
   -1 & 0  \\
\end{matrix} \right)y.
\end{align}
We also consider the following indefinite form on $A_{\mathbb{H}}^{2}$:
\[h(x,y):={{\sigma }_{1}}{{(x)}^{t}}\left( \begin{matrix}
   0 & j  \\
   -j & 0  \\
\end{matrix} \right)y.\]
As we have seen in Proposition~\ref{prop:incarn}, $\mathrm{Sp}_2(A_{\mathbb C},\sigma_{\mathbb C})=\mathrm{Sp}_2(A_\mathbb H,\sigma_0)\cap\mathrm{Aut}(h)$ and $\mathrm{Sp}_2(A_\mathbb H,\sigma_3)= \mathrm{Aut}(h)$.

We define the following two spaces 
\begin{align*}
    \mathfrak{P}_G^\pm:={{\mathfrak{P}}_{\mathrm{Aut}(h)}^\pm}\cap \mathbb{P}\mathrm{Is}\left( {{\omega }_{\mathbb{H}}} \right),
\end{align*}
where $\mathfrak{P}_{\mathrm{Aut}(h)}^\pm$ are projective space models of the symmetric space for the indefinite orthogonal group $\mathrm{Aut}(h)$ (cf.~Section~\ref{sec:projective_O}). These spaces are models of the symmetric space $\mathcal{X}_{\mathrm{Sp}_{2}(A_{\mathbb{C}},\sigma_{\mathbb{C}})}$. We thus have the following definition:

\begin{defn}\label{proj_model_complex}
Let $\mathrm{Sp}_{2}(A_{\mathbb{C}},\sigma_{\mathbb{C}})$ be the symplectic group for the complexification $(A_{\mathbb{C}},\sigma_{\mathbb{C}})$ of a real Hermitian algebra. Either model $\mathfrak{P}_G^+$ or $\mathfrak{P}_G^-$ as introduced above is called the \emph{projective space model} for the symmetric space $\mathcal{X}_{\mathrm{Sp}_{2}(A_{\mathbb{C}},\sigma_{\mathbb{C}})}$.
\end{defn}

The spaces $\mathfrak{P}_G^\pm$ are open domains of the compact manifold $\mathbb P\mathrm{Is}(\omega_\mathbb H)$. Thus, their closures are compact:
\begin{align*}
    \overline{\mathfrak{P}}_G^\pm:={\overline{\mathfrak{P}}_{\mathrm{Aut}(h)}^\pm}\cap \mathbb{P}\mathrm{Is}\left( {{\omega }_{\mathbb{H}}} \right).
\end{align*}

\begin{prop}[{\cite[Proposition~5.18]{ABRRW}}]
    The action of $\mathrm{Sp}_2(A_\mathbb C,\sigma_\mathbb C)<\mathrm{GL}_2(A_\mathbb H)$ on $A_\mathbb H^2$ extends naturally to the action on $\mathbb{P}\mathrm{Is}(\omega_\mathbb{H})$ which preserves $\mathfrak{P}_G^+$ and $\mathfrak{P}_G^-$. The restriction of this action to $\mathfrak{P}_G^+$ and $\mathfrak{P}_G^-$ is transitive. The stabilizer of $(j,1)^tA_\mathbb H$ agrees with $K^c$.
\end{prop}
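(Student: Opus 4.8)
The plan is to reduce the statement to the transitivity of $G$ on the space $\mathfrak C_G$ of quaternionic structures established in Section~\ref{sec:quatstructures}; equivalently, it is the instance of \cite[Proposition~5.18]{ABRRW} attached to the pair of forms $(\omega_\mathbb H,h)$ on $A_\mathbb H^2$, and what is really being used is that $(A_\mathbb H,\sigma_1)$ is Hermitian and that $G=\mathrm{Aut}(\omega_\mathbb H)\cap\mathrm{Aut}(h)$ by Proposition~\ref{prop:incarn}. I outline the first route.

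\textbf{The action is well defined.} Viewing $G=\mathrm{Sp}_2(A_\mathbb C,\sigma_\mathbb C)$ inside $\mathrm{GL}_2(A_\mathbb H)$ via $A_\mathbb C\subset A_\mathbb H$, it acts on the right $A_\mathbb H$-module $A_\mathbb H^2$ by left matrix multiplication; this commutes with the right $A_\mathbb H$-module structure (left and right multiplications commute in any bimodule), hence descends to $\mathbb P(A_\mathbb H^2)$. By Proposition~\ref{prop:incarn} one has $G=\mathrm{Sp}_2(A_\mathbb H,\sigma_0)\cap\mathrm{Aut}(h)$, so on the one hand $G\subseteq\mathrm{Sp}_2(A_\mathbb H,\sigma_0)=\mathrm{Aut}(\omega_\mathbb H)$ preserves $\omega_\mathbb H$-isotropy and acts on $\mathbb P\mathrm{Is}(\omega_\mathbb H)$, and on the other $G\subseteq\mathrm{Aut}(h)$, which preserves $\mathfrak P^\pm_{\mathrm{Aut}(h)}$ by Section~\ref{sec:projective_O} (the group $\mathrm{Aut}(h)$ being conjugate to $\mathrm O_{(1,1)}(A_\mathbb H,\sigma_1)$, again by Proposition~\ref{prop:incarn}). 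Intersecting, $G$ preserves $\mathfrak P^\pm_G=\mathfrak P^\pm_{\mathrm{Aut}(h)}\cap\mathbb P\mathrm{Is}(\omega_\mathbb H)$.

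\textbf{An equivariant identification with $\mathfrak C_G$.} To a quaternionic structure $J$ on $A_\mathbb C^2$ I attach an $A_\mathbb H$-linear involution of $A_\mathbb H^2$: writing $A_\mathbb H^2=A_\mathbb C^2\oplus A_\mathbb C^2 j$, set $\hat J(v_1+v_2 j):=J(v_1)+J(v_2)j$, so that $\hat J^2=-\mathrm{Id}$ and $\hat J(vq)=\hat J(v)\,(jqj^{-1})$ for $q\in A_\mathbb H$; then $\mathcal J(v):=\hat J(v)\,j$ is $A_\mathbb H$-linear with $\mathcal J^2=\mathrm{Id}$. Using that $h_J$ is a $\bar\sigma_\mathbb C$-inner product — equivalently, that $J$ is anti-symplectic (cf.\ Section~\ref{sec:quatstructures}) — together with the relation $h=j\,\omega'_\mathbb H$ between $h$ and the standard $\sigma_3$-symplectic form $\omega'_\mathbb H$ (Proposition~\ref{prop:incarn}), one checks that $\mathcal J$ is indefinite, that $h(\mathcal J\cdot,\cdot)$ is a $\sigma_1$-inner product, and that its $(+1)$-eigenline $l^+_{\mathcal J}$ is $\omega_\mathbb H$-isotropic, so that $F(J):=l^+_{\mathcal J}\in\mathfrak P^+_G$. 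The map $F\colon\mathfrak C_G\to\mathfrak P^+_G$ is $G$-equivariant, since $\widehat{gJg^{-1}}=g\hat J g^{-1}$ and left multiplication by $g$ commutes with right multiplication by $j$; and it is bijective, the point being that $\omega_\mathbb H$-isotropy of $l\in\mathfrak P^+_G$ is exactly the condition that the indefinite involution $\mathcal J_l$ with eigenlines $l$ and $l^{\perp_h}$ be of the form $\mathcal J_l(v)=\hat J(v)j$ for an operator $\hat J$ preserving $A_\mathbb C^2$, whose restriction $\hat J|_{A_\mathbb C^2}$ is then the sought quaternionic structure. A direct computation gives $F(J_0)=(j,1)^tA_\mathbb H$ for the standard $J_0$. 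Since $G$ acts transitively on $\mathfrak C_G$ with $\mathrm{Stab}_G(J_0)=K^c$ (Section~\ref{sec:quatstructures}), transporting through $F$ gives transitivity of $G$ on $\mathfrak P^+_G$ together with $\mathrm{Stab}_G\big((j,1)^tA_\mathbb H\big)=K^c$; the case of $\mathfrak P^-_G$ is identical, with $l^-_{\mathcal J}$ replacing $l^+_{\mathcal J}$.

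\textbf{Main obstacle.} The delicate part is the middle step — showing that $J\mapsto\mathcal J$ is a bijection from $\mathfrak C_G$ onto the indefinite $A_\mathbb H$-linear involutions whose associated form is a $\sigma_1$-inner product and whose eigenlines are $\omega_\mathbb H$-isotropic. This is not formal: one must track how the forms $\omega_\mathbb H$ and $h$ (equivalently, the anti-involutions $\sigma_0$ and $\sigma_1$) interact under right multiplication by $j$, and use the positivity built into the definition of $\mathfrak C_G$; here the explicit identities of Proposition~\ref{prop:incarn} are genuinely needed. An alternative, more computational route that bypasses $\mathfrak C_G$: normalize a generator $x$ of $l\in\mathfrak P^+_G$ so that $h(x,x)$ takes the standard value, complete $x$ to a basis $(y,x)$ of $A_\mathbb H^2$ whose Gram matrices with respect to both $\omega_\mathbb H$ and $h$ agree with those of the quaternionified standard basis $(e_1,e_2)$, and observe that the change-of-basis operator then lies in $\mathrm{Aut}(\omega_\mathbb H)\cap\mathrm{Aut}(h)=G$ and carries $(j,1)^tA_\mathbb H$ to $l$; the hard point becomes performing the two Gram-matrix normalizations simultaneously.
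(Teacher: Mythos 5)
The paper contains no proof of this proposition — it is quoted from \cite[Proposition~5.18]{ABRRW} — so there is no internal argument to compare against; judged on its own terms, your proposal has a genuine gap at exactly the point where the content of the statement lies. The parts you do carry out are correct: $G\subseteq\mathrm{Aut}(\omega_\mathbb H)\cap\mathrm{Aut}(h)$ (Proposition~\ref{prop:incarn}) gives well-definedness of the action and preservation of $\mathfrak P_G^\pm$; the extension $\hat J$ and the operator $\mathcal J(v)=\hat J(v)j$ are set up correctly ($\mathcal J$ is $A_\mathbb H$-linear with $\mathcal J^2=\mathrm{Id}$, and its $(+1)$-eigenspace is $\{v\mid \hat J(v)=-vj\}$, which is exactly the line $l_J^+$ the paper uses to define $F_{\mathfrak{C}_G,\mathfrak{P}_G^\pm}$); equivariance and the base-point computation $F(J_0)=(j,1)^tA_\mathbb H$ check out. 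But once transitivity of $G$ on $\mathfrak C_G$ is granted, the image of $F$ is the single orbit $G\cdot(j,1)^tA_\mathbb H$, so transitivity of $G$ on $\mathfrak P_G^+$ is \emph{equivalent} to surjectivity of $F$ — and surjectivity is precisely what you do not prove. Your one-sentence justification (that $\omega_\mathbb H$-isotropy of $l$ is ``exactly'' the condition that the involution with eigenlines $l$ and $l^{\perp_h}$ has the form $\hat J(\cdot)j$ with $\hat J$ preserving $A_\mathbb C^2$) is an unverified claim, which you yourself label the main obstacle; the stabilizer statement likewise needs injectivity of $F$, which sits behind the same unverified step. The alternative route you sketch (simultaneous normalization of the Gram matrices of $\omega_\mathbb H$ and $h$ after completing a generator of $l$ to a basis) is in effect the direct argument one would have to write out — and, given that in the cited source the projective-model statement (Proposition 5.18) precedes the structures-model statement (Theorem 6.7), it is plausibly how the original proof goes, so leaning on the $\mathfrak C_G$ result also carries a risk of circularity at the level of the source, even though this paper quotes both results.

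Concretely, what is missing: given $l\in\mathfrak P_G^+$, you must produce $J\in\mathfrak C_G$ with $l=l^+_{\mathcal J}$. Setting $\mathcal J_l=\mathrm{Id}$ on $l$ and $-\mathrm{Id}$ on $l^{\perp_h}$ and defining $\hat J:=-\mathcal J_l(\cdot)\,j$ gives an additive map satisfying $\hat J(vq)=\hat J(v)\,jqj^{-1}$, but there is no formal reason why such a map should preserve the summand $A_\mathbb C^2\subset A_\mathbb H^2=A_\mathbb C^2\oplus A_\mathbb C^2 j$, nor why the resulting restriction should land in $\mathfrak C_G$ (i.e.\ have $h_J$ a $\bar\sigma_\mathbb C$-inner product rather than merely $\bar\sigma_\mathbb C$-symmetric). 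This is where the $\omega_\mathbb H$-isotropy of $l$, the positivity $h(x,x)\in(A_\mathbb H^{\sigma_1})_+$, and the interplay of $\sigma_0$ and $\sigma_1$ under right multiplication by $j$ must actually be used, and none of that computation appears. Until either this inverse construction or the direct normalization argument is carried out, the transitivity and the identification of the stabilizer with $K^c$ remain unproven.
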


We now study the tangent $T\mathfrak{P}_G^+$ of the projective space model more explicitly. Since $\mathfrak{P}_G^+$ is an open subspace of $\mathbb P\mathrm{Is}\left( {{\omega }_{\mathbb{H}}} \right)$, we can apply Remark~\ref{prop:tangent_skew} and Corollary~\ref{cor:tangent_skew_invariant} to derive that for every $l\in \mathfrak{P}_G^+$ and $x\in A_\mathbb H^2$ generating $l$, there are two realizations of the tangent space at $l$:
\begin{align*}
    T_{l}\mathfrak{P}_G^+
&=\{Q\in\mathrm{Hom}(l,A_\mathbb H^2/l)\mid \omega_\mathbb H(Q(x),x)\in A^{\sigma_0}_\mathbb H\}\\
&=\{[x,v]\mid x\text{ generates }l_J,\;v\in x^{\perp_{\omega_\mathbb H}}\}\\
&\cong x^{\perp_{\omega_\mathbb H}}/l.
\end{align*}

The action of $\mathrm{Sp}_2(A_\mathbb{C},\sigma_\mathbb{C})$ on the first realization of the tangent bundle is by conjugation. The~action on the second realization is linear.

The quaternionic structures model and the projective space model are related by the following isometry: 
For $l_J$, the $A_{\mathbb{H}}$-line $yA_{\mathbb{H}}$ such that $J_\mathbb{H}(y)=-yj$, there is a well-defined homeomorphism between the two models 
\begin{align}\label{homeo_quatern_proj}
F_{\mathfrak{C}_G,\mathfrak{P}_G^\pm}\colon \mathfrak{C}_G & \to \mathfrak{P}_G^\pm \\
J & \mapsto l_{J}^\pm,
\end{align}
where $l_J^\pm$ is the $A_{\mathbb{H}}$-line such that $l_J^\pm=yA_{\mathbb{H}}$, for a regular $y\in A_\mathbb H^2$ with $J_\mathbb{H}(y)=\mp yj$. This isometry preserves the structures of the Riemannian symmetric spaces and is equivariant under the $\mathrm{Sp}_2 (A_\mathbb C, \sigma_\mathbb C)$-action.
We compute the differential of the map $F_{\mathfrak{C}_G,\mathfrak{P}_G^\pm}$ explicitly:

\begin{prop}\label{prop:differential_F_quat}
    Let $J\in\mathfrak{C}_G$, $L\in T_J\mathfrak{C}_G$ and $x\in l_J$ be a generator of $l_J$. Then
    $$d_{J}F_{\mathfrak{C}_G,\mathfrak{P}_G^\pm}(L)=[x,\pm\frac{1}{2}L_\mathbb H(x)j+l_J^\pm]=\pm\frac{1}{2}[L_\mathbb H(\bullet)]j\in T_{l_J^\pm}\mathfrak{P}_G^\pm,$$
    where $[L_\mathbb H(\bullet)]\colon l_J^\pm\to A^2_\mathbb H/l_J^\pm$ denotes the projection of the restriction map $L_\mathbb H|_{l_J^\pm}\colon l_J^\pm\to A^2_\mathbb H$.
\end{prop}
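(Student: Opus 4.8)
The plan is to mirror verbatim the argument of Proposition~\ref{prop:differential_F} (the complex-structures case), replacing the complex structure $J_\mathbb{C}$ and its $(\mp i)$-eigenline by the quaternionic structure $J_\mathbb{H}$ and the $A_\mathbb{H}$-line $l_J^\pm$ on which $J_\mathbb{H}$ acts as right multiplication by $\mp j$. Concretely, I would first choose a smooth path $J(t)\in\mathfrak{C}_G$ with $J(0)=J$ and $\dot J(0)=L$, and a smooth path $x(t)\in A_\mathbb{H}^2$ with $x(t)$ generating $l_{J(t)}^\pm$ and $x(0)=x$, so that $J_\mathbb{H}(t)\,x(t)=\mp x(t)j$ for all $t$. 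By definition of the differential in the projective-line model, $d_JF_{\mathfrak{C}_G,\mathfrak{P}_G^\pm}(L)=[x,\dot x(0)+l_J^\pm]$, so the whole task is to identify $\dot x(0)$ modulo $l_J^\pm$.

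Differentiating the relation $J_\mathbb{H}(t)x(t)=\mp x(t)j$ at $t=0$ gives $L_\mathbb{H}(x)+J_\mathbb{H}(\dot x(0))=\mp\dot x(0)j$, that is,
\[
J_\mathbb{H}(\dot x(0))\pm\dot x(0)j=-L_\mathbb{H}(x).
\]
I would then observe that the $\mathbb{R}$-linear operator $u\mapsto J_\mathbb{H}(u)\pm uj$ on $A_\mathbb{H}^2$ has kernel exactly $l_J^\pm$: it annihilates $l_J^\pm$ since $J_\mathbb{H}$ restricts there to right multiplication by $\mp j$, whereas on the transverse line $l_J^\mp$ (where $J_\mathbb{H}$ is right multiplication by $\pm j$) it equals $u\mapsto\pm 2uj$, which is injective because $j$ is invertible; as $A_\mathbb{H}^2=l_J^+\oplus l_J^-$, the claim follows. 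Hence it suffices to exhibit one solution of the displayed equation, since any two differ by an element of $l_J^\pm$.

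The candidate is $u:=\pm\tfrac{1}{2}L_\mathbb{H}(x)j$. First, $uj=\pm\tfrac{1}{2}L_\mathbb{H}(x)j^2=\mp\tfrac{1}{2}L_\mathbb{H}(x)$, so $\pm uj=-\tfrac{1}{2}L_\mathbb{H}(x)$. Next, the quaternionic analogue of Lemma~\ref{lem:complexified_complex_structure} (proved in the same way from $L_\mathbb{H}J_\mathbb{H}+J_\mathbb{H}L_\mathbb{H}=0$) gives $L_\mathbb{H}(l_J^\pm)\subseteq l_J^\mp$; since $l_J^\mp$ is an $A_\mathbb{H}$-line on which $J_\mathbb{H}$ acts as right multiplication by $\pm j$, we get $J_\mathbb{H}(L_\mathbb{H}(x)j)=\pm L_\mathbb{H}(x)j^2=\mp L_\mathbb{H}(x)$, hence $J_\mathbb{H}(u)=\pm\tfrac{1}{2}(\mp L_\mathbb{H}(x))=-\tfrac{1}{2}L_\mathbb{H}(x)$. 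Adding the two contributions yields $J_\mathbb{H}(u)\pm uj=-L_\mathbb{H}(x)$, so $u$ solves the equation and $d_JF_{\mathfrak{C}_G,\mathfrak{P}_G^\pm}(L)=[x,\pm\tfrac{1}{2}L_\mathbb{H}(x)j+l_J^\pm]$. To rewrite this intrinsically, I would recall that under $T_l\mathfrak{P}_G^\pm\cong\mathrm{Hom}(l,A_\mathbb{H}^2/l)$ the class $[x,v+l_J^\pm]$ is the map $xa\mapsto va+l_J^\pm$; taking $v=\pm\tfrac{1}{2}L_\mathbb{H}(x)j$ and using that $L_\mathbb{H}$ commutes with right multiplication by $j$ (which follows from the definition of $L_\mathbb{H}$), this is identified with $\pm\tfrac{1}{2}[L_\mathbb{H}(\bullet)]j$.

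The only genuine obstacle, beyond bookkeeping, is the non-commutativity of $\mathbb{H}$: one must keep $j$ consistently on the correct side, and the key step $J_\mathbb{H}(L_\mathbb{H}(x)j)=\mp L_\mathbb{H}(x)$ relies crucially on $L_\mathbb{H}(x)$ lying in the \emph{other} line $l_J^\mp$ (so that $J_\mathbb{H}$ contributes the sign opposite to the one from $j^2=-1$, making the two terms add rather than cancel). Everything else is a transcription of the proof of Proposition~\ref{prop:differential_F}.
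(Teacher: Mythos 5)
Your proof is correct and follows exactly the route the paper intends: the paper's own proof of this proposition is just the remark that it is "similar to the proof of Proposition~\ref{prop:differential_F}", and your argument is precisely that transcription to the quaternionic setting, with the twisted linearity $L_\mathbb H(xa)=L_\mathbb H(x)\theta_i(a)$, the relation $j\theta_i(a)=aj$, and the sign bookkeeping carried out correctly. If anything, you are slightly more careful than the paper, since you verify that the kernel of $u\mapsto J_\mathbb H(u)\pm uj$ is exactly $l_J^\pm$, which pins down $\dot x(0)$ modulo $l_J^\pm$.
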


\begin{proof} The proof is similar to the proof of Proposition~\ref{prop:differential_F}.
\end{proof}

Let $l\in\mathfrak{P}_G^\pm$. Its complexification is denoted by $l_\mathbb C:=l\otimes_\mathbb R\mathbb C\subset A_\mathbb H^2\otimes_\mathbb R \mathbb C$. The complexification of the tangent space of $\mathfrak P_G^\pm$ at $l$ is given by the space of all $A_\mathbb H\otimes_\mathbb R \mathbb C$-linear morphisms:
\begin{align*}
T_l^\mathbb C\mathfrak{P}_G^\pm
&=\left\{Q\in\mathrm{Hom}(l_\mathbb C,A_\mathbb H^2\otimes_\mathbb R\mathbb C/l_\mathbb C)\mid \omega_{\mathbb H\mathbb C}(Q(x),x)\in A^{\sigma_0}_\mathbb H\otimes_\mathbb R\mathbb C\right\}\\
&=\left\{Q\in\mathrm{Hom}(l_\mathbb C,l_\mathbb C^{\perp_{h_\mathbb C}})\mid \omega_{\mathbb H\mathbb C}(Q(x),x)\in A^{\sigma_0}_\mathbb H\otimes_\mathbb R\mathbb C\right\},
\end{align*}
where $\omega_{\mathbb H\mathbb C}$ denotes the $\mathbb C$-linear extension of $\omega_\mathbb H$, and $h_{\mathbb C}$ denotes the $\mathbb C$-linear extension of $h$.

From Appendix~\ref{sec:cq_to_mat2}, there is a $\mathbb C$-algebra isomorphism $\Phi\colon A_\mathbb H\otimes_\mathbb R\mathbb C\to \mathrm{Mat}_2(A_\mathbb C)=:\hat A$. Under this isomorphism $\sigma_0\otimes\mathrm{Id}$ identifies with the map 
$$\hat\sigma_0\colon a\mapsto \sigma_\mathbb C(a)^t,$$
and $\sigma_1\otimes\mathrm{Id}$ identifies with the map 
$$\hat\sigma_1\colon a\mapsto \begin{pmatrix}
0 & 1 \\
-1 & 0
\end{pmatrix}^{-1}\sigma_\mathbb C(a)^t\begin{pmatrix}
0 & 1 \\
-1 & 0
\end{pmatrix}.$$
This allows us to identify a right $A_\mathbb H\otimes_\mathbb R\mathbb C$-module of rank 2 with a right $A_\mathbb C$-module of rank $4$ equipped with two $\sigma_\mathbb C$-sesquilinear symplectic forms $\hat\omega$ and $\hat h$ defined as follows: for $x,y\in A^4$,
$$\hat\omega(x,y)=\sigma_\mathbb C(x)^t\begin{pmatrix}
    0 & 0 & 1 & 0\\
    0 & 0 & 0 & 1\\
    -1 & 0 & 0 & 0\\
    0 & -1 & 0 & 0
\end{pmatrix}y,$$
$$\hat h(x,y)=\sigma_\mathbb C(x)^t\begin{pmatrix}
    0 & 1 & 0 & 0\\
    -1 &  0 & 0 & 0\\
    0 & 0 & 0 & 1\\
    0 & 0 & -1 & 0
\end{pmatrix}\begin{pmatrix}
    0 & 0 & 0 & i\\
    0 & 0 & i & 0\\
    0 & -i & 0 & 0\\
    -i & 0 & 0 & 0
\end{pmatrix}y=\sigma_\mathbb C(x)^t\begin{pmatrix}
    0 & 0 & i & 0\\
    0 & 0 & 0 & -i\\
    -i & 0 & 0 & 0\\
    0 & i & 0 & 0
\end{pmatrix}y.$$

Let $\hat l$ be an $A_\mathbb C^2$-submodule of $A^4$, which is the image of $l_\mathbb C$ under $\Phi$, for $l\in \mathfrak{P}_G^\pm$. Then, we can describe the complexified tangent space at $l$ of $\mathfrak{P}_G^\pm$ as follows
\begin{align}
\begin{aligned}\label{eq:spc_proj_comp.tangent}
    T_l^\mathbb C\mathfrak{P}_G^\pm
    &=\{Q\in\mathrm{Hom}(\hat l,\hat A^4/\hat l)\mid \hat \omega(Q(x),x)+\hat \omega(x,Q(x))=0,\;x\in\hat l\}\\
    &=\left\{Q\in\mathrm{Hom}(\hat l,\hat l)\mid \hat h(Q(x),x)+ \hat h(x,Q(x))=0,\;x\in\hat l\right\}.    
\end{aligned}
\end{align}
The last equality is given by the fact that $\hat\omega$ and $\hat h$ allow us to identify canonically $A^4/\hat l$ and $\hat l$ as follows: for $x+\hat l\in A_\mathbb C^4/\hat l$, we associate the unique $y\in \hat l$ such that $\hat\omega(z,x)=\hat h(z,y)$ for all $y\in \hat l$. This identification is clearly $A_\mathbb C$-linear.

The spaces $\mathfrak{P}_G^\pm$ are open domains in  the projective space $\mathbb{P}\mathrm{Is}(\omega_\mathbb H)$. Later on we will construct affine charts for these spaces. 

\subsection{Half-space models}\label{sec:upperhalf_complex}

The next model of the symmetric space of $\mathcal X_{\mathrm{Sp}_2(A_\mathbb{C},\sigma_\mathbb{C})}$ that we are going to discuss, is the upper half-space model. We denote as before by $A_\mathbb H$ the quaternionification of $A$, that is, $A_\mathbb{H}:=A\otimes_\mathbb R\mathbb H\{i,j,k\}$.

Every element of $z\in A_\mathbb{H}$ can be uniquely written as $z=x+yj$ where $x,y\in A_\mathbb C$. We denote by $\mathrm{Re}_\mathbb C(z):=x$, $\mathrm{Im}_\mathbb C(z):=y$. Moreover, if $z\in A_\mathbb{H}^{\sigma_0}$, then $x\in A_\mathbb C^{\sigma_\mathbb C}$ and $y\in A_{\mathbb C}^{\bar\sigma_\mathbb C}$.

\begin{defn}
    The \emph{upper (resp. lower) half-space models} are defined as
    $$\mathfrak{U}_G^\pm:=\left\{z\in A_\mathbb H^{\sigma_0}\mid \mathrm{Im}_\mathbb C(z)\in (A_\mathbb C^{\bar\sigma_\mathbb C})_\pm\right\}.$$
\end{defn}

\noindent They are models of the symmetric space  $\mathcal X_{\mathrm{Sp}_2(A_\mathbb{C},\sigma_\mathbb{C})}$ as seen by the following:

\begin{prop}[{\cite[Proposition~6.10]{ABRRW}}]
    The group $\mathrm{Sp}_2(A_\mathbb C,\sigma_\mathbb C)$ acts smoothly and transitively on $\mathfrak{U}_G^\pm$ by M\"obius transformations, 
    $$g.z=(az+b)(cz+d)^{-1},$$ 
    where $g=\begin{pmatrix}
        a & b \\
        c & d
    \end{pmatrix}$. The stabilizer of $\pm j\in\mathfrak{U}_G^\pm$ agrees with $K^c$.
\end{prop}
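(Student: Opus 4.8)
The plan is to realize $\mathfrak{U}_G^\pm$ as an affine chart of the projective space model $\mathfrak{P}_G^\pm$ from Section~\ref{sec:projstructure_cpx} and then to transport the linear $G$-action on $\mathfrak{P}_G^\pm$ — which by the projective-model proposition of Section~\ref{sec:projstructure_cpx} is transitive with stabilizer $K^c$ of the line $(j,1)^tA_\mathbb{H}$ — through the chart. Concretely, I would introduce the map
\[
F_{\mathfrak{P}_G^\pm,\mathfrak{U}_G^\pm}\colon\ \mathfrak{P}_G^\pm\longrightarrow A_\mathbb{H},\qquad (x_1,x_2)^tA_\mathbb{H}\longmapsto x_1x_2^{-1},
\]
in exact analogy with the real case of Section~\ref{sec:upperhalf_real}. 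The first task is to check that this is well defined, i.e. that for a regular generator $(x_1,x_2)^t$ of a line $l\in\mathfrak{P}_G^\pm$ the component $x_2$ is invertible. If not, there is $0\neq c\in A_\mathbb{H}$ with $x_2c=0$, whence $h\big((x_1,x_2)^tc,(x_1,x_2)^tc\big)=\sigma_1(c)\,h\big((x_1,x_2)^t,(x_1,x_2)^t\big)\,c=0$; but $h\big((x_1,x_2)^t,(x_1,x_2)^t\big)$ lies in the proper cone $(A_\mathbb{H}^{\sigma_1})_\pm$ of invertible elements of the Hermitian algebra $(A_\mathbb{H},\sigma_1)$, so this is impossible (cf.~\cite[Proposition~2.27]{ABRRW}). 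Since $x_1x_2^{-1}$ is unchanged under right multiplication of $(x_1,x_2)^t$ by $A_\mathbb{H}^\times$, the map descends to lines.

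Next I would identify the image of $F_{\mathfrak{P}_G^\pm,\mathfrak{U}_G^\pm}$ with $\mathfrak{U}_G^\pm$. Normalizing a generator as $(z,1)^t$, the isotropy condition $(z,1)^t\in\mathrm{Is}(\omega_\mathbb{H})$ reads $\omega_\mathbb{H}\big((z,1)^t,(z,1)^t\big)=\sigma_0(z)-z=0$, i.e. $z\in A_\mathbb{H}^{\sigma_0}$; writing then $z=a+bj$ with $a\in A_\mathbb{C}^{\sigma_\mathbb{C}}$ and $b=\mathrm{Im}_\mathbb{C}(z)\in A_\mathbb{C}^{\bar\sigma_\mathbb{C}}$, a short computation using $j^2=-1$ and $jx=\theta_i(x)j$ for $x\in A_\mathbb{C}$ gives $h\big((z,1)^t,(z,1)^t\big)=2\,\theta_i(b)$. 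Since $A_\mathbb{C}^{\bar\sigma_\mathbb{C}}$ is a Jordan subalgebra of the formally real Jordan algebra $A_\mathbb{H}^{\sigma_1}$, the positive cones are compatible, $(A_\mathbb{H}^{\sigma_1})_+\cap A_\mathbb{C}^{\bar\sigma_\mathbb{C}}=(A_\mathbb{C}^{\bar\sigma_\mathbb{C}})_+$, and $\theta_i$ preserves this cone; therefore $h\big((z,1)^t,(z,1)^t\big)\in(A_\mathbb{H}^{\sigma_1})_\pm$ precisely when $b\in(A_\mathbb{C}^{\bar\sigma_\mathbb{C}})_\pm$. Hence $F_{\mathfrak{P}_G^\pm,\mathfrak{U}_G^\pm}$ is a bijection onto $\mathfrak{U}_G^\pm$; being the restriction of the usual affine-chart map on $\mathbb{P}\mathrm{Is}(\omega_\mathbb{H})$, it is a diffeomorphism onto its image.

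The proposition then follows by transport of structure. A matrix $g=\begin{pmatrix}a&b\\c&d\end{pmatrix}\in G$ sends the generator $(z,1)^t$ of a line to $(az+b,\,cz+d)^t$, so under $F_{\mathfrak{P}_G^\pm,\mathfrak{U}_G^\pm}$ it induces $z\mapsto(az+b)(cz+d)^{-1}$; here $cz+d$ is invertible, being the second component of a regular generator of $g\cdot l\in\mathfrak{P}_G^\pm$ by the first step. As multiplication in $A_\mathbb{H}$ is polynomial, inversion is smooth on the open set $A_\mathbb{H}^\times$, and $\mathfrak{U}_G^\pm$ is open in the $\mathbb{R}$-vector space $A_\mathbb{H}^{\sigma_0}$, this action is smooth; it is transitive because $F_{\mathfrak{P}_G^\pm,\mathfrak{U}_G^\pm}$ conjugates it to the transitive action on $\mathfrak{P}_G^\pm$, and since $F_{\mathfrak{P}_G^\pm,\mathfrak{U}_G^\pm}\big((\pm j,1)^tA_\mathbb{H}\big)=\pm j$ the stabilizer of $\pm j$ equals the stabilizer of $(\pm j,1)^tA_\mathbb{H}$, which is $K^c$ by that proposition (and its obvious $\mathfrak{P}_G^-$ counterpart). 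The one genuinely delicate point is the identification in the second paragraph — translating the $h$-positivity of a generator into the cone condition $\mathrm{Im}_\mathbb{C}(z)\in(A_\mathbb{C}^{\bar\sigma_\mathbb{C}})_\pm$ — which intertwines the quaternionic arithmetic of $j$ with the anti-involutions $\sigma_0,\sigma_1,\bar\sigma_\mathbb{C}$ and the conjugation $\theta_i$, together with the restriction behaviour of positive cones to Jordan subalgebras; once this is settled, the invertibility of $cz+d$ and the group-theoretic assertions are essentially formal.
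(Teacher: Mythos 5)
Your argument is correct: the computation $\omega_\mathbb{H}((z,1)^t,(z,1)^t)=\sigma_0(z)-z$ and $h((z,1)^t,(z,1)^t)=2\,\theta_i(\mathrm{Im}_\mathbb{C}(z))$ checks out, the cone compatibility you flag is covered by the ABRRW results on $\sigma$-closed subalgebras, and transporting the transitive linear action with stabilizer $K^c$ of $(j,1)^tA_\mathbb{H}$ through the chart $l\mapsto x_1x_2^{-1}$ is legitimate. The paper itself gives no proof (it quotes \cite[Proposition~6.10]{ABRRW}), but your affine-chart route is exactly the viewpoint the paper adopts around this statement — the $G$-equivariant map $F_{\mathfrak{P}_G^\pm,\mathfrak{U}_G^\pm}$ of Section~\ref{sec:upperhalf_complex} and the analogous derivation of the half-space model in Sections~\ref{sec:halfspace_O} and~\ref{sec:upperhalf_real} — so this matches the intended argument.
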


Since $\mathfrak{U}_G^\pm$ is an open subspace of the vector space $A_\mathbb{H}^{\sigma_0}$, then for every $z\in \mathfrak{U}_G^\pm$, $T_z\mathfrak{U}_G^\pm = A_\mathbb{H}^{\sigma_0}$.  The action of $\mathrm{Sp}_2(A_\mathbb C,\sigma_\mathbb C)$ extends naturally to the action on $T\mathfrak{U}_G^\pm=\mathfrak{U}_G^\pm\times A^{\sigma_0}_\mathbb H$. For $g=\begin{pmatrix}
        a & b \\
        c & d
    \end{pmatrix}$
and $(z,v)\in \mathfrak{U}_G^\pm\times A^{\sigma_0}_\mathbb H$, the action is given by 
\begin{equation}\label{eq:diff_moebius_c}
    g.(z,v)=\left((az+b)(cz+d)^{-1}, av(cz+d)^{-1}-(az+b)(cz+d)^{-1}cv(cz+d)^{-1}\right).    
\end{equation}

For every $z\in\mathfrak{U}_G^\pm$, the stabilizer $\mathrm{Stab}_{G}(z)$ is conjugated to the stabilizer of $z=j$ which is $K^c$. We are now going to understand the action of $K^c$ on $T_j\mathfrak{U}_G^+\cong A_\mathbb H^{\sigma_0}$. By~\eqref{eq:diff_moebius_c}, for $g=\begin{pmatrix}
    a & b \\
    -\bar b & \bar a
\end{pmatrix}\in K^c$ and $v\in T_j\mathfrak{U}_G^+$, we have that
$$g.v
=(a+bj)v(-\bar bj+\bar a)^{-1}
=(a+bj)v(\sigma_{\mathbb C}(a)+\bar\sigma_{\mathbb C}(b)j)=qv\sigma_0(q),$$
because $(-\bar bj+\bar a)^{-1}=\sigma_{\mathbb C}(a)+\bar\sigma_{\mathbb C}(b)j$ and where $q=a+bj\in\mathrm O(A_\mathbb H,\sigma_1)\cong K^c$ (cf.~\eqref{eq:conj_symp_cpx}). 

Let $v=v_1+v_2j\in A_\mathbb H^{\sigma_0}$, then $v_1\in A^{\sigma_\mathbb C}_\mathbb C$, $v_2\in A^{\bar \sigma_\mathbb C}_\mathbb C$, and
\begin{align*}
g.v
=&av_1\sigma_{\mathbb C}(a)-b\bar v_1\sigma_{\mathbb C}(b)-av_2\sigma_{\mathbb C}(b) -b\bar v_2\sigma_{\mathbb C}(a)
\\
& +(av_1\bar\sigma_{\mathbb C}(b)+b\bar v_1\bar\sigma_{\mathbb C}(a)+av_2\bar\sigma_{\mathbb C}(a)-b\bar v_2\bar\sigma_{\mathbb C}(b))j.
\end{align*}
Notice that $A_\mathbb H^{\sigma_0}$ can be identified with $\mathfrak m^c$ via the map: $v_1+v_2j\mapsto \begin{pmatrix}
    v_2 & v_1 \\
    \bar v_1 & -\bar v_2
\end{pmatrix}$. This action agrees with the isotropy action:
\begin{align*}
& \begin{pmatrix}
    a & b \\
    -\bar b & \bar a
\end{pmatrix}\begin{pmatrix}
    v_2 & v_1 \\
    \bar v_1 & -\bar v_2
\end{pmatrix}\begin{pmatrix}
    a & b \\
    -\bar b & \bar a
\end{pmatrix}^{-1}=\begin{pmatrix}
    a & b \\
    -\bar b & \bar a
\end{pmatrix}\begin{pmatrix}
    v_2 & v_1 \\
    \bar v_1 & -\bar v_2
\end{pmatrix}\begin{pmatrix}
    \bar\sigma_{\mathbb C}(a) & -\sigma_{\mathbb C}(b) \\
    \bar\sigma_{\mathbb C}(b) & \sigma_{\mathbb C}(a)
\end{pmatrix}\\
= & \begin{pmatrix}
    av_2+b\bar v_1 & av_1-b\bar v_2 \\
    -\bar bv_2+\bar a\bar v_1 & -\bar bv_1-\bar a\bar v_2
\end{pmatrix}\begin{pmatrix}
    \bar \sigma_{\mathbb C}(a) & -\sigma_{\mathbb C}(b) \\
    \bar \sigma_{\mathbb C}(b) & \sigma_{\mathbb C}(a)
\end{pmatrix} = \begin{pmatrix}
    \alpha & \beta \\
    \gamma & \delta
\end{pmatrix},
\end{align*}
where
\begin{align*}
\alpha & := av_2\bar\sigma_{\mathbb C}(a)+b\bar v_1\bar\sigma_{\mathbb C}(a)+av_1\bar\sigma_{\mathbb C}(b)-b\bar v_2\bar\sigma_{\mathbb C}(b),\\
\beta & := -av_2\sigma_{\mathbb C}(b)-b\bar v_1\sigma_{\mathbb C}(b)+av_1\sigma_{\mathbb C}(a)-b\bar v_2\sigma_{\mathbb C}(a), \\
\gamma & := -av_2\sigma_{\mathbb C}(b)-b\bar v_1\sigma_{\mathbb C}(b)+av_1\sigma_{\mathbb C}(a)-b\bar v_2\sigma_{\mathbb C}(a),\\
\delta & := -av_2\bar\sigma_{\mathbb C}(a)-b\bar v_1\bar\sigma_{\mathbb C}(a)-av_1\bar\sigma_{\mathbb C}(b)+b\bar v_2\bar\sigma_{\mathbb C}(b).
\end{align*}

Further, the map $m\mapsto im$ provides an $\mathbb R$-linear isomorphism between $\mathfrak{m}$ and $\mathfrak k$. Under this identification, the action of $K$ on $T_j\mathfrak{U}_G^+$ by M\"obius transformations agrees with the adjoint action of $K$ on its Lie algebra $\mathfrak{k}$.

The projective space models and the half-space models are related by the following $G$-equivariant isometries: 
$$\begin{matrix}
F_{\mathfrak{P}_G^\pm,\mathfrak{U}_G^\pm} \colon & \mathfrak{P}_G^\pm & \to & \mathfrak{U}_G^\pm\\
 & (x_1,x_2)^tA_\mathbb H & \mapsto & x_1x_2^{-1}.
\end{matrix}$$
The differentials of these maps can be computed immediately:

\begin{prop}
    Let $x=(x_1,x_2)^t\in\mathrm{Is}(\omega_\mathbb H)$ such that $l=xA_\mathbb H\in\mathfrak{P}_G^\pm$, and $v=[x,(v_1,v_2)^t+l]\in T_{l}\mathfrak{P}_G^\pm$. Then, 
    $$d_{l}F_{\mathfrak{P}_G^\pm,\mathfrak{U}_G^\pm}(v)=(v_1-F_{\mathfrak{P}_G^\pm,\mathfrak{U}_G^\pm}(x) v_2)x_2^{-1}.$$ 
\end{prop}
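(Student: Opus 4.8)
The plan is to mimic the computations already carried out for the analogous maps $F_{\mathfrak{P}_G^\pm,\mathfrak{U}_G^\pm}$ in the $\mathrm{O}_{(1,1)}(A,\sigma)$-case and the $\mathrm{Sp}_2(A,\sigma)$-case, only now working over the quaternionified algebra $A_\mathbb H$. Since $F_{\mathfrak{P}_G^\pm,\mathfrak{U}_G^\pm}$ sends $(x_1,x_2)^tA_\mathbb H\mapsto x_1x_2^{-1}$, the first step is to pick a smooth path $l(t)=x(t)A_\mathbb H$ in $\mathfrak{P}_G^\pm$ through $l=l(0)$ whose initial velocity in the model $T_l\mathfrak{P}_G^\pm=\{[x,(v_1,v_2)^t+l]\}$ is the prescribed tangent vector $v$; concretely one may take $x(t)=(x_1+tv_1,\,x_2+tv_2)^t$, which is regular and $\omega_\mathbb H$-isotropic to first order. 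Then one computes $\frac{d}{dt}\big|_{t=0}\big((x_1+tv_1)(x_2+tv_2)^{-1}\big)$.

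The second step is the differentiation itself: using $\frac{d}{dt}(x_2+tv_2)^{-1}\big|_{t=0}=-x_2^{-1}v_2x_2^{-1}$ (valid since $x_2$ is invertible, as $l$ is transverse to the reference line), the product rule gives
$$
d_l F_{\mathfrak{P}_G^\pm,\mathfrak{U}_G^\pm}(v)=v_1x_2^{-1}-x_1x_2^{-1}v_2x_2^{-1}=(v_1-F_{\mathfrak{P}_G^\pm,\mathfrak{U}_G^\pm}(x)\,v_2)x_2^{-1},
$$
which is exactly the claimed formula. The third step is to check that this is well-defined, i.e. independent of the choice of representative $x$ of $l$ and of the lift $(v_1,v_2)$ of $v\in x^{\perp_{\omega_\mathbb H}}/l$: replacing $x$ by $xa$ for $a\in A_\mathbb H^\times$ replaces $x_i$ by $x_ia$ and $v_i$ by $v_ia$, and one verifies $(v_1a-x_1ax_2^{-1}\,(\text{nothing})\dots)$—more carefully, $(v_1a)(x_2a)^{-1}-(x_1a)(x_2a)^{-1}(v_2a)(x_2a)^{-1}$ collapses to the same element after cancelling $a$ and $a^{-1}$; adding $x a_0$ to the lift changes $v_i$ by $x_ia_0$ and one checks the two resulting terms cancel. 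This is the same bookkeeping as in the proof of the corresponding statement for $\mathrm{Sp}_2(A,\sigma)$, so I would simply invoke that pattern.

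The only genuinely delicate point—and the one I expect to be the main obstacle to a fully rigorous write-up—is confirming that the first-order perturbation $x(t)=(x_1+tv_1,x_2+tv_2)^t$ really does stay (to first order) inside $\mathfrak{P}_G^\pm\subset\mathbb{P}\mathrm{Is}(\omega_\mathbb H)$, i.e. that the isotropy condition $\omega_\mathbb H(x(t),x(t))=0$ and the sign condition defining $\mathfrak P_G^\pm$ are satisfied to the needed order. This is precisely guaranteed by the characterization of $T_l\mathfrak{P}_G^\pm$ via $\omega_\mathbb H(Q(x),x)\in A_\mathbb H^{\sigma_0}$ from Corollary~\ref{cor:tangent_skew_invariant} and the discussion in Section~\ref{sec:projstructure_cpx}: the condition $v\in x^{\perp_{\omega_\mathbb H}}/l$ is exactly the linearization of isotropy, and openness of $\mathfrak P_G^\pm$ handles the inequality. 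Given that these facts are already established, the proof reduces to the short computation above, and I would present it in two or three lines, referring to the proof of the $\mathrm{Sp}_2(A,\sigma)$ analogue for the independence-of-choices verification.
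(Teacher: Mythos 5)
Your computation is correct and is exactly the ``immediate'' computation the paper has in mind (the paper states this proposition without proof, just as for the analogous differentials in the $\mathrm{O}_{(1,1)}(A,\sigma)$ and $\mathrm{Sp}_2(A,\sigma)$ cases): differentiate $(x_1+tv_1)(x_2+tv_2)^{-1}$ at $t=0$ using $\frac{d}{dt}(x_2+tv_2)^{-1}\big|_{t=0}=-x_2^{-1}v_2x_2^{-1}$, and check independence of the representative of $l$ and of the lift of $v$. Your handling of the only delicate point is also fine, since the formula $x\mapsto x_1x_2^{-1}$ extends to the open set of lines transverse to the reference line, so the derivative depends only on the first-order data and restricting to $T_l\mathfrak{P}_G^\pm$ gives the claim.
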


\subsection{Precompact model}\label{sec:precompact_complex}

Finally, we discuss the precompact model for the symmetric space of $G=\mathrm{Sp}_2(A_\mathbb{C},\sigma_\mathbb{C})$. In this section, we denote
    $G':=Q^{-1}GQ$ where $Q=\frac{1}{\sqrt{2}}\begin{pmatrix}
        1 & j\\
        j & 1
    \end{pmatrix}$.
We also denote $K'=Q^{-1}K^cQ$ a maximal compact subgroup of $G'$ (cf. Section~\ref{sec:incarnations}). Since $G$ and $G'$ are conjugates, their symmetric spaces are isometric Riemannian manifolds.

\begin{defn}
    The \emph{precompact model} is defined as the space 
    $$\mathfrak{B}_G:=\left\{z\in A^{\sigma_0}_\mathbb{H}\mid 1-\sigma_1(z)z\in (A^{\sigma_1}_\mathbb{H})_+\right\}.$$
\end{defn}

It is a model of the symmetric space $\mathcal X_{\mathrm{Sp}_2(A_\mathbb{C},\sigma_\mathbb{C})}$ as seen by the following:

\begin{prop}[{\cite[Proposition~6.16]{ABRRW}}]
    The group $G'$ acts smoothly and transitively on $\mathfrak{B}_G$ by M\"obius transformations 
    $$g.z=(az+b)(cz+d)^{-1},$$ 
    where $g=\begin{pmatrix}
        a & b \\
        c & d
    \end{pmatrix}$. The stabilizer of $0\in\mathfrak{B}_G$ agrees with $K'$.
\end{prop}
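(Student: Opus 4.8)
The plan is to obtain this statement from the transitivity of the $\mathrm{Sp}_2(A_\mathbb C,\sigma_\mathbb C)$-action on the upper half-space model $\mathfrak U_G^+$ (with stabilizer $K^c$ at $j$, see \cite[Proposition~6.10]{ABRRW}), by transporting everything through a Cayley-type transform, exactly as the precompact models for $\mathrm{O}_{(1,1)}(A,\sigma)$ and $\mathrm{Sp}_2(A,\sigma)$ were treated in the previous sections. Concretely, I would introduce the map
\[
\mathcal C\colon z\longmapsto (z-j)(1-jz)^{-1},
\]
the Möbius transformation attached to the matrix $Q^{-1}=\tfrac{1}{\sqrt2}\begin{pmatrix}1&-j\\-j&1\end{pmatrix}$, and show that it restricts to a diffeomorphism from $\mathfrak U_G^+$ onto $\mathfrak B_G$.

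For the first step one must check: (i) for $z\in\mathfrak U_G^+$, writing $z=z_1+z_2j$ with $z_1\in A_\mathbb C^{\sigma_\mathbb C}$ and $z_2=\mathrm{Im}_\mathbb C(z)\in(A_\mathbb C^{\bar\sigma_\mathbb C})_+$, the element $1-jz$ is invertible in $A_\mathbb H$ — this should follow from the positivity of $z_2$ together with the fact, recalled in Section~\ref{sec:symplectic and indefinite}, that elements of the positive cone of a Hermitian algebra are invertible, after expanding $1-jz$ in the decomposition $A_\mathbb H=A_\mathbb C\oplus A_\mathbb C j$; and (ii) the image $w=\mathcal C(z)$ lies in $A_\mathbb H^{\sigma_0}$ and satisfies $1-\sigma_1(w)w\in(A_\mathbb H^{\sigma_1})_+$. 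Part (ii) is the computational heart: I would compute $1-\sigma_1(w)w$ and show it is congruent, by $(1-jz)^{-1}$, to an element of $(A_\mathbb H^{\sigma_1})_{\geq 0}$ which is strictly positive precisely when $z_2\in(A_\mathbb C^{\bar\sigma_\mathbb C})_+$, using that congruence by $A_\mathbb H^\times$ preserves $(A_\mathbb H^{\sigma_1})_+$ (the Hermitian-algebra statement recalled in Section~\ref{sec:symplectic and indefinite}, applied to $(A_\mathbb H,\sigma_1)$). The inverse of $\mathcal C$ is the Möbius map of $Q$, and both maps are smooth, being rational with invertible denominators on their domains.

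Next I would establish equivariance: for $g\in G$ and $z\in\mathfrak U_G^+$ the cocycle identity $\mathcal C(g.z)=(Q^{-1}gQ).\mathcal C(z)$ holds, which is the usual associativity of the Möbius action, legitimate here because every denominator that occurs — $cz+d$ for $g$, $1-jz$ for $Q^{-1}$, and the one for the composite — is invertible on $\mathfrak U_G^+$; this is exactly the bookkeeping carried out in \cite[Appendix~B]{ABRRW}. It follows that the formula $g'.w=(aw+b)(cw+d)^{-1}$ for $g'=\begin{pmatrix}a&b\\c&d\end{pmatrix}\in G'=Q^{-1}GQ$ defines a smooth action of $G'$ on $\mathfrak B_G$ which is conjugate, via $\mathcal C$, to the $G$-action on $\mathfrak U_G^+$; hence it is smooth and transitive. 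Finally, since $\mathcal C(j)=0$ by direct substitution, equivariance yields $\mathrm{Stab}_{G'}(0)=Q^{-1}\,\mathrm{Stab}_G(j)\,Q=Q^{-1}K^cQ=K'$, which is the remaining assertion; alternatively, $g'.0=bd^{-1}=0$ forces $b=0$, and the conditions cutting out $G'$ then force $c=0$ and $a,d$ to be the entries of an element of $K'$ (cf.~\eqref{max_cpt_cplx}).

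The step I expect to be the main obstacle is (ii): pinning down exactly how the condition $\mathrm{Im}_\mathbb C(z)\in(A_\mathbb C^{\bar\sigma_\mathbb C})_+$ turns into $1-\sigma_1(w)w\in(A_\mathbb H^{\sigma_1})_+$ under $\mathcal C$, which requires careful handling of the interplay of the three anti-involutions $\sigma_0,\sigma_1,\bar\sigma_\mathbb C$ on $A_\mathbb H$ and of the invertibility of $1-jz$. Once $\mathcal C$ is shown to be a diffeomorphism of the two models, transitivity and the stabilizer computation are formal. One could also bypass the Cayley transform and argue inside $\mathfrak B_G$ directly — show $cz+d\in A_\mathbb H^\times$ for $g'\in G'$ and $z\in\mathfrak B_G$, verify by a congruence computation that $g'$ preserves the defining cone condition, and realize a given $z\in\mathfrak B_G$ as $g'.0$ using a square root of $1-\sigma_1(z)z$ — but this essentially re-derives the Cayley transform by hand.
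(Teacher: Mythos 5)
The paper itself contains no proof of this proposition: it is quoted directly from \cite[Proposition~6.16]{ABRRW}, so there is no internal argument to compare yours against. On its own terms, your Cayley-transform reduction is sound, and it is the natural route given the paper's conventions, since $\mathfrak B_G$ is tied to $\mathfrak U_G^+$ and $\mathfrak P_G^+$ exactly through the matrix $Q$ (cf.\ $F_{\mathfrak P_G^+,\mathfrak B_G}$ and the parallel treatment of the precompact models for $\mathrm O_{(1,1)}(A,\sigma)$ and $\mathrm{Sp}_2(A,\sigma)$ via $R$ and $T$). Your step (ii) does go through: writing $z=x+yj$ with $x\in A_\mathbb C^{\sigma_\mathbb C}$ and $y=\mathrm{Im}_\mathbb C(z)$, one computes $\sigma_1(1-jz)(1-jz)-\sigma_1(z-j)(z-j)=4\bar y$, so for $w=(z-j)(1-jz)^{-1}$ one gets $1-\sigma_1(w)w=\sigma_1\bigl((1-jz)^{-1}\bigr)\,4\bar y\,(1-jz)^{-1}$, which lies in $(A_\mathbb H^{\sigma_1})_+$ precisely when $y\in(A_\mathbb C^{\bar\sigma_\mathbb C})_+$ because congruence by invertible elements preserves the positive cone; a short check also gives $\sigma_0(w)=w$. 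Two caveats to record: first, your argument is a transfer from the half-space proposition, which in this paper is likewise only quoted (\cite[Proposition~6.10]{ABRRW}), so the proof is relative rather than self-contained; second, the invertibility of $1-jz$ is most cleanly seen through the projective model (transversality of $Q^{-1}\cdot(z,1)^tA_\mathbb H$ to the line $e_1A_\mathbb H$, using that $Q\in\mathrm{Aut}(\omega_\mathbb H)$) rather than from positivity of $\mathrm{Im}_\mathbb C(z)$ alone, so that step deserves to be argued that way. For the stabilizer, keep the conjugation argument $\mathrm{Stab}_{G'}(0)=Q^{-1}\,\mathrm{Stab}_G(j)\,Q=K'$; your fallback ($b=0$ forcing $c=0$) is true but requires writing out the relations defining $G'=Q^{-1}GQ$, which is avoidable extra work.
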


The closure of the space $\mathfrak{B}_G$ is compact:
    $$\overline{\mathfrak{B}}_G:=\left\{z\in A^{\sigma_0}_\mathbb{H}\mid 1-\sigma_1(z)z\in (A^{\sigma_1}_\mathbb{H})_{\geq 0}\right\}.$$

Since $\mathfrak{B}_G$ is an open subspace of the vector space $A_\mathbb{H}^{\sigma_0}$, then for every $z\in \mathfrak{B}_G$, $T_z\mathfrak{B}_G = A_\mathbb{H}^{\sigma_0}$.  The action of $\mathrm{Sp}_2(A_\mathbb C,\sigma_\mathbb C)$ extends naturally to the action on $T\mathfrak{B}_G=\mathfrak{B}_G\times A^{\sigma_0}_\mathbb H$. For $T^{-1}gT=\begin{pmatrix}
        a & b \\
        c & d
    \end{pmatrix}$
and $(z,v)\in \mathfrak{U}_G\times A^{\sigma_0}_\mathbb H$,  the action is given by
$$g.(z,v)=\left((az+b)(cz+d)^{-1}, av(cz+d)^{-1}-(az+b)(cz+d)^{-1}cv(cz+d)^{-1}\right).$$

For every $z\in\mathfrak B_G$, the stabilizer $\mathrm{Stab}_{G'}(z)$ is conjugated to the stabilizer of $x=0$ which is $K'$. We are now going to understand the action of $K'$ on $T_0\mathfrak B_G= A_\mathbb{H}^{\sigma_0}$. For $u\in \mathrm O(A_\mathbb H,\sigma_1)$,
    $g=\begin{pmatrix}
        u & 0 \\
        0 & (ij) u (ij)^{-1}
    \end{pmatrix}\in K'$ 
(cf.~\eqref{eq:conj_symp_cpx}) and $v\in T_0\mathfrak B_G$, then 
$$g.v=uv\sigma_0(u).$$

\begin{rem}\label{rem:not_quatern_module}
    Notice that the tangent space $T_z\mathcal X$ at some point $z$ in the symmetric space $\mathcal{X}$ of $\mathrm{Sp}_2(A_{\mathbb{C}}, \sigma_{\mathbb{C}})$  is isomorphic to $A_\mathbb H^{\sigma_0}$, which is not a quaternionic module. It is isomorphic to $A_\mathbb C^{\sigma_\mathbb C}+ A_\mathbb C^{\bar\sigma_\mathbb C}j$, which is even not a complex vector space but only a real one. However, after complexification, it becomes isomorphic to the complex vector space 
    \[\left(A_{\mathbb{C}}^{\sigma_{\mathbb{C}}} + A_{\mathbb{C}}^{\bar\sigma_{\mathbb{C}}}j\right) \otimes \mathbb{C}\cong (A_{\mathbb{C}}^{\sigma_{\mathbb{C}}})^3\oplus A_{\mathbb{C}}^{-\sigma_{\mathbb{C}}}.\]  
\end{rem}

The projective space model and the precompact model are related by the following $G$-equivariant isometry: 
$$\begin{matrix}
F_{\mathfrak{P}_G^+,\mathfrak{B}_G} \colon & \mathfrak{P}_G^+ & \to & \mathfrak{B}_G\\
 & (x_1,x_2)^tA_\mathbb H & \mapsto & y_1y_2^{-1},
\end{matrix}$$
where $y=\begin{pmatrix}y_1 \\y_2\end{pmatrix}=\frac{1}{\sqrt{2}}\begin{pmatrix}
1 & -j \\
-j & 1
\end{pmatrix}\begin{pmatrix}x_1 \\x_2\end{pmatrix}$. Its differential can be computed as follows:

\begin{prop}
    Let $x=(x_1,x_2)^t\in\mathrm{Is}(\omega_\mathbb H)$ such that $l=xA_\mathbb H\in \mathfrak{P}_G^+$, and $v=[x,(v_1,v_2)^t+l]\in T_{l}\mathfrak{P}_G^+$. Then, 
    $$d_{l}F_{\mathfrak{P}_G^+,\mathfrak{B}_G}(v)=(w_1-F_{\mathfrak{P}_G^+,\mathfrak{B}_G}(x) w_2)y_2^{-1},$$
    where $y=\begin{pmatrix}y_1 \\y_2\end{pmatrix}=\frac{1}{\sqrt{2}}\begin{pmatrix}
1 & -j \\
-j & 1
\end{pmatrix}\begin{pmatrix}x_1 \\x_2\end{pmatrix}$, and $w=\begin{pmatrix}w_1 \\w_2\end{pmatrix}=\frac{1}{\sqrt{2}}\begin{pmatrix}
1 & -j \\
-j & 1
\end{pmatrix}\begin{pmatrix}v_1 \\v_2\end{pmatrix}$.
\end{prop}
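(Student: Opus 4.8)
The plan is to factor $F_{\mathfrak{P}_G^+,\mathfrak{B}_G}$ through a right $A_{\mathbb{H}}$-linear change of basis followed by the affine-chart map already used for the half-space model, and then differentiate by the chain rule. First I would record the elementary identity $\frac{1}{\sqrt{2}}\begin{pmatrix} 1 & -j \\ -j & 1 \end{pmatrix}=Q^{-1}$, where $Q=\frac{1}{\sqrt{2}}\begin{pmatrix} 1 & j \\ j & 1\end{pmatrix}$ is the base-change matrix of Section~\ref{sec:incarnations}; indeed $Q\cdot\frac{1}{\sqrt{2}}\begin{pmatrix} 1 & -j \\ -j & 1\end{pmatrix}=\mathrm{Id}$ because $j^{2}=-1$, and left multiplication by a matrix with entries in $\mathbb{H}$ is right $A_{\mathbb{H}}$-linear. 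Hence $F_{\mathfrak{P}_G^+,\mathfrak{B}_G}$ is the composition of the line map $l\mapsto Q^{-1}l$ with the chart $(y_{1},y_{2})^{t}A_{\mathbb{H}}\mapsto y_{1}y_{2}^{-1}$; by the definition of $\mathfrak{P}_G^+$ the component $y_{2}$ is invertible, so this is well defined, and by the cited results of \cite{ABRRW} it is a $G$-equivariant diffeomorphism onto $\mathfrak{B}_G$.

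Next I would choose a smooth path $x(t)=(x_{1}(t),x_{2}(t))^{t}\in\mathrm{Is}(\omega_{\mathbb{H}})$ with $x(0)=x$ and $\dot{x}(0)=(v_{1},v_{2})^{t}$, and set $l(t):=x(t)A_{\mathbb{H}}\in\mathfrak{P}_G^+$; by the description of the tangent space in Proposition~\ref{prop:tangent_gen_omega} this is a smooth curve through $l$ with velocity $[x,(v_{1},v_{2})^{t}+l]=v$, and every such $v$ arises this way. Putting $y(t):=Q^{-1}x(t)$, one has $y(0)=y$ and $\dot{y}(0)=Q^{-1}(v_{1},v_{2})^{t}=w$. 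Then $F_{\mathfrak{P}_G^+,\mathfrak{B}_G}(l(t))=y_{1}(t)y_{2}(t)^{-1}$, and differentiating at $t=0$, using $\frac{d}{dt}y_{2}(t)^{-1}\big|_{0}=-y_{2}^{-1}\dot{y}_{2}(0)y_{2}^{-1}$ (valid in any associative algebra), gives
\[
d_{l}F_{\mathfrak{P}_G^+,\mathfrak{B}_G}(v)=w_{1}y_{2}^{-1}-y_{1}y_{2}^{-1}w_{2}y_{2}^{-1}=\bigl(w_{1}-(y_{1}y_{2}^{-1})w_{2}\bigr)y_{2}^{-1}=\bigl(w_{1}-F_{\mathfrak{P}_G^+,\mathfrak{B}_G}(x)w_{2}\bigr)y_{2}^{-1},
\]
which is the asserted formula.

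It then remains to check that the right-hand side depends only on the class $v=[x,(v_{1},v_{2})^{t}+l]$. Replacing $x$ by $xa$ and $(v_{1},v_{2})^{t}$ by $(v_{1},v_{2})^{t}a$ with $a\in A_{\mathbb{H}}^{\times}$ multiplies $y$ and $w$ on the right by $a$, while replacing $(v_{1},v_{2})^{t}$ by $(v_{1},v_{2})^{t}+xb$ with $b\in A_{\mathbb{H}}$ replaces $w$ by $w+yb$; in both cases the expression $(w_{1}-(y_{1}y_{2}^{-1})w_{2})y_{2}^{-1}$ is unchanged by a one-line cancellation, exactly as in the computation of $d_lF_{\mathfrak{P}_G^\pm,\mathfrak{U}_G^\pm}$ in Section~\ref{sec:upperhalf_complex}. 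I do not expect a genuine obstacle here: the only points that require attention are the non-commutative bookkeeping in the derivative of $y_{2}^{-1}$ and this representative-independence check, everything else being formal once one invokes the already established fact that $F_{\mathfrak{P}_G^+,\mathfrak{B}_G}$ is a diffeomorphism whose differential is a well-defined linear isomorphism of the corresponding tangent spaces.
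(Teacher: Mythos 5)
Your proposal is correct and follows exactly the computation the paper leaves implicit ("its differential can be computed as follows"): one writes $F_{\mathfrak{P}_G^+,\mathfrak{B}_G}$ as the base change by $Q^{-1}=\frac{1}{\sqrt{2}}\begin{pmatrix}1 & -j\\ -j & 1\end{pmatrix}$ followed by the affine chart $(y_1,y_2)^tA_\mathbb H\mapsto y_1y_2^{-1}$, differentiates along a path in $\mathrm{Is}(\omega_\mathbb H)$ using $\frac{d}{dt}y_2(t)^{-1}=-y_2^{-1}\dot y_2 y_2^{-1}$, and checks independence of representatives, just as for $d_lF_{\mathfrak{P}_G^\pm,\mathfrak{U}_G^\pm}$. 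No gaps; the noncommutative bookkeeping and the well-definedness check are handled correctly.
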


\subsection{Riemannian metric on the models}

In terms of the upper half-space model, it is particularly easy to write the Riemannian metric on the Riemannian symmetric space of $\mathrm{Sp}_2(A_\mathbb C,\sigma_\mathbb C)$. Namely, for $z=x+yj\in \mathfrak U^+_G$, $v\in T_z\mathfrak U^+_G=A^{\sigma_0}_\mathbb H$, the associated norm of this metric is given by:
\begin{align*}
    g_z(v,v):=&\mathrm{tr}\left(y^{-\frac{1}{2}}\sigma_1(v)y^{-1}vy^{-\frac{1}{2}}\right).    
\end{align*}
As in the case of the group $\mathrm{Sp}_2(A,\sigma)$, one can check that this metric is $\mathrm{Sp}_2(A_\mathbb C,\sigma_\mathbb C)$-invariant. Using the polarization, one can also write the expression of $g_z(v,w)$, for $v,w\in T_z\mathfrak U^+_G$. The isomorphisms between models allow us to express this metric in terms of all four models of the symmetric space of $\mathrm{Sp}_2(A_\mathbb C,\sigma_\mathbb C)$.

Notice that if $z=j$, then the metric becomes particularly easy: 
$$g_j(v,w)=\mathrm{tr}(\sigma_1(v)w+\sigma_1(w)v).$$

\subsection{Example: \texorpdfstring{$A=\mathrm{Mat}_n(\mathbb R)$}{}}
In parallel to Section~\ref{sec:exam_A_real_mat}, we demonstrate all four models of the symmetric space of $\mathrm{Sp}_2(A_\mathbb C,\sigma_\mathbb C)$, their tangent spaces and the differentials of the diffeomorphisms between them in a certain important example.
    Let $A=\mathrm{Mat}_n(\mathbb R)$ and $\sigma$ be the transposition. Then $G=\mathrm{Sp}_2(A_\mathbb C,\sigma_\mathbb C)$ is isomorphic to the complex symplectic group $\mathrm{Sp}_{2n}(\mathbb C)$ which is the group of isometries of the symplectic space $(\mathbb C^{2n}, \Omega)$ where $\Omega=\begin{pmatrix} 0 & \mathrm{Id} \\ -\mathrm{Id} & 0 \end{pmatrix}$ and $\mathrm{Id}$ denotes the $n\times n$ identity matrix.
    
    As shown in~\cite{ABRRW}, $\mathfrak{C}_G$ can be seen as the space of all complex anti-linear operators $J\colon \mathbb C^{2n}\to\mathbb C^{2n}$ such that $J\bar J=-\mathrm{Id}$ and $J^t\Omega$ is a Hermitian positive definite matrix. Then the tangent space $T_J\mathfrak{C}_G$ can be identified with the space of all complex anti-linear maps $L\colon \mathbb C^{2n}\to\mathbb C^{2n}$ such that $J\bar L=-L\bar J$ and $L^t\Omega$ is a Hermitian matrix.

    The projective space models $\mathfrak{P}_G^\pm$ in this case can be seen as the space of all quaternionic Lagrangian subspaces of $(\mathbb H^{2n},\omega)$, i.e. $n$-dimensional subspaces $L$ such that $\omega|_L\equiv 0$ where $\omega (x,y)=\sigma_0(x)^t\Omega y$, and the sesquilinear form $h(x,y):=\sigma_1(x)^t j \Omega y$, where $x,y\in L$, is positive-, resp. negative-definite.  Its compact closure agrees with the space of all quaternionic Lagrangian subspaces of $(\mathbb H^{2n},\omega)$ such that the sesquilinear form $h$ is non-negative, resp. non-positive definite. The tangent space $T_L\mathfrak{P}_G^\pm$ can be then identified with the space of all linear maps $Q\colon L\to \mathbb{H}^{2n}/L$ such that the bilinear form $\omega(Q(\cdot),\cdot)$ on $L$ is $\sigma_0$-symmetric, i.e. $\omega(Q(x),y)=\sigma_0(\omega(Q(y),x))$, for all $x,y\in L$.

    The upper half-space models in this case are given by
    $$\mathfrak{U}_G^\pm=\{z_1+z_2j\in\mathrm{Mat}_n(\mathbb H)\mid z_1\in\mathrm{Sym}_n(\mathbb C),\;z_2\in\mathrm{Herm}^\pm_n(\mathbb C)\},$$
     and the tangent space is then 
     \[T_z\mathfrak{U}_G^\pm=\{v_1+v_2j\in\mathrm{Mat}_n(\mathbb H)\mid v_1\in\mathrm{Sym}_n(\mathbb C),\;v_2\in\mathrm{Herm}_n(\mathbb C)\},\] 
     for every $z\in\mathfrak{U}_G^\pm$.

    The precompact model in this case is given by
    $$\mathfrak{B}_G=\left\{z_1+z_2j\in\mathrm{Mat}_n(\mathbb H)\midwd \begin{array}{l}z_1\in\mathrm{Sym}_n(\mathbb C),\;z_2\in\mathrm{Herm}_n(\mathbb C),\\
    \mathrm{Id}-(\bar z_1-\bar z_2j)(z_1+z_2j)\in \mathrm{Herm}^+_n(\mathbb H)
    \end{array}\right\}.$$
    Its compact closure is
    $$\overline{\mathfrak{B}}_G=\left\{z_1+z_2j\in\mathrm{Mat}_n(\mathbb H)\midwd \begin{array}{l}z_1\in\mathrm{Sym}_n(\mathbb C),\;z_2\in\mathrm{Herm}_n(\mathbb C),\\
    \mathrm{Id}-(\bar z_1-\bar z_2j)(z_1+z_2j)\in \mathrm{Herm}^{\geq 0}_n(\mathbb H)\end{array}\right\}.$$
    The tangent space at every $z\in\mathfrak{B}_G$ is \[T_z\mathfrak{B}_G=\{v_1+v_2j\in\mathrm{Mat}_n(\mathbb H)\mid v_1\in\mathrm{Sym}_n(\mathbb C),\;v_2\in\mathrm{Herm}_n(\mathbb C)\}.\]

    The map $F_{\mathfrak{C}_G,\mathfrak{P}_G^\pm}$ maps a quaternionic structure $J\colon \mathbb C^{2n}\to \mathbb C^{2n}$ to the quaternionic Lagrangian subspace $L_J$ which is the $(\mp j)$-eigenspace of $J_\mathbb H$. If $M\colon \mathbb C^{2n}\to \mathbb C^{2n}$ is a tangent vector at $J$, then 
    \[d_JF_{\mathfrak{C}_G,\mathfrak{P}_G^\pm}(M)=\pm\frac{1}{2}[M_\mathbb H(\bullet)]j,\] 
    where $[M_\mathbb H]\colon L_J\to \mathbb H^{2n}/L_J$ which is the natural projection of $M_\mathbb H|_{L_J}$.

    Let now $L\in\mathfrak{P}_G^\pm$ and $Q\colon L\to \mathbb H^{2n}/L$ be a tangent vector at $L$. Then $F_{\mathfrak{P}_G^\pm,\mathfrak{U}_G^\pm}$ maps $L$ to the following symmetric matrix. We first fix a basis of $L$ and write it as a $2n\times n$-matrix $A$. This matrix contains an upper $(n\times n)$-block $A_1$ and a lower one, say  $A_2$, which is invertible because $L\in\mathfrak{P}_G^\pm$. Then $F_{\mathfrak{P}_G^\pm,\mathfrak{U}_G^\pm}(L)=A_1A_2^{-1}$ and this expression is clearly independent of the chosen basis. Applying $Q$ to the vectors of the basis, we obtain a family of $2n\times n$ matrices $B+Ax$ where $x\in\mathrm{Mat}_n(\mathbb R)$. We choose one of them, say for $x=0$ we obtain $B$, then we also obtain in a similar vain two $n\times n$-matrices $B_1$ and $B_2$ which are upper and lower submatrices of $B$. Then,
    \[d_LF_{\mathfrak{P}_G^\pm,\mathfrak U_G^\pm}(Q)=(B_1-A_1A_2^{-1}B_2)A_2^{-1}.\] 
    Easy calculations show that this expression does not depend on any of the choices made.

    One similarly describes the map $F_{\mathfrak{P}_G^+,\mathfrak{B}_G}$. Let $T:=\frac{1}{\sqrt{2}}\begin{pmatrix}
    \mathrm{Id} & -j\mathrm{Id}\\
    -j\mathrm{Id} & \mathrm{Id}
    \end{pmatrix}$, denote $A':=TA$, $B':=TB$, and let $A'_1$ and $A'_2$ be the lower and upper $n\times n$ submatrices of $A'$ respectively, and $B'_1$ and $B'_2$ be the lower and upper $n\times n$ submatrices of $B'$ respectively. Then, 
    \[F_{\mathfrak{P}_G^+,\mathfrak{B}_G}(L)=A_1'(A_2')^{-1}\] and 
    \[d_LF_{\mathfrak{P}_G^+,\mathfrak B_G}(Q)=(B'_1-A'_1(A'_2)^{-1}B'_2)(A'_2)^{-1}.\]

\section{Projective space model for some compact groups}\label{sec:models for compact}

In this section, we assume $(A,\sigma)$ to be a Hermitian algebra. We discuss the projective space model of symmetric spaces of the following compact groups $G$ with respect to the following compact subgroups $K$ (embedded diagonally): 
\begin{enumerate}
    \item $K=\mathrm{O}(A,\sigma)\times\mathrm{O}(A,\sigma)<G=\mathrm{O}_2(A,\sigma)$;
    \item $K=\mathrm{O}(A,\sigma)<G=\mathrm{KO}_{(1,1)}(A,\sigma)\cong \mathrm{O}(A,\sigma)\times\mathrm{O}(A,\sigma)$;
    \item $K=\mathrm{O}(A,\sigma)<G=\mathrm{KSp}_{2}(A,\sigma)\cong\mathrm{O}(A_\mathbb C,\bar\sigma_\mathbb C)$;
    \item $K=\mathrm{O}(A_\mathbb C,\bar\sigma_\mathbb C)<G=\mathrm{KSp}^c_{2}(A_\mathbb C,\sigma_\mathbb C)\cong\mathrm{O}(A_\mathbb H,\sigma_1)$.
\end{enumerate}

\begin{thm}$ $
    \begin{enumerate}
        \item The space $\mathbb P(A^2)$ is a model for the symmetric space  
        $\mathrm{O}_2(A,\sigma)/(\mathrm{O}(A,\sigma)\times\mathrm{O}(A,\sigma))$;
        \item The space $\mathbb P\mathrm{Is}(h)$ is a model for the symmetric space $\mathrm{KO}_{(1,1)}(A,\sigma)/\mathrm{O}(A,\sigma)$, where $h$ is the standard indefinite orthogonal form on $A^2$;
        \item The space $\mathbb P\mathrm{Is}(\omega)$ is a model for the symmetric space $\mathrm{KSp}_2(A,\sigma)/\mathrm{O}(A,\sigma)$, where $\omega$ is the standard symplectic form on $A^2$;
        \item The space $\mathbb P\mathrm{Is}(\omega_\mathbb C)$ is a model for the symmetric space $\mathrm{KSp}^c_2(A_\mathbb C,\sigma_\mathbb C)/\mathrm{O}(A_\mathbb C,\bar\sigma_\mathbb C)$, where $\omega_\mathbb C$ is the complex linear extension of the standard symplectic form $\omega$ on $A^2$.
    \end{enumerate}    
\end{thm}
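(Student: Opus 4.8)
The plan is to treat all four items uniformly: in each case $G$ is a compact group, $K\subset G$ a closed subgroup embedded diagonally, and the candidate model $\mathcal X$ is one of $\mathbb P(A^2)$, $\mathbb P\mathrm{Is}(h)$, $\mathbb P\mathrm{Is}(\omega)$, $\mathbb P\mathrm{Is}(\omega_\mathbb C)$. To show such a $\mathcal X$ is a model of $G/K$ it suffices, by the definition of ``model'' recalled in the introduction, to verify three things: (a) $G$ acts smoothly on $\mathcal X$; (b) the action is transitive; (c) for some basepoint $p_0\in\mathcal X$ the stabilizer $\mathrm{Stab}_G(p_0)$ equals (the relevant copy of) $K$. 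Then $gK\mapsto g(p_0)$ is the desired diffeomorphism. Smoothness in (a) is automatic from the linearity of the $G$-action on $A^2$ (resp.\ $A_\mathbb C^2$) descending to the projective spaces, all of which are smooth compact manifolds by the results recalled in Section~\ref{sec:sesq_forms} and in~\cite{ABRRW}. So the real content is transitivity and the stabilizer computation, done case by case with a judicious choice of basepoint.

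\textbf{Items (1)--(3).} For (1), take $p_0=e_1A$ with $e_1=(1,0)^t$. Transitivity of $\mathrm O_2(A,\sigma)=\{g\mid \sigma(g)^tg=\mathrm{Id}\}$ on $\mathbb P(A^2)$ amounts to the statement that any line $xA$ with $x$ regular can be sent to $e_1A$ by a $\sigma$-unitary matrix; this follows from a Gram--Schmidt argument using that $A^2$ carries the standard positive $\sigma$-inner product and that $\mathrm O_2(A,\sigma)$ acts transitively on $h$-orthonormal bases (the same mechanism used for $\mathfrak C_G$ in Section~\ref{sec:indefinite_O}). The stabilizer of $e_1A$ consists of $\sigma$-unitary block-diagonal matrices $\mathrm{diag}(a,d)$ with $a,d\in\mathrm O(A,\sigma)$, which is exactly the diagonally embedded $\mathrm O(A,\sigma)\times\mathrm O(A,\sigma)$. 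For (2), recall $R^{-1}\mathrm{KO}_{(1,1)}(A,\sigma)R\cong\mathrm O(A,\sigma)\times\mathrm O(A,\sigma)$ acting as $\mathrm{diag}(a,d)$ in the basis where $h$ is $\mathrm{diag}(-1,1)$ (formula~\eqref{eq:conj_orth}); the isotropic lines for $h$ in this basis are exactly $(u,v)^tA$ with $\sigma(u)u=\sigma(v)v$, i.e.\ $u,v\in\mathrm O(A,\sigma)$ after normalization, and $\mathrm{diag}(a,d)$ acts on these by $(u,v)\mapsto(au,dv)$, which is transitive with stabilizer of $(1,1)^tA$ the diagonal $\mathrm O(A,\sigma)$. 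For (3), use the identification $\mathrm{KSp}_2(A,\sigma)\cong\mathrm O(A_\mathbb C,\bar\sigma_\mathbb C)$ from~\eqref{eq:conj_symp_real} and note $\mathbb P\mathrm{Is}(\omega)$ consists of the $A$-lines isotropic for the standard symplectic form; transitivity and the stabilizer of the standard basepoint $(1,i)^tA$ (or $(1,-1)^tA$, appropriately) follow from the explicit form of $\mathrm{KSp}_2$ in~\eqref{max_cpt_real} together with~\cite[Corollary~2.65]{ABRRW} (every $\sigma$-positive element is $g\sigma(g)$), in the same way as the proof of Theorem~\ref{thm:involaopermodel_A}.

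\textbf{Item (4) and the main obstacle.} For (4), $\mathbb P\mathrm{Is}(\omega_\mathbb C)$ is the space of $A_\mathbb C$-lines in $A_\mathbb C^2$ isotropic for the complex-linear symplectic form, and the acting group is $\mathrm{KSp}_2^c(A_\mathbb C,\sigma_\mathbb C)\cong\mathrm O(A_\mathbb H,\sigma_1)$ via~\eqref{eq:conj_symp_cpx}. One takes as basepoint $(i,1)^tA_\mathbb C$ (consistent with Proposition~5.18's basepoint in~\cite{ABRRW}) and checks that the maximal compact $K^c$ from~\eqref{max_cpt_cplx}, written as $\begin{pmatrix}a&b\\-\bar b&\bar a\end{pmatrix}$, acts transitively on $\mathbb P\mathrm{Is}(\omega_\mathbb C)$ with stabilizer the block-diagonal copy, which after the conjugation by $Q$ is $\mathrm O(A_\mathbb C,\bar\sigma_\mathbb C)$. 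The verifications reduce to the identities $\bar\sigma_\mathbb C(a)a+\sigma_\mathbb C(b)\bar b=1$ and $\bar\sigma_\mathbb C(a)b=\sigma_\mathbb C(b)\bar a$ defining $K^c$, plus the fact that $\mathbb P\mathrm{Is}(\omega_\mathbb C)$ is compact (noted in Section~\ref{sec_proj-sp-mod}). The main difficulty I anticipate is \emph{transitivity of the compact group on the isotropic lines}: unlike the full automorphism group, which is transitive on isotropic lines essentially by Witt-type extension, here one must show that the \emph{maximal compact} subgroup alone already acts transitively. This is where one needs the interplay, already exploited in~\cite{ABRRW}, between the isotropy condition for $\omega$ (resp.\ $\omega_\mathbb C$) and the positivity of the auxiliary $\sigma$-inner product $h$: an isotropic line for $\omega$ automatically carries a compatible positive form, so a unit-normalization followed by an orthonormal-basis completion lands inside the compact group. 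I would phrase this as: $\mathrm{Is}(\omega)$ (resp.\ $\mathrm{Is}(\omega_\mathbb C)$) carries a transitive action of the compact orthogonal/unitary group because every isotropic regular vector can be rescaled to have unit $h$-norm and any two such are related by a $\sigma$-unitary (resp.\ $\bar\sigma_\mathbb C$-unitary) transformation. Once transitivity is in hand, the stabilizer computations in all four cases are short and direct from the explicit matrix descriptions~\eqref{max_cpt_real},~\eqref{eq:conj_orth},~\eqref{eq:conj_symp_real},~\eqref{max_cpt_cplx},~\eqref{eq:conj_symp_cpx}.
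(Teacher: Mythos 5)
Your overall strategy (smooth action, transitivity, stabilizer computation, with transitivity obtained by normalizing a generator against the auxiliary positive form and completing it to a suitable basis inside the compact group) is the same mechanism the paper relies on; the difference is that the paper proves only item (2) explicitly, via Gram--Schmidt as in \cite[Proposition~3.16]{ABRRW}, and handles items (1), (3), (4) by citing \cite[Corollary 4.2 and 4.3]{ABRRW}, whereas you sketch direct arguments for all four. Your treatment of (2) in the conjugated basis where $h$ becomes $\mathrm{diag}(-1,1)$, using $R^{-1}\mathrm{KO}_{(1,1)}(A,\sigma)R\cong\mathrm O(A,\sigma)\times\mathrm O(A,\sigma)$ acting block-diagonally, is a clean alternative to the paper's construction of a second isotropic vector; the one detail you assert without proof is that a regular $h$-isotropic vector $(u,v)^t$ has both components invertible. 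This is fillable: regularity with respect to the standard positive form gives $\sigma(u)u+\sigma(v)v\in A^\sigma_+$ (by the proposition characterizing weak regularity via $\sigma$-inner products), while isotropy gives $\sigma(u)u=\sigma(v)v$, so each lies in $A^\sigma_+$ and hence is invertible.

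The genuine gap is in items (3) and (4). You correctly single out transitivity of the maximal compact subgroup on isotropic lines as the crux, but your proposed resolution --- that every regular isotropic vector can be rescaled to unit norm and ``any two such are related by a $\sigma$-unitary (resp.\ $\bar\sigma_\mathbb C$-unitary) transformation'' --- proves too little: an element of $\mathrm O_2(A,\sigma)$ relating two unit vectors need not lie in $\mathrm{KSp}_2(A,\sigma)=\mathrm O_2(A,\sigma)\cap\mathrm{Sp}_2(A,\sigma)$ (resp.\ in $\mathrm{KSp}^c_2(A_\mathbb C,\sigma_\mathbb C)$), and an arbitrary orthonormal completion of the unit generator likewise need not produce a symplectic matrix. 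What is needed is a completion of the normalized isotropic generator $x$ to a pair $(x,y)$ compatible with \emph{both} forms simultaneously: $y$ orthonormal to $x$ for the positive form, isotropic for $\omega$ (resp.\ $\omega_\mathbb C$), and normalized so that the pairing $\omega(x,y)$ equals $1$, so that the resulting matrix lies in the intersection of the two automorphism groups. This simultaneous compatibility is exactly the extra step the paper's written proof of item (2) performs (choosing the Gram--Schmidt vector to be $h$-isotropic with $h(x,y)=1$), and it is the content of the cited \cite[Corollary 4.2 and 4.3]{ABRRW} for the remaining cases. Once that step is supplied, the stabilizer computations you indicate do go through.
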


\begin{proof}
    The cases (1), (3), and (4) are proven in~\cite[Corollary 4.2 and 4.3]{ABRRW}. To see (2), it is enough to check that $\mathrm{KO}_{(1,1)}(A,\sigma)$ acts transitively on $\mathbb P\mathrm{Is}(h)$ and the stabilizer of the line $(1,0)^tA$ agrees with $\mathrm{O}(A,\sigma)$. By Gram--Schmidt orthogonalization (cf.~\cite[Proposition~3.16]{ABRRW}), for every $h$-isotropic vector $x=(x_1,x_2)^t$ such that $\sigma(x_1)x_1+\sigma(x_2)x_2=1$, there exist $y=(y_1,y_2)^t$ such that $\sigma(y_1)y_1+\sigma(y_2)y_2=1$ and $\sigma(x_1)y_1+\sigma(x_2)y_2=0$. Then $y$ is $h$-isotropic as well, and moreover, since $h$ is non-degenerate, then $h(x,y)\in A^\times$. Therefore, $y$ can be normalized so that $h(x,y)=1$. Then the matrix 
    \[g:=\begin{pmatrix}
        x_1 & y_1 \\
        x_2 & y_2
        \end{pmatrix}\in \mathrm{KO}_{(1,1)}(A,\sigma)=\mathrm{O}_{(1,1)}(A,\sigma)\cap \mathrm{O}_{2}(A,\sigma),\] 
    where we identify  $\mathrm{O}_{2}(A,\sigma)$ with $\mathrm{Aut}(b)$, where $b(x,y)=\sigma(x_1)y_1+\sigma(x_2)y_2$. This matrix maps the line $(1,0)^tA$ to the line spanned by $x$, i.e. $\mathrm{KO}_{(1,1)}(A,\sigma)$ acts transitively on $\mathbb P\mathrm{Is}(h)$. The stabilizer of the line $(1,0)^tA$ is clearly $\mathrm{O}(A,\sigma)$ diagonally embedded into $\mathrm{KO}_{(1,1)}(A,\sigma)$.  
\end{proof}

The precompact and unbounded models, which we defined for non-compact Lie groups as affine charts of the corresponding projective space models, cannot exist for these symmetric spaces because symmetric spaces for compact groups are closed manifolds. However, the latter 3 spaces are parts of the boundary of the corresponding noncompact symmetric spaces for $\mathrm{O}_{(1,1)}(A,\sigma)$, $\mathrm{Sp}_{2}(A,\sigma)$, and $\mathrm{Sp}_{2}(A_\mathbb C,\sigma_\mathbb C)$, thus the precompact models can be used to provide their embeddings into certain vector spaces:

\begin{thm}$ $
    \begin{enumerate}
        \item The space 
        $$\mathfrak{B}_{\mathrm{KO}_{(1,1)}(A,\sigma)}:=\{a\in A\mid 1-\sigma(a)a=0\}$$ is a model for the symmetric space $\mathrm{KO}_{(1,1)}(A,\sigma)/\mathrm{O}(A,\sigma)$;
        \item The space 
        $$\mathfrak{B}_{\mathrm{KSp}_2(A,\sigma)}:=\{z\in A^{\sigma_\mathbb C}_\mathbb C\mid 1-\bar zz=0\}$$ is a model for the symmetric space $\mathrm{KSp}_2(A,\sigma)/\mathrm{O}(A,\sigma)$;
        \item The space 
        $$\mathfrak{B}_{\mathrm{KSp}^c_2(A_\mathbb C,\sigma_\mathbb C)}:=\left\{z\in A^{\sigma_0}_\mathbb{H}\mid 1-\sigma_1(z)z=0\right\}$$ is a model for the symmetric space $\mathrm{KSp}^c_2(A_\mathbb C,\sigma_\mathbb C)/\mathrm{O}(A_\mathbb C,\bar\sigma_\mathbb C)$.
    \end{enumerate}
\noindent The action of the group $G$ on its symmetric space is given by the same formulas as in each Section on the corresponding non-compact symmetric spaces.
\end{thm}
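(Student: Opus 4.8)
The plan is to deduce all three assertions from the preceding Theorem, using the precompact models $\mathfrak{B}$ built in Sections~\ref{sec:precompact_O}, \ref{sec:precompact_real} and \ref{sec:precompact_complex} for the ambient \emph{non-compact} groups. The key observation is that each space in the statement is the image, under the affine-chart diffeomorphism $F_{\mathfrak{P}^+,\mathfrak{B}}$ of the relevant non-compact section, of the space of isotropic lines that the preceding Theorem has already identified with the compact symmetric space in question. Thus the whole argument reduces to restricting an equivariant homeomorphism to a boundary stratum.

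I spell this out for~(1); (2) and (3) run identically. By the preceding Theorem, $\mathrm{KO}_{(1,1)}(A,\sigma)$ acts linearly, smoothly and transitively on $\mathbb{P}\mathrm{Is}(h)$ with the stabilizer of $(1,0)^tA$ equal to $\mathrm{O}(A,\sigma)$, so $\mathbb{P}\mathrm{Is}(h)$ is a model of $\mathrm{KO}_{(1,1)}(A,\sigma)/\mathrm{O}(A,\sigma)$. On the other hand, $h$-isotropy forces $h(x,x)=0\in A^\sigma_{\geq 0}$, so $\mathbb{P}\mathrm{Is}(h)\subseteq\overline{\mathfrak{P}}^+_{\mathrm{O}_{(1,1)}(A,\sigma)}$, and the map $F_{\mathfrak{P}^+,\mathfrak{B}}$ of Section~\ref{sec:precompact_O} is a homeomorphism of $\overline{\mathfrak{P}}^+_{\mathrm{O}_{(1,1)}(A,\sigma)}$ onto $\overline{\mathfrak{B}}_{\mathrm{O}_{(1,1)}(A,\sigma)}$ carrying (as recorded there) the isotropic-line locus onto $\mathrm{O}(A,\sigma)=\{a\in A\mid 1-\sigma(a)a=0\}=\mathfrak{B}_{\mathrm{KO}_{(1,1)}(A,\sigma)}$; surjectivity here is just the remark that for $a\in\mathrm{O}(A,\sigma)$ the regular vector $e_1a+e_2$ is $h$-isotropic and has image $a$. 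The restriction of $F_{\mathfrak{P}^+,\mathfrak{B}}$ is then a homeomorphism onto $\mathfrak{B}_{\mathrm{KO}_{(1,1)}(A,\sigma)}$, equivariant for the $\mathrm{O}_{(1,1)}(A,\sigma)$-action and hence for its compact subgroup $\mathrm{KO}_{(1,1)}(A,\sigma)$, which preserves both sides. Transporting the transitive action with stabilizer $\mathrm{O}(A,\sigma)$ through this equivariant homeomorphism shows that $\mathfrak{B}_{\mathrm{KO}_{(1,1)}(A,\sigma)}$ is a model, with $G$ acting by the inherited M\"obius formula; writing $R^{-1}gR=\operatorname{diag}(a,d)$ with $a,d\in\mathrm{O}(A,\sigma)$, that formula is simply $x\mapsto ax\sigma(d)$.

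For~(2) and~(3) one replaces $(h,\omega)$ on $A^2$ by the corresponding forms on $A_\mathbb{C}^2$, resp.\ $A_\mathbb{H}^2$. The model $\mathbb{P}\mathrm{Is}(\omega)$, resp.\ $\mathbb{P}\mathrm{Is}(\omega_\mathbb{C})$, of the preceding Theorem sits inside the closure of the projective model of the ambient non-compact symplectic symmetric space as its $h$-isotropic (Shilov) boundary, and $F_{\mathfrak{P}^+,\mathfrak{B}}$ of Section~\ref{sec:precompact_real}, resp.\ \ref{sec:precompact_complex}, carries it onto $\{z\in A_\mathbb{C}^{\sigma_\mathbb{C}}\mid 1-\bar zz=0\}$, resp.\ $\{z\in A_\mathbb{H}^{\sigma_0}\mid 1-\sigma_1(z)z=0\}$; restricting to the maximal compact subgroup and invoking the preceding Theorem finishes the argument. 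Here the inherited action is by congruence: $z\mapsto uz\sigma_\mathbb{C}(u)$ with $u\in\mathrm{O}(A_\mathbb{C},\bar\sigma_\mathbb{C})$ in case~(2) and $z\mapsto uz\sigma_0(u)$ with $u\in\mathrm{O}(A_\mathbb{H},\sigma_1)$ in case~(3), with base point $z=1$, whose stabilizer is cut out by $u\sigma_\bullet(u)=1$ and equals $\mathrm{O}(A,\sigma)$, resp.\ $\mathrm{O}(A_\mathbb{C},\bar\sigma_\mathbb{C})$. If one prefers to avoid any discussion of the boundary, one can instead verify transitivity of these congruence actions directly: every element of the displayed sets lies in the relevant orthogonal group and is of the form $u\sigma_\bullet(u)$ by the polar-type decompositions of \cite{ABRRW}, and the stabilizer is read off as above.

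The only point requiring genuine care is the behaviour on the boundary: one must check that $\mathbb{P}\mathrm{Is}(\cdot)$ lies entirely in $\overline{\mathfrak{P}}^+$ (rather than being split between $\overline{\mathfrak{P}}^+$ and $\overline{\mathfrak{P}}^-$), that $F_{\mathfrak{P}^+,\mathfrak{B}}$ extends to and remains injective on this locus, and that its image is \emph{exactly} the closed level set $1-\sigma_\bullet(z)z=0$ and not a proper subset. All of this follows from the explicit formulas of Sections~\ref{sec:precompact_O}--\ref{sec:precompact_complex} together with the fact that the fixed reference line $\ell'$ used there is \emph{definite}, hence transverse to every isotropic line; this is the main (mild) obstacle, and the direct transitivity argument sketched above is the fallback if extending $F_{\mathfrak{P}^+,\mathfrak{B}}$ across the boundary turns out to be delicate. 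The closing assertion that $G$ acts by the same M\"obius formulas is then automatic, since $F_{\mathfrak{P}^+,\mathfrak{B}}$ intertwines those formulas with the linear action on the projective model.
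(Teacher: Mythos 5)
Your argument is correct and is essentially the paper's own (largely implicit) justification: the paper states this theorem without a separate proof, relying precisely on the preceding observation that these compact symmetric spaces are the isotropic boundary loci of the noncompact projective models, together with the fact recorded in Section~\ref{sec:precompact_O} that $F_{\mathfrak{P}_G^+,\mathfrak{B}_G}$ carries $\mathrm{Is}(h)$ onto $\mathrm{O}(A,\sigma)$ — exactly the restriction-of-the-affine-chart argument you carry out and then transport through the first theorem of Section~\ref{sec:models for compact}. The boundary details you flag (transversality to the definite reference line, identification of the image with the exact level set $1-\sigma_\bullet(z)z=0$, equivariance of the inherited M\"obius/congruence formulas) are the same ones the paper leaves implicit, and they go through as you indicate.
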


\newpage 

\part{Applications to Higgs bundles}
In this second part of the article, we give three applications to the theory of Higgs bundles.  First, we show that each geometric model for the Riemannian symmetric space we described in the first part provides a new geometric interpretation of the $G$-Higgs bundle data, where $G$ is a symplectic group or an indefinite orthogonal group over an involutive,
possibly noncommutative, Hermitian algebra, or its complexification. 

Second, we give an exact component count for the moduli spaces of $\mathrm{Sp}_2(A_{\mathbb{C}}, \sigma_{\mathbb{C}})$-Higgs bundles and of $\mathrm{O}(A_{\mathbb{C}}, \sigma_{\mathbb{C}})$-Higgs bundles which does not rely on  Morse--Bott theoretic techniques. 

As a third application, we construct a factorization of the
Hitchin morphism for $\mathrm{Sp}_2(A_{\mathbb C},\sigma_{\mathbb C})$-Higgs bundles, together with analogous factorizations for the real groups $\mathrm{Sp}_2(A,\sigma)$ and $\mathrm{O}_{(1,1)}(A,\sigma)$. The existence of such factorization allows for a more concrete understanding of the image of the Hitchin morphism.

\section{Higgs bundle data}\label{sec:Higgs-Data}

In this section, we use the geometric models of the symmetric spaces constructed in Part 1 in order to describe the corresponding $G$-Higgs bundle data, and in particular to give different geometric realizations.  

Let $X$ be a compact Riemann surface, let $G$ be one of the Lie groups considered in Sections \ref{sec:ort_models}-\ref{sec:complex_models}, and let $K\subseteq G$ be a maximal compact subgroup. Given a reductive representation
$\rho:\pi_1(X)\longrightarrow G
$, let $
f_{G/K}:\widetilde X\longrightarrow G/K
$ be a $\rho$-equivariant harmonic map, whose existence is guaranteed by Corlette’s theorem (see Appendix \ref{sec:Corlette_Hadamard}). Since the isometries $G/K \cong \mathcal{X}$ we studied in the first part are $G$-equivariant, this yields $\rho$-equivariant harmonic maps $
f_{\mathcal X}:\widetilde X\longrightarrow \mathcal{X}
$, for each geometric model $\mathcal X$ of the symmetric space $G/K$.

The map $f_{\mathcal{X}}$ then determines a $K$-reduction of the flat principal $G$-bundle $(E_G,\nabla)$ corresponding to $\rho$, that is a smooth principal $K$-bundle $E_K$ with $E_K\times_KG\cong E_G$. After complexification, this reduction gives a principal $K^{\mathbb C}$-bundle 
$E_{K^{\mathbb C}} \to X$ equipped with a holomorphic structure $\bar{\partial}_{E_{K^{\mathbb{C}}}}$.  More precisely, the harmonic reduction decomposes the flat connection as
\[
\nabla=\nabla_K+\Psi,
\qquad
\Psi\in\Omega^1(X,E_K(\mathfrak m)).
\]
The $(0,1)$-part of $\nabla_K$ defines a holomorphic structure $\bar{\partial}_{E_{K^{\mathbb{C}}}}$ on $E_{K^{\mathbb C}}$, while $\varphi:=\Psi^{1,0}$
defines a holomorphic section
$\varphi\in H^0(X,E_{K^{\mathbb C}}(\mathfrak m^{\mathbb C})\otimes K_X)$.
Thus, one obtains the \(G\)-Higgs bundle associated with \(\rho\).

Different geometric models $\mathcal X$ of $G/K$ lead to different descriptions of the
$G$-Higgs bundle data. Therefore, all realizations for the symmetric space define the same algebraic moduli problem of a holomorphic principal $K^{\mathbb C}$-bundle together with a holomorphic section of an associated bundle; the existence of the moduli space
of polystable $G$-Higgs bundles follows from Schmitt's general construction  (cf.~\cite[Section 2.7]{Schmitt}). We
denote this moduli space by
$\mathcal M_{\mathrm{Higgs}}(X,G)$.

\subsection{Motivating example}
In order to motivate our approach, we illustrate the classical description of the $G$-Higgs bundle data in the case of the group $G=\mathrm{Sp}_{2n}(\mathbb{R})$. A maximal compact subgroup $K$ of $G$ can be identified with the unitary group $\mathrm{U}(n)$ whose complexification is isomorphic to $K^\mathbb C
\cong \mathrm{GL}_n(\mathbb C)$.

The complexified Lie algebra
\[{{\mathfrak{g}}^{\mathbb{C}}}=\mathfrak{sp}_{2n}\left(\mathbb{C} \right)=\left\{ \left( \begin{matrix}
   A & B  \\
   C & -{{A}^{t}}  \\
\end{matrix} \right)\midwd A,B,C\in {{\mathsf{\mathrm{Mat}}}_{n}}\left( \mathbb{C} \right);{{B}^{t}}=B,{{C}^{t}}=C \right\}\] has split real form $\mathfrak{sp}_{2n}\left(\mathbb{R} \right)$ and compact real form $\mathfrak{sp}(n)$.

The Cartan involution $\theta :\mathfrak{sp}_{2n}\left(\mathbb{C} \right)\to \mathfrak{sp}_{2n}\left(\mathbb{C} \right)$ with $\theta \left( X \right)=-{{X}^{t}}$ determines a Cartan decomposition for a choice of maximal compact subgroup  $K\cong  \mathrm{U}(n)\subset \mathrm{Sp}_{2n}( \mathbb{R})$ as
	\[\mathfrak{sp}_{2n}\left(\mathbb{R} \right)=\mathfrak{u}\left(n \right)\oplus \mathfrak{m}\]
	with complexification 
    \begin{equation*}\label{Cartan_complex_sp2n}\mathfrak{sp}_{2n}\left( \mathbb{C} \right)=\mathfrak{gl}_n\left(\mathbb{C} \right)\oplus {{\mathfrak{m}}^{\mathbb{C}}}.
    \end{equation*}
Classically, an $\mathrm{Sp}_{2n}(\mathbb R)$-Higgs bundle is defined as follows:

\begin{defn}
A \emph{$\mathrm{Sp}_{2n}(\mathbb R)$-Higgs bundle} over $X$ is a pair $\left( E,\varphi  \right)$ where
\begin{itemize}
  \item $E$ is a principal holomorphic $\mathrm{GL}_n(\mathbb C)$-bundle over $X$ and
  \item a \emph{Higgs field} $\varphi \in H^0(X,E\left( {{\mathfrak{m}}^{\mathbb{C}}} \right)\otimes K_X)$.
\end{itemize}
\end{defn}

One translates this principal-bundle definition into a vector-bundle one as follows:  
Applying the change-of-basis matrix $T=\left( \begin{matrix}
   I_n & iI_n  \\
   I_n & -iI_n  \\
\end{matrix} \right)$ on ${{\mathbb{C}}^{2n}}$, we can identify the summands in the Cartan decomposition of $\mathfrak{sp}_{2n}\left(\mathbb{C} \right)\subset \mathfrak{sl}_{2n}\left(\mathbb{C} \right)$ as:
\begin{align*}
\mathfrak{gl}_n\left(\mathbb{C} \right)& =\left\{ \left( \begin{matrix}
   Z & 0  \\
   0 & -{{Z}^{t}}  \\
\end{matrix} \right)\midwd Z\in {{\mathsf{\mathrm{Mat}}}_{n}}\left( \mathbb{C} \right) \right\},\\
{{\mathfrak{m}}^{\mathbb{C}}}& =\left\{ \left( \begin{matrix}
   0 & \beta   \\
   \gamma  & 0  \\
\end{matrix} \right)\midwd \,\beta ,\gamma \in {{\mathsf{\mathrm{Mat}}}_{n}}\left( \mathbb{C} \right);{{\beta }^{t}}=\beta ,{{\gamma }^{t}}=\gamma   \right\}.
\end{align*}

The Cartan decomposition and the choice of basis on $\mathbb{C}^{2n}$ by $T$ above, induce a splitting of $\mathbb{C}^{2n}$ into two copies of $\mathbb{C}^n$. Then, the symplectic form $\omega$ provides an identification of one copy $\mathbb{C}^n$ with the dual vector space $(\mathbb{C}^n)^*$ of the second copy. Therefore, a pair $(\beta, \gamma)$ in $\mathfrak{m}^{\mathbb{C}}$ as above can be regarded as an element of the space   
\[\mathrm{Sym}^{2}\left( {{\mathbb{C}}^{n}} \right)\oplus \mathrm{Sym}^{2}\left( {{\left( {{\mathbb{C}}^{n}} \right)}^{*}} \right).\]

Let $V$ denote the rank $n$ vector bundle associated to a holomorphic principal $\mathrm{GL}_n( \mathbb{C})$-bundle $E$ via the standard representation. Then from the Cartan decomposition for the Lie algebra $\mathfrak{sp}_{2n}\left(\mathbb{C} \right)$ we can identify
	\[E\left( {{\mathfrak{m}}^{\mathbb{C}}} \right)={\mathrm{Sym}}^{2}\left( V \right)\oplus {\mathrm{Sym}}^{2}\left( {{V}^{*}} \right)\]
and so the general definition for an $\mathrm{Sp}_{2n}( \mathbb{R})$-Higgs bundle specializes to the following:

\begin{defn}\label{defn:classical_Sp2nR}
An $\mathrm{Sp}_{2n}( \mathbb{R})$-Higgs bundle over $X$ is defined by a triple $\left( V,\beta ,\gamma  \right)$, where $V$ is a rank $n$ holomorphic vector bundle over $X$ and $\beta , \gamma$ are symmetric homomorphisms
	\[\beta :{{V}^{*}}\to V\otimes K_X\text{  and  }\gamma :V\to {{V}^{*}}\otimes K_X.\]
\end{defn}

The embedding $\mathrm{Sp}_{2n}( \mathbb{R})\hookrightarrow \mathrm{SL}_{2n}( \mathbb{C})$ allows one to reinterpret the defining $\mathrm{Sp}_{2n}( \mathbb{R})$-data of a Higgs bundle as special $\mathrm{SL}_{2n}( \mathbb{C})$-data. We can thus consider an $\mathrm{Sp}_{2n}( \mathbb{R})$-Higgs bundle to be defined as a pair $\left( W,\Phi  \right)$, where \begin{enumerate}
\item $W=V\oplus {{V}^{*}}$ is a rank $2n$ holomorphic vector bundle over $X$ and
\item $\Phi :W\to W\otimes K_X$ is a Higgs field with $\Phi = \begin{pmatrix}
   0 & \beta   \\
   \gamma  & 0  \\
\end{pmatrix}$, 
\end{enumerate}
for $V$, $\beta$, $\gamma$ as above.

However, this transition from the principal bundle description to the vector bundle description can be done in a different way using the equivariant map $f_\mathfrak P\colon\tilde X\to \mathfrak P$ where $\mathfrak P$ is the projective space model which is in this case the space of positive complex Lagrangians of $\mathbb C^{2n}$:
$$\mathfrak P=\left\{L\subset \mathbb C^{2n}\midwd\; \dim(L)=n,\; \omega|_L=0,\;i\omega(\bar\cdot,\cdot)\text{ is a positive definite Hermitian form on $L$}\right\}.$$

As $K^\mathbb C<\mathrm{Sp}_{2n}(\mathbb C)$, for a principal $K^\mathbb C$-bundle $E$, we take the associated holomorphic vector bundle of rank $2n$, which, by a slight abuse of notation, we denote by $W$. However, we will see that in fact it is the same vector bundle. For every $x\in\tilde X$, the map $f_\mathfrak P$ provides a splitting $\mathbb C^{2n}=L_x\oplus L_x^*$ where $L_x^*$ is the complement of $L_x$ with respect to the Hermitian form $i\omega(\bar\cdot,\cdot)$, which also identifies as the dual vector space for $L_x$ using the symplectic form $\omega$. This splitting descends to a splitting of the bundles $W=V\oplus V^*$. As the complexified tangent space is
$$T_L^\mathbb C\mathfrak P=\left\{(Q,Q') \in\mathrm{Hom}(L,L^*)\times\mathrm{Hom}(L^*,L)\mid Q,Q'\text{ are symmetric as bilinear forms}\right\},$$
the Higgs field is therefore identified with a pair of symmetric morphisms
\[
\beta:V^*\to V\otimes K_X \qquad \text{and} \qquad 
\gamma:V\to V^*\otimes K_X.
\]
Thus, the Higgs field $\Phi\colon W\to W\otimes K_X$ can be written as an anti-diagonal matrix
$$\Phi=\begin{pmatrix}
   0 & \beta   \\
   \gamma  & 0  \\
\end{pmatrix}.$$
Note that this agrees with the vector bundle description of an $\mathrm{Sp}_{2n}(\mathbb R)$-Higgs bundle above.

In what follows, we describe explicitly the Higgs bundle data for the groups $\mathrm{O}_{(1,1)}(A, \sigma)$ and $\mathrm{Sp}_2(A, \sigma)$ in all four geometric models of the symmetric space. Since the groups $A^{\times}$ and $\mathrm{O}(A_{\mathbb{C}}, \sigma_{\mathbb{C}})$ can be both embedded as subgroups into $\mathrm{O}_{(1,1)}(A,\sigma)$, the $G$-Higgs bundle data in these cases can be induced as special cases of $\mathrm{O}_{(1,1)}(A,\sigma)$-data. For the groups $\mathrm{Sp}_2(A_{\mathbb{C}}, \sigma_{\mathbb{C}})$, we demonstrate the Higgs bundle data only in the quaternionic structures model and the projective space model which are the most interesting cases. Several examples are included in Appendix \ref{Appendix:several_examples}.

\subsection{Higgs bundles for models associated to groups \texorpdfstring{$\mathrm{O}_{(1,1)}(A,\sigma)$}{} }\label{subsec:indefi_defs}

For an indefinite orthogonal group $\mathrm{O}_{(1,1)}(A,\sigma)$ over a Hermitian noncommutative algebra $A$ with an anti-involution $\sigma$, we interpret the Higgs bundle data in terms of each of the four models of the symmetric space $\mathcal X:=\mathcal{X}_{\mathrm{O}_{(1,1)}(A,\sigma)}$ described explicitly in Section \ref{sec:ort_models}. 

In this case, a flat $\mathrm{O}_{(1,1)}(A,\sigma)$-bundle is an $A^2$-bundle with the standard indefinite orthogonal form on $A^2$
\[\omega(x,y):=\sigma(x)^t\Omega y,\]
where $\Omega=\begin{pmatrix}0 & 1 \\ 1 & 0\end{pmatrix}$, for $x,y\in A^2$. Let us denote by $(E, \omega, \nabla)$ this flat bundle.

\subsubsection{Indefinite involutive operators model.}

Recall from Section \ref{sec:indefinite_O} that $\mathfrak{C}_G$ denotes the indefinite involutive operators model of $\mathcal{X}$. For $x\in X$, let $J_x:=f_{\mathfrak{C}_G}(x)\in\mathfrak{C}_G$ be the corresponding indefinite involutive operator. Then we have the identification $\mathrm{Stab}_G(J_x)\cong K$, which acts on the tangent space
$$
T_{J_x}\mathfrak{C}_G=\{\ L\colon A^2\to A^2\mid \text{$L$ is $A$-linear and }LJ_x+J_xL=0\ \};
$$
note that $J_x$ extends $\mathbb{C}$-linearly to an $A_{\mathbb C}$-linear operator $J_{x}^{\mathbb{C}}$ on $A_{\mathbb{C}}^2$. Therefore, the Higgs field in this case is a $(1,0)$-form $\varphi$ with values in $f_{\mathfrak{C}_G}^*T^\mathbb{C}\mathfrak{C}_G$, such that $\bar\partial_E \varphi =0$. More specifically, the Higgs field in this model is interpreted as an endomorphism $ L\colon A_{\mathbb{C}}^2\to A_{\mathbb{C}}^2$ which is $A_{\mathbb{C}}$-linear and satisfies  $LJ_{x}^{\mathbb{C}}+J_{x}^{\mathbb{C}}L=0$.

Therefore, using the indefinite operators model for the group $G=\mathrm{O}_{(1,1)}(A,\sigma)$, a $G$-Higgs bundle can be described as: 
\begin{itemize}
    \item a holomorphic $A^2_{\mathbb{C}}$-bundle $V$ equipped with: 
    \begin{itemize}
        \item an orthogonal structure $h$,
        \item an indefinite involutive endomorphism $J$ of $V$, which is compatible with $h$, i.e. the form $h_J=h(J(\cdot),\cdot)$ is non-degenerate,
    \end{itemize}
    and
    \item a Higgs field $\varphi\in H^0(X,\mathrm{End}_{A_\mathbb C}(V)\otimes K_X)$ such that $\varphi J+J\varphi=0$.
\end{itemize}

\subsubsection{Projective space models.}

Recall from Section \ref{sec:projective_O} that $\mathfrak{P}^{\pm}_G$ denote the two projective space models of $\mathcal{X}$. For $x\in X$, let $l_x:=f_{\mathfrak{P}^{+}_G}(x)\in\mathfrak{P}^+_G$ be the corresponding $A$-line in $\mathbb{P}(A^2)$, and $l_x^{\perp_\omega}\in \mathfrak{P}_G^-$ the $\omega$-orthogonal line which is always transverse to $l_x$. Then we have the identification $\mathrm{Stab}_G(l_x)\cong K$, which acts on the tangent space
$$
T_{l_x}\mathfrak{P}^{\pm}_G=\mathrm{Hom}(l_x,A^2/l_x);
$$
note that the $A$-line $l_x$ extends $\mathbb{C}$-linearly to an $A_{\mathbb C}$-line $l_{x}^{\mathbb{C}}\subset A_{\mathbb{C}}^2$. 

As above, for $x\in X$ we denote $l_x:=f_{\mathfrak{P}_G^+}(x)\in \mathfrak{P}_G^+$. Then, the Higgs field in this case is a $(1,0)$-form $\varphi$ with values in $f_{\mathfrak{P}_G^+}^*T^\mathbb{C}\mathfrak{P}_G^+$, such that $\bar\partial_E \varphi =0$. More specifically, for each $x\in X$, 
the fiber over $x$ of the bundle part of a Higgs bundle consists of two $A_\mathbb C$-lines $l_x^\mathbb C$ and $(l_x^{\perp_{\omega}})^\mathbb C$, and a Higgs field is a homomorphism $l_x^\mathbb C \to (l_x^{\perp_{\omega}})^\mathbb C$, where by a slight abuse of notation we still write $\omega$ to mean $\omega_{\mathbb{C}}$ on the complexified lines..

Therefore, using the projective space model for the group $G=\mathrm{O}_{(1,1)}(A,\sigma)$, a $G$-Higgs bundle can be described as: 
\begin{itemize}
    \item a holomorphic $A^2_{\mathbb{C}}$-bundle $V$ equipped with an orthogonal structure $h$, which splits as a direct sum of two $A_{\mathbb{C}}$-line bundles, $\ell$ and $\ell^{\perp_{h}}$, such that $h$ descends to a non-degenerate orthogonal structure on each of these two line subbundles, and
    \item a Higgs field $\varphi\in H^0(X,\mathrm{Hom}_{A_\mathbb C}(\ell, \ell^{\perp_{h}})\otimes K_X)$. 
\end{itemize}

An analogous definition is obtained for the projective space model $\mathfrak{P}_G^-$ as well.

\subsubsection{Half-space models and precompact model.} Since the half-space models $\mathfrak U^{\pm}_G$ and the precompact model $\mathfrak{B}_G$ are open domains in $A$, the complexification of the tangent space for every point of $\mathfrak U^{\pm}_G$ and $\mathfrak{B}_G$ is naturally isomorphic to  $A_\mathbb C$. Therefore, $f^*_{\mathfrak U^{\pm}_G} T^\mathbb C\mathfrak U^{\pm}_G$ and $f^*_{\mathfrak{B}_G} T^\mathbb C\mathfrak{B}_G$ identify canonically with $\tilde X\times A_\mathbb C$ and a Higgs field $\varphi(x)$ is in both cases a holomorphic 1-form with values in $A_\mathbb C$. The only difference between fibers of $\tilde X\times  A_\mathbb C$ over different points $x\in\tilde X$ is the action of the stabilizer $\mathrm{Stab}_{\mathrm{O}_{(1,1)}(A,\sigma)}(f(x))$, which is described in Section~\ref{sec:halfspace_O} (for the  upper half-space model) and Section~\ref{sec:precompact_O} (for the precompact model). 

We thus conclude that the Higgs field in the upper half-space model $\mathfrak U^+_G$ or in the \emph{precompact model} $\mathfrak{B}_G$ is a $(1,0)$-form $\varphi$ with values in $f_{\mathfrak{U}^+_G}^*T^\mathbb{C}\mathfrak{U}^+_G \cong  f_{\mathfrak{B}_G}^*T^\mathbb{C}\mathfrak{B}_G$, such that $\bar\partial_E \varphi =0$. In other words, the Higgs field in this case is an element in $A_{\mathbb{C}}$.

An analogous geometric realization is obtained for the lower half-space model $\mathfrak{U}_G^-$ as well.

\subsection{Higgs bundles for models associated to groups  \texorpdfstring{$A^{\times}$ and $\mathrm{O}(A_{\mathbb{C}}, \sigma_{\mathbb{C}})$}{}}

The groups $A^{\times}$ and $\mathrm{O}(A_{\mathbb{C}}, \sigma_{\mathbb{C}})$ can be both embedded as subgroups into $\mathrm{O}_{(1,1)}(A,\sigma)$, thus the $G$-Higgs bundle data in these cases are described as special cases of the $\mathrm{O}_{(1,1)}(A,\sigma)$-definitions given above. We next provide these descriptions only for the indefinite involutive operators and the projective space models, while the 
definitions for the half-space models and the precompact model associated to the groups $A^{\times}$ and $\mathrm{O}(A_{\mathbb{C}}, \sigma_{\mathbb{C}})$ are obtained in an exactly analogous way.

\subsubsection{Groups $A^{\times}$.}

As we have seen in Section \ref{sec:A_models}, consider a diagonal copy $\hat{G}$ of the group $A^{\times}$ into $\mathrm{O}_{(1,1)}(A,\sigma)$ and a diagonal copy $\hat{K}$ of a maximal compact subgroup $K=\mathrm{O}(A,\sigma)$. We have the following geometric interpretations:

Using the indefinite involutive operators model for groups $G=A^{\times}$, a $G$-Higgs bundle can be described as: 
\begin{itemize}
    \item a holomorphic $A^2_{\mathbb{C}}$-bundle $V$ equipped with: 
    \begin{itemize}
        \item an orthogonal structure $h$,
        \item a symplectic structure $\omega$,
        \item an indefinite involutive endomorphism $J$ of $V$, which is compatible with $h$ and such that $\omega(J(\cdot),J(\cdot))=-\omega(\cdot,\cdot)$,
    \end{itemize}
    and
    \item a Higgs field $\varphi\in H^0(X,\mathrm{End}_{A_\mathbb C}(V)\otimes K_X)$ such that 
    \begin{itemize}
        \item $\omega(\varphi(\cdot),\cdot)+\omega(\cdot,\varphi(\cdot))=0$, 
        \item $\varphi J+J\varphi=0$.
        \end{itemize}
\end{itemize}

Furthermore, using the projective space model for the group $G=A^\times$, a $G$-Higgs bundle can be described as: 
\begin{itemize}
    \item a holomorphic $A^2_{\mathbb{C}}$-bundle $V$ equipped with:
    \begin{itemize}
    \item an orthogonal structure $h$,
    \item a symplectic structure $\omega$,
    \end{itemize}
    which splits as a direct sum of two $\omega$-isotropic $A_{\mathbb{C}}$-line subbundles, $\ell$ and $\ell^{\perp_{\omega}}$, such that $h$ descends to a non-degenerate orthogonal structure on each of these two line subbundles, and
    \item a Higgs field $\varphi\in H^0(X,\mathrm{End}_{A_\mathbb C}\ell\otimes K_X)$ such that 
    $$h|_{\ell}(\varphi(\cdot),\cdot)=h|_{\ell}(\cdot,\varphi(\cdot)).$$ 
\end{itemize}
From this description, it is obvious that all information about $V$ is contained in the subbundle $\ell$. So we can reformulate this description:
a $G$-Higgs bundle can be described as: 
\begin{itemize}
    \item a holomorphic $A_{\mathbb{C}}$-bundle $\ell$ equipped with an orthogonal structure $h$ and
    \item a Higgs field $\varphi\in H^0(X,\mathrm{End}_{A_\mathbb C}\ell\otimes K_X)$ such that $h(\varphi(\cdot),\cdot)=h(\cdot,\varphi(\cdot)).$ 
\end{itemize}

\subsubsection{Groups $\mathrm{O}(A_{\mathbb{C}}, \sigma_{\mathbb{C}})$}\label{subsec:O(A_C)defs}

We have seen in Section \ref{sec:OC_models} that the group $G:=\mathrm{O}(A_{\mathbb{C}},\sigma_{\mathbb{C}})$ is contained in $A_{\mathbb{C}}^{\times}$, thus $G$ is embedded into $\mathrm{O}_{(1,1)}(A,\bar{\sigma}_{\mathbb{C}})$ as a subgroup $\hat{G}$ of diagonal matrices. Let $\hat{K}$ denote the corresponding diagonal copy of $K:=\mathrm{O}(A, \sigma)$ in $\mathrm{O}_{(1,1)}(A,\bar{\sigma}_{\mathbb{C}})$. As in the previous subsection, we have the following geometric interpretations for the Higgs fields of the $G$-Higgs bundles: 

Using the indefinite involutive operators model for groups $G=\mathrm{O}(A_{\mathbb{C}}, \sigma_{\mathbb{C}})$, a $G$-Higgs bundle can be described as: 
\begin{itemize}
    \item a holomorphic $A^2_{\mathbb{C}}$-bundle $V$ equipped with: 
    \begin{itemize}
        \item two orthogonal structures $h$ and $b$,
        \item an indefinite involutive endomorphism $J$ of $V$, which is compatible with $h$ and such that $b(J(\cdot),J(\cdot))=-b(\cdot,\cdot)$,
    \end{itemize}
    and
    \item a Higgs field $\varphi\in H^0(X,\mathrm{End}_{A_\mathbb C}(V)\otimes K_X)$ such that 
    \begin{itemize}
        \item $b(\varphi(\cdot),\cdot)+b(\cdot,\varphi(\cdot))=0$, 
        \item $\varphi J+J\varphi=0$.
        \end{itemize}
\end{itemize}

Furthermore, using the projective space model for the group $G=\mathrm{O}(A_{\mathbb{C}}, \sigma_{\mathbb{C}})$, a $G$-Higgs bundle can be described as: 
\begin{itemize}
    \item a holomorphic $A^2_{\mathbb{C}}$-bundle $V$ equipped with two orthogonal structures $h$ and $b$,
    which splits as a direct sum of two $b$-isotropic $A_{\mathbb{C}}$-line subbundles, $\ell$ and $\ell^{\perp_{b}}$, such that $h$ descends to a non-degenerate orthogonal structure on each of these two line subbundles, and
    \item a Higgs field $\varphi\in H^0(X,\mathrm{End}_{A_\mathbb C}\ell\otimes K_X)$ such that 
    $$h|_{\ell}(\varphi(\cdot),\cdot)+h|_{\ell}(\cdot,\varphi(\cdot))=0.$$ 
\end{itemize}
From this description, it is obvious that all information about $V$ is contained in the subbundle $\ell$. We can reformulate this description of a $G$-Higgs bundle as follows: 
\begin{itemize}
    \item a holomorphic $A_{\mathbb{C}}$-bundle $\ell$ equipped with an orthogonal structure $h$, and
    \item a Higgs field $\varphi\in H^0(X,\mathrm{End}_{A_\mathbb C}\ell\otimes K_X)$ such that $h(\varphi(\cdot),\cdot)+h(\cdot,\varphi(\cdot))=0.$ 
\end{itemize}

\subsection{Higgs bundles for models associated to groups \texorpdfstring{$\mathrm{Sp}_2(A, \sigma)$}{}}\label{subsec:real_defs}

We consider similarly the Higgs bundle data for symplectic groups $\mathrm{Sp}_{2}(A,\sigma )$ over a Hermitian noncommutative algebra $A$ with an anti-involution $\sigma$. For $G=\mathrm{Sp}_2(A, \sigma)$, $K=\mathrm{KSp}_2(A, \sigma)$, we give next the geometric interpretation of the Higgs field for each model of the symmetric space: 

\subsubsection{Complex structures model.}\label{G-Higgs in cx_str_model}

Recall from Section \ref{sec_cplx-str-mod} that $\mathfrak{C}_G$ denotes the complex structures model of $\mathcal{X}$, for $x\in X$, let $J_x:=f_{\mathfrak{C}_G}(x)\in\mathfrak{C}_G$ be the corresponding complex structure. Then we have the identification $\mathrm{Stab}_G(J_x)\cong K$, which acts on the tangent space
\begin{align*}
        T_{J_x}\mathfrak{C}_G=\left\{
        L\colon A^2\to A^2\midwd 
        \begin{aligned}
            &\ L \text{ is } A\text{-linear},\\
            &\ h_L \text{ is } \sigma\text{-symmetric, and}\\
            &\ LJ_{x}+J_{x}L=0
        \end{aligned}
        \right\};
    \end{align*}
note that $J_x$ extends $\mathbb{C}$-linearly to an $A_{\mathbb C}$-linear operator $J_{x}^{\mathbb{C}}$ on $A_{\mathbb{C}}^2$. Therefore, using the complex structures model for the group $G$, a $G$-Higgs bundle can be described as: 
\begin{itemize}
    \item a holomorphic $A^2_{\mathbb{C}}$-bundle $V$ equipped with: 
    \begin{itemize}
        \item a symplectic structure $\omega$,
        \item an automorphism $J$ of $V$ such that $iJ$ is an indefinite involutive operator on $V$ and the form $h_J=\omega(J(\cdot),\cdot)$ is symmetric and non-degenerate,
    \end{itemize}
    and
    \item a Higgs field $\varphi\in H^0(X,\mathrm{End}_{A_\mathbb C}(V)\otimes K_X)$ such that 
    \begin{itemize}
        \item $\omega(\varphi(\cdot),\cdot)=\omega(\cdot,\varphi(\cdot))$,  
        \item $\varphi J+J\varphi=0$.
        \end{itemize}
\end{itemize}

\subsubsection{Projective space model.}\label{df:SpR-Higgs-proj}

Notice that, as we have already seen in Lemma~\ref{lem:complexified_complex_structure}, any $L \in T^\mathbb C_{J}\mathfrak{C}_G$ has a compatible structure of eigen-directions with $J_\mathbb C$. Namely, let $v_\pm\in A_\mathbb C^2$ be such that $J_\mathbb C v_\pm=\pm i v_\pm$. Since $J$ maps $A^2$ to $A^2$, $v_\pm$ can be chosen so that $v_\pm=\bar v_\mp$ and, moreover, $Lv_\pm=v_\mp a_\pm$ where $a_\pm\in A_\mathbb C^{\sigma_\mathbb C}$. This gives rise to two linear maps $L_\pm\colon l_\pm\to A_\mathbb C^2/l_\pm$ mapping $v_\pm\mapsto Lv_\pm+l_\pm$ where $l_\pm=v_\pm A_\mathbb C=\bar v_\mp A_\mathbb C=\bar l_\mp$. Notice that $L_\pm$ represent tangent vectors of $\mathfrak{P}_G^+$ at $l_\pm$. In this regard, we can identify the complexified tangent bundle  $T^\mathbb C\mathfrak{P}_G^+$ with the space of triples (cf.~\eqref{eq:compl.tangent.P.Sp})
$$\{(l,L_+,L_-)\mid l\in\mathfrak{P}_G^+,\;L_+\in T_l\mathfrak{P}_G^+,\; L_-\in T_{\bar l}\mathfrak{P}_G^-\}.$$

Therefore, using the projective space model for the group $G$, a $G$-Higgs bundle can be described as: 
\begin{itemize}
    \item a holomorphic $A^2_{\mathbb{C}}$-bundle $V$ equipped with a symplectic structure $\omega$, which splits as a direct sum of two $\omega$-isotropic $A_{\mathbb{C}}$-line subbundles, $\ell$ and $\ell'$, and
    \item a Higgs field, which is a pair $(\beta,\gamma)$ where $\beta\in H^0(X,\mathrm{Hom}_{A_\mathbb C}(\ell,\ell')\otimes K_X)$ and $\gamma\in H^0(X,\mathrm{Hom}_{A_\mathbb C}(\ell',\ell)\otimes K_X)$ such that $\omega(\beta(x),y)=\omega(x,\beta(y))$ for all local sections $x$ and $y$ of $\ell$ and $\omega(\gamma(x'),y')=\omega(x',\gamma(y'))$ for all local sections $x'$ and $y'$ of $\ell'$. 
\end{itemize}

\subsubsection{Upper half-space model and precompact model.} Since the upper half-space model $\mathfrak U_G^+$ and the precompact model $\mathfrak{B}_G$ are open domains in $A_\mathbb C^{\sigma_\mathbb C}$, the complexification of the tangent space for every point of $\mathfrak U_G^+$ is naturally isomorphic to  $A_\mathbb C^{\sigma_\mathbb C}\times A_\mathbb C^{\sigma_\mathbb C}$. Therefore, $f^*_{\mathfrak U_G^+} T^\mathbb C\mathfrak U_G^+$ and $f^*_{\mathfrak{B}_G} T^\mathbb C\mathfrak{B}_G$ identify canonically with $\tilde X\times (A_\mathbb C^{\sigma_\mathbb C})^2$ and a Higgs field $\varphi(x)$ is in both cases a holomorphic 1-form with values in $A_\mathbb C^{\sigma_{\mathbb{C}}}\times A_\mathbb C^{\sigma_\mathbb C}$.

The only difference between the fibers of $\tilde X\times (A^{\sigma_\mathbb C}_\mathbb C)^2$ over different points $x\in\tilde X$ is the action of the stabilizer $\mathrm{Stab}_{\mathrm{Sp}_2(A,\sigma)}(f(x))$, which is described in Section~\ref{sec:upperhalf_real} (for the upper half-space model) and Section~\ref{sec:precompact_real} (for the precompact model). Thus, the Higgs field is a $(1,0)$-form $\varphi$ with values in $f_{\mathfrak{U}_G^+}^*T^\mathbb{C}\mathfrak{U}_G^+ \cong  f_{\mathfrak{B}_G}^*T^\mathbb{C}\mathfrak{B}_G$, such that $\bar\partial_E \varphi =0$. In other words, the Higgs field $\varphi(x)$ is an element in $A_\mathbb C^{\sigma_{\mathbb{C}}}\times A_\mathbb C^{\sigma_\mathbb C}$.

\subsection{Higgs bundles for models associated to groups \texorpdfstring{$\mathrm{Sp}_2(A_{\mathbb{C}}, \sigma_{\mathbb{C}})$}{}}\label{subsec:complex_defs}

We give next the description of Higgs bundle data for the quaternionic structures model and the projective space model of the symmetric space of groups $\mathrm{Sp}_2(A_{\mathbb{C}}, \sigma_{\mathbb{C}})$ which are the most interesting cases; descriptions for the rest two models given in Section \ref{sec:complex_models} can be obtained analogously. For $G=\mathrm{Sp}_2(A_{\mathbb{C}}, \sigma_{\mathbb{C}})$, the $G$-Higgs fields enjoy a geometric description as follows: 

\emph{Quaternionic structures model}.
    From the description of the complexified tangent space of the quaternionic structures model in \eqref{eq:complex_tang_quater_str_model}, a $G$-Higgs bundle can be described as: 
    \begin{itemize}
        \item an $A^2_{\mathbb{C}\{i\}\mathbb C\{I\}}$-bundle $V$ which is holomorphic with respect to $\mathbb C\{I\}$ equipped with:
        \begin{itemize}
            \item a symplectic structure $\omega$,
            \item an automorphism $J$ of $V$ which is $A_{\mathbb C\{I\}}$-linear and a quaternionic structure with respect to $i$ and the $\sigma^i$-sesquilinear form $h_J=\omega(J(\cdot),\cdot)$ is $\sigma^i$-symmetric and non-degenerate,
        \end{itemize}
        and
        \item a Higgs field $\varphi\in H^0(X,\mathrm{End}_{A_{\mathbb C\{I\}}}(V)\otimes K_X)$ such that 
        \begin{itemize}
            \item $\varphi(xa)=\varphi(x)\theta_i(a)$ for a local section $x$ of $V$ and $a\in A_{\mathbb{C}\{i\}\mathbb C\{I\}}$
            \item the $\sigma^i$-sesquilinear form $h_\varphi=\omega(\varphi(\cdot),\cdot)$ is $\sigma^i$-symmetric,  
            \item $\varphi J+J\varphi=0$.
        \end{itemize}
\end{itemize}

\emph{Projective space model.}
Using the projective space model for the group $G$, a $G$-Higgs bundle can be described as: 
\begin{itemize}
    \item a holomorphic $A^4_{\mathbb{C}}$-bundle $V$ equipped with two symplectic structures $\omega$ and $h$,
    which splits as a direct sum of two $\omega$-isotropic $A_{\mathbb{C}}^2$-subbundles, $\ell$ and $\ell'$ such that $h$ induces non-degenerate symplectic structures on each of them, and
    \item a Higgs field $\varphi\in H^0(X,\mathrm{Hom}_{A_\mathbb C}(\ell,\ell)\otimes K_X)$ such that $h(\varphi(x),y)+h(x,\varphi(y))=0$ for all local sections $x$ and $y$ of $\ell$. 
\end{itemize}
From this description, it is obvious that all information about $V$ is contained in the subbundle $\ell$. We can reformulate this description of a $G$-Higgs bundle as follows: 
\begin{itemize}
    \item a holomorphic $A_{\mathbb{C}}^2$-bundle $\ell$ equipped with a symplectic structure $h$, and
    \item a Higgs field $\varphi\in H^0(X,\mathrm{End}_{A_\mathbb C}\ell\otimes K_X)$ such that $h(\varphi(\cdot),\cdot)+h(\cdot,\varphi(\cdot))=0.$ 
\end{itemize}

\section{Connected components of the moduli spaces}\label{sec:components}

In this section we will be always considering that the involutive algebra $(A, \sigma)$ is Hermitian and semisimple. From the Artin--Wedderburn Theorem (see Section \ref{sec:incarnations}), it follows that
\begin{equation}\label{Artin_wed_A_1}
A \cong \bigoplus_{i=1}^p \mathrm{Mat}_{l_i}\mathbb{R} \oplus \bigoplus_{j=1}^q \mathrm{Mat}_{m_j}\mathbb{C} \oplus \bigoplus_{k=1}^r \mathrm{Mat}_{n_k}\mathbb{H}.
\end{equation}

\subsection{Moduli space \texorpdfstring{$\mathcal{M}_{\rm Higgs}(X,\mathrm{Sp}_2(A_\mathbb{C}, \sigma_{\mathbb{C}}))$}{for symplectic group}}

\begin{thm}\label{thm:comp_count_identity_Sp}
The moduli space $\mathcal{M}_{\rm Higgs}(X,\mathrm{Sp}_2(A_\mathbb{C}, \sigma_{\mathbb{C}})_0)$ has $2^r$-many connected components.
\end{thm}

\begin{proof}
In \cite[Theorem 0.1]{Li}, Li has shown that for a connected complex semisimple Lie group $G$, the number of connected components of the character variety $\mathcal{R}(G)$ is equal to $\vert\pi_1(G)\vert$. This result was later generalized by Ho--Liu for connected reductive complex Lie groups (cf. \cite[Appendix A]{LR15})). In particular, if $G$ is a connected reductive complex Lie group, then the number of the components of the character variety $\mathcal{R}(G)$ is equal to the cardinality of the fundamental group of the maximal semisimple subgroup of $G$. Via the nonabelian Hodge correspondence for such a complex Lie group $G$, we imply that the moduli space of $G$-Higgs bundles $\mathcal{M}_{\rm Higgs}(X,G)$ has the same number of connected components. 

\noindent Now for $G=\mathrm{Sp}_2(A_\mathbb{C}, \sigma_{\mathbb{C}})_0$, Theorem \ref{prop:max_semisimple_simply_con} provides directly that the moduli space 
\[\mathcal{M}_{\rm Higgs}(X,\mathrm{Sp}_2(A_\mathbb{C}, \sigma_{\mathbb{C}})_0)\]
has $2^r$-many connected components.
\end{proof}

We may now obtain component counts for various examples of the groups $\mathrm{Sp}_2(A_\mathbb{C}, \sigma_{\mathbb{C}}) $ (cf. Section \ref{sec:real_examples}):

\begin{cor}\label{cor:comp_counts_specific} We have the following: 
\begin{enumerate}
 \item The moduli space $\mathcal{M}_{\rm Higgs}(X,\mathrm{Sp}_{2n}(\mathbb{C}))$ is connected.
 \item The moduli space $\mathcal{M}_{\rm Higgs}(X,\mathrm{GL}_{2n}(\mathbb{C}))$ is connected.
 \item The moduli space $\mathcal{M}_{\rm Higgs}(X,\mathrm{SO}_{4n}(\mathbb{C}))$ has two connected components.
\end{enumerate}    
\end{cor}

\begin{proof}  
We show accordingly: 
\begin{enumerate}
\item  When $(A,\sigma)=(\mathrm{Mat}_n(\mathbb{R}),(\cdot)^{t})$, then the group $\mathrm{Sp}_2(A_\mathbb{C}, \sigma_{\mathbb{C}}) $ is the group $\mathrm{Sp}_{2n}(\mathbb{C})$. From \eqref{Artin_wed_A}
we deduce that for the algebra $A$ in this case, we have $p=1$ and $q=r=0$. Theorem \ref{thm:comp_count_identity_Sp} then provides that the moduli space $\mathcal{M}_{\rm Higgs}(X,\mathrm{Sp}_{2n}(\mathbb{C}))$ is connected.

\item Similarly, when $(A,\sigma)=(\mathrm{Mat}_n(\mathbb{C}),(\bar{\cdot})^{t})$, then the group $\mathrm{Sp}_2(A_\mathbb{C}, \sigma_{\mathbb{C}}) $ is the group $\mathrm{GL}_{2n}(\mathbb{C})$. From \eqref{Artin_wed_A}
we deduce that for the algebra $A$ in this case, we have $q=1$ and $p=r=0$. Theorem \ref{thm:comp_count_identity_Sp} then provides that the moduli space $\mathcal{M}_{\rm Higgs}(X,\mathrm{GL}_{2n}(\mathbb{C}))$ is connected.

\item Lastly, when $A=\mathrm{Mat}_n(\mathbb{H})$ and $\sigma(r_0+r_1j)=\bar r_0^t- r_1^tj$, for $r_0,r_1\in\mathrm{Mat}_n(\mathbb C)$, then the group $(\mathrm{Sp}_2(A_\mathbb{C}, \sigma_{\mathbb{C}}))_0 $ is the group $\mathrm{SO}_{4n}(\mathbb{C})$. From \eqref{Artin_wed_A}
we deduce that for the algebra $A$ in this case, we have $r=1$ and $p=q=0$. Theorem \ref{thm:comp_count_identity_Sp} then provides that the moduli space $\mathcal{M}_{\rm Higgs}(X,\mathrm{SO}_{4n}(\mathbb{C}))$ has two connected components.
\end{enumerate}

\end{proof}

\subsection{Moduli space \texorpdfstring{$\mathcal{M}_{\rm Higgs}(X,\mathrm{O}(A_\mathbb{C}, \sigma_{\mathbb{C}}))$}{for orthogonal group}}

We can also obtain a component count for the moduli space $\mathcal{M}_{\rm Higgs}(X,\mathrm{O}(A_\mathbb{C}, \sigma_{\mathbb{C}}))$, which provides slightly more general component counts than those of Corollary \ref{cor:comp_counts_specific}.

\begin{thm}\label{thm:comp_count_identity_O}
The moduli space $\mathcal{M}_{\rm Higgs}(X,\mathrm{O}(A_\mathbb{C}, \sigma_{\mathbb{C}})_0)$ has $2^{p_{\geq3}}$-many connected components, where $p_{\geq3}:=\#\{i\mid l_i\geq3\}$.
\end{thm}

\begin{proof}
Using the same methodology as in the proof of Theorem \ref{thm:comp_count_identity_Sp}, for the group $G=\mathrm{O}(A_\mathbb{C}, \sigma_{\mathbb{C}})_0$, Theorem \ref{prop:max_semisimple_simply_con_O} provides directly that the moduli space \[\mathcal{M}_{\rm Higgs}(X,\mathrm{O}(A_\mathbb{C}, \sigma_{\mathbb{C}})_0)\] has $2^{p_{\geq3}}$-many connected components.
\end{proof}

We may now obtain component counts for various examples of the groups $\mathrm{O}(A_\mathbb{C}, \sigma_{\mathbb{C}}) $ (cf. Section \ref{sec:real_examples}):

\begin{cor}\label{cor:comp_counts_specific_O} We have the following: 
\begin{enumerate}
 \item The moduli space $\mathcal{M}_{\rm Higgs}(X,\mathrm{SO}_n(\mathbb{C}))\ (n\geq3)$ has two connected components.
 \item The moduli space $\mathcal{M}_{\rm Higgs}(X,\mathrm{GL}_n(\mathbb{C}))$ is connected.
 \item The moduli space $\mathcal{M}_{\rm Higgs}(X,\mathrm{Sp}_{2n}(\mathbb{C}))$ is connected.
\end{enumerate}    
\end{cor}

\begin{proof}  
We show accordingly: 
\begin{enumerate}
\item  When $(A,\sigma)=(\mathrm{Mat}_n(\mathbb{R}),(\cdot)^{t})$, then the group $\mathrm{O}(A_\mathbb{C}, \sigma_{\mathbb{C}}) $ is the group $\mathrm{O}_n(\mathbb{C})$, and so $\mathrm{O}(A_\mathbb{C}, \sigma_{\mathbb{C}})_0 $ is the group $\mathrm{SO}_n(\mathbb{C})$. From \eqref{Artin_wed_A}
we deduce that for the algebra $A$ in this case, we have $p=1$ and $q=r=0$. Theorem \ref{thm:comp_count_identity_O} then provides that the moduli space $\mathcal{M}_{\rm Higgs}(X,\mathrm{SO}_n(\mathbb{C}))$ has two connected components when $n\geq3$.

\item Similarly, when $(A,\sigma)=(\mathrm{Mat}_n(\mathbb{C}),(\bar{\cdot})^{t})$, then the group $\mathrm{O}(A_\mathbb{C}, \sigma_{\mathbb{C}}) $ is the group $\mathrm{GL}_n(\mathbb{C})$. From \eqref{Artin_wed_A}
we deduce that for the algebra $A$ in this case, we have $q=1$ and $p=r=0$. Theorem \ref{thm:comp_count_identity_O} then provides that the moduli space $\mathcal{M}_{\rm Higgs}(X,\mathrm{GL}_n(\mathbb{C}))$ is connected.

\item Lastly, when $A=\mathrm{Mat}_n(\mathbb{H})$ and $\sigma(r_0+r_1j)=\bar r_0^t- r_1^tj$, for $r_0,r_1\in\mathrm{Mat}_n(\mathbb C)$, then the group $\mathrm{O}(A_\mathbb{C}, \sigma_{\mathbb{C}})_0 $ is the group $\mathrm{Sp}_{2n}(\mathbb{C})$. From \eqref{Artin_wed_A}
we deduce that for the algebra $A$ in this case, we have $r=1$ and $p=q=0$. Theorem \ref{thm:comp_count_identity_O} then provides that the moduli space $\mathcal{M}_{\rm Higgs}(X,\mathrm{Sp}_{2n}(\mathbb{C}))$ is connected.
\end{enumerate}
\end{proof}

\section{Hitchin morphism and factorization}\label{sec:Hitchin morphism}

Let $G$ be a real reductive Lie group with Lie algebra $\mathfrak g$, and let $K\subseteq G$  be a maximal compact subgroup with Cartan decomposition
\[
\mathfrak g=\mathfrak k\oplus\mathfrak m,
\]
then the affine GIT quotient $\mathfrak{m}^{\mathbb C}/\!\!/K^{\mathbb C}$ induced from the complexification of the isotropy representation $K\to\mathrm{GL}(\mathfrak{m})$ defines the Hitchin base
\[
\mathcal B(X,G):=H^0(X,\mathfrak{m}^{\mathbb C}/\!\!/K^{\mathbb C}\otimes K_X)\cong\bigoplus_{i=1}^{n} H^0(X,K_X^{e_i}),
\]
 where $e_1,\dots,e_n$ are the fundamental degrees of the invariant homogeneous polynomials that generate the invariant algebra $\mathbb{C}[\mathfrak{m}^{\mathbb C}]^{K^{\mathbb C}}$. By evaluating the basic $K^{\mathbb C}$-invariant polynomials on Higgs fields, one obtains the Hitchin morphism
\[
h_G:\mathcal M_{\mathrm{Higgs}}(X,G)\longrightarrow \mathcal B(X,G).
\]
When $G$ is a complex reductive group, regarded as a real reductive group, one has $K^{\mathbb C}\cong G$ and $\mathfrak{m}^{\mathbb C}\cong\mathfrak{g}$,  hence the identification $\mathfrak{m}^{\mathbb C}/\!\!/K^{\mathbb C}\cong\mathfrak g/\!\!/G$. Therefore, the above definition recovers the classical Hitchin base and the classical Hitchin morphism for complex groups. 

In this section, we reinterpret the Hitchin morphism using the geometric models of the corresponding symmetric spaces constructed in Part 1. We first treat the complex group
\[
\mathrm{Sp}_2(A_{\mathbb C},\sigma_{\mathbb C}),
\]
and then the real groups
\[
\mathrm{Sp}_2(A,\sigma) \qquad \text{and} \qquad \mathrm O_{(1,1)}(A,\sigma).
\]
The guiding principle is the same in all cases: rather than working only with the abstract invariant-theoretic quotient, we use concrete realizations of the corresponding representation arising from the tangent spaces of the geometric models. In these realizations, the relevant invariant polynomials are obtained from quadratic norm maps.

More explicitly, let $W_{\mathbb C}$ be a concrete complexified tangent model for interpreting the corresponding Higgs fields, and let $J_{\mathbb C}$ be the complexification of a formally real Jordan algebra naturally associated with the model. The geometric nature of $W_{\mathbb C}$ and $J_{\mathbb C}$ gives rise to a quadratic norm map
\[
\tilde{\mathcal N}_{\mathbb C}: W_{\mathbb{C}}\longrightarrow J_{\mathbb C},
\]
which is $K^{\mathbb{C}}$-equivariant. Let now $H^{\mathbb{C}}$ be the quotient of $K^{\mathbb{C}}$ by the kernel of the action of $K^{\mathbb{C}}$ on $J_{\mathbb{C}}$. In the cases considered below, the spaces $W_{\mathbb C}$, the Jordan algebras $J_{\mathbb C}$, the norm maps, and the groups $K^{\mathbb{C}}$ and $H^{\mathbb{C}}$ are as follows:
\[
\footnotesize
\renewcommand{\arraystretch}{2.1}
\begin{array}{|c|c|c|c|c|c|}
\hline
\text{Group} & W_{\mathbb C} & J_{\mathbb C} & \tilde{\mathcal N}_{\mathbb C}: W_{\mathbb{C}}\rightarrow J_{\mathbb C} & K^{\mathbb C} & H^{\mathbb C} \\
\hline
\mathrm{Sp}_2(A_{\mathbb C},\sigma_{\mathbb C})&A_{\mathbb H}^{\sigma_0}\otimes_{\mathbb R}\mathbb C\{I\}&A_{\mathbb H}^{\sigma_1}\otimes_{\mathbb R}\mathbb C\{I\}&a\longmapsto a\sigma_1(a)&\mathrm{Sp}_2(A_{\mathbb C},\sigma_{\mathbb C})&
\mathrm{Sp}_2(A_{\mathbb C},\sigma_{\mathbb C})
\\
\hline
\mathrm{Sp}_2(A,\sigma)&A_{\mathbb C}^{\sigma_{\mathbb C}}\oplus A_{\mathbb C}^{\sigma_{\mathbb C}}&A_{\mathbb C}&(q_+,q_-)\longmapsto q_+q_-&A_{\mathbb C}^{\times}&A_{\mathbb C}^{\times}
\\
\hline
\mathrm O_{(1,1)}(A,\sigma)&A_{\mathbb C}&A_{\mathbb C}^{\sigma_{\mathbb C}}&q\longmapsto \sigma_{\mathbb C}(q)q&\mathrm O(A_{\mathbb C},\sigma_{\mathbb C})\times\mathrm O(A_{\mathbb C},\sigma_{\mathbb C})&\mathrm O(A_{\mathbb C},\sigma_{\mathbb C})
\\
\hline
\end{array}
\]

Passing the equivariant norm map $\tilde{\mathcal N}_{\mathbb C}: W_{\mathbb{C}}\rightarrow J_{\mathbb C}$ to affine GIT quotients
gives a morphism
\[
\hat{\mathcal{N}}_{\mathbb C}: W_{\mathbb C}/\!\!/K^{\mathbb C}\longrightarrow J_{\mathbb C}/\!\!/H^{\mathbb C}.
\]

Now choose homogeneous generators
\[
p_1,\dots,p_n\in\mathbb C[J_{\mathbb C}]^{H^{\mathbb C}},\qquad \deg(p_i)=d_i.
\]
Then each composite $p_i\circ\widetilde{\mathcal N}_{\mathbb C}$ is a $K^{\mathbb{C}}$-invariant homogeneous polynomial on $W_{\mathbb{C}}$ of degree $2d_i$. In the cases considered below, the quotient identifications show that these composites generate the invariant algebra $\mathbb C[W_{\mathbb C}]^{K^{\mathbb C}}$. Hence, for a $G$-Higgs bundle $(E_{K^{\mathbb{C}}},\varphi)$, whenever the Higgs field is represented in the corresponding geometric model by a section
\[
\varphi\in H^0(X,E_{K^{\mathbb{C}}}(W_{\mathbb C})\otimes K_X),
\] 
we obtain sections
\[
(p_i\comp \tilde{\mathcal{N}}_{\mathbb{C}})(\varphi)\in H^0(X,K_X^{2d_i}).
\]
Therefore, the factorization through the affine quotient $J_{\mathbb C}/\!\!/H^{\mathbb C}$ naturally produces a map
\[
\mathcal M_{\mathrm{Higgs}}(X,G)\longrightarrow \bigoplus_{i=1}^{n} H^0(X,K_X^{2d_i}).
\]
The main point of this section is that, for the groups considered here, this target agrees with the corresponding Hitchin base:
\[
\bigoplus_{i=1}^{n} H^0(X,K_X^{2d_i}) \cong \mathcal B(X,G)=\bigoplus_{i=1}^{n} H^0(X,K_X^{e_i}).
\]

We first illustrate this mechanism in the basic case $\mathrm{SL}_2(\mathbb{C})\cong\mathrm{Sp}_2(\mathbb{C})$, then treat the general complex group 
\[
\mathrm{Sp}_2(A_{\mathbb C},\sigma_{\mathbb C}),
\]
and finally treat the real groups
\[
\mathrm{Sp}_2(A,\sigma)\qquad \text{and} \qquad \mathrm O_{(1,1)}(A,\sigma)
\]
in the different geometric incarnations of their symmetric spaces.

\subsection{The case \texorpdfstring{$G=\mathrm{SL}_2( \mathbb{C})$}{G=SL(2,C)}}
The complex Lie group $G=\mathrm{SL}_2( \mathbb{C})$ can be viewed as a symplectic group $\mathrm{Sp}_2(A_{\mathbb{C}}, \sigma_{\mathbb{C}})$ for $A=\mathbb{R}$ and $\sigma = \mathrm{Id}$. Then, $K= \mathrm{Sp}(1) \cong  \mathrm{SU}(2)$ and $\mathfrak{m}=\mathbb{H}^{\sigma_0}$ with complexification $\mathfrak{m}^{\mathbb{C}}= \mathbb{H}^{\sigma_0} \otimes \mathbb{C}$. There is a map 
\begin{align}\label{map_square_norm}
\mathbb{H}^{\sigma_0} & \to \mathbb{R}^+ \\
q & \mapsto \vert q\vert ^2=q\bar{q} \nonumber
\end{align}
defined as the square of the norm of a quaternionic element. This map is preserved by the action of $\mathrm{SU}(2)$ on $\mathbb{H}^{\sigma_0}$. Therefore, the map above is invariant for this action. Now take the complex extension of the map (\ref{map_square_norm}) given by 
\[\mathbb{H}^{\sigma_0} \otimes_{\mathbb{R}} \mathbb{C}  \to \mathbb{C}\]
and this is invariant under the action of $K^{\mathbb{C}}$, the complexification of $\mathrm{SU}(2)$. 

\vspace{2mm}

\noindent \textit{Quaternionic structures model.} In this model, an $\mathrm{SL}_2( \mathbb{C})$-Higgs field is given by an element $L \in T_J^{\mathbb{C}}\mathfrak{C}_G$ with 
\[JL +LJ =0.\]
Since $L \in \mathrm{Mat}_2(\mathbb{C})$, the coefficient of its invariant polynomial is given by the trace $\mathrm{Tr}(L^2)$. Therefore, in this model, the Hitchin base is no different than the classical description.

\vspace{2mm}

\noindent \textit{Projective space model.} Now we consider $\mathrm{SL}_2(\mathbb{C})$-Higgs bundles in the projective space model. For $l\in \mathfrak{P}_G$ and $Q\in T_l\mathfrak{P}_G$, it is $Q \colon  l \to \theta_j(l)$. The map
    $$Q':=\theta_j\circ Q\circ\theta_j \colon  \theta_j(l) \mapsto l$$
is $\mathbb H$-linear. The pair of lines $(l,\theta_j(l))$ give rise to a symplectic basis $(v,v':=\theta_j(v)j)$ of $\mathbb H^2$ such that $v$ generates $l$ and $v'$ generates $\theta_j(l)$. Then $Q(v)=v'a$ and $Q'(v')=v\theta_i(\theta_j(a))=v\bar a$, for $a\in \mathbb H^{\sigma_0}$, where $\bar a$ is a quaternionic conjugate of $a$. Consider the operator 
\[Q' \circ Q \colon l \mapsto l.\]
Its operator norm 
\[\Vert Q' \circ Q \Vert :=\sup_{x\in l,\,\Vert x \Vert =1 }\Vert (Q' \circ Q)(x)\Vert=\bar a a\in\mathbb R_{\geq 0}\]
does not depend on the choice of the symplectic basis $(v,v')$, in other words, this operator norm is invariant under the action of $\mathrm{SL}_2(\mathbb C)$ on the tangent bundle. In particular, for every $l\in \mathfrak{P}_G$, this is an invariant of the action of $\mathrm{Stab}_{\mathrm{SL}_2(\mathbb C)}(l)\cong\mathrm{SU}(2)$.

Now for $Q\in T_l^\mathbb C\mathfrak{P}_G$, it is $Q \colon  l\otimes \mathbb C\{I\} \mapsto \theta_j(l)\otimes \mathbb C\{I\}$ and $Q':=\theta_j\circ Q\circ\theta_j$. In a symplectic basis $(v,v':=\theta_j(v)j)$ of $\mathbb H^2$ such that $v$ generates $l$ and $v'$ generates $\theta_j(l)$, $Q(v)=v'a$ and $Q'(v')=v\theta_i(\theta_j(a))$, for $a\in \mathbb H^{\sigma_0}\otimes\mathbb C\{I\}$. The assignment
$$\begin{array}{rccl}
\mathcal N_\mathbb C\colon &T^{\mathbb{C}}\mathfrak{P}_G^+ & \to & \mathbb C\{I\}\\
& (l,Q) &\mapsto &\theta_i(\theta_j(a))a
\end{array}$$
gives an invariant under the action of $\mathrm{SL}_2(\mathbb C)$ seen as the complexification of $\mathrm{Stab}_{\mathrm{SL}_2(\mathbb C)}(l)$.

Then this complexified norm map is invariant under the action of $K^{\mathbb{C}}$ by symplectic transformations. Therefore, we get a map
\[
f^{*}_{\mathfrak{P}_G^+}T^{\mathbb{C}}\mathfrak{P}_G^+ \otimes K_X \to H^0(X, K^2_X)\]
which maps a point $(E_{K^{\mathbb{C}}}, \Phi=(l,Q)dz)$ in $\mathcal{M}_{\mathrm{Higgs}}(X,\mathrm{SL}_2(\mathbb{C}))$ to $\mathcal N_\mathbb C(l,Q) dz^2$. 

\vspace{2mm}

\noindent \textit{Upper half-space and precompact model.} We consider an $\mathrm{SL}_2( \mathbb{C})$-Higgs bundle in the upper half-space model or in the precompact model:
\[\mathfrak{U}_G \cong  \mathfrak{B}_G.\]
In these two cases, we have 
\[T^{\mathbb{C}}\mathfrak{U} \cong  T^{\mathbb{C}}\mathfrak{B}_G \cong  A_{\mathbb{H}}^{\sigma_0} \otimes_{\mathbb{R}} \mathbb{C} \cong  \mathbb{H}^{\sigma_0} \otimes_{\mathbb{R}} \mathbb{C}.\]
Moreover, the Higgs field $\Phi$ is a $(1,0)$-form with values in 
\[f^{*}_{\mathfrak{U}}T^{\mathbb{C}}\mathfrak{U} \cong  f^{*}_{\mathfrak{B}_G}T^{\mathbb{C}}\mathfrak{B}_G,\]
which is an adjoint $\mathrm{SU}(2)$-bundle.

We next define a morphism $\mathcal N_\mathbb C\colon T^{\mathbb{C}}\mathfrak{U}\to \mathbb C$. For $(z,v)\in T^{\mathbb{C}}\mathfrak{U}$, there exists $g\in\mathrm{SL}_2(\mathbb C)$ such that $g.(z,v)= (j,v_0)$, for $v_0\in \mathbb{H}^{\sigma_0} \otimes_{\mathbb{R}} \mathbb{C}\{I\}$. Let $\mathcal N_\mathbb C(z,v):=\theta_i(\theta_j(v_0))v_0\in\mathbb C$. This is invariant under the action of the complexified 
$\mathrm{Stab}_{\mathrm{SL}_2(\mathbb C)}(z)$ on $T_z^{\mathbb{C}}\mathfrak{U}$ which is isomorphic to $\mathrm{SL}_2(\mathbb C)$ which is the complexified stabilizer of the point $j$.

The construction for $\mathfrak{B}_G$ is similar. The only difference is that we use the point $0\in\mathfrak{B}_G$ instead of $j\in\mathfrak{U}_G$.

The morphism $\mathcal N_\mathbb C$ introduced above extends to the map 
\[f^{*}_{\mathfrak{U}}T^{\mathbb{C}}\mathfrak{U} \otimes K_X \to H^0(X, K^2_X),\]
which maps a point $(E_{K^{\mathbb{C}}}, \Phi=(z,v)dz)$ in $\mathcal{M}_{\mathrm{Higgs}}(X,\mathrm{SL}_2(\mathbb{C}))$ to $\mathcal N_\mathbb C(z,v) dz^2$. This defines the Hitchin morphism for these two models of the symmetric space.

\subsection{The general case \texorpdfstring{$G=\mathrm{Sp}_2(A_{\mathbb{C}}, \sigma_{\mathbb{C}})$}{}}\label{sec:Hitchin_morphism_Sp_general}
We now give the description of the Hitchin base for general complex symplectic groups, motivated by the descriptions in the case of $\mathrm{SL}_2( \mathbb{C})$ above. 

Before we start discussing the Hitchin morphism in different models, we introduce and discuss some properties of the following $\mathrm{KSp}^c_2(A_\mathbb C,\sigma_{\mathbb{C}})$-equivariant map:
\begin{align*}
\tilde{\mathcal{N}}_\mathbb R\colon A_{\mathbb{H}}^{\sigma_0} & \to (A_{\mathbb{H}}^{\sigma_1})_{\geq 0}\\
q & \mapsto q\sigma_1(q),
\end{align*}
where $\mathrm{KSp}^c_2(A_\mathbb C,\sigma_{\mathbb{C}})$  acts on $A^{\sigma_1}_\mathbb H$ by conjugation preserving $(A_{\mathbb{H}}^{\sigma_1})_{\geq 0}$, and on $A^{\sigma_0}_\mathbb H$ by $\sigma_0$-congruence. More precisely, for $k=\begin{pmatrix}
a & b\\
-\bar{b} & \bar{a}
\end{pmatrix}\in \mathrm{KSp}^c_2(A_\mathbb C,\sigma_{\mathbb{C}})$, which we can also identify with the $a+bj\in A_\mathbb H$ such that  $\bar{\sigma}_{\mathbb{C}}(a)a+\sigma_{\mathbb{C}}(b)\bar{b}=1 \text{ and } \bar{\sigma}_{\mathbb{C}}(a)b-\sigma_{\mathbb{C}}(b)\bar{a}=0, \text{ for } a,b\in A_{\mathbb{C}}$, the action on $A_{\mathbb{H}}^{\sigma_0}$ is given by 
\begin{equation}\label{eq:action_on_A^sigma0}
    k.q = kq\sigma_0(k)=(a+bj)q(-\bar{b}j+\bar{a})^{-1}
\end{equation}
for $q\in A_{\mathbb{H}}^{\sigma_0}$. 
Moreover, it is 
\begin{align}\label{eq:invariance}
\begin{aligned}
 k.q\,\sigma_1(k.q) & = (a+bj)q\left( \sigma_{\mathbb{C}}(a)+\bar{\sigma}_{\mathbb{C}}(b)j\right)(\bar{a} - \bar{b}j) \sigma_1(q) \left( \bar{\sigma}_{\mathbb{C}}(a)-\sigma_{\mathbb{C}}(b)j\right)\\
 & = (a+bj)q\sigma_1(q) \left( \bar{\sigma}_{\mathbb{C}}(a)-\sigma_{\mathbb{C}}(b)j\right)\\
 & =(a+bj)q\sigma_1(q) (a+bj)^{-1}.
 \end{aligned}
\end{align}
This implies that the map is indeed equivariant. 
The complexification of this map then produces the map 
\begin{align*}
\tilde{\mathcal{N}}_\mathbb C\colon A_{\mathbb{H}}^{\sigma_0} \otimes_{\mathbb{R}} \mathbb{C}\{I\}& \to A_{\mathbb{H}}^{\sigma_1} \otimes_{\mathbb{R}} \mathbb{C}\{I\}\\
q & \mapsto q\sigma_1(q).
\end{align*}

We next introduce the quotient map:
\begin{equation}\label{eq:real_norm}
\begin{array}{rccl}
\mathcal N_\mathbb R\colon & A^{\sigma_0}_\mathbb H & \to & A^{\sigma_1}_\mathbb H/\mathrm{KSp}^c_2(A_\mathbb C,\sigma_\mathbb C)\\
& q & \mapsto & [q\sigma_1(q)].
\end{array}    
\end{equation}
After the complexification of $A_\mathbb H$, we can define the corresponding complexified map:
\begin{equation}\label{eq:complex_norm}
\begin{array}{rrcl}
\mathcal N_\mathbb C\colon & A^{\sigma_0}_\mathbb H\otimes_\mathbb R\mathbb C\{I\} & \to & A^{\sigma_1}_\mathbb H\otimes_\mathbb R \mathbb C\{I\}/\!\!/\mathrm{Sp}_2(A_\mathbb C,\sigma_\mathbb C)\\
& q & \mapsto & [q\sigma_1(q)],
\end{array}    
\end{equation}
where we, slightly abusing the notation, denote the $\mathbb C\{I\}$-linear extension of $\sigma_1$ again by $\sigma_1$ and $\mathrm{Sp}_2(A_\mathbb C,\sigma_\mathbb C)$ is seen as the complexification of $\mathrm{KSp}^c_2(A_\mathbb C,\sigma_\mathbb C)$. This map is indeed the quotient map of $\tilde{\mathcal{N}}_\mathbb C$ by the group $\mathrm{Sp}_2(A_\mathbb C,\sigma_\mathbb C)$. This action is, as before, invariant under $\sigma_0$-congruence where we, also slightly abusing the notation, denote the $\mathbb C\{I\}$-linear extension of $\sigma_0$ again by $\sigma_0$. Notice that because the group is not compact, the affine GIT-quotient is needed to provide a quotient which is Hausdorff and, more generally, an algebraic variety. The map $\mathcal N_\mathbb C$ is known as the affine GIT morphism and, furthermore, by the Chevalley restriction theorem, this GIT-quotient can be identified with $\mathfrak{a}^\mathbb C/W(\mathfrak a^\mathbb C)$ where $\mathfrak{a}^\mathbb C$ is a maximal (abelian) subalgebra in $\mathfrak{sp}_2(A_{\mathbb{C}},\sigma_{\mathbb{C}})$, and $W(\mathfrak a^\mathbb C)$ is the Weyl group associated to $\mathfrak a^\mathbb C$ (cf.~\cite[Section 4]{GPR}). 

The invariants of elements in $A^{\sigma_1}_\mathbb H\otimes_\mathbb R \mathbb C\{I\}$ under the action of $\mathrm{Sp}_2(A_\mathbb C,\sigma_\mathbb C)$ are given by coefficients of the characteristic polynomials which are homogeneous polynomials of degrees $d \in \{d_1, ..., d_n\}$ of the complexification of the formally real Jordan algebra $A_{\mathbb{H}}^{\sigma_1}$. Thus, for each $d$ there is a map defined by the polynomial of degree $d$
\[\chi_d\colon A_{\mathbb{H}}^{\sigma_1}\otimes_{\mathbb{R}}\mathbb{C}\{I\} \to \mathbb{C}\{I\},\]
\noindent where the map $\chi_d$ maps each conjugacy class of elements in $A^{\sigma_1}_\mathbb H\otimes_\mathbb R \mathbb C\{I\}$ to the $d$-coefficient of its characteristic polynomial.
The composition of this map $\chi_d$ with the map $\mathcal{N}_{\mathbb{C}}$ defined in \eqref{eq:complex_norm} then gives a map \[\mathcal{N}_{\mathbb{C}}^d\colon A_{\mathbb{H}}^{\sigma_0}\otimes_{\mathbb{R}}\mathbb{C}\{I\} \to \mathbb{C}\{I\},\]
thus fitting into the following diagram
\begin{equation}\label{eq:factorization_char_pol}
    \xymatrix{
&A^{\sigma_1}_\mathbb H\otimes_\mathbb R \mathbb C\{I\}/\!\!/\mathrm{Sp}_2(A_\mathbb C,\sigma_\mathbb C)\ar[dd]^{\chi_d}\\
&&\\
A_{\mathbb{H}}^{\sigma_0} \otimes_{\mathbb{R}} \mathbb{C}\{I\}\ar[uur]^{\mathcal N_\mathbb C}\ar[r]^{\mathcal{N}_{\mathbb{C}}^d}&\mathbb{C}\{I\}.
}
\end{equation}

At the level of moduli spaces of $\mathrm{Sp}_2(A_\mathbb C,\sigma_\mathbb C)$-Higgs bundles, the diagram is respectively of the form 
\begin{equation}\label{hit_mor_fac_sp_general}
    \xymatrix{
&H^0(X,A^{\sigma_1}_\mathbb H\otimes_\mathbb R \mathbb C\{I\}/\!\!/\mathrm{Sp}_2(A_\mathbb C,\sigma_\mathbb C)\otimes K_X^2)\ar[dd]^{\bigoplus\limits_{i=1}^n\chi_{d_i}}\\
&&\\
\mathcal{M}_{\mathrm{Higgs}}(X,\mathrm{Sp}_2(A_{\mathbb{C}}, \sigma_{\mathbb{C}}))\ar[uur]^{\mathcal N_\mathbb C^{\bullet}}\ar[r]^{\hat{\mathcal{N}}_{\mathbb{C}}}&\bigoplus\limits_{i=1}^{n} H^0(X, K_X^{2d_i}),
}
\end{equation}
where $\mathcal N_\mathbb C^{\bullet}$ denotes the map at the level of $\mathrm{Sp}_2(A_\mathbb C,\sigma_\mathbb C)$-Higgs bundles induced by \eqref{eq:complex_norm}, for $\bullet$ a choice of model for the symmetric space $\mathcal{X}$ of $\mathrm{Sp}_2(A_\mathbb C,\sigma_\mathbb C)$. Note that the map $\hat{\mathcal N}_\mathbb C$ is the usual Hitchin morphism. We describe the map $\mathcal N_\mathbb C^{\bullet}$ more explicitly below for each of the four models of the symmetric space. 

\vspace{2mm}

\noindent \textit{Quaternionic structures model.} As seen above in the special case of $\mathrm{SL}_2( \mathbb{C})$ in this model, the coefficient of the invariant polynomial of an element $L$ in the complexified tangent $T^{\mathbb{C}}_J\mathfrak{C}_G$ is just given by the trace $\mathrm{Tr}(L^2)$. For the case when the algebra $A$ is a matrix algebra, we can define analogously a map from general $\mathrm{Sp}_2(A_{\mathbb{C}}, \sigma_{\mathbb{C}})$-Higgs bundles to the invariant polynomials of their Higgs field viewed as an element in $T^{\mathbb{C}}_J\mathfrak{C}_G$ given by the traces of all powers of a point $L$ viewed as a $(2 \times 2)$-matrix over $A_{\mathbb{C}}$. Note that the requirement that $A$ is a matrix algebra is not necessary for the realization of the Hitchin $\mathrm{Sp}_2(A_{\mathbb{C}}, \sigma_{\mathbb{C}})$-morphism in the other models.

\vspace{2mm}

\noindent \textit{Projective space model.} 
Now we consider the general case of $\mathrm{Sp}_2(A_\mathbb{C},\sigma_\mathbb C)$-Higgs bundles in the projective space model. An element $Q\in T_l\mathfrak{P}_G$ of the tangent space at $l\in \mathfrak{P}_G$ can be seen as an $A_\mathbb H$-linear map $Q \colon  l \mapsto \theta_j(l)$. The map
    $$Q':=\theta_j\circ Q\circ\theta_j \colon  \theta_j(l) \mapsto l$$
is $A_\mathbb H$-linear. The pair of lines $(l,\theta_j(l))$ give rise to a symplectic basis $(v,v':=\theta_j(v)j)$ of $A_\mathbb H^2$ such that $v$ generates $l$ and $v'$ generates $\theta_j(l)$. Then $Q(v)=v'a$ and $Q'(v')=v\theta_i(\theta_j(a))=v\sigma_1(a)$, for $a\in A_\mathbb H^{\sigma_0}$. 
We define an $A_\mathbb H^{\sigma_1}$-valued map which is an $A_\mathbb H^{\sigma_1}$-valued norm of $Q' \circ Q$: 
\[\tilde{\mathcal N}_\mathbb R^\mathfrak P(Q) :=a\sigma_1(a)\in (A_\mathbb H^{\sigma_1})_{\geq 0}.\]
This is equivariant under the actions of $\mathrm{KSp}^c_2(A_\mathbb C,\sigma_\mathbb C)$ on $A_\mathbb H^{\sigma_1}$ by conjugation and 
as 
\[\mathrm{Stab}_{\mathrm{Sp}_2(A_\mathbb C,\sigma_\mathbb C)}(l)=g\mathrm{KSp}^c_2(A_\mathbb C,\sigma_\mathbb C)g^{-1}\] on $T_l\mathfrak{P}_G$, for some fixed $g\in \mathrm{Sp}_2(A_\mathbb C,\sigma_\mathbb C)$ such that $g((j,1)^tA_\mathbb H)=l$. Thus, it descends (cf.~\eqref{eq:real_norm}) to the map \[{\mathcal N}_\mathbb R^\mathfrak P\colon T_l \mathfrak{P}_G\to A^{\sigma_1}_\mathbb H/\mathrm{KSp}^c_2(A_\mathbb C,\sigma_{\mathbb C}).\]

Now, for $Q\in T_l^\mathbb C\mathfrak{P}_G$, it is $Q \colon  l\otimes \mathbb C\{I\} \mapsto \theta_j(l)\otimes \mathbb C\{I\}$ and $Q':=\theta_j\circ Q\circ\theta_j$. In a symplectic basis $(v,v':=\theta_j(v)j)$ of $A_\mathbb H^2$ such that $v$ generates $l$ and $v'$ generates $\theta_j(l)$, $Q(v)=v'a$ and $Q'(v')=v\theta_i(\theta_j(a))=v\sigma_1(a)$, for $a\in A_\mathbb H^{\sigma_0}\otimes\mathbb C\{I\}$. The assignment
$$\begin{array}{rccl}
\mathcal N_\mathbb C^{\mathfrak P}\colon &T^{\mathbb{C}}\mathfrak{P}_G & \to & (A_\mathbb H^{\sigma_1})\otimes_\mathbb R\mathbb C\{I\}/\!\!/\mathrm{Sp}_2(A_\mathbb C,\sigma_\mathbb C)\\
& (l,Q) &\mapsto & [a\sigma_1(a)]
\end{array}$$
gives an invariant under the action of $\mathrm{Sp}_2(A_\mathbb C,\sigma_\mathbb C)$. Tensoring $T^{\mathbb{C}}\mathfrak{P}_G$ with $K_X$, we obtain the map
$$
T^{\mathbb{C}}\mathfrak{P}_G\otimes K_X  \to  (A_\mathbb H^{\sigma_1})\otimes_\mathbb R\mathbb C\{I\}/\!\!/\mathrm{Sp}_2(A_\mathbb C,\sigma_\mathbb C)\otimes K_X^2,
$$
such  a map is still denoted as $\mathcal N_\mathbb C^{\mathfrak P}$ when there is no ambiguity. Applying $\chi_d$ to $(A_\mathbb H^{\sigma_1})\otimes_\mathbb R\mathbb C\{I\}/\!\!/\mathrm{Sp}_2(A_\mathbb C,\sigma_\mathbb C)\otimes K_X^2$ we get
$$
(A_\mathbb H^{\sigma_1})\otimes_\mathbb R\mathbb C\{I\}/\!\!/\mathrm{Sp}_2(A_\mathbb C,\sigma_\mathbb C)\otimes K_X^2 \longrightarrow K_X^{2d},
$$
which we still employ $\chi_d$ to denote. Thus, we can consider the composition map
$$
\mathcal N_\mathbb C^{\mathfrak P,d}:=\chi_d\comp\mathcal N_\mathbb C^{\mathfrak P}: T^{\mathbb{C}}\mathfrak{P}_G\otimes K_X  \longrightarrow  K_X^{2d}.
$$
Therefore, for all $d\in\{d_1,\dots,d_n\}$ as above, we get a map
\[
\hat{\mathcal N}^d_\mathbb C\colon \mathcal{M}_{\mathrm{Higgs}}(X,\mathrm{Sp}_2(A_{\mathbb{C}}, \sigma_{\mathbb{C}})) \to H^0(X, K^{2d}_X),\]
which maps a point $(E_{K^{\mathbb{C}}}, \Phi=(l,Q)dz)$ in $\mathcal{M}_{\mathrm{Higgs}}(X,\mathrm{Sp}_2(A_\mathbb{C},\sigma_\mathbb C))$ to $\mathcal N_\mathbb C^{\mathfrak P,d}((l,Q)dz)$. 
This then finally induces a map
\begin{align*}
 \hat{\mathcal{N}}_{\mathbb{C}}\colon  \mathcal{M}_{\mathrm{Higgs}}(X,\mathrm{Sp}_2(A_{\mathbb{C}}, \sigma_{\mathbb{C}})) & \to \bigoplus_{i=1}^{n} H^0(X, K_X^{2d_i}). \\
 \end{align*}
This map factors as in the diagram \eqref{hit_mor_fac_sp_general}, for $\bullet = \mathfrak{P}.$

\vspace{2mm}

\noindent \textit{Upper half-space and precompact model.} In this case, it is 
$T^{\mathbb{C}}_\bullet\mathfrak{U}_G \cong  T^{\mathbb{C}}_\bullet\mathfrak{B}_G \cong  A_{\mathbb{H}}^{\sigma_0} \otimes_{\mathbb{R}} \mathbb{C}.$ We define the map 
\begin{align*}
\mathcal N_\mathbb C^{\mathfrak U}\colon T^{\mathbb{C}}\mathfrak{U}\xrightarrow{q_1} (A_\mathbb H^{\sigma_0})\otimes_\mathbb R\mathbb C\{I\}/\mathrm{KSp}^c_2(A_\mathbb C,\sigma_\mathbb C) & \xrightarrow{q_2} (A_\mathbb H^{\sigma_1})\otimes_\mathbb R\mathbb C\{I\}/\mathrm{KSp}^c_2(A_\mathbb C,\sigma_\mathbb C) \\ & \xrightarrow{q_3}(A_\mathbb H^{\sigma_1})\otimes_\mathbb R\mathbb C\{I\}/\!\!/\mathrm{Sp}_2(A_\mathbb C,\sigma_\mathbb C)
\end{align*}
as a composition of the following maps: For $(z,v)\in T^{\mathbb{C}}\mathfrak{U}$, there exists $g\in\mathrm{Sp}_2(A_\mathbb C,\sigma_\mathbb C)$ such that $g.(z,v)= (j,v_0)$, for $v_0\in A_\mathbb{H}^{\sigma_0} \otimes_{\mathbb{R}} \mathbb{C}\{I\}$. The element $g$ is uniquely defined up to the left multiplication by an element of $\mathrm{KSp}^c_2(A_\mathbb C,\sigma_\mathbb C)=\mathrm{Stab}_{\mathrm{Sp}_2(A_\mathbb C,\sigma_\mathbb C)}(j)$. Thus, we can identify 
$$T^{\mathbb{C}}\mathfrak{U}/\mathrm{Sp}_2(A_\mathbb C,\sigma_\mathbb C)\cong A_\mathbb{H}^{\sigma_0} \otimes_{\mathbb{R}} \mathbb{C}\{I\}/\mathrm{KSp}^c_2(A_\mathbb C,\sigma_\mathbb C),$$
where the action is given by~\eqref{eq:action_on_A^sigma0}, and the map $q_1$ is defined as the quotient map:
$$q_1\colon T^{\mathbb{C}}\mathfrak{U}\to A_\mathbb{H}^{\sigma_0} \otimes_{\mathbb{R}} \mathbb{C}\{I\}/\mathrm{KSp}^c_2(A_\mathbb C,\sigma_\mathbb C)$$
mapping $(z,v)$ to the class $\mathrm{KSp}_2(A_\mathbb C,\sigma_\mathbb C).v_0$. As the map $a\mapsto a\sigma_1(a)$ maps $A_\mathbb{H}^{\sigma_0} \otimes_{\mathbb{R}} \mathbb{C}\{I\}$ to $A_\mathbb{H}^{\sigma_1} \otimes_{\mathbb{R}} \mathbb{C}\{I\}$ and by~\eqref{eq:invariance}, the following map is well defined:
$$q_2\colon A_\mathbb{H}^{\sigma_0} \otimes_{\mathbb{R}} \mathbb{C}\{I\}/\mathrm{KSp}^c_2(A_\mathbb C,\sigma_\mathbb C)\to A_\mathbb{H}^{\sigma_1} \otimes_{\mathbb{R}} \mathbb{C}\{I\}/\mathrm{KSp}^c_2(A_\mathbb C,\sigma_\mathbb C).$$
The action of $\mathrm{KSp}^c_2(A_\mathbb C,\sigma_\mathbb C)$ on $A_\mathbb{H}^{\sigma_1} \otimes_{\mathbb{R}} \mathbb{C}\{I\}$ is given by $(k,v_0)\mapsto kv_0\sigma_1(k)$ where $v_0\in A_\mathbb{H}^{\sigma_1} \otimes_{\mathbb{R}} \mathbb{C}\{I\}$ and $k\in \mathrm{O}(A_\mathbb H,\sigma_1)\cong \mathrm{KSp}^c_2(A_\mathbb C,\sigma_\mathbb C)$ (cf. identification from Section~\ref{sec:models for compact}).

Finally, since $\mathrm{Sp}_2(A_\mathbb C,\sigma_\mathbb C)$ is the complexification of $\mathrm{KSp}^c_2(A_\mathbb C,\sigma_\mathbb C)$, and $\mathrm{KSp}^c_2(A_\mathbb C,\sigma_\mathbb C)$ acts on $A_\mathbb{H}^{\sigma_1}$, this action extends to the complexified spaces, i.e. $\mathrm{Sp}_2(A_\mathbb C,\sigma_\mathbb C)$ acts on $A_\mathbb{H}^{\sigma_1} \otimes_{\mathbb{R}} \mathbb{C}\{I\}$. Moreover, as a subgroup $\mathrm{KSp}^c_2(A_\mathbb C,\sigma_\mathbb C)$ acts on $A_\mathbb{H}^{\sigma_1} \otimes_{\mathbb{R}} \mathbb{C}\{I\}$ and this action agrees with the action described above. Thus, the following quotient map is well-defined: 
$$q_3\colon A_\mathbb{H}^{\sigma_1} \otimes_{\mathbb{R}} \mathbb{C}\{I\}/\mathrm{KSp}^c_2(A_\mathbb C,\sigma_\mathbb C)\to A_\mathbb{H}^{\sigma_1} \otimes_{\mathbb{R}} \mathbb{C}\{I\}/\!\!/\mathrm{Sp}_2(A_\mathbb C,\sigma_\mathbb C).$$

Let 
$$
\mathcal N^{\mathfrak U}_\mathbb C(z,v):=[v_0\sigma_1(v_0)]=q_3(q_2(q_1(z,v)))\in A_\mathbb{H}^{\sigma_1}\otimes_{\mathbb{R}}\mathbb C\{I\}/\!\!/\mathrm{Sp}_2(A_\mathbb C,\sigma_\mathbb C).
$$ 
This is an invariant under the action of the complexified 
$\mathrm{Stab}_{\mathrm{Sp}_2(A_\mathbb C,\sigma_\mathbb C)}(z)$ on $T_z^{\mathbb{C}}\mathfrak{U}$ which is isomorphic to $\mathrm{Sp}_2(A_\mathbb C,\sigma_\mathbb C)$ which is the complexified stabilizer of the point $j$.

The construction for $\mathfrak{B}_G$ is similar. The only difference is that we use the point $0\in\mathfrak{B}_G$ instead of $j\in\mathfrak{U}_G$.

The invariants of elements in $A^{\sigma_1}_\mathbb H\otimes_\mathbb R \mathbb C\{I\}$ under the action of $\mathrm{Sp}_2(A_\mathbb C,\sigma_{\mathbb{C}})$ are given by coefficients of the characteristic polynomials which are homogeneous polynomials of degrees $d \in \{d_1, ..., d_n\}$ of the complexification of the formally real Jordan algebra $A_{\mathbb{H}}^{\sigma_1}$. Thus, for each $d$ there is a map defined by the polynomial of degree $d$:
\[\mathcal{N}_{\mathbb{C}}^d\colon A_{\mathbb{H}}^{\sigma_0}\otimes_{\mathbb{R}}\mathbb{C} \to \mathbb{C}\]
and now this map for each $d$ induces the map
\begin{align*}
 \hat{\mathcal{N}}_{\mathbb{C}}^d\colon  \mathcal{M}_{\mathrm{Higgs}}(X,\mathrm{Sp}_2(A_{\mathbb{C}}, \sigma_{\mathbb{C}})) & \to H^0(X, K_X^{2d}). 
\end{align*}
This then finally induces a Hitchin morphism
\begin{align*}
 \hat{\mathcal{N}}_{\mathbb{C}}\colon  \mathcal{M}_{\mathrm{Higgs}}(X,\mathrm{Sp}_2(A_{\mathbb{C}}, \sigma_{\mathbb{C}})) & \to \bigoplus_{i=1}^{n} H^0(X, K_X^{2d_i}).
\end{align*}

Similar to the description of the projective space model, we have that the map $\mathcal{N}_{\mathbb{C}}^d$ factors as in~\eqref{eq:factorization_char_pol}
and the Hitchin morphism factors as in \eqref{hit_mor_fac_sp_general}.

Lastly, for a polystable  $\mathrm{Sp}_2(A_\mathbb C,\sigma_\mathbb C)$-Higgs bundle  $(E,\varphi)$ in the upper-half space model (not a gauge equivalence class), we see that the Higgs field $\varphi\in f_\mathfrak U^*(T^\mathbb C\mathfrak U)\otimes K_X$, can be associated to the following two sections: 
\begin{itemize}
\item $\varphi'\in H^0(X,(A_\mathbb{H}^{\sigma_0} \otimes_{\mathbb{R}} \mathbb{C}\{I\}/\mathrm{KSp}^c_2(A_\mathbb C,\sigma_\mathbb C))\otimes K_X)$,
\item $\varphi''\in H^0(X,(A_\mathbb{H}^{\sigma_1} \otimes_{\mathbb{R}} \mathbb{C}\{I\}/\mathrm{KSp}^c_2(A_\mathbb C,\sigma_\mathbb C))\otimes K_X^2)$
\end{itemize}
where $\varphi'=q_1(\varphi)$ and $\varphi''=q_2(\varphi')$. Therefore, denoting the space of such Higgs bundles as ${M}_{\mathrm{Higgs}}(X,\mathrm{Sp}_2(A_\mathbb C,\sigma_\mathbb C))$, then the diagram \eqref{hit_mor_fac_sp_general} further factorizes as 

\begin{equation}\label{hit_mor_fac_sp_general_not_equiv}    
\xymatrix{
H^0(X,(A_\mathbb{H}^{\sigma_0} \otimes_{\mathbb{R}} \mathbb{C}\{I\}/\mathrm{KSp}^c_2(A_\mathbb C,\sigma_\mathbb C))\otimes K_X) \ar[r]^{q_2} & H^0(X,(A_\mathbb{H}^{\sigma_1} \otimes_{\mathbb{R}} \mathbb{C}\{I\}/\mathrm{KSp}^c_2(A_\mathbb C,\sigma_\mathbb C))\otimes K_X^2) \ar[dd]^{q_3}\\
&\\
{M}_{\mathrm{Higgs}}(X,\mathrm{Sp}_2(A_\mathbb C,\sigma_\mathbb C)) \ar[uu]^{q_1} \ar[r]^{\mathcal{N}_{\mathbb{C}}^{\bullet}\ \ \ \ \ \ \ }&   H^0(X,A^{\sigma_1}_\mathbb H\otimes_\mathbb R \mathbb C\{I\}/\!\!/\mathrm{Sp}_2(A_\mathbb C,\sigma_\mathbb C)\otimes K_X^2)
}
\end{equation}

\subsection{The real case \texorpdfstring{$G=\mathrm{Sp}_2(A,\sigma)$}{}}\label{sec:real_Sp_Hitchin_mor}

We now explain the analogous factorization for the real group
\[
G=\mathrm{Sp}_2(A,\sigma),
\]
where $(A,\sigma)$ is a semisimple Hermitian involutive algebra. The main difference from the complex group \( \mathrm{Sp}_2(A_{\mathbb C},\sigma_{\mathbb C}) \)
is that the relevant Hitchin base is not $H^0(X,\mathfrak{g}^{\mathbb{C}}/\!\!/G^{\mathbb{C}}\otimes K_X)$, but rather $H^0(X,\mathfrak{m}^{\mathbb{C}}/\!\!/K^{\mathbb{C}}\otimes K_X)$, where
\[
\mathfrak g=\mathfrak k\oplus\mathfrak m
\]
is the Cartan decomposition of $G$.

We use the projective and half-space models of the symmetric space $\mathcal{X}_G=G/K$ described in Section \ref{sec:real_models}. After the change of basis used there, the
maximal compact subgroup $K=\mathrm{KSp}_2(A,\sigma)$ is identified with
\[
K\cong \mathrm{O}(A_{\mathbb C},\bar\sigma_{\mathbb C}),
\]
and the tangent space of the symmetric space at the base point is identified with $A_{\mathbb C}^{\sigma_{\mathbb C}}$, which is viewed as a real vector space.

The associated real Jordan algebra is $A_{\mathbb C}^{\bar\sigma_{\mathbb C}}$, with Jordan product
\[
x\circ y=\frac{1}{2}(xy+yx).
\]
Since $(A,\sigma)$ is Hermitian, this is a formally real Jordan algebra. The real quadratic norm map is
\begin{align*}
    \tilde{\mathcal{N}}_{\mathbb R}:
A_{\mathbb C}^{\sigma_{\mathbb C}}&\longrightarrow A_{\mathbb C}^{\bar\sigma_{\mathbb C}},\\
q&\longmapsto q\,\bar\sigma_{\mathbb C}(q),
\end{align*}
where $K$ acts on $A_{\mathbb C}^{\bar\sigma_{\mathbb C}}$ by conjugation. This is well-defined because
\[
\bar\sigma_{\mathbb C}\bigl(q\,\bar\sigma_{\mathbb C}(q)\bigr)=\bar\sigma_{\mathbb C}^2(q)\,\bar\sigma_{\mathbb C}(q)=q\,\bar\sigma_{\mathbb C}(q).
\]
Moreover, the norm map is $K$-equivariant. Indeed, if
$k\in \mathrm{O}(A_{\mathbb C},\bar\sigma_{\mathbb C})$,
then the action on $A_{\mathbb C}^{\sigma_{\mathbb C}}$ is by congruence,
\begin{align}\label{eq:action_on_A_C^sigma}
    k.q=kq\sigma_{\mathbb C}(k),
\end{align}
and one has
\[
\tilde{\mathcal{N}}_{\mathbb R}(k.q)=kq\sigma_{\mathbb C}(k)\,\bar\sigma_{\mathbb C}\bigl(kq\sigma_{\mathbb C}(k)\bigr)=kq\bar\sigma_{\mathbb C}(q)k^{-1}.
\]

We now complexify this construction. First we have the complexification
\[
A_{\mathbb C}^{\sigma_{\mathbb C}}\otimes_{\mathbb R}\mathbb{C}\cong 
A_{\mathbb C}^{\sigma_{\mathbb C}}\oplus A_{\mathbb C}^{\sigma_{\mathbb C}}, 
\]
and the complexification of the real Jordan algebra as
\[
A_{\mathbb C}^{\bar\sigma_{\mathbb C}}\otimes_{\mathbb R}\mathbb C
\cong A_{\mathbb C}.
\]
The complexified maximal compact group $K^{\mathbb C}\cong A_{\mathbb C}^{\times}$ acts on $A_{\mathbb C}^{\sigma_{\mathbb C}}\oplus A_{\mathbb C}^{\sigma_{\mathbb C}}$ as 
\[
k.(q_+,q_-)=\bigl(kq_+\sigma_{\mathbb C}(k),
\sigma_{\mathbb C}(k)^{-1}q_-k^{-1}\bigr).
\]

The complexified quadratic norm map is therefore
\begin{align*}
\tilde{\mathcal{N}}_{\mathbb C} :\ \ \ 
A_{\mathbb C}^{\sigma_{\mathbb C}}\oplus A_{\mathbb C}^{\sigma_{\mathbb C}}\  &\ \ \longrightarrow\ \ \ A_{\mathbb C},\\
(q_+,q_-) \ \ &\ \ \longmapsto\ \ q_+q_-.
\end{align*}
It is $K^{\mathbb C}$-equivariant with respect to the conjugation action of $A_{\mathbb C}^{\times}$ on $A_{\mathbb C}$, since
\[
\tilde{\mathcal{N}}_{\mathbb C}\bigl(k.(q_+,q_-)\bigr)=
kq_+\sigma_{\mathbb C}(k)\sigma_{\mathbb C}(k)^{-1}q_-k^{-1}=k(q_+q_-)k^{-1}.
\]
Consequently, it induces a morphism of affine GIT quotients
\[
\hat{\mathcal{N}}_{\mathbb C}: (A_{\mathbb C}^{\sigma_{\mathbb C}}\oplus A_{\mathbb C}^{\sigma_{\mathbb C}})/\!\!/K^{\mathbb C}
\longrightarrow A_{\mathbb C}/\!\!/A_{\mathbb C}^{\times}.
\]

\begin{prop}
The morphism
\[
\hat{\mathcal{N}}_{\mathbb C}: (A_{\mathbb C}^{\sigma_{\mathbb C}}\oplus A_{\mathbb C}^{\sigma_{\mathbb C}})/\!\!/K^{\mathbb C}
\longrightarrow A_{\mathbb C}/\!\!/A_{\mathbb C}^{\times}
\]
is an isomorphism.
\end{prop}

\begin{proof}
Since $A$ is semisimple, the assertion is checked factor by factor using the Artin--Wedderburn decomposition of $A$ (c.f. \eqref{Artin_wed_A}). For a real matrix factor
\[
(A,\sigma)=({\mathrm{Mat}}_n(\mathbb R),(\bullet)^t),
\]
one obtains
\[
A_{\mathbb{C}}^{\sigma_{\mathbb{C}}}\oplus A_{\mathbb{C}}^{\sigma_{\mathbb{C}}}={\mathrm{Sym}}_n(\mathbb{C})\oplus{\mathrm{Sym}}_n(\mathbb{C}),\qquad A_{\mathbb{C}}={\mathrm{Mat}}_n(\mathbb{C}),\qquad K^{\mathbb{C}}={\mathrm{GL}}_n(\mathbb{C}),
\]
and 
\begin{align*}
    \tilde{\mathcal{N}}_{\mathbb{C}}:\ {\mathrm{Sym}}_n(\mathbb{C})\oplus{\mathrm{Sym}}_n(\mathbb{C})&\longrightarrow{\mathrm{Mat}}_n(\mathbb{C}),\\
    (\beta,\gamma)&\longmapsto\beta\gamma.
\end{align*}
The group ${\mathrm{GL}}_n(\mathbb{C})$ acts on ${\mathrm{Sym}}_n(\mathbb{C})\oplus{\mathrm{Sym}}_n(\mathbb{C})$ by
\[
g\cdot(\beta,\gamma)=(g\beta g^t,(g^{-1})^t\gamma g^{-1}),
\]
and acts on ${\mathrm{Mat}}_n(\mathbb{C})$ by conjugation, so $\tilde{\mathcal{N}}_{\mathbb{C}}$ is ${\mathrm{GL}}_n(\mathbb{C})$-equivariant. Thus, the coefficients of the characteristic polynomial of $\beta\gamma$ define ${\mathrm{GL}}_n(\mathbb{C})$-invariant polynomials.

By the Chevalley restriction theorem of Kostant--Rallis (c.f. \cite{KR71} or Section \ref{Hi_Ko_Ra_sec}) for the complexified isotropy representation of $\mathrm{Sp}_{2n}(\mathbb R)$, these polynomials  generate the whole invariant algebra
\[
\mathbb{C}[{\mathrm{Sym}}_n(\mathbb{C})\oplus{\mathrm{Sym}}_n(\mathbb{C})]^{{\mathrm{GL}}_n(\mathbb{C})}.
\]
More precisely,
\[
\mathbb{C}[{\mathrm{Sym}}_n(\mathbb{C})\oplus{\mathrm{Sym}}_n(\mathbb{C})]^{{\mathrm{GL}}_n(\mathbb{C})}=\mathbb{C}[p_1(\beta\gamma),\cdots,p_n(\beta\gamma)],
\]
where 
\[
\det(\lambda\mathrm{I}_n-\beta\gamma)=\lambda^n+p_1(\beta\gamma)\lambda^{n-1}+\cdots+p_n(\beta\gamma).
\]
These generators are precisely the pullbacks of the standard generators of
\[
\mathbb{C}[{\mathrm{Mat}}_n(\mathbb{C})]^{{\mathrm{GL}}_n(\mathbb{C})}.
\]
Therefore, $\tilde{\mathcal{N}}_{\mathbb{C}}$ induces the isomorphism 
\begin{align*}
    \tilde{\mathcal{N}}_{\mathbb{C}}^*: \mathbb{C}[{\mathrm{Mat}}_n(\mathbb{C})]^{{\mathrm{GL}}_n(\mathbb{C})}\xrightarrow{\cong}\mathbb{C}[{\mathrm{Sym}}_n(\mathbb{C})\oplus{\mathrm{Sym}}_n(\mathbb{C})]^{{\mathrm{GL}}_n(\mathbb{C})}.
\end{align*}
The complex and quaternionic simple factors can be treated in the same way, in each case, the invariant polynomials are the coefficients of the corresponding characteristic polynomial of $\beta\gamma$. Finally, taking the product over all simple factors gives the desired isomorphism of affine quotients.
\end{proof}

So we have the quotient map
\begin{align}\label{eq:complex_norm_real-Sp_2}
\begin{aligned}
    \mathcal{N}_{\mathbb C} :\ \ \ 
A_{\mathbb C}^{\sigma_{\mathbb C}}\oplus A_{\mathbb C}^{\sigma_{\mathbb C}}  &\ \ \longrightarrow\ \ \ A_{\mathbb C}/\!\!/A_{\mathbb C}^{\times},\\
(q_+,q_-)\ \  &\ \ \longmapsto\ \ [q_+q_-].
\end{aligned}
\end{align}

Now choose homogeneous generators of the invariant algebra
\[
p_{1},\dots,p_{n}\in \mathbb{C}[A_{\mathbb{C}}]^{A_{\mathbb C}^{\times}},
\qquad \deg(p_i)=d_i.
\]
Equivalently, one may take the coefficients of the characteristic polynomial in the formally real Jordan algebra $A_{\mathbb{C}}^{\bar\sigma_{\mathbb C}}$ and then complexify. For each degree $d$, the corresponding invariant polynomial induces a map
\[
\chi_d: A_{\mathbb C}/\!\!/A_{\mathbb C}^{\times}\longrightarrow \mathbb{C}.
\]
Composing this map with \eqref{eq:complex_norm_real-Sp_2} gives a polynomial map of degree $2d$ (because of the quadratic nature of $\mathcal{N}_{\mathbb{C}}$)
\[
\mathcal{N}_{\mathbb{C}}^{d}: A_{\mathbb C}^{\sigma_{\mathbb C}}\oplus A_{\mathbb C}^{\sigma_{\mathbb C}}\longrightarrow \mathbb{C},
\]
thus fitting into the diagram
\begin{equation}\label{eq::factorization_char_pol_real-Sp_2}
    \xymatrix{
&A_{\mathbb C}/\!\!/A_{\mathbb C}^{\times}\ar[dd]^{\chi_d}\\
&&\\
(A_{\mathbb C}^{\sigma_{\mathbb C}}\oplus A_{\mathbb C}^{\sigma_{\mathbb C}})\ar[uur]^{\mathcal{N}_{\mathbb{C}}}\ar[r]^{\mathcal{N}_{\mathbb{C}}^d}&\mathbb{C}
.}
\end{equation}

At the level of moduli spaces of $\mathrm{Sp}_2(A,\sigma)$-Higgs bundles, the diagram is respectively of the form
\begin{equation}\label{hit_mor_fac_real-Sp_2_general}
    \xymatrix{
&H^0(X,A_{\mathbb C}/\!\!/A_{\mathbb C}^{\times}\otimes K_X^2)\ar[dd]^{\bigoplus\limits_{i=1}^n\chi_{d_i}}\\
&&\\
\mathcal{M}_{\mathrm{Higgs}}(X,\mathrm{Sp}_2(A,\sigma))\ar[uur]^{\mathcal N_\mathbb C^{\bullet}}\ar[r]^{\hat{\mathcal{N}}_{\mathbb{C}}}&\bigoplus\limits_{i=1}^{n} H^0(X, K_X^{2d_i}),
}
\end{equation}
where $\mathcal N_\mathbb C^{\bullet}$ denotes the map at the level of $\mathrm{Sp}_2(A,\sigma)$-Higgs bundles induced by \eqref{eq:complex_norm_real-Sp_2}, for $\bullet$ a choice of model for the symmetric space $\mathcal{X}$ of $\mathrm{Sp}_2(A,\sigma)$. Note that the map $\hat{\mathcal{N}}_{\mathbb{C}}$ above is the usual Hitchin morphism. We describe the map $\mathcal N_\mathbb C^{\bullet}$ more explicitly below for each of the four models of the symmetric space.

\vspace{2mm}

\noindent \textit{Complex structures model.}
For the special case when $A$ is a matrix algebra, we can define analogously a map from general $\mathrm{Sp}_2(A,\sigma)$-Higgs bundles to the coefficients of the characteristic polynomials of their Higgs field viewed as an element in $T^{\mathbb{C}}_J\mathfrak{C}_G$ given by the traces of all powers of the operator $L$ viewed as a $(2 \times 2)$-matrix over $A_{\mathbb{C}}$. Note that the requirement that $A$ is a matrix algebra is not necessary for the realization of the Hitchin $\mathrm{Sp}_2(A,\sigma)$-morphism in the other models.

\vspace{2mm}

\noindent \textit{Projective space model.}
We next describe the same construction in the projective space model. Let $\mathfrak P_G^+$ be the positive projective model of the symmetric space. A point $l\in\mathfrak P_G^+$ is an isotropic $A_{\mathbb C}$-line
in $A_{\mathbb C}^2$, and the tangent space at $l$ is described by
\[
T_l\mathfrak P_G^+ \cong
\operatorname{Hom}_{A_{\mathbb C}}(l,l^{\perp_h}),
\]
with the positivity condition described in Section \ref{sec:real_models}.

The involution $(\bar{\cdot})$ on $A_{\mathbb C}$ sends the positive line $l$ to the opposite line $\bar l\in\mathfrak P_G^-$. Thus the tangent space can be viewed as the space of $A_{\mathbb C}$-linear maps
\[
Q:l\longrightarrow \bar l.
\]
Choose a generator $v$ of $l$, then $\bar v$ generates $\bar l$, and
\[
Q(v)=\bar v\,a\qquad \text{for a unique } a\in A_{\mathbb C}^{\sigma_{\mathbb C}}.
\]
The conjugate map
\[
Q':=(\bar{\cdot})\circ Q\circ (\bar{\cdot}) : \bar l\longrightarrow l
\]
is then given by
\[
Q'(\bar v)=v\,\bar\sigma_{\mathbb C}(a).
\]
Therefore the composition $Q\circ Q'$ is represented by the element
\[
a\bar\sigma_{\mathbb C}(a)\in A_{\mathbb C}^{\bar\sigma_{\mathbb C}}.
\]
This gives the real norm in the projective model:
\[
\widetilde{\mathcal N}_{\mathbb R}^{\mathfrak P}(Q)
:= a\bar\sigma_{\mathbb C}(a) \in A_{\mathbb C}^{\bar\sigma_{\mathbb C}}.
\]
Changing the generator $v$ changes $a$ by the corresponding congruence action by the stabilizer $K=\mathrm{Stab}_{\mathrm{Sp}_2(A,\sigma)}(l)
\cong \mathrm{O}(A_{\mathbb C},\bar\sigma_{\mathbb C})$, and therefore changes $a\bar\sigma_{\mathbb C}(a)$ by conjugation. Hence the class
\[
[\widetilde{\mathcal N}_{\mathbb R}^{\mathfrak P}(Q)]
\in A_{\mathbb C}^{\bar\sigma_{\mathbb C}}/K
\]
is well defined.

Now we complexify, an element of $T_l^{\mathbb C}\mathfrak P_G^+$ can be written as a pair
\[
(Q_+,Q_-),
\]
where, after choosing the pair of opposite lines $(l,\bar l)$, the maps are represented by elements
\[
a_+,a_-\in A_{\mathbb C}^{\sigma_{\mathbb C}}.
\]
That is,
\[
Q_+(v)=\bar v\,a_+,
\qquad
Q_-(\bar v)=v\,a_-.
\]
The complexified quadratic norm map is then
\[
\widetilde{\mathcal N}_{\mathbb C}^{\mathfrak P}(Q_+,Q_-)
:= a_+a_- \in A_{\mathbb C}.
\]
Passing to the affine GIT quotient gives the map
\[
\mathcal N_{\mathbb C}^{\mathfrak P}: T^{\mathbb C}\mathfrak P_G^+ \longrightarrow A_{\mathbb C}/\!\!/A_{\mathbb C}^{\times},
\qquad (l,Q_+,Q_-)\longmapsto [a_+a_-].
\]
This map is independent of all choices. Indeed, changing the adapted basis
or the element $g\in \mathrm{Sp}_2(A,\sigma)$ transporting $l$ to the base point changes $a_+a_-$ by conjugation in $A_{\mathbb C}^{\times}$.

Tensoring with $K_X$, we obtain
\[
T^{\mathbb C}\mathfrak P_G^+\otimes K_X
\longrightarrow
(A_{\mathbb C}/\!\!/A_{\mathbb C}^{\times})\otimes K_X^2.
\]
For a homogeneous invariant polynomial $\chi_d$ of degree $d$, define
\[
\mathcal N_{\mathbb C}^{\mathfrak P,d}:=\chi_d\circ \mathcal N_{\mathbb C}^{\mathfrak P}.
\]
Then
\[
\mathcal N_{\mathbb C}^{\mathfrak P,d}: T^{\mathbb C}\mathfrak P_G^+\otimes K_X\longrightarrow K_X^{2d}.
\]
Thus, for a Higgs bundle $(E_{K^{\mathbb{C}}},\Phi=(l,Q_+,Q_-)\,dz)$ in the projective model, we obtain
\[
\mathcal N_{\mathbb C}^{\mathfrak P,d_i}(\Phi)\in
H^0(X,K_X^{2d_i}), \qquad i=1,\ldots,n,
\]
and hence a map
\[
\widehat{\mathcal N}_{\mathbb C}^{\mathfrak P}:
\mathcal M_{\mathrm{Higgs}}(X,\mathrm{Sp}_2(A,\sigma))
\longrightarrow
\bigoplus_{i=1}^{n}H^0(X,K_X^{2d_i}).
\]
This map fits into the factorization diagram
\[
\xymatrix{
&
H^0\bigl(X,(A_{\mathbb C}/\!\!/A_{\mathbb C}^{\times})\otimes K_X^2\bigr)
\ar[dd]^{\bigoplus_i\chi_{d_i}}
\\
&&\\
\mathcal M_{\mathrm{Higgs}}(X,\mathrm{Sp}_2(A,\sigma))
\ar[uur]^{\mathcal N_{\mathbb C}^{\mathfrak P}}
\ar[r]_{\widehat{\mathcal N}_{\mathbb C}^{\mathfrak P}}
&
\displaystyle\bigoplus_{i=1}^{n}H^0(X,K_X^{2d_i}).
}
\]

\vspace{2mm}

\noindent \textit{Upper half-space and precompact model.}
We now describe the same factorization in the upper half-space model. Recall that
\[
\mathfrak U_G^+=
\left\{
z\in A_{\mathbb C}^{\sigma_{\mathbb C}}
\ \middle|\
\operatorname{Im}(z)\in A^\sigma_+
\right\}.
\]
This is an open subset of the real vector space
$A_{\mathbb C}^{\sigma_{\mathbb C}}$, hence $T_z\mathfrak U_G^+\cong A_{\mathbb C}^{\sigma_{\mathbb C}}$ for all $z\in \mathfrak U_G^+$. Its complexified tangent space is
\[
T_z^{\mathbb C}\mathfrak U_G^+\cong A_{\mathbb C}^{\sigma_{\mathbb C}}\oplus A_{\mathbb C}^{\sigma_{\mathbb C}}.
\]

We define the map 
\begin{align*}
\mathcal N_\mathbb C^{\mathfrak U}\colon T^{\mathbb{C}}\mathfrak{U}\xrightarrow{q_1} (A_{\mathbb C}^{\sigma_{\mathbb C}}\oplus A_{\mathbb C}^{\sigma_{\mathbb C}})/K  \xrightarrow{q_2} A_\mathbb C/K  \xrightarrow{q_3}A_\mathbb C/\!\!/K^\mathbb C
\end{align*}
as a composition of the following maps: Given $(z,v_+,v_-)\in T^{\mathbb C}\mathfrak U_G^+$,
choose $g\in\mathrm{Sp}_2(A,\sigma)$ such that 
\[
g.(z,v_+,v_-)=(i,q_+,q_-),\qquad (q_+,q_-)\in A_{\mathbb C}^{\sigma_{\mathbb C}}
\oplus A_{\mathbb C}^{\sigma_{\mathbb C}}.
\]
The element $g$ is unique up to left multiplication by the stabilizer $K=\mathrm{Stab}_{\mathrm{Sp}_2(A,\sigma)}(i)\cong \mathrm{O}(A_{\mathbb C},\bar\sigma_{\mathbb C})$. Thus, we can identify 
$$
T^{\mathbb{C}}\mathfrak{U}/\mathrm{Sp}_2(A,\sigma)\cong (A_{\mathbb C}^{\sigma_{\mathbb C}}\oplus A_{\mathbb C}^{\sigma_{\mathbb C}})/K,$$
where the action is given by~\eqref{eq:action_on_A_C^sigma}, and the map $q_1$ is defined as the quotient map:
$$
q_1\colon T^{\mathbb{C}}\mathfrak{U}\to (A_{\mathbb C}^{\sigma_{\mathbb C}}\oplus A_{\mathbb C}^{\sigma_{\mathbb C}})/K
$$
mapping $(z,v_+,v_-)$ to the class $K.(v_+,v_-)$. As the map $(q_+,q_-)\mapsto q_+q_-$ maps $A_{\mathbb C}^{\sigma_{\mathbb C}}\oplus A_{\mathbb C}^{\sigma_{\mathbb C}}$ to $A_\mathbb{C}$ and by~\eqref{eq:invariance}, the following map is well defined:
$$
q_2\colon (A_{\mathbb C}^{\sigma_{\mathbb C}}\oplus A_{\mathbb C}^{\sigma_{\mathbb C}})/K\to A_\mathbb{C}/K.
$$
The action of $K$ on $A_\mathbb{C}$ is given by $(k,q)\mapsto kqk^{-1}$. 

Finally, since $K^{\mathbb{C}}\cong A_{\mathbb{C}}^\times$ is the complexification of $K\cong\mathrm{O}(A_{\mathbb C},\bar\sigma_{\mathbb C})$, and $K$ acts on $A_\mathbb{C}^{\bar\sigma_{\mathbb C}}$, this action extends to the complexified spaces, i.e. $K^{\mathbb C}$ acts on $A_\mathbb{C}$. Moreover, as a subgroup $K$ acts on $A_\mathbb{C}$ and this action agrees with the action described above. Thus, the following quotient map is well-defined: 
$$q_3\colon A_\mathbb{C}/K\longrightarrow A_\mathbb{C}/\!\!/K^{\mathbb C}.$$

Let 
$$
\mathcal N^{\mathfrak U}_\mathbb C(z,v_+,v_-):=[q_+q_-]=q_3(q_2(q_1(z,v_+,v_-)))\in A_\mathbb{C}/\!\!/K^{\mathbb C}.
$$ 
This is an invariant under the action of the complexified 
$\mathrm{Stab}_{K^\mathbb C}(z)$ on $T_z^{\mathbb{C}}\mathfrak{U}$ which is isomorphic to $K^\mathbb C$ which is the complexified stabilizer of the point $i$.

The construction for $\mathfrak{B}_G$ is similar. The only difference is that we use the point $0\in\mathfrak{B}_G$ instead of $i\in\mathfrak{U}_G$.

As above, the invariants of elements in $A_\mathbb C$ under the action of $K^{\mathbb{C}}$ are given by coefficients of the characteristic polynomials which are homogeneous polynomials of degrees $d \in \{d_1, ..., d_n\}$ of the complexification of the formally real Jordan algebra $A_{\mathbb C}^{\bar\sigma_{\mathbb C}}$. Thus, for each $d$ there is a map defined by the polynomial of degree $d$:
\[\mathcal{N}_{\mathbb{C}}^d\colon A_{\mathbb{C}} \longrightarrow \mathbb{C}\]
and now this map for each $d$ induces the map
\[
\hat{\mathcal{N}}_{\mathbb{C}}^{d}:\mathcal{M}_{\rm Higgs}\bigl(X,\mathrm{Sp}_2(A,\sigma)\bigr)\longrightarrow H^{0}(X,K_X^{2d}).
\]
This then finally induces the map
\[
\hat{\mathcal{N}}_{\mathbb{C}}:\mathcal{M}_{\rm Higgs}\bigl(X,\mathrm{Sp}_2(A,\sigma)\bigr)
\longrightarrow \bigoplus_{i=1}^{n}H^{0}(X,K_X^{2d_i}).
\]
Similar to the description of the projective space model, we have that the map $\mathcal{N}_{\mathbb{C}}^{d}$ factors as in \eqref{eq::factorization_char_pol_real-Sp_2} and the Hitchin morphism $\hat{\mathcal{N}}_{\mathbb{C}}$ factors as in \eqref{hit_mor_fac_real-Sp_2_general}.

Lastly, for a polystable $\mathrm{Sp}_2(A,\sigma)$-Higgs bundle $(E,\varphi)$ in the upper half-space model (not a gauge equivalence class), we see that the Higgs field $\varphi\in f_{\mathfrak{U}}^{*}(T^{\mathbb{C}}\mathfrak{U}_{G})\otimes K_{X}$, can be associated to the following two sections:
\begin{itemize}
\item $\varphi'\in H^{0}\bigl(((A_{\mathbb C}^{\sigma_{\mathbb C}}\oplus A_{\mathbb C}^{\sigma_{\mathbb C}})/\mathrm{O}(A_{\mathbb C},\bar\sigma_{\mathbb C}))\otimes K_X\bigr)$,
\item $\varphi''\in H^{0}\bigl(X,(A_{\mathbb{C}}/\mathrm{O}(A_{\mathbb C},\bar\sigma_{\mathbb C}))\otimes K_X^{2}\bigr)$,
\end{itemize}
where $\varphi'=q_{1}(\varphi)$ and $\varphi''=q_{2}(\varphi')$. Therefore, denoting the space of such Higgs bundles as $M_{\rm Higgs}(X,\mathrm{Sp}_{2}(A,\sigma))$, the diagram \eqref{hit_mor_fac_real-Sp_2_general} further factorizes as
\begin{equation}
\begin{tikzcd}
H^{0}\bigl(X,((A_{\mathbb C}^{\sigma_{\mathbb C}}\oplus A_{\mathbb C}^{\sigma_{\mathbb C}})/\mathrm{O}(A_{\mathbb C},\bar\sigma_{\mathbb C}))\otimes K_{X}\bigr)
\arrow[r,"q_{2}"]& H^{0}\bigl(X,(A_{\mathbb{C}}/\mathrm{O}(A_{\mathbb C},\bar\sigma_{\mathbb C}))\otimes K_{X}^{2}\bigr)
\arrow[dd,"q_{3}"]
\\
\\
M_{\rm Higgs}\bigl(X,\mathrm{Sp}_2(A,\sigma)\bigr)
\arrow[uu,"q_{1}"]
\arrow[r,"\mathcal{N}_{\mathbb{C}}^{\bullet}"]&H^{0}\bigl(X,(A_{\mathbb{C}}/\!\!/A_{\mathbb{C}}^\times)\otimes K_{X}^{2}\bigr).
\end{tikzcd}
\end{equation}

\subsection{The real case \texorpdfstring{$G=\mathrm{O}_{(1,1)}(A,\sigma)$}{}}

We now explain the analogous factorization for the real group
\[
G=\mathrm{O}_{(1,1)}(A,\sigma),
\]
where $(A,\sigma)$ is a semisimple Hermitian involutive algebra. As in the case of
$\mathrm{Sp}_2(A,\sigma)$, the relevant Hitchin base is $H^0(X,\mathfrak{m}^{\mathbb{C}}/\!\!/K^{\mathbb{C}}\otimes K_X)$ for $\mathfrak g=\mathfrak k\oplus\mathfrak m$ being the Cartan decomposition.

We use the projective and half-space models of the symmetric space $\mathcal X_G=G/K$ described in Section \ref{sec:ort_models}. After the change of basis used there, the maximal compact subgroup $K=\mathrm{KO}_{(1,1)}(A,\sigma)$ is identified with
\[
K\cong \mathrm{O}(A,\sigma)\times \mathrm{O}(A,\sigma),
\]
and the tangent space of the symmetric space at the base point is identified with $A$, which is viewed as a real vector space.

The associated real Jordan algebra is $A^\sigma,$ which is also formally real. The real quadratic norm map is
\begin{align*}
    \tilde{\mathcal N}_{\mathbb R}:
A &\longrightarrow A^\sigma,\\
q &\longmapsto \sigma(q)q,
\end{align*}
where the second factor $\mathrm{O}(A,\sigma)$ of $K=\mathrm{O}(A,\sigma)\times\mathrm{O}(A,\sigma)$ acts on $A^\sigma$ by conjugation. This is well-defined because
\[
\sigma\bigl(\sigma(q)q\bigr)=\sigma(q)\sigma^2(q)=\sigma(q)q.
\]
Moreover, the norm map is $K$-equivariant. Indeed, the action of $K$ on $A$ is given by
\begin{align}\label{eq:action_on_A_O11}
    (k_1,k_2).q=k_1qk_2^{-1},
\end{align}
and one has
\[
\tilde{\mathcal N}_{\mathbb R}\bigl((k_1,k_2).q\bigr)
=
\sigma(k_1qk_2^{-1})k_1qk_2^{-1}
=
k_2\sigma(q)qk_2^{-1}.
\]

We now complexify this construction. We have the complexified maximal compact group 
\[
K^{\mathbb{C}}\cong\mathrm{O}(A_{\mathbb{C}},\sigma_{\mathbb{C}})\times\mathrm{O}(A_{\mathbb{C}},\sigma_{\mathbb{C}}),
\]
and it acts on $A_{\mathbb{C}}:=A\otimes_{\mathbb R}\mathbb{C}$ by
\[
(k_1,k_2).q=k_1qk_2^{-1}.
\]

The complexified quadratic norm map is therefore
\begin{align*}
\tilde{\mathcal N}_{\mathbb{C}}:\ 
A_{\mathbb{C}} &\longrightarrow A_{\mathbb{C}}^{\sigma_{\mathbb{C}}}:=A^\sigma\otimes_{\mathbb R}\mathbb{C},\\
q &\longmapsto \sigma_{\mathbb{C}}(q)q.
\end{align*}
It is $K^{\mathbb{C}}$-equivariant with respect to the conjugation action of the second factor $\mathrm{O}(A_{\mathbb{C}},\sigma_{\mathbb{C}})$ of $K^{\mathbb{C}}$ on $A_{\mathbb{C}}^{\sigma_{\mathbb{C}}}$, since
\[
\tilde{\mathcal N}_{\mathbb{C}}\bigl((k_1,k_2).q\bigr)=\sigma_{\mathbb{C}}(k_1qk_2^{-1})k_1qk_2^{-1}=k_2\sigma_{\mathbb{C}}(q)qk_2^{-1}.
\]
Consequently, it induces a morphism of affine GIT quotients
\[
\hat{\mathcal N}_{\mathbb{C}}: A_{\mathbb{C}}/\!\!/\bigl(\mathrm{O}(A_{\mathbb{C}},\sigma_{\mathbb{C}})\times
\mathrm{O}(A_{\mathbb{C}},\sigma_{\mathbb{C}})\bigr)
\longrightarrow
A_{\mathbb{C}}^{\sigma_{\mathbb{C}}}/\!\!/\mathrm{O}(A_{\mathbb{C}},\sigma_{\mathbb{C}}).
\]

\begin{prop}
The morphism
\[
\hat{\mathcal N}_{\mathbb{C}}: A_{\mathbb{C}}/\!\!/\bigl(\mathrm{O}(A_{\mathbb{C}},\sigma_{\mathbb{C}})\times
\mathrm{O}(A_{\mathbb{C}},\sigma_{\mathbb{C}})\bigr)\longrightarrow A_{\mathbb{C}}^{\sigma_{\mathbb{C}}}/\!\!/\mathrm{O}(A_{\mathbb{C}},\sigma_{\mathbb{C}})
\]
induced by $q\mapsto \sigma_{\mathbb{C}}(q)q$ is an isomorphism.
\end{prop}

\begin{proof}
Since $A$ is semisimple, the assertion is checked factor by factor using the Artin--Wedderburn decomposition of $A$ (cf.~\eqref{Artin_wed_A}). For a real matrix factor
\[
(A,\sigma)=({\mathrm{Mat}}_n(\mathbb R),(\bullet)^t),
\]
one obtains
\[
A_{\mathbb{C}}={\mathrm{Mat}}_n(\mathbb{C}),\qquad A_{\mathbb{C}}^{\sigma_{\mathbb{C}}}={\mathrm{Sym}}_n(\mathbb{C}),\qquad K^{\mathbb{C}}=\mathrm{O}_n(\mathbb{C})\times \mathrm{O}_n(\mathbb{C}).
\]
and 
\begin{align*}
    \tilde{\mathcal{N}}_{\mathbb{C}}:\ {\mathrm{Mat}}_n(\mathbb{C})&\longrightarrow{\mathrm{Sym}}_n(\mathbb{C}),\\
    q&\longmapsto q^tq.
\end{align*}
Similarly, we can see $\tilde{\mathcal{N}}_{\mathbb{C}}$ is equivariant with respect to the conjugation action of the second factor $\mathrm{O}_n(\mathbb{C})$ on ${\mathrm{Sym}}_n(\mathbb{C})$. $ \tilde{\mathcal{N}}_{\mathbb{C}}$ induces the isomorphism
\[
 \tilde{\mathcal{N}}_{\mathbb{C}}^*: \mathbb{C}[{\mathrm{Sym}}_n(\mathbb{C})]^{{\mathrm O}_n(\mathbb{C})}\xrightarrow{\cong}\mathbb{C}[{\mathrm{Mat}}_n(\mathbb{C})]^{{\mathrm O}_n(\mathbb{C})\times {\mathrm O}_n(\mathbb{C})}.
\]

In fact, we can compute $\mathbb{C}[{\mathrm{Mat}}_n(\mathbb{C})]^{{\mathrm O}_n(\mathbb{C})\times {\mathrm O}_n(\mathbb{C})}$ via two-step group actions. Let $G_1=G_2=\mathrm{O}_n(\mathbb{C})$, $G_1$ acts on $\mathrm{Mat}_n(\mathbb{C})$ by left multiplication:
\[
k_1.q:=k_1\cdot q.
\]
By the first fundamental theorem for the orthogonal group (see \cite[Proposition 10.2]{KP96}, or \cite[Corollary 5.2.5]{GW09}), the invariant algebra for the left $G_1$-action is generated by the entries of the Gram matrix $q^tq$. More precisely, write $q=(q_1,\cdots,q_n)\in\mathrm{Mat}_n(\mathbb{C})$, then
\[
\mathbb{C}[{\mathrm{Mat}}_n(\mathbb{C})]^{{\mathrm O}_n(\mathbb{C})}=\mathbb{C}[(q^tq)_{ij}\ |\ 1\leq i,j\leq n].
\]
Thus the map
\begin{align*}
    \pi: {\mathrm{Mat}}_n(\mathbb{C})&\longrightarrow{\mathrm{Sym}}_n(\mathbb{C})\\
    q&\longmapsto q^tq
\end{align*}
is the affine GIT quotient for the left $G_1$-action, namely we have the first identification
\[
{\mathrm{Mat}}_n(\mathbb{C})/\!\!/{\mathrm O}_n(\mathbb{C})\cong{\mathrm{Sym}}_n(\mathbb{C}).
\]
Moreover, the right $G_2$-action descends to the conjugation action on ${\mathrm{Sym}}_n(\mathbb{C})$, since
\[
(qk_2^{-1})^t(qk_2^{-1})=k_2(q^tq)k_2^{-1}.
\]
Therefore, quotienting first by the left factor $G_1$ and then by the right factor $G_2$ gives
\[
\bigl({\mathrm{Mat}}_n(\mathbb{C})/\!\!/{\mathrm O}_n(\mathbb{C})\bigr)/\!\!/{\mathrm O}_n(\mathbb{C})\cong {\mathrm{Sym}}_n(\mathbb{C})/\!\!/{\mathrm O}_n(\mathbb{C}), 
\]
namely we have
\[
{\mathrm{Mat}}_n(\mathbb{C})/\!\!/\bigl({\mathrm O}_n(\mathbb{C})\times {\mathrm O}_n(\mathbb{C})\bigr)\cong{\mathrm{Sym}}_n(\mathbb{C})/\!\!/{\mathrm O}_n(\mathbb{C}).
\]

The complex and quaternionic simple factors can be treated in the same way: in each case, the invariant polynomials are the coefficients of the corresponding characteristic polynomial of $\sigma_{\mathbb{C}}(q)q$ in the complexified Jordan algebra $A_{\mathbb{C}}^{\sigma_{\mathbb{C}}}$. Finally, taking the product over all simple factors gives the desired isomorphism of affine quotients.
\end{proof}

So we have the quotient map
\begin{align}\label{eq:complex_norm_real-O11}
\begin{aligned}
    \mathcal N_{\mathbb{C}}:\ A_{\mathbb{C}}&\longrightarrow A_{\mathbb{C}}^{\sigma_{\mathbb{C}}}/\!\!/\mathrm{O}(A_{\mathbb{C}},\sigma_{\mathbb{C}}),\\
q&\longmapsto [\sigma_{\mathbb{C}}(q)q].
\end{aligned}
\end{align}

Now choose homogeneous generators of the invariant algebra
\[
p_1,\dots,p_n\in\mathbb{C}[A_{\mathbb{C}}^{\sigma_{\mathbb{C}}}]^{\mathrm{O}(A_{\mathbb{C}},\sigma_{\mathbb{C}})},
\qquad \deg(p_i)=d_i.
\]
Equivalently, one may take the coefficients of the characteristic polynomial in the formally real Jordan algebra $A^\sigma$ and then complexify. For each degree $d$, the corresponding invariant polynomial induces a map
\[
\chi_d: A_{\mathbb{C}}^{\sigma_{\mathbb{C}}}/\!\!/\mathrm{O}(A_{\mathbb{C}},\sigma_{\mathbb{C}})
\longrightarrow\mathbb{C}.
\]
Composing this map with \eqref{eq:complex_norm_real-O11} gives a polynomial map of degree $2d$ because of the quadratic nature of $\mathcal N_{\mathbb{C}}$:
\[
\mathcal N_{\mathbb{C}}^d: A_{\mathbb{C}}\longrightarrow \mathbb{C}.
\]
Thus we have the diagram
\begin{equation}\label{eq::factorization_char_pol_real-O11}
    \xymatrix{
&A_{\mathbb{C}}^{\sigma_{\mathbb{C}}}/\!\!/\mathrm{O}(A_{\mathbb{C}},\sigma_{\mathbb{C}})\ar[dd]^{\chi_d}\\
&&\\
A_{\mathbb{C}}\ar[uur]^{\mathcal N_{\mathbb{C}}}\ar[r]^{\mathcal N_{\mathbb{C}}^d}&\mathbb{C}.
}
\end{equation}

At the level of moduli spaces of $\mathrm{O}_{(1,1)}(A,\sigma)$-Higgs bundles, the diagram is respectively of the form
\begin{equation}\label{hit_mor_fac_real-O11_general}
    \xymatrix{
&H^0\bigl(X,(A_{\mathbb{C}}^{\sigma_{\mathbb{C}}}/\!\!/\mathrm{O}(A_{\mathbb{C}},\sigma_{\mathbb{C}}))\otimes K_X^2\bigr)
\ar[dd]^{\bigoplus\limits_{i=1}^n\chi_{d_i}}
\\
&&\\
\mathcal{M}_{\mathrm{Higgs}}(X,\mathrm{O}_{(1,1)}(A,\sigma))\ar[uur]^{\mathcal N_{\mathbb{C}}^{\bullet}}\ar[r]^{\hat{\mathcal N}_{\mathbb{C}}}&\bigoplus\limits_{i=1}^{n} H^0(X,K_X^{2d_i}),
}
\end{equation}
where $\mathcal{N}_{\mathbb{C}}^{\bullet}$ denotes the map at the level of $\mathrm{O}_{(1,1)}(A,\sigma)$-Higgs bundles induced by \eqref{eq:complex_norm_real-O11}, for $\bullet$ a choice of model for the symmetric space $\mathcal X_G$ of $\mathrm{O}_{(1,1)}(A,\sigma)$. Note that the map $\hat{\mathcal N}_{\mathbb{C}}$ above is the usual Hitchin morphism for the real group $\mathrm{O}_{(1,1)}(A,\sigma)$. We describe the map $\mathcal{N}_{\mathbb{C}}^{\bullet}$ more explicitly below for the models of the symmetric space.

\vspace{2mm}

\noindent \textit{Indefinite involutive operators model.}
In the indefinite involutive operators model $\mathfrak{C}_G$, a point of the symmetric space is an involutive operator $J$ on $A^2$ compatible with the standard indefinite orthogonal form. The tangent space at $J$ consists of $A$-linear morphisms $L: A^2\to A^2$ that anti-commuting with $J$, namely $LJ+JL=0$. Transporting $J$ to the base point by an element of $\mathrm{O}_{(1,1)}(A,\sigma)$ identifies $T_J\mathfrak{C}_G$ with $A$. After complexification, one obtains $T_J^{\mathbb{C}}\mathfrak{C}_G\cong A_{\mathbb{C}}$.

 Thus, for an element $L\in T_J^{\mathbb{C}}\mathfrak{C}_G$, after choosing such a transport to the base point, we obtain an element
\[
q\in A_{\mathbb{C}}.
\]
Changing the transport changes $q$ by the action of the complexified stabilizer
\[
K^{\mathbb{C}}\cong\mathrm{O}(A_{\mathbb{C}},\sigma_{\mathbb{C}})\times\mathrm{O}(A_{\mathbb{C}},\sigma_{\mathbb{C}}).
\]
Therefore, the class
\[
[\sigma_{\mathbb{C}}(q)q]\in A_{\mathbb{C}}^{\sigma_{\mathbb{C}}}/\!\!/\mathrm{O}(A_{\mathbb{C}},\sigma_{\mathbb{C}})
\]
is independent of all choices. This gives a well-defined invariant map
\begin{align*}
    \mathcal{N}_{\mathbb{C}}^{\mathfrak C}: T^{\mathbb{C}}\mathfrak{C}_G&\longrightarrow A_{\mathbb{C}}^{\sigma_{\mathbb{C}}}/\!\!/\mathrm{O}(A_{\mathbb{C}},\sigma_{\mathbb{C}}),\\
    (J,L)&\longmapsto[\sigma_{\mathbb{C}}(q)q].
\end{align*}
For a homogeneous invariant polynomial $\chi_d$ of degree $d$, define
\[
\mathcal{N}_{\mathbb{C}}^{\mathfrak C,d} := \chi_d\circ \mathcal N_{\mathbb{C}}^{\mathfrak C}: T^{\mathbb{C}}\mathfrak{C}_G\longrightarrow\mathbb{C}.
\]
After tensoring with $K_X$, this gives
\[
\mathcal{N}_{\mathbb{C}}^{\mathfrak C,d}: T^{\mathbb{C}}\mathfrak C_G\otimes K_X \longrightarrow K_X^{2d}.
\]
Thus, for a Higgs bundle $(E_{K^{\mathbb{C}}},\Phi=(J,L)\,dz)$ in the indefinite involutive operators model, we obtain
\[
\mathcal{N}_{\mathbb{C}}^{\mathfrak C,d_i}(\Phi)\in H^0(X,K_X^{2d_i}), \qquad i=1,\ldots,n,
\]
and hence a map
\[
\hat{\mathcal N}_{\mathbb{C}}^{\mathfrak C}: \mathcal M_{\mathrm{Higgs}}(X,\mathrm{O}_{(1,1)}(A,\sigma))
\longrightarrow \bigoplus_{i=1}^{n}H^0(X,K_X^{2d_i}).
\]

\vspace{2mm}

\noindent \textit{Projective space model.}
Now we consider the case of $\mathrm{O}_{(1,1)}(A,\sigma)$-Higgs bundles in the projective space model. Let $\mathfrak P_G^+$ be the positive projective model of the symmetric space. A point $l\in\mathfrak{P}_G^+$ is a positive $A$-line in $A^2$ for the standard indefinite orthogonal form. Its orthogonal complement $l^{\perp_h}$ is a negative $A$-line, and the tangent space at $l$ is described by
\[
T_l\mathfrak P_G^+\cong\mathrm{Hom}_A(l,l^{\perp_h}).
\]
Choose a generator $v$ of $l$ and a generator $w$ of $l^{\perp_h}$ such that the pair $(v,w)$ is adapted to the indefinite orthogonal form $h$, namely $(v,w)$ is normalized by
\[
h(v,w)=0,\qquad h(v,v)=1,\qquad h(w,w)=-1.
\]
Then an element $Q\in T_l\mathfrak P_G^+$ is represented by a unique element $a\in A$ via
\[
Q(v)=wa.
\]
The adjoint map
\[
Q^\ast:l^{\perp_h}\longrightarrow l
\]
is represented by
\[
Q^\ast(w)=v\sigma(a).
\]
Therefore, the composition $Q^\ast\circ Q$ is represented by the element
\[
\sigma(a)a\in A^\sigma.
\]
This gives the real norm in the projective model:
\[
\widetilde{\mathcal N}_{\mathbb R}^{\mathfrak P}(Q) := \sigma(a)a\in A^\sigma.
\]
Changing the generator $v$ changes $a$ by the left-right action of the stabilizer
\[
K=\mathrm{Stab}_{\mathrm{O}_{(1,1)}(A,\sigma)}(l)\cong \mathrm{O}(A,\sigma)\times\mathrm{O}(A,\sigma),
\]
and therefore changes $\sigma(a)a$ by conjugation in the second factor $\mathrm{O}(A,\sigma)$. Hence the class
\[
[\widetilde{\mathcal N}_{\mathbb R}^{\mathfrak P}(Q)]\in A^\sigma/\mathrm{O}(A,\sigma)
\]
is well defined.

Now we complexify, an element of $T_l^{\mathbb{C}}\mathfrak P_G^+$ is represented by an element $a\in A_{\mathbb{C}}$, and the complexified quadratic norm map is
\[
\widetilde{\mathcal N}_{\mathbb{C}}^{\mathfrak P}(Q) :=\sigma_{\mathbb{C}}(a)a\in A_{\mathbb{C}}^{\sigma_{\mathbb{C}}}.
\]
Passing to the affine GIT quotient gives the map
\begin{align*}
    \mathcal N_{\mathbb{C}}^{\mathfrak P}: T^{\mathbb{C}}\mathfrak P_G^+&\longrightarrow A_{\mathbb{C}}^{\sigma_{\mathbb{C}}}/\!\!/\mathrm{O}(A_{\mathbb{C}},\sigma_{\mathbb{C}}),\\
    (l,Q)&\longmapsto [\sigma_{\mathbb{C}}(a)a].
\end{align*}
This map is independent of all choices. Indeed, changing the adapted basis or the element $g\in \mathrm{O}_{(1,1)}(A,\sigma)$ transporting $l$ to the base point changes $\sigma_{\mathbb{C}}(a)a$ by conjugation in $\mathrm{O}(A_{\mathbb{C}},\sigma_{\mathbb{C}})$.

Tensoring with $K_X$, we obtain
\[
T^{\mathbb{C}}\mathfrak P_G^+\otimes K_X\longrightarrow (A_{\mathbb{C}}^{\sigma_{\mathbb{C}}}/\!\!/\mathrm{O}(A_{\mathbb{C}},\sigma_{\mathbb{C}}))\otimes K_X^2.
\]
For a homogeneous invariant polynomial $\chi_d$ of degree $d$, define
\[
\mathcal{N}_{\mathbb{C}}^{\mathfrak P,d} := \chi_d\circ \mathcal N_{\mathbb{C}}^{\mathfrak P}: T^{\mathbb{C}}\mathfrak P_G^+\longrightarrow\mathbb{C}.
\]
Then
\[
\mathcal{N}_{\mathbb{C}}^{\mathfrak P,d}: T^{\mathbb{C}}\mathfrak P_G^+\otimes K_X\longrightarrow K_X^{2d}.
\]
Thus, for a Higgs bundle $(E_{K^{\mathbb{C}}},\Phi=(l,Q)\,dz)$ in the projective space model, we obtain
\[
\mathcal N_{\mathbb{C}}^{\mathfrak P,d_i}(\Phi)\in H^0(X,K_X^{2d_i}), \qquad i=1,\ldots,n,
\]
and hence a map
\[
\hat{\mathcal N}_{\mathbb{C}}^{\mathfrak P}: \mathcal M_{\mathrm{Higgs}}(X,\mathrm{O}_{(1,1)}(A,\sigma))
\longrightarrow\bigoplus_{i=1}^{n}H^0(X,K_X^{2d_i}).
\]
This map fits into the factorization diagram
\[
\xymatrix{
& H^0\bigl(X,(A_{\mathbb{C}}^{\sigma_{\mathbb{C}}}/\!\!/\mathrm{O}(A_{\mathbb{C}},\sigma_{\mathbb{C}}))
\otimes K_X^2\bigr)\ar[dd]^{\bigoplus_i\chi_{d_i}}
\\
&&\\
\mathcal M_{\mathrm{Higgs}}(X,\mathrm{O}_{(1,1)}(A,\sigma))\ar[uur]^{\mathcal N_{\mathbb{C}}^{\mathfrak P}}
\ar[r]_{\hat{\mathcal N}_{\mathbb{C}}^{\mathfrak P}}
&
\displaystyle\bigoplus_{i=1}^{n}H^0(X,K_X^{2d_i}).
}
\]

\vspace{2mm}

\noindent \textit{Upper half-space and precompact model.}
We now describe the same factorization in the upper half-space model. Recall that the upper half-space model $\mathfrak U_G^+$ is an open subset whose tangent space at each point is naturally identified with $A$. Hence $T_z^{\mathbb{C}}\mathfrak U_G^+\cong A_{\mathbb{C}}$.

We define the map
\[
\mathcal N_{\mathbb{C}}^{\mathfrak U}: T^{\mathbb{C}}\mathfrak U_G^+\xrightarrow{q_1} A_{\mathbb{C}}/\bigl(\mathrm{O}(A,\sigma)\times\mathrm{O}(A,\sigma)\bigr)\xrightarrow{q_2}A_{\mathbb{C}}^{\sigma_{\mathbb{C}}}/\mathrm{O}(A,\sigma)\xrightarrow{q_3} A_{\mathbb{C}}^{\sigma_{\mathbb{C}}}/\!\!/\mathrm{O}(A_{\mathbb{C}},\sigma_{\mathbb{C}})
\]
as a composition of the following maps. Given
\[
(z,v)\in T^{\mathbb{C}}\mathfrak U_G^+,
\]
choose $g\in \mathrm{O}_{(1,1)}(A,\sigma)$ such that
\[
g.(z,v)=(1,q), \qquad q\in A_{\mathbb{C}}.
\]
The element $g$ is unique up to left multiplication by the stabilizer
\[
K=\mathrm{Stab}_{\mathrm{O}_{(1,1)}(A,\sigma)}(1)\cong\mathrm{O}(A,\sigma)\times\mathrm{O}(A,\sigma).
\]
Thus, we can identify
\[
T^{\mathbb{C}}\mathfrak U_G^+/\mathrm{O}_{(1,1)}(A,\sigma)\cong A_{\mathbb{C}}/K,
\]
where the action is given by~\eqref{eq:action_on_A_O11}, and the map $q_1$ is defined as the quotient map
\[
q_1: T^{\mathbb{C}}\mathfrak U_G^+\longrightarrow A_{\mathbb{C}}/K
\]
mapping $(z,v)$ to the class $K.q$. Since the map $q\longmapsto \sigma_{\mathbb{C}}(q)q$ maps $A_{\mathbb{C}}$ to $A_{\mathbb{C}}^{\sigma_{\mathbb{C}}}$ and is equivariant with respect to the action~\eqref{eq:action_on_A_O11}, the following map is well defined:
\[
q_2: A_{\mathbb{C}}/K\longrightarrow A_{\mathbb{C}}^{\sigma_{\mathbb{C}}}/\mathrm{O}(A,\sigma).
\]
The action of $\mathrm{O}(A_{\mathbb{C}},\sigma_{\mathbb{C}})$ on $A_{\mathbb{C}}^{\sigma_{\mathbb{C}}}$ is by conjugation.

Finally, the quotient map
\[
q_3: A_{\mathbb{C}}^{\sigma_{\mathbb{C}}}/\mathrm{O}(A,\sigma)\longrightarrow
A_{\mathbb{C}}^{\sigma_{\mathbb{C}}}/\!\!/\mathrm{O}(A_{\mathbb{C}},\sigma_{\mathbb{C}})
\]
is well defined. We set
\[
\mathcal{N}_{\mathbb{C}}^{\mathfrak U}(z,v) :=[\sigma_{\mathbb{C}}(q)q]= q_3(q_2(q_1(z,v)))\in A_{\mathbb{C}}^{\sigma_{\mathbb{C}}}/\!\!/\mathrm{O}(A_{\mathbb{C}},\sigma_{\mathbb{C}}).
\]
This is an invariant under the action of the complexified stabilizer of $z$ on $T_z^{\mathbb{C}}\mathfrak U_G^+$.

The construction for the precompact model $\mathfrak B_G$ is similar. The only difference is that we use the base point $0\in\mathfrak B_G$ instead of $1\in\mathfrak U_G^+$.

As above, the invariants of elements in $A_{\mathbb{C}}^{\sigma_{\mathbb{C}}}$ under the action of $\mathrm{O}(A_{\mathbb{C}},\sigma_{\mathbb{C}})$ are given by coefficients of the characteristic polynomials, which are homogeneous polynomials of degrees $d\in\{d_1,\dots,d_n\}$ of the complexification of the formally real Jordan algebra $A^\sigma$. Thus, for each $d$ there is a map defined by a polynomial of degree $d$:
\[
\mathcal N_{\mathbb{C}}^{d}:A_{\mathbb{C}}^{\sigma_{\mathbb{C}}}\longrightarrow\mathbb{C}.
\]
Composing with the quadratic norm gives a polynomial of degree $2d$ on $A_{\mathbb{C}}$ and induces the map
\[
\hat{\mathcal N}_{\mathbb{C}}^{d}: \mathcal M_{\mathrm{Higgs}}\bigl(X,\mathrm{O}_{(1,1)}(A,\sigma)\bigr)
\longrightarrow H^0(X,K_X^{2d}).
\]
This then finally induces the map
\[
\hat{\mathcal N}_{\mathbb{C}}: \mathcal M_{\mathrm{Higgs}}\bigl(X,\mathrm{O}_{(1,1)}(A,\sigma)\bigr) \longrightarrow\bigoplus_{i=1}^{n}H^0(X,K_X^{2d_i}).
\]
Similar to the description of the projective space model, the map $\mathcal N_{\mathbb{C}}^{d}$ factors as in
\eqref{eq::factorization_char_pol_real-O11}, and the Hitchin morphism $\hat{\mathcal N}_{\mathbb{C}}$ factors as in
\eqref{hit_mor_fac_real-O11_general}.

Let $(E,\varphi)$ be a polystable $\mathrm{O}_{(1,1)}(A,\sigma)$-Higgs bundle in the upper half-space model, then $\varphi\in f_{\mathfrak U}^{*}(T^{\mathbb{C}}\mathfrak U_G)\otimes K_X$, and we can associate the following two sections:
\begin{itemize}
\item $\varphi'\in H^0\bigl(X,(A_{\mathbb{C}}/(\mathrm{O}(A,\sigma)\times\mathrm{O}(A,\sigma)))\otimes K_X\bigr)$,
\item $\varphi''\in H^0\bigl(X,(A_{\mathbb{C}}^{\sigma_{\mathbb{C}}}/\mathrm{O}(A,\sigma))\otimes K_X^2\bigr)$,
\end{itemize}
where $\varphi'=q_1(\varphi)$ and $\varphi''=q_2(\varphi')$. Therefore, denoting the space of such Higgs bundles by
$M_{\mathrm{Higgs}}(X,\mathrm{O}_{(1,1)}(A,\sigma))$, the diagram \eqref{hit_mor_fac_real-O11_general} further factorizes as
\begin{equation}
\begin{tikzcd}
H^0\bigl(X,(A_{\mathbb{C}}/
(\mathrm{O}(A,\sigma)\times\mathrm{O}(A,\sigma)))\otimes K_X\bigr)
\arrow[r,"q_2"]
&
H^0\bigl(X,(A_{\mathbb{C}}^{\sigma_{\mathbb{C}}}/
\mathrm{O}(A,\sigma))\otimes K_X^2\bigr)
\arrow[dd,"q_3"]
\\
\\
M_{\mathrm{Higgs}}\bigl(X,\mathrm{O}_{(1,1)}(A,\sigma)\bigr)
\arrow[uu,"q_1"]
\arrow[r,"\mathcal N_{\mathbb{C}}^{\bullet}"]
&
H^0\bigl(X,
(A_{\mathbb{C}}^{\sigma_{\mathbb{C}}}/\!\!/
\mathrm{O}(A_{\mathbb{C}},\sigma_{\mathbb{C}}))\otimes K_X^2
\bigr).
\end{tikzcd}
\end{equation}

\subsection{Geometric identification and dimensional efficiency}

The factorizations constructed in \eqref{hit_mor_fac_sp_general}, \eqref{hit_mor_fac_real-Sp_2_general}, and \eqref{hit_mor_fac_real-O11_general} have the same geometric form. A Higgs field is first interpreted in one of the geometric models of the symmetric space, so that its value lies in a concrete complexified tangent model $W_{\mathbb C}$. A quadratic norm map
\[
\tilde{\mathcal N}_{\mathbb C}: W_{\mathbb C}\longrightarrow J_{\mathbb C}
\]
then sends this tangent datum to the complexification $J_{\mathbb C}$ of a formally real Jordan algebra. The map $\tilde{\mathcal N}_{\mathbb C}$ is equivariant with respect to the $K^{\mathbb{C}}$-action on $W_{\mathbb C}$ and the $H^{\mathbb{C}}$-action on $J_{\mathbb C}$, where $H^{\mathbb{C}}$ denotes the group acting on $J_{\mathbb C}$ in the corresponding quotient. Passing to affine GIT quotients gives a factorization of the invariant-theoretic quotient defining the Hitchin base.

For the complex group
\[
G=\mathrm{Sp}_2(A_{\mathbb C},\sigma_{\mathbb C}),
\]
one has $H^{\mathbb{C}}=\mathrm{Sp}_2(A_{\mathbb C},\sigma_{\mathbb C})$, and the factorization takes the form
\[
A_{\mathbb H}^{\sigma_0}\otimes_{\mathbb R}\mathbb C\{I\}\longrightarrow A_{\mathbb H}^{\sigma_1}\otimes_{\mathbb R}\mathbb C\{I\}
\longrightarrow\bigl(A_{\mathbb H}^{\sigma_1}\otimes_{\mathbb R}\mathbb C\{I\}\bigr)/\!\!/\mathrm{Sp}_2(A_{\mathbb C},\sigma_{\mathbb C}).
\]
For the real group
\[
G=\mathrm{Sp}_2(A,\sigma),
\]
one has $H^{\mathbb{C}}=A_{\mathbb C}^{\times}$, and the quotient $\mathfrak m^{\mathbb C}/\!\!/K^{\mathbb C}$ is realized as
\[
(A_{\mathbb C}^{\sigma_{\mathbb C}}\oplus A_{\mathbb C}^{\sigma_{\mathbb C}})/\!\!/K^{\mathbb C}\cong A_{\mathbb C}/\!\!/A_{\mathbb C}^{\times}.
\]
For the real group 
\[
G=\mathrm{O}_{(1,1)}(A,\sigma),
\]
one has $H^{\mathbb{C}}=\mathrm{O}(A_{\mathbb C},\sigma_{\mathbb C})$, and the quotient $\mathfrak m^{\mathbb C}/\!\!/K^{\mathbb C}$ is realized as
\[
A_{\mathbb C}/\!\!/\bigl(\mathrm{O}(A_{\mathbb C},\sigma_{\mathbb C})\times\mathrm{O}(A_{\mathbb C},\sigma_{\mathbb C})\bigr)\cong A_{\mathbb C}^{\sigma_{\mathbb C}}/\!\!/\mathrm{O}(A_{\mathbb C},\sigma_{\mathbb C}).
\]

Thus, in each case, the invariant-theoretic quotient defining the Hitchin base is described through invariant polynomials on an intermediate affine GIT quotient of the complexified Jordan algebra $J_{\mathbb{C}}$. More precisely, choose homogeneous generators
\[
p_1,\dots,p_n\in \mathbb{C}[J_{\mathbb{C}}]^{H^{\mathbb C}}, \qquad \mathrm{deg}(p_i)=d_i
\]
so that
\[
\mathbb{C}[J_{\mathbb{C}}]^{H^{\mathbb C}}=\mathbb{C}[p_1,\cdots,p_n].
\]
Then the Higgs field $\varphi$ is first sent, via the quadratic norm map $\tilde{\mathcal{N}}_{\mathbb{C}}$, to a section with values in $J_{\mathbb{C}}\otimes K_X^2$, and the invariant polynomials $p_i$ are then applied.

Concretely, the quadratic norm map $\tilde{\mathcal{N}}_{\mathbb{C}}$ is
\[
q\longmapsto q\sigma_1(q)
\]
for $G=\mathrm{Sp}_2(A_{\mathbb C},\sigma_{\mathbb C})$; it is
\[
(q_+,q_-)\longmapsto q_+q_-
\]
for $G=\mathrm{Sp}_2(A,\sigma)$; and it is
\[
q\longmapsto \sigma_{\mathbb C}(q)q
\]
for $G=\mathrm{O}_{(1,1)}(A,\sigma)$. The resulting composition is exactly the map $\mathcal{N}_{\mathbb{C}}^{\bullet}$ appearing in the previous subsections, where $\bullet$ denotes the chosen geometric model of the symmetric space.

Since $\tilde{\mathcal{N}}_{\mathbb{C}}$ is quadratic, the composite $p_i\circ \tilde{\mathcal{N}}_{\mathbb{C}}$ is homogeneous of degree $2d_i$ as a polynomial on the tangent model $W_{\mathbb{C}}$. Hence, after applying it to a Higgs field $\varphi$, one obtains a
section
\[
(p_i\circ \tilde{\mathcal{N}}_{\mathbb{C}})(\varphi)\in H^0(X,K_X^{2d_i}).
\]
Therefore, the factorizations of the Hitchin morphism obtained in the previous subsections have target
\[
\bigoplus_{i=1}^{n} H^0(X,K_X^{2d_i}).
\]

Let $e_1,\dots,e_n$ denote the fundamental degrees of the invariant algebra $\mathbb{C}[\mathfrak{m}^{\mathbb C}]^{K^{\mathbb C}}$ (it is $\mathbb{C}[\mathfrak g]^{G}$ when $G$ is complex) defining the corresponding Hitchin base. In our situation, the identification of affine GIT quotients show that the Jordan-theoretic degrees $d_i$ satisfy
\[
e_i=2d_i,\qquad i=1,\dots,n.
\]
Consequently,
\[
\bigoplus_{i=1}^{n} H^0(X,K_X^{2d_i}) = \bigoplus_{i=1}^{n} H^0(X,K_X^{e_i}) = \mathcal B(X,G),
\]
which is precisely the Hitchin base. In other words, the intermediate quotient $J_{\mathbb{C}}/\!\!/{H^{\mathbb C}}$
provides a geometric realization of the Hitchin base through the quadratic norm map arising from the Jordan-algebraic models of the symmetric space.

This identification yields a substantial reduction in the algebraic complexity required to characterize the Hitchin base. For example, in the complex symplectic case $G=\mathrm{Sp}_{2n}(\mathbb{C})$, the Higgs field is represented in the tangent model $W_{\mathbb C}$, but its Hitchin invariants factor through the complexified Jordan algebra
\[
J_{\mathbb C}\cong\mathrm{Herm}_n(\mathbb H)\otimes_{\mathbb{R}}\mathbb{C}.
\]
This Jordan algebra has complex dimension $2n^2-n$, whereas the corresponding Lie algebra $\mathfrak{sp}_{2n}(\mathbb{C})$ has dimension $2n^2+n$. Thus the intermediate quotient is built from a smaller linear space than the full Lie algebra. 

Similarly, in the real indefinite orthogonal case, for the classical example
\[
(A,\sigma)=\bigl(\mathrm{Mat}_n(\mathbb R),(\cdot)^t\bigr),
\]
one has
\[
\mathfrak{m}^{\mathbb C}\cong \mathrm{Mat}_n(\mathbb C),\qquad K^{\mathbb{C}}\cong\mathrm{O}_n(\mathbb{C})\times\mathrm{O}_n(\mathbb{C})
\]
and 
\[
J_{\mathbb{C}}\cong \mathrm{Sym}_n(\mathbb{C}), \qquad H^{\mathbb C}\cong \mathrm{O}_n(\mathbb{C}).
\]
The complexified Jordan algebra $\mathrm{Sym}_n(\mathbb{C})$ has complex dimension $\frac{n(n+1)}{2}$, which is strictly smaller than $\dim_{\mathbb{C}}\bigl(\mathrm{Mat}_n(\mathbb C)\bigr)=n^2$ whenever $n>1$. Moreover, in this example the generators of $\mathbb{C}[\mathrm{Sym}_n(\mathbb{C})]^{\mathrm{O}_n(\mathbb{C})}$ have degrees $1,\dots,n$, whereas the corresponding generators of $\mathbb{C} [\mathrm{Mat}_n(\mathbb C)]^{\mathrm{O}_n(\mathbb{C})\times\mathrm{O}_n(\mathbb{C})}$ have degrees $2,4,\dots,2n$. Equivalently, $e_i=2d_i$.

Thus, the factorization through $J_{\mathbb{C}}/\!\!/H^{\mathbb{C}}$ replaces the full Lie-algebraic description of the Hitchin morphism by lower-dimensional Jordan data, while recovering the same Hitchin base. This gives a more explicit description of the Hitchin morphism, where the coefficients of the base are seen to arise naturally from the internal geometry of the noncommutative models developed in Part I.

\subsection{Hitchin--Kostant--Rallis section for the real groups \texorpdfstring{$\mathrm{Sp}_2(A,\sigma)$ and $\mathrm{O}_{(1,1)}(A,\sigma)$}{}}\label{Hi_Ko_Ra_sec}

We now apply the construction of Hitchin--Kostant--Rallis sections for real reductive groups, as developed in \cite{GPR}, to our specific setting. For the detailed construction, we refer the reader to \cite{GPR}.

Let $G$ be a real reductive group with Cartan involution $\theta$, inducing the Cartan decomposition $\mathfrak{g} = \mathfrak{k} \oplus \mathfrak{m}$ where $\mathfrak{k}$ is the Lie algebra of a maximal compact subgroup $K \subseteq G$. Let $\mathfrak{a} \subseteq \mathfrak{m}$ be a maximal abelian subspace and $W(\mathfrak{a}^\mathbb{C})$ the associated Weyl group. Critically, if $(G, K, \theta)$ is \emph{strongly reductive}, that is, $G$ acts by inner automorphisms on $\mathfrak{g}^{\mathbb{C}}$ via the adjoint representation, there is an identification of the categorical quotient \cite[Section 4]{GPR}:
$$
\mathfrak{m}^{\mathbb{C}} /\!\!/ K^{\mathbb{C}} \cong \mathfrak{a}^{\mathbb{C}} / W(\mathfrak{a}^{\mathbb{C}}).
$$
This identification gives rise to the Chevalley morphism
$$
\chi \colon \mathfrak{m}^{\mathbb{C}}\longrightarrow\mathfrak{a}^{\mathbb{C}}/W(\mathfrak{a}^{\mathbb{C}}).
$$
In this case, the \emph{Hitchin morphism} for the moduli space $\mathcal{M}_{\mathrm{Higgs}}(X,G)$ of polystable $G$-Higgs bundles (of degree 0) is defined as
$$
h_G \colon \mathcal{M}_{\mathrm{Higgs}}(X,G) \longrightarrow B(X,G) := H^0(X, \mathfrak{a}^{\mathbb{C}}/W(\mathfrak{a}^{\mathbb{C}}) \otimes K_X).
$$
Under the strongly reductivity assumption, there exists a Kostant--Rallis section \[s_{\mathrm{KR}} \colon \mathfrak{a}^{\mathbb{C}}/W(\mathfrak{a}^{\mathbb{C}}) \to \mathfrak{m}^{\mathbb{C}}\] through regular elements. Globally, for a fixed square root of $K_X$, this construction yields $N$ many ($N$ here is the number of cosets in $\mathrm{Ad}(G)^\theta / \mathrm{Ad}(K)$) inequivalent \emph{Hitchin--Kostant--Rallis sections} 
$$
s_{\mathrm{HKR}} \colon B(X,G) \longrightarrow \mathcal{M}_{\mathrm{Higgs}}(X,G),
$$ 
each factoring through the moduli space $\mathcal{M}_{\mathrm{Higgs}}(X,\widehat{G}_0)$ that corresponds to the maximal split subgroup $\widehat{G}_0 \subseteq G$.

We emphasize that strong reductivity is not needed to define the Hitchin morphism itself. The Hitchin morphism for a real reductive group is already defined at the beginning of this section using the affine quotient $\mathfrak m_{\mathbb C}/\!\!/K^{\mathbb C}$. Strong reductivity is used to identify this quotient with $\mathfrak a^{\mathbb C}/W(\mathfrak a^{\mathbb C})$ and to construct the Hitchin--Kostant--Rallis section.

The applicability of this theory to groups over Hermitian algebras $(A, \sigma)$ follows from the properties of strongly reductive Lie groups. By \cite[Prop.~1.2.8]{Peon}, any connected reductive group contained in a complex reductive group is strongly reductive. Together with Propositions \ref{prop:O_(1,1)-connected} and \ref{prop:Sp_2-connected}, we therefore obtain:

\begin{prop}
Let $(A, \sigma)$ be a Hermitian algebra. 
\begin{enumerate}
    \item The group $\mathrm{Sp}_2(A, \sigma)$ is strongly reductive. 
    \item If $A^{\times}$ is connected, the group $\mathrm{O}_{(1,1)}(A, \sigma)$ is strongly reductive.
\end{enumerate}
Consequently, both groups admit a Hitchin morphism $h_G$ taking values in $B(X,G)$ and possess Hitchin--Kostant--Rallis sections as described above.
\end{prop}

Specifying the algebra $A$ and the involution $\sigma$ in each case, one can obtain a detailed description of the Hitchin base and the Hitchin--Kostant--Rallis section. We demonstrate this in a basic example below.

\subsection{Example: Hitchin base and Hitchin section for the case \texorpdfstring{$G=\mathrm{Sp}_{4}(\mathbb{R})$}{}.}
We now restrict to the case when $G=\mathrm{Sp}_{4}(\mathbb{R})$ and consider the model $\mathfrak{C}$ of all complex structures on $A^2$, for $A=\mathrm{Mat}_2(\mathbb{R})$, following our analysis in Section \ref{sec:real_Sp_Hitchin_mor} (cf. also Appendix \ref{sec:comparison of defs}).

Remember that in its classical interpretation, for the fibration 
$$h\colon \mathcal{M}_{\mathrm{Higgs}}(X,\mathrm{Sp}_4(\mathbb{R})) \to \mathcal{B},$$ the Hitchin base is the space 
\[\mathcal{B}= H^0(X, K_X^2)\bigoplus H^0(X, K_X^4)\] 
of quadratic and quartic holomorphic differentials over the Riemann surface $X$. Moreover, the points in the Hitchin section for the space $\mathcal{M}_{\mathrm{Higgs}}(X,\mathrm{Sp}_4(\mathbb{R}))$ are triples $(V, \beta, \gamma)$ in the sense of Definition \ref{defn:classical_Sp2nR} with 
\begin{equation}\label{eqn:Higgs_fields_Teichmuller}
V= K^{\frac{3}{2}} \oplus K^{-\frac{1}{2}}, \quad \beta = \begin{pmatrix} q_4 & q_2 \\ q_2& 1 \end{pmatrix} \quad \text{and} \quad \gamma = \begin{pmatrix} 0 & 1 \\ 1& 0 \end{pmatrix},
\end{equation}
where $q_2 \in H^0(X, K_X^2)$ and $q_4 \in H^0(X, K_X^4)$ (see \cite[Table 1]{BGG12}). 

Following Section \ref{sec:real_Sp_Hitchin_mor}, the Hitchin morphism
\begin{align*}
 \hat{\mathcal{N}}_{\mathbb{C}}\colon  \mathcal{M}_{\mathrm{Higgs}}(X,\mathrm{Sp}_4(\mathbb{R})) & \to H^0(X, K_X^2)\bigoplus H^0(X, K_X^4) 
\end{align*}
when using the complex structures model, is a map to the invariant polynomials of $T_{J_{\mathbb{C}}}^{\mathbb{C}}\mathfrak{C}$ given by the traces $\mathrm{Tr}(L^2_{\mathbb{C}})$ and $\mathrm{Tr}(L^4_{\mathbb{C}})$, for the $\mathbb{C}$-linear operator 
$L_\mathbb C \colon A^2_{\mathbb{C}} \to A^2_{\mathbb{C}}$, which in matrix presentation with respect to the basis $v=(v_+, v_-)$  has the form $\begin{pmatrix}
    0 & a_- \\
    a_+ & 0
\end{pmatrix}$. Here the morphisms $a_{+}\colon l_{+} \to l_{-}$ and $a_{-}\colon l_{-} \to l_{+}$ of the isotropic $A_{\mathbb{C}}$-lines $l_{-}$ and $l_{+}$ spanned by the eigenvectors $v_{-}$ and $v_{+}$ respectively form a basis of a complex structure $J_{\mathbb{C}}$ 
(cf. Section \ref{sec:comparison of defs}). 
 
Under the interpretation of $\mathrm{Sp}_4(\mathbb{R})$-Higgs bundles $(V, V^*, \beta, \gamma)$ as tuples $(l_{-}, l_{+}, a_{+}, a_{-})$, we now have that in the Hitchin section in the complex structures model there lie tuples $(l_{-}, l_{+}, a_{+}, a_{-})$, where $l_{+}$ and $l_{-}$ are dual isotropic lines with respect to the form $\omega_{\mathbb{C}}$ (in the sense of Section~\ref{sec:duality}) that correspond to the bundles $V=K^{\frac{3}{2}} \oplus K^{-\frac{1}{2}}$ and $V^*=K^{-\frac{3}{2}} \oplus K^{\frac{1}{2}}$ respectively, while the morphisms $a_{+}$ and $a_{-}$ have matrix presentation
\[a_{-}=\begin{pmatrix} q_4 & q_2 \\ q_2& 1 \end{pmatrix} \quad \text{and} \quad a_{+} = \begin{pmatrix} 0 & 1 \\ 1& 0 \end{pmatrix}.\]

Using the diffeomorphisms $F_{\mathfrak{C},\mathfrak{P}^\pm}$, $F_{\mathfrak{P}^\pm,\mathfrak U^\pm}$ and $F_{\mathfrak{P}^+,\mathfrak{B}}$, one can now obtain the description of the Higgs data in the Hitchin section in the projective space models, the half-space models and the precompact model; these maps were studied in detail in Section \ref{sec:exam_A_real_mat} for the case when $G=\mathrm{Sp}_{2n}(\mathbb{R})$.

\addtocontents{toc}{\protect\vskip8pt}

\newpage
\appendix

\addtocontents{toc}{\protect\vskip6pt}

\section{From representations to Higgs bundles.}\label{sec:Corlette_Hadamard}

\subsection{Corlette's theorem}
Let $f\colon (M_1, g_1) \to (M_2, g_2)$ 
be a $C^2$ map between Riemannian manifolds. Its derivative $df$ defines a section of the endomorphism bundle $\mathcal{E}:=T^*M_1 \otimes f^*TM_2$. Letting $\lVert \cdot \rVert _{\mathcal{E}}$ denote the natural Riemannian metric on $\mathcal{E}$ inherited from the ones on $M_1$ and $M_2$, the \emph{energy density} of $f$ is defined by 
\[e(f)=\frac{1}{2}\lVert df \rVert _{\mathcal{E}}^2=\frac{1}{2} \mathrm{Tr}_{g_1}f^*g_2.\]
The map $f$ is called \emph{harmonic} if for every relatively compact subset $V \subset M_1$, $f$ is a critical point of the energy functional
\[E_V(f)= \int_{V}e(f)dvol_1 , \]
for the volume form $dvol_1$ on $(M_1,g_1)$. Equivalently, $f$ is harmonic if it is a solution of the Euler--Lagrange equation 
\[\mathrm{Tr}_{g_1}\nabla df=0,\]
where $\nabla$ denotes the Levi-Civita connection on $\mathcal{E}$. 

Assume now that $M_1=\tilde{M}_1/\Gamma$, for a discrete group $\Gamma$, and let $\rho\colon \Gamma \to \mathrm{Isom}(M_2,g_2)$ be a representation of $\Gamma$ into the isometry group of $(M_2,g_2)$. For a $C^2$ $\rho$-equivariant map $f\colon \tilde{M}_1 \to M_2$, since $\rho$ is acting by isometries, the energy density $e(f)$ descends to a well-defined function on $M_1$. Thus, all local properties of harmonic maps are inherited by equivariant harmonic maps. 

We now restrict ourselves to a compact Riemann surface $X$ with fundamental group $\pi_1(X)$ and let $G/K$ be a symmetric space of noncompact type, where $G$ is a Lie group with maximal compact subgroup $K$. The Theorem of Corlette \cite{Corlette} asserts that a fundamental group representation $\rho \colon \pi_1(X) \to G$ is reductive if and only if there exists a $\rho$-equivariant harmonic map $f \colon \tilde{X}\to G/K$. Among  numerous important applications and generalizations, this result was recently generalized by Sagman \cite{Sagman} in the case of Hadamard manifolds. A \emph{Hadamard manifold} is a Riemannian manifold $(N,g)$ which is complete, simply connected, and non-positively curved. The following version of Corlette's theorem is the one we are applying for our purposes in this article:

\begin{thm}[{\cite[Theorem 1.1]{Sagman}}]\label{Sagman-harmonic}
    Let $X$ be a compact Riemann surface and $(N,g)$ be a Hadamard manifold. If $\rho \colon \pi_1(X) \to \mathrm{Isom}(N,g)$ is a reductive representation, then there exists a $\rho$-equivariant harmonic map $f \colon \tilde{X}\to N$.
\end{thm}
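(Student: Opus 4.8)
The plan is to follow the heat-flow method of Eells--Sampson and Corlette, adapted to a Hadamard target. Since $N$ is Hadamard it is diffeomorphic to a Euclidean space by Cartan--Hadamard, so the flat $N$-bundle $\tilde X\times_\rho N\to X$ (with contractible fibre) admits a smooth section, which is the same datum as a smooth $\rho$-equivariant map $f_0\colon\tilde X\to N$; because $X$ is compact this map automatically has finite energy. I would then run the harmonic map heat flow $\partial_t f_t=\tau(f_t)$ with initial condition $f_0$, where $\tau$ denotes the tension field. The first task is to show that this flow exists for all $t\in[0,\infty)$, remains $\rho$-equivariant (immediate, since $\rho$ acts by isometries and solutions of the flow are unique), and has non-increasing energy $E(f_t)=\int_X e(f_t)\,dvol_X$. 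Long-time existence is exactly where the hypothesis $\mathrm{sec}_N\le 0$ is essential: the Bochner--Weitzenb\"ock formula for $e(f_t)$ contains a curvature term $-\langle R^N(df\,e_i,df\,e_j)df\,e_j,df\,e_i\rangle$, which is nonnegative under this hypothesis, so on the compact surface $X$ the maximum principle gives a uniform bound on $e(f_t)$ in terms of $e(f_0)$ and rules out finite-time blow-up; in two dimensions the estimate is especially clean because the energy is conformally invariant.

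The crux of the argument --- and the only place where reductivity of $\rho$ enters --- is to upgrade the uniform energy bound to a uniform $C^0$ bound, i.e.\ to show that the images $f_t(D)$ of a fixed compact fundamental domain $D\subset\tilde X$ stay inside a compact subset of $N$, uniformly in $t$. Suppose not: then $d_N(f_{t_n}(x_0),f_0(x_0))\to\infty$ for some $x_0\in D$ along a sequence $t_n\to\infty$. As $N$ is Hadamard, after normalizing the functions $y\mapsto d_N\!\big(y,f_{t_n}(x_0)\big)-d_N\!\big(f_0(x_0),f_{t_n}(x_0)\big)$ and using the uniform Lipschitz control on the $f_t$ coming from the energy-density bound, one extracts a limiting Busemann (horo-)function that is $\rho$-invariant up to additive constants, hence a $\rho$-invariant point $\xi\in\partial_\infty N$. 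Reductivity is precisely the hypothesis forbidding such a configuration without a compensating $\rho$-invariant geodesic or totally geodesic flat; given such a flat, a convexity argument for the Busemann function along the flow (it is convex because $\mathrm{sec}_N\le 0$) together with a center-of-mass construction on the complementary directions produces an a priori displacement bound, contradicting the assumed escape. I expect this non-escape dichotomy, and in particular making precise the role of the reductivity assumption at infinity, to be the main technical obstacle; everything upstream (the Bochner estimates and parabolic regularity) is standard once the sign of the curvature is exploited.

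With uniform $C^0$ and $C^1$ bounds in place, the remaining steps are routine: interior parabolic Schauder estimates give uniform $C^k$ bounds for $f_t$ on compact subsets of $\tilde X$ for all $k$, so by Arzel\`a--Ascoli a subsequence $f_{t_n}$ converges in $C^\infty_{loc}$ to a smooth $\rho$-equivariant finite-energy map $f_\infty\colon\tilde X\to N$. Since $E(f_t)$ is monotone and bounded below, $\int_0^\infty\|\tau(f_t)\|_{L^2(X)}^2\,dt<\infty$, so $\|\tau(f_{t_n})\|_{L^2(X)}\to 0$ along a further subsequence, which combined with $C^\infty_{loc}$ convergence forces $\tau(f_\infty)=0$; thus $f_\infty$ is the desired $\rho$-equivariant harmonic map. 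An alternative, heat-flow-free route is the direct method: minimize $E$ over the space of finite-energy $\rho$-equivariant maps; a minimizing sequence is bounded in $W^{1,2}$, and since on a surface the only obstruction to strong convergence is bubbling of harmonic spheres into $N$, which must be constant because $N$ is nonpositively curved, the sequence converges --- after again excluding escape to infinity via reductivity --- to an energy-minimizing, hence harmonic, $\rho$-equivariant map.
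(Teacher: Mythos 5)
This statement is not proved in the paper at all: it is quoted verbatim from Sagman's work, so there is no internal argument to compare against, and your proposal has to stand on its own as a proof. As such, it has a genuine gap at exactly the point you yourself flag. The Eells--Sampson/Corlette machinery you describe upstream (long-time existence of the flow, energy monotonicity, equivariance, subsequential $C^\infty_{loc}$ convergence once a uniform $C^0$ bound is known) is standard for nonpositively curved targets, although even there your statement that the maximum principle bounds $e(f_t)$ by $e(f_0)$ is imprecise: on a hyperbolic surface the Bochner inequality only gives $(\partial_t-\Delta)e\le Ce$, and the uniform gradient bound comes from energy monotonicity combined with a local parabolic mean-value (Moser) estimate, not from a global maximum principle. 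That is fixable. What is not supplied is the heart of the theorem: a precise definition of ``reductive'' for a representation into $\mathrm{Isom}(N,g)$ of a general Hadamard manifold, and a proof that under that definition the flow (or a minimizing sequence) does not escape to infinity. You extract a $\rho$-fixed point $\xi\in\partial_\infty N$ from a diverging sequence and then assert that reductivity forbids this ``without a compensating $\rho$-invariant geodesic or flat,'' after which ``a convexity argument together with a center-of-mass construction produces an a priori displacement bound, contradicting the assumed escape.'' This last step is not a proof, and as stated it is also the wrong dichotomy: in the standard formulation (if $\rho$ fixes $\xi$ at infinity then it preserves a geodesic asymptotic to $\xi$), reductivity does \emph{not} rule out escape. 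Translations along an invariant geodesic are perfectly compatible with reductivity, and minimizing sequences can and do leave every compact set in those flat directions; no displacement bound or contradiction is available. The correct handling is structural rather than by contradiction: one uses the flat strip/splitting theorem to decompose the parallel set of the invariant geodesic as $\mathbb{R}\times N'$, solves the reduced equivariant problem on the cross-section $N'$ (inducting on this splitting), and adds a harmonic/affine component in the flat factor. Without this (or an equivalent properness argument for the displacement functional under the chosen definition of reductivity), the central existence claim is unestablished, and the same unproved non-escape assertion is what the alternative direct-method route leans on as well.
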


\subsection{\texorpdfstring{$G$}{G}-holomorphic pairs}\label{sec:general_defs}

In this second part of the Appendix, we see that it is possible to obtain a $G$-holomorphic pair given a principal $K$-bundle over a Riemann surface $X$ and a smooth map $f_{\mathcal{X}}\colon \tilde{X} \to \mathcal{X}:=G/K$, which is not necessarily the classifying map provided by Corlette's theorem for a given reductive fundamental group representation. This is a more general situation that is made possible using the theory of symmetric spaces for groups over involutive algebras and is of independent interest.

Let $G$ be a semisimple Lie group and $K$ be a maximal compact subgroup of $G$. Let $X$ be a compact Riemann surface of negative Euler characteristic, and let $\pi\colon \tilde X\to X$ be its universal covering. 

Let $p\colon E_K\to X$ be a principal $K$-bundle, and let $\tilde E_K=\pi^* E_K\to \tilde X$ be the pull-back bundle of $E_K$ over $\tilde X$. Since $\tilde X$ is a contractible manifold, the bundle $\tilde E_K$ is topologically trivial. For every $\tilde x \in \tilde X$, the fiber $\tilde E_K|_{\tilde x}$ is diffeomorphic to $K$. 

We now choose a smooth map $f_{\mathcal X}\colon \tilde X\to \mathcal X:=G/K$. For every $\tilde{x}\in \mathcal{X}$, we can identify $\tilde E_K|_{\tilde x}$ with $f_{\mathcal{X}}(\tilde{x})=g_{\tilde x}K$ for some $g_{\tilde x}\in G$. For instance, in the classical definition of principal bundles over a contractible space, $\tilde{E}_K \cong \tilde{X} \times K$ and so $f_{\mathcal X}\equiv K$ in this case. Moreover, in this way we can embed $\tilde \iota\colon \tilde E_K \hookrightarrow \tilde E_G:=\tilde X\times G$. Slightly abusing our notation, we denote by $\tilde E_K$ the image $\tilde\iota(\tilde E_K)\subseteq \tilde E_G$. Notice that for a given $f_\mathcal X$, the map  $\tilde\iota$ is in general not unique. It is determined by the lift of the map $f_\mathcal X$ to the group $G$, i.e. a smooth map $f_G\colon \tilde X\to G$ such that $f_G(\tilde x)K=f_\mathcal X(\tilde x)$ for all $\tilde x\in\tilde X$. However, this lift always exists as $\mathcal X$ can be identified with the subspace $\exp(\mathfrak m)\subset G=\exp(\mathfrak m) K$ where $\mathfrak m$ is the orthogonal complement of $\mathfrak k:=\mathrm{Lie}(K)$ with respect to the Killing form on $\mathfrak g:=\mathrm{Lie}(G)$.

The bundle $E_K$ is the quotient bundle of $\tilde E_K$ under certain equivalence relation $\sim$, which identifies for every $\tilde x\in\tilde X$ and every $\gamma\in\pi_1(X)$ a point $(\tilde x,k)$ where $k\in \tilde E_K|_{\tilde x}=f_{\mathcal{X}}(\tilde{x})$ with some point $(\gamma \tilde x,k')$ for a $k'\in \tilde E_K|_{\gamma\tilde x}=f_{\mathcal{X}}(\gamma\tilde{x})$, i.e. $\sim$ provides a smooth map 
\begin{equation*}\label{hom_like_map}
\rho\colon\tilde X\times\pi_1(X)\to G,\text{ such that }\rho(\tilde x,\gamma)k=k'.
\end{equation*}
In particular, $\rho(\tilde x,\gamma)f_{\mathcal{X}}(\tilde{x})=f_{\mathcal{X}}(\gamma\tilde{x})$, i.e., the map $f_{\mathcal X}$ is $\rho$-equivariant. This map is ``homomorphism-like'' in the following sense: 
$$
\rho(\tilde x,1)=1\text{ for all }\tilde x\in\tilde X,
$$
$$
\rho(\tilde x,\gamma_2\gamma_1)=\rho(\gamma_1\tilde x,\gamma_2)\rho(\tilde x,\gamma_1)\text{ for all }\tilde x\in\tilde X, \gamma_1,\gamma_2\in \pi_1(X).
$$

Notice that if $\rho$ does not depend on $\tilde x$, then it is a group homomorphism in the usual sense, thus it canonically defines via the Riemann--Hilbert correspondence a flat $G$-bundle $E_G= (\tilde{X} \times G )/ \sim $ with $E_K \subset E_G$, for $\sim$ the equivalence relation described above. However, the quotient bundle $E_G$ is also well-defined in general by the equivalence relation 
\begin{equation}\label{equiv_rel_rho}(\tilde x,g)\sim (\gamma\tilde x, g'),
\end{equation}
where $g'=\rho(\tilde x,\gamma)g$. It depends on the smooth map $\rho$ and is not flat in general. 

\begin{rem}
Note that $G\to\mathcal{X}$ is a $K$-bundle, thus the pull-back bundle $f_{\mathcal{X}}^*G$ has a structure of $K$-bundle over $\tilde{X}$, and it descends to a $K$-bundle over $X$, which is exactly a $K$-reduction $E_K$ of $E_G$. This $K$-reduction is defined with respect to the map $f_{\mathcal{X}}$.
\end{rem}

We have the following: 

\begin{prop}\label{prop:General_def_G}
The choice of a principal $K$-bundle $E_K$ and the choice of the smooth map $f_{\mathcal X}\colon \tilde{X} \to \mathcal{X}$ thus give a bundle isomorphism for the induced pull-back bundles over $\tilde{X}$
\begin{equation}\label{ident_general_G}
\tilde{E}_K(\mathfrak{m}) \cong f^{*}_{\mathcal{X}} T\mathcal{X},
\end{equation}
which is $\rho$-equivariant. 
\end{prop}

The isomorphism \eqref{ident_general_G} descends to $X$, thus providing an identification between  $E_K(\mathfrak{m})$ and the quotient of $f^{*}_{\mathcal{X}} T\mathcal{X}$ modulo the action by $\rho$. By a slight abuse of notation, we will still denote the latter by $f^{*}_{\mathcal{X}} T\mathcal{X}$ to mean the principal bundle over $X$.

Before we get into the proof of this proposition, we need the following result about a certain correspondence between differential forms over fiber bundles.

Let $H$ be a group and let $\mathrm{pr}\colon E\to M$ be a principal $H$-bundle over a smooth manifold $M$ and let $R_g\colon E\to E$ be the action of an element  $g\in H$ on $E$. The tangent vectors in the kernel of $d\mathrm{pr}\colon TE\to TM$ are called \emph{vertical}.

Now let $V$ be a vector space and $\tau\colon H\to \mathrm{GL}(V)$ be a homomorphism. Let $E(V)$ be the corresponding associate bundle. We remind that a form $\omega\in\Omega^1(E,V)$ is called \emph{horizontal} if the kernel of $\omega$ contains all vertical vectors. A form $\omega$ is \emph{$\tau$-equivariant} if for all $g\in H$ and all $p\in E$, $v\in T_pE$, 
$$\omega_{R(g)p}(dR_g(v))=\tau(g^{-1})\omega_p(v).$$

We have the following
\begin{lem}\label{lem:Maurer-Cartan_G}
There is a canonical 1-1 correspondence between horizontal, $\tau$-equivariant forms on $E$ and all $V$-valued forms on $M$ (i.e. all forms in $\Omega^1(M,E\times_H V)$).  
\end{lem}

\begin{proof}[Proof of Lemma \ref{lem:Maurer-Cartan_G}]
For $p\in E$, $v\in T_pE$, and $\omega\in \Omega^1(E,V)$, let 
\[\alpha^\omega_{\mathrm{pr}(p)}(d\mathrm{pr}(v)):=[p,\omega_p(v)].\] 
The definition of $\alpha^\omega$ above does not depend on any of the choices made. Indeed, let $p_1,p_2\in E$ be such that $p_2:=R(g)p_1$ for a point  $g\in H$, and let $v_1\in T_{p_1}E$, $v_2\in T_{p_2}E$ be such that \[d\mathrm{pr}(v_1)=d\mathrm{pr}(v_2)\in T_{\mathrm{pr}(p_1)}M.\] Then, we see that
\begin{align*}
\alpha^\omega_{\mathrm{pr}(p_2)}(d\mathrm{pr}(v_2)) & = [p_2,\omega_{p_2}(v_2)] \\
& = [p_1g,\tau(g^{-1})\omega_{p_1}(dR_{g^{-1}}v_2)]\\
& = [p_1,\omega_{p_1}(v_1)]\\
& = \alpha^\omega_{\mathrm{pr}(p_1)}(d\mathrm{pr}(v_1)).
\end{align*}
In the last two steps, we used that the form is $H$-equivariant and then that it is horizontal.

Conversely, for a form $\alpha\in \Omega^1(M,E\times_H V)$, we define a horizontal and $\tau$-equivariant element  $\omega^\alpha\in \Omega^1(E,V)$ as follows: for $p\in E$ and $v\in T_pE$, then $w:=\omega^\alpha_p(v) \in V$ is the (unique) element in $V$ such that 
$$[p,w]=\alpha_{\mathrm{pr}(p)}(d\mathrm{pr}(v)).$$
This form is $\tau$-equivariant, since for any $g\in H$, we have that 
\[[R(g)p,\tau(g^{-1})w]=[p,w].\] In particular, 
\[\omega^\alpha_{R(g)p}(dR_g(v))=\tau(g^{-1})\omega^\alpha_p(v).\]
Moreover,  it is horizontal, because by the definition of $\omega^\alpha(v)$, it depends only on the projection of $v\in TE$ to $TM$, i.e. evaluation on vertical directions always gives zero.
\end{proof}

We now apply Lemma \ref{lem:Maurer-Cartan_G} for the case when $E=G$, $H=K$, $M=\mathcal X$, $V=\mathfrak m$ and $\tau=\mathrm{Ad}$, the adjoint action of $K$ restricted to $\mathfrak m$; this will then give a proof of  Proposition \ref{prop:General_def_G}:

\begin{proof}[Proof of Proposition \ref{prop:General_def_G}]
Denote by $G(\mathfrak{m}):=G\times_{\mathrm{Ad}(K)}\mathfrak{m}$, and by $G(\mathfrak{g}/\mathfrak{k}):=G\times_{\mathrm{Ad}(K)}\mathfrak{g}/\mathfrak{k}$ the associate bundle over $\mathcal{X}$, which can be identified with $T\mathcal{X}$ {\cite[Section 1]{BR90}}, since
\begin{equation}\label{identification of TX}
    T\mathcal{X}\cong G(\mathfrak{g}/\mathfrak{k})\cong G(\mathfrak{m}).
\end{equation}

\noindent More precisely, an element of $G(\mathfrak{g}/\mathfrak{k})$ can be identified with the equivalence class of a pair $[g,v+\mathfrak{k}]$ with $g\in G$, $v\in\mathfrak g$ such that $(g,v+\mathfrak{k})\sim (gk,Ad(k^{-1})v+\mathfrak{k})$, for all $k\in K$.
Similarly, elements of $G(\mathfrak{m})$ are the equivalence classes of pairs $[g,m]$ with $g\in G$, $m\in\mathfrak m$ such that $(g,m)\sim (gk,Ad(k^{-1})m)$, for all $k\in K$. 

\noindent The map 
\begin{align*}
    \Phi\colon G(\mathfrak{m})&\to G(\mathfrak{g}/\mathfrak{k})\\
    [g,m]&\mapsto [g,m+\mathfrak k]
\end{align*}
is a $G$-equivariant isomorphism between bundles. Indeed, one checks that 
\begin{align*}
    \Phi([gk,Ad(k^{-1})m])&=[gk,Ad(k^{-1})m+\mathfrak k]\\
    &=[g,m+\mathfrak k]\\
    &=\Phi([g,m]),
\end{align*}
thus $\Phi$ is well-defined. Furthermore, $\Phi$ is an isomorphism because $\mathfrak m$ is a complement of $\mathfrak k$ in $\mathfrak g$. 

\noindent Finally, the space $T\mathcal X$ can be identified with the space of equivalence classes of pairs $[g,v+Ad(g)\mathfrak k]$ with $g\in G$, $v\in\mathfrak g$ such that $(g,v+Ad(g)\mathfrak k)\sim (gk,v+Ad(g)\mathfrak k)$. We identify $T\mathcal X$ and $G(\mathfrak{g}/\mathfrak{k})$ using the bundle isomorphism 
\begin{align*}
\Phi'\colon T\mathcal X & \to G(\mathfrak{g}/\mathfrak{k}),\\ [g,v+Ad(g)\mathfrak k] & \mapsto [g,Ad(g^{-1})v+\mathfrak k].
\end{align*} 
This map is also well-defined since
\begin{align*}
\Phi'([gk,v+Ad(g)\mathfrak k]) & =[gk,Ad((gk)^{-1})v+\mathfrak k]\\
& = [gk,Ad(k^{-1})Ad(g^{-1})v+\mathfrak k]\\
& = [g,Ad(g^{-1})v+\mathfrak k]\\
& = \Phi'([g,v+Ad(g)\mathfrak k]).
\end{align*}
Now since the action of $G$ on $T\mathcal X$ is described by 
\[(g',[g,v+Ad(g)\mathfrak{k}])\mapsto [g'g,Ad(g')v+Ad(g'g)\mathfrak k],\] 
we can define the action of $G$ on the spaces $G(\mathfrak{g}/\mathfrak{k})$ and $G(\mathfrak{m})$ by 
\[(g',[g,v+\mathfrak k])\mapsto[g'g,v+\mathfrak k]\ \mathrm{ and }\ (g',[g,m])\mapsto[g'g,m],\]
respectively. 

\noindent The maps $\Phi$ and $\Phi'$ are both equivariant with respect to these two actions. Indeed, one sees that
\begin{align*}
\Phi'(g'.[g,v+Ad(g)\mathfrak k]) & = \Phi'([g'g,Ad(g')v+Ad(g'g)\mathfrak k])\\
 & = [g'g,Ad(g^{-1})v+\mathfrak k]\\
 & = g'.[g,Ad(g^{-1})v+\mathfrak k]\\
 & = g'.\Phi'([g,v+Ad(g)\mathfrak k]).
 \end{align*}

\noindent Thus, the bundle $E_K(\mathfrak m)\to X$  is now defined by taking the quotient of the pull-back bundle $f^*_\mathcal X(G(\mathfrak m))\to \tilde X$ by the action of the group of deck transformations on $\tilde X$. Similarly, the bundle $E_K(\mathfrak{g}/\mathfrak{k})\to X$ is defined as the quotient of the pull-back bundle $f^*_\mathcal X(G(\mathfrak{g}/\mathfrak{k}))\to \tilde X$ by the equivalence relation $\sim$ defined in \eqref{equiv_rel_rho}. Then, the equivariant isomorphisms $\Phi$ and $\Phi'$ described earlier induce a bundle isomorphism
\[E_K(\mathfrak m) \cong f^{*}_{\mathcal{X}} T\mathcal{X}, \]
which is equivariant under the action of $\pi_1(X)$.

\noindent Now, consider the $K$-bundle given by the natural projection $\mathrm{pr}\colon G\to G/K=\mathcal X$ and remember the Maurer--Cartan form $\omega^{MC}\in\Omega^1(G,\mathfrak g)$ given by the differential of the left-multiplication map $(L_{g^{-1}})_*\colon T_gG\to \mathfrak g$. Then, the bundle identification 
\[\omega_\mathfrak m:=\Phi^{-1}\circ\,\Phi'\colon TG/K\to G\times_{\mathrm{Ad}(K)}\mathfrak m\] can be seen as an element in
\[\Omega^1(G/K,G\times_{\mathrm{Ad}(K)}\mathfrak m),\]
which is naturally identified with the subspace of horizontal, $\mathrm{Ad}(K)$-equivariant forms of $\Omega^1(G,\mathfrak m)$ by Lemma~\ref{lem:Maurer-Cartan_G}.

\noindent  It then follows that the form $\omega_K:=\omega^{MC}-\omega_\mathfrak m\in\Omega^1(G,\mathfrak k)$. The space $\Omega^1(G,\mathfrak k)$ can be naturally identified with $\Omega^1(P_K,\mathfrak k)$, where $P_K$ is the following $K$-subbundle of the trivial $G$-bundle $G/K\times G\to G/K$:
\[P_K=\{(gK,gk)\mid g\in G,\;k\in K\}\subset G/K\times G\to G/K,\] 
and $G\overset{\cong}{\longrightarrow} P_K$ with $g\mapsto (\mathrm{pr}(g),g)$, where $\mathrm{pr}\colon G\to G/K$ is the natural projection as above.

\noindent  Finally, having a $\rho$-equivariant map $f_\mathcal X\colon \tilde X\to G/K$, one obtains the bundle $f_\mathcal X^*(G/K\times G)=E_G$ equipped with the connection $\nabla:=f_\mathcal X^*\omega^{MC}$, and the $K$-subbundle $f^*(P_K)\subset E_G$ with a connection $D:=f_\mathcal X^*\omega_K$. Then, 
\[\Psi:=\nabla-D=f_\mathcal X^*\omega_\mathfrak m=\omega_\mathfrak m\circ df.\] Under the identification $TG/K\cong G\times_{\mathrm{Ad}(K)}\mathfrak m$ which is, as we have seen above, provided by $\omega_\mathfrak m$, we can identify $\Psi$ with $df$ (as is usually seen in the literature in the case when $G=\mathrm{GL}_n(\mathbb{C})$).
\end{proof}

\begin{rem}
The unitary connection $D$ above is essentially the pull-back of the Levi-Civita connection of $\mathcal{X}$; this connection does not depend on the choice of the model $\mathcal{X}$.
\end{rem}

The discussion above leads to the following definition:

\begin{defn}\label{defn:general $G$-Higgs defn_any model}
Let $E_K$ be a principal $K$-bundle over $X$. Let $f_{\mathcal X}\colon \tilde{X} \to \mathcal{X}$ be a smooth map which is equivariant with respect to the homomorphism-like map $\rho\colon \tilde X\times \pi_1(X) \to G$ induced by the principal bundle $E_K$ and the map $f_{\mathcal X}$ as above.  Choose a holomorphic structure $\bar{\partial}_{E_{K^{\mathbb{C}}}}$ to equip the bundle $E_{K^{\mathbb{C}}}$. Then, there is a \emph{$G$-holomorphic pair} $(\mathcal{E}, \varphi)$ where $\mathcal{E}:=\left(E_{K^{\mathbb{C}}}, \bar{\partial}_{E_{K^{\mathbb{C}}}}\right)$ is a holomorphic principal $K^{\mathbb{C}}$-bundle over $X$ and $\varphi$ is a  section of the bundle $f^{*}_{\mathcal{X}} T^{\mathbb{C}}\mathcal{X}\otimes K_X$ with $\bar{\partial}_{E_{K^{\mathbb{C}}}} \varphi =0$. 
\end{defn}

Note that the choice of $f_{\mathcal{X}}$ guarantees the identification  $\tilde{E}_K(\mathfrak{m}) \cong f^{*}_{\mathcal{X}} T\mathcal{X}$ over $\tilde{X}$. Thus, it does not impose any restriction on the holomorphic structure of a $G$-holomorphic pair as above.

\begin{rem} 
In the special case when the homomorphism-like map $\rho$ above is a group homomorphism, then the $G$-holomorphic pair $(E, \varphi)$ is precisely a $G$-Higgs bundle. Moreover, when the classifying map $f_{\mathcal{X}}$ is coming from a reductive fundamental group representation via Corlette's theorem, then such a $G$-Higgs bundle is \emph{polystable}. Note, however, that a complex vector bundle $E_{K^{\mathbb{C}}}$ admits a real structure $E_K$ if and only if $E_{K^{\mathbb{C}}}$ is isomorphic to $\overline{E_{K^{\mathbb{C}}}}$ (see \cite[Section 19.1]{FoFu}).
\end{rem}

\section{Geometric incarnations for certain groups}\label{Appendix:several_examples}
Following the general descriptions for $G$-Higgs bundles for various models of the symmetric space as given in Section \ref{sec:Higgs-Data}, we now illustrate these geometric incarnations of the data in certain examples of classical Lie groups. 
For the reader's convenience we give all the details so that this can be read independently of Section~\ref{sec:Higgs-Data}.

\subsection{\texorpdfstring{$G=\mathrm{Sp}_{2n}(\mathbb{R})$}{G=Sp(2n,R)}}\label{sec:comparison of defs}

Recall the classical definition of an $\mathrm{Sp}_{2n}( \mathbb{R})$-Higgs bundle from \cite[Section 3.1]{GGM}:

\begin{defn}\label{defn:classical_Sp2nR.ap}
An $\mathrm{Sp}_{2n}( \mathbb{R})$-Higgs bundle over $X$ is defined by a triple $\left( V,\beta ,\gamma  \right)$, where $V$ is a rank $n$ holomorphic vector bundle over $X$ and $\beta , \gamma$ are symmetric homomorphisms
	\[\beta :{{V}^{*}}\to V\otimes K_X\text{  and  }\gamma :V\to {{V}^{*}}\otimes K_X.\]
\end{defn}
From (\ref{formula:complexified tangent bundle_cx str model}), the complexified tangent bundle of the space of all complex structures at a point  $J\in\mathfrak{C}_G$ is given by

\begin{align*}
 T^{\mathbb C}_J\mathfrak{C}_G& =\left\{L_\mathbb C\colon A_\mathbb C^2\to A_\mathbb C^2 \midwd
 \begin{aligned}
     &\ L_\mathbb C\text{ is $\mathbb C$-linear},\\
     &\ L_\mathbb CJ_\mathbb C+J_\mathbb CL_\mathbb C=0, \text{ and}\\
     &\ h_{L_\mathbb C}\text{ is $\sigma_\mathbb C$-symmetric}
 \end{aligned}
 \right\} \\
  & \cong \{(a_+,a_-)\in A_\mathbb C^{\sigma_\mathbb C}\times A_\mathbb C^{\sigma_\mathbb C}\}, 
 \end{align*}
and this description depends on the choice of a basis $v=(v_+,v_-)$ of $i$- and $-i$-eigenlines $l_+$ and $l_-$ of $J_\mathbb C$ such that $\omega_\mathbb C(v_+,v_-)=-i$ and $v_+=\bar v_-$. Therefore, using the form $\omega_{\mathbb{C}}$, the isotropic line $l_{+}$ identifies as dual to $l_{-}$ in the sense of Section~\ref{sec:duality}.

For the case $(A,\sigma)=(\mathrm{Mat}_{n}(\mathbb{R}),(\bullet)^t)$ when $\mathrm{Sp}_2(A,\sigma) \cong  \mathrm{Sp}_{2n}(\mathbb{R})$, the anti-involution ${{\sigma }_{\mathbb{C}}}$ is defined as ${{\sigma }_{\mathbb{C}}}\left( a+bi \right)={{a}^{t}}+{b^{t}}i$, and we have 
${A}_{\mathbb{C}}=\left\{ a+bi\mid a,b\in A \right\}$ 
and 
\begin{align*}	
A_{\mathbb{C}}^{{{\sigma }_{\mathbb{C}}}} =\left\{ a+bi \mid a={{a}^{t}},b={{b}^{t}} \right\}=  \mathrm{Sym}_n\left(\mathbb{R} \right)\oplus i\mathrm{Sym}_n(\mathbb{R})=\mathrm{Sym}_n(\mathbb C).
\end{align*}
Moreover, $L_\mathbb C$ in matrix presentation with respect to the basis $v$  has the form $\begin{pmatrix}
    0 & a_- \\
    a_+ & 0
\end{pmatrix}$, 
for the morphisms $a_{-}\colon l_{-} \to l_{+}$ and $a_{+}\colon l_{+} \to l_{-}$. The pullback bundle of $T^{\mathbb C}\mathfrak{C}_G$ via a $\rho$-equivariant map $f$ twisted by the canonical line bundle $K_X$ has sections which are
identified with a morphism 
\[E_{K^{\mathbb{C}}}(A^2_{\mathbb{C}})\to E_{K^{\mathbb{C}}}(A^2_{\mathbb{C}})\otimes K_X,\]
where $E_{K^{\mathbb{C}}}(A^2_{\mathbb{C}})$ denotes the associated bundle to the principal $K^{\mathbb{C}}$-bundle for the maximal compact subgroup $K \subset \mathrm{Sp}_2(A, \sigma)$. An endomorphism of $E_{K^{\mathbb{C}}}(A^2_{\mathbb{C}})$ can be written in matrix presentation with respect to the basis $v$ as $\left( \begin{matrix}
   0 & a_{-}   \\
   a_{+}  & 0  \\
\end{matrix} \right)$. In particular, under this 1-1 correspondence the isotropic line $l_{+}$ corresponds to $V$ from Definition \ref{defn:classical_Sp2nR.ap} and $l_{-}$ corresponds to $V^*$.

\subsection{\texorpdfstring{$G=\mathrm{Sp}_{2n}(\mathbb{C})$}{G=Sp(2n,C)}}

Recall also the classical definition of an $\mathrm{Sp}_{2n}( \mathbb{C})$-Higgs bundle from \cite[Section 3.5]{GGM}:

\begin{defn}\label{defn:class_Sp_2nC}
    An $\mathrm{Sp}_{2n}( \mathbb{C})$-Higgs bundle over $X$ is a pair $(E, \Phi)$ where $E$ is a rank $2n$ holomorphic vector bundle with a symplectic form $\omega (\cdot, \cdot)$, and $\Phi$ is a holomorphic section in $H^0(X, \mathrm{End}(E)\otimes K_X)$ satisfying
    \[\omega(\Phi v,w)=-\omega(v,\Phi w).\]
\end{defn}

We now consider the geometric interpretation of an $\mathrm{Sp}_{2n}( \mathbb{C})$-Higgs bundle for the quaternionic structures model. Let $A=\mathrm{Mat}_n(\mathbb R)$ with $\sigma$ being the transposition, then $A_\mathbb C=\mathrm{Mat}_n(\mathbb C)$. A Higgs field at $x\in X$ is a $1$-form with values in
\begin{align*}
T^\mathbb C_{J_x}\mathfrak{C}_G=\left\{
L\colon A_{\mathbb{C}\mathbb C}^2\to A_{\mathbb{C}\mathbb C}^2 \midwd
\begin{aligned}
    &\ L(xa)=L(x)\theta_{(i,i)}(a)\;\text{ for }x\in A_{\mathbb{C}\mathbb C}^2, a\in A_{\mathbb{C}\mathbb C},\\
    &\ h_L \text{ is } \sigma_1\text{-symmetric}, \text{ and } LJ_x+J_xL=0
\end{aligned}
\right\},
\end{align*}
where $A_{\mathbb C\mathbb C}=A_{\mathbb C}\times A_{\mathbb C}$ and $\sigma_1(z_1,z_2)=(\bar\sigma_\mathbb C(z_2),\bar\sigma_\mathbb C(z_1))$ (cf.~Section~\ref{sec:quatstructures}). We choose a local frame in which $J_x=\Omega=\begin{pmatrix}
0 & \mathrm{Id}_n\\
-\mathrm{Id}_n & 0 
\end{pmatrix}$ and identify in the following the linear maps $J_x$ and $L$ with their matrices with respect to this frame.
The condition that $h_L$ is  $\sigma_1$-symmetric means that $\sigma_1(L)\Omega=\Omega L$. Let $L=\begin{pmatrix}
a & b \\
c & d
\end{pmatrix}$ where $a=(a_1,a_2)$, $b=(b_1,b_2)$, $c=(c_1,c_2)$, $d=(d_1,d_2)$ are elements of $A_{\mathbb C\mathbb C}$. This implies 
$$\bar\sigma_\mathbb C(a_2)=a_1,\,\bar\sigma_\mathbb C(d_2)=d_1, \text{ and }$$ 
$$\bar\sigma_\mathbb C(c_2)=b_1,\,\bar\sigma_\mathbb C(b_2)=c_1.$$
The condition $LJ_x+J_xL=0$ additionally implies:
$$b_1=\bar c_2,\, b_2=\bar c_1,$$
$$a_1=-\bar d_2,\, a_2=-\bar d_1, \text{ and }$$
$$c_1=\sigma_\mathbb C(c_1),\,c_2=\sigma_\mathbb C(c_2).$$
Thus, $L=\begin{pmatrix}
(a_1,\bar\sigma_\mathbb C(a_1)) & (\bar c_2,\bar c_1)\\
(c_1,c_2) & -(\sigma_\mathbb C(a_1),\bar a_1)
\end{pmatrix}$, where $a_1\in A_\mathbb C$, $c_1,c_2\in A_\mathbb C^{\sigma_\mathbb C}$.

In the classical definition of the Higgs field as a $1$-form with values in $\mathfrak m^\mathbb C$, where $\mathfrak m$ is the symmetric part in the Cartan decomposition
$$\mathfrak{sp}_{2n}(\mathbb C)=\mathfrak{k}+\mathfrak m,$$
 $\mathfrak m$ is the space of Hermitian symplectic matrices: 
$$\mathfrak m=\left\{\begin{pmatrix}
a & b \\
\bar b & -\sigma(a)
\end{pmatrix} \midwd a\in A_\mathbb C^{\bar\sigma_\mathbb C},\,b\in A_\mathbb C^{\sigma_\mathbb C}\right\}.$$

Moreover, the complexification of $A^{\bar\sigma_\mathbb C}_{\mathbb{C}}$ is the space of $\sigma_1$-symmetric elements of $A_{\mathbb C\mathbb C}$, i.e. the space of all pairs $(a,\bar\sigma_\mathbb C(a))$ where $a\in A_\mathbb C$. The complexification of $A^{\sigma_\mathbb C}_{\mathbb{C}}$ is just its double $A^{\sigma_\mathbb C}_{\mathbb{C}}\times A^{\sigma_\mathbb C}_{\mathbb{C}}$, thus we obtain

$$\mathfrak m^\mathbb C= \left\{\begin{pmatrix}
(a,\bar\sigma_\mathbb C(a)) & (b_1,b_2)\\
(\bar b_2,\bar b_1) & -(\sigma_\mathbb C(a),\bar a)
\end{pmatrix} \midwd a\in A_\mathbb C,\,b_1,b_2\in A_\mathbb C^{\sigma_\mathbb C}\right\}.$$
Therefore, the description of the Higgs field in terms of the quaternionic structures model agrees with the one in the classical definition.

One can also see that an $\mathrm{Sp}_{2n}(\mathbb{C})$-Higgs bundle viewed as an $\mathrm{Sp}_2(A_{\mathbb{C}}, \sigma_{\mathbb{C}})$-Higgs bundle is exactly the same as an $\mathrm{Sp}_{2n}(\mathbb{C})$-Higgs bundle in the classical sense, when considering the projective space model. Indeed, following the description of the complexified tangent bundle provided in~\eqref{eq:spc_proj_comp.tangent}, we see that a polystable $G=\mathrm{Sp}_{2n}(\mathbb C)$-Higgs bundle in the projective space model $\mathfrak{P}_{G}^+$ that comes from a reductive representation $\rho\colon \pi_1(X)\to G$ consists of a holomorphic $K^\mathbb C$-bundle $(E,\bar\partial_E)$ defined by $f_{\mathfrak{P}_{G}^+}^\mathbb C$ together with a $(1,0)$-form $\varphi$ with values in $f_{\mathfrak{P}_{G}^+}^*T^\mathbb{C}\mathfrak{P}_{G}^+$, such that $\bar\partial_E \varphi =0$. More specifically, for each $x\in \tilde X$, the fiber over $x$ is a $2n$-dimensional complex vector space $L_x^\mathbb C:=f_{\mathfrak{P}_{G}^+}(x)^\mathbb C$ equipped with a symplectic form $\omega$, the Higgs field $\varphi(x)$ is a $(1,0)$-form with values in 
    $$T_{L_x}^\mathbb C\mathfrak{P}_{G}^+=\{Q\in\mathrm{Hom}(L_x^\mathbb C,L_x^\mathbb C)\mid \text{for all $x\in L_x^\mathbb C$, } \omega(Q(x),x)+\omega(x,Q(x))=0\},$$
    such that $\bar\partial_E \varphi =0$.

\subsection{\texorpdfstring{$G=\mathrm{O}(n,n)$}{G=O(n,n)}}
The maximal compact subgroup of $G=\mathrm{O}(n,n)$ is $K:=\mathrm{O}(n)\times\mathrm{O}(n)$. The complexified tangent space in terms of the half-space model or the precompact model is naturally isomorphic to $\mathrm{Mat}_n(\mathbb C)$. We then have the following interpretation:

    An $\mathrm{O}(n,n)$-Higgs bundle over $X$ associated to a reductive representation $\rho\colon \pi_1(X)\to G$ with a $\rho$-equivariant harmonic map $f\colon\tilde X\to \mathfrak B_G\cong G/K$ is a triple $(V_1,V_2,\varphi)$, where $V_1,V_2$ are holomorphic principal $\mathrm{O}_n(\mathbb C)$-bundles over $X$ and $\varphi$ is a holomorphic equivariant section of the pull-back bundle $f^*T^\mathbb C\mathfrak B_G\otimes K_X$ which is a holomorphic bundle over $\tilde X$ with fibers isomorphic to $\mathrm{Mat}_n(\mathbb C)$.

\noindent We can also realize an $\mathrm{O}(n,n)$-Higgs bundle in terms of the projective space model:

    An $\mathrm{O}(n,n)$-Higgs bundle over $X$ associated to a reductive representation $\rho\colon \pi_1(X)\to G$ with a $\rho$-equivariant harmonic map $f\colon\tilde X\to \mathfrak P_G^+\cong G/K$ is a triple $(V_1,V_2,\varphi)$, where $V_1,V_2$ are holomorphic principal $\mathrm{O}_n(\mathbb C)$-bundles over $X$. This can be interpreted as two holomorphic rank $n$ vector bundles equipped with non-degenerate orthogonal forms $\omega_1$, $\omega_2$. The Higgs field in terms of the projective space model is a holomorphic equivariant section of the pull-back bundle $f^*T^\mathbb C\mathfrak P_G^+\otimes K_X$ which can be seen as a holomorphic morphism $\varphi\colon V_1\to V_2\otimes K_X$ (cf.~\eqref{eq:o(1,1)-comp.tang}).

\begin{rem}
    Since the forms $\omega_1$ and $\omega_2$ are non-degenerate, there is a unique morphism $\varphi'\colon V_2\to V_1\otimes K_X$ defined as follows
   \begin{equation}\label{eq:o(n,n)-higgs}
   \omega_1(v_1,\varphi'(v_2))=\omega_2(\varphi(v_1),v_2),   
   \end{equation}
   for $v_1\in V_1$, $v_2\in V_2$. 
    If we choose local orthonormal bases of $V_1$ and $V_2$, then the matrices $\Phi$ and $\Phi'$ corresponding to $\varphi$ and $\varphi'$ in these bases are related as follows: $\Phi'=\Phi^t$ and $\Phi\in\mathrm{Mat}_n(\mathbb C)$. This corresponds (fiberwise) to the element in $\mathrm{Mat}_n(\mathbb C)$ above.
\end{rem}

The above description is equivalent to the definition of general $\mathrm{O}(n,n)$-Higgs bundles in the classical sense (see \cite[Section 2]{ABCGPGO}), here ``general'' means Higgs bundles not necessarily coming from reductive representations, and ``in the classical sense'' means coming from the complexification of the Cartan decomposition; we recall this next:

\begin{defn}
    An \emph{$\mathrm{O}(n,n)$-Higgs bundle} over $X$ is a tuple $(V_1,\omega_1,V_2,\omega_2,\varphi)$ for which $(V_1,\omega_1)$ and $(V_2,\omega_2)$ are rank $n$ holomorphic orthogonal bundles, and $\varphi: V_1\to V_2\otimes K_X$ is a holomorphic morphism.
\end{defn}

\subsection{\texorpdfstring{$G=\mathrm{GL}_n(\mathbb{R})$}{G=GL(n,R)} and \texorpdfstring{$\mathrm{O}_n(\mathbb C)$}{G=O(n,C)}}
In terms of the precompact model, the definition of $G$-Higgs bundles in these cases looks very similar. Embedded into $\mathrm{O}(n,n)$, the groups have the same maximal compact subgroup $K=\mathrm{O}(n)$ (embedded diagonally), so the principal $K^\mathbb C$-bundle for both groups is the same. The only difference appears in the Higgs field. In terms of precompact model, the complexified tangent space at every point of the symmetric space is naturally isomorphic either to $\mathrm{Sym}_n(\mathbb C)$ or $\mathrm{Skew}_n(\mathbb C)$. We thus have the following:

A $\mathrm{GL}_n(\mathbb R)$-Higgs bundle (resp. $\mathrm{O}_n(\mathbb C)$-Higgs bundle) over $X$ associated to a reductive representation $\rho\colon \pi_1(X)\to G$ with a $\rho$-equivariant harmonic map $f\colon\tilde X\to \mathfrak B_G\cong G/K$ is a pair $(V,\varphi)$, where $V$ is a holomorphic principal $\mathrm{O}_n(\mathbb C)$-bundle over $X$ and $\varphi$ is a holomorphic equivariant section of the pull-back bundle $f^*T^\mathbb C\mathfrak B_G\otimes K_X$ which is a holomorphic bundle over $\tilde X$ with fibers isomorphic to  $\mathrm{Sym}_n(\mathbb C)$ (resp. $\mathrm{Skew}_n(\mathbb C)$).

    \vspace{2mm}

    \noindent This can be interpreted as a holomorphic rank $n$ vector bundle equipped with a non-degenerate orthogonal form $\omega$, and $\varphi\colon V\to V\otimes K_X$ is a holomorphic morphism which is symmetric (resp. anti-symmetric) with respect to $\omega$.

\subsection{\texorpdfstring{$G=\mathrm{U}(n,n)$}{G=U(n,n)}}
We start by reviewing the definition of $\mathrm{U}(n,n)$-Higgs bundles in the classical sense (see \cite{MaXi}). The maximal compact subgroup of $G$ is $K:=\mathrm{U}(n)\times\mathrm{U}(n)$, with complexification $K^{\mathbb{C}}=\mathrm{GL}_n(\mathbb{C})\times\mathrm{GL}_n(\mathbb{C})$. Note that the complexification of the Cartan decomposition of $G$ is
$$
\mathfrak{gl}_{2n}(\mathbb{C})=\Big(\mathfrak{gl}_{n}(\mathbb{C})\oplus\mathfrak{gl}_{n}(\mathbb{C})\Big)\oplus\mathfrak{m}^{\mathbb{C}}.
$$
If we embed $\mathfrak{gl}_{n}(\mathbb{C})\oplus\mathfrak{gl}_{n}(\mathbb{C})$ into $\mathfrak{gl}_{2n}(\mathbb{C})$ as block diagonal matrices, then 
$$\mathfrak{m}^{\mathbb{C}}=\bigg\{\begin{pmatrix}
    0&*\\
    *&0
\end{pmatrix}\in\mathfrak{gl}_{2n}(\mathbb{C})\bigg\},$$ 
i.e. it consists of the off-diagonal matrices. This leads to the following definition of general $\mathrm{U}(n,n)$-Higgs bundles in the classical sense:

\begin{defn}\label{def:u(n,n)-classical}
    A \emph{$\mathrm{U}(n,n)$-Higgs bundle} consists of a quadruple $(V_1,V_2,\beta,\gamma)$ so that $V_1$ and $V_2$ are rank $n$ holomorphic vector bundles and $\beta: V_2\to V_1\otimes K_X$, $\gamma: V_1\to V_2\otimes K_X$ are holomorphic morphisms.

    \vspace{2mm}

    \noindent Note that $(V_1,V_2,\beta,\gamma)$ embeds as a $\mathrm{GL}_{2n}(\mathbb{C})$-Higgs bundle $(E=V_1\oplus V_2, \varphi=\begin{pmatrix}
    0&\beta\\
    \gamma&0
\end{pmatrix})$.
\end{defn}

Now we define $\mathrm{U}(n,n)$-Higgs bundles in the sense of $\mathrm{Sp}_2(A,\bar\sigma)$ and $\mathrm{O}_{(1,1)}(A,\bar\sigma)$, where $A=\mathrm{Mat}_n(\mathbb{C})$ and $\bar\sigma(X):=\bar{X}^t$. Remember from (\ref{eq:U(n,n)}) that $\mathrm{Sp}_2(A,\bar\sigma)\cong\mathrm{O}_{(1,1)}(A,\bar\sigma)\cong\mathrm{U}(n,n)$ with a maximal compact subgroup $\mathrm{O}(A,\bar\sigma)\times\mathrm{O}(A,\bar\sigma)\cong \mathrm{U}(n)\times\mathrm{U}(n)$. The complexification of this group is isomorphic to $\mathrm{GL}_n(\mathbb C)\times\mathrm{GL}_n(\mathbb C)$. However, to provide a description of a $\mathrm{U}(n,n)$-Higgs bundle in terms of models, we need to understand this isomorphism more precisely. The complexification $A\otimes_\mathbb R\mathbb C\{I\}$ of $A$ can be identified with $A\times A$ using the isomorphism from~\cite[Appendix B.1.1]{ABRRW} mapping $a_0+a_1I$ to $(a_0+a_1i,a_0-a_1i)$. Under this identification the elements of $a\in A\otimes_\mathbb R\mathbb C\{I\}$ such that $\bar\sigma_\mathbb{C}(a)a=1$ where $\bar\sigma_\mathbb{C}$ is the $\mathbb C\{I\}$-linear extension of $\bar\sigma$ are mapped to pairs $(x,\bar\sigma(x)^{-1})$, i.e. the image of $\mathrm{O}(A\otimes_\mathbb R\mathbb C\{I\},\bar\sigma_\mathbb C)$ is $\{(x,\bar\sigma(x)^{-1})\mid x\in A^\times\}$. The projection to the first component gives a group isomorphism between  $\mathrm{O}(A\otimes_\mathbb R\mathbb C\{I\},\bar\sigma_\mathbb C)$ and $A^\times$.

Regarding $(A,\bar\sigma)$ as a real Hermitian algebra, then for the group $G=\mathrm{O}_{(1,1)}(A,\bar\sigma)$, from Section \ref{sec:ort_models} it follows that the complexified tangent space in terms of the half-space or the precompact model is naturally identified with $A_{\mathbb{C}}=\mathrm{Mat}_n(\mathbb{C})\otimes\mathbb{C}\{I\}$ which is isomorphic to $ \mathrm{Mat}_n(\mathbb{C})\times\mathrm{Mat}_n(\mathbb{C})$ under the isomorphism described above. We thus get to the following interpretation:

    A \emph{$\mathrm{U}(n,n)$-Higgs bundle} over $X$ associated to a reductive representation $\rho\colon \pi_1(X)\to G=\mathrm{O}_{(1,1)}(A,\bar\sigma)$ with a $\rho$-equivariant harmonic map $f\colon\tilde X\to \mathfrak B_G\cong G/K$ is a holomorphic $\mathrm{GL}_n(\mathbb C)\times\mathrm{GL}_n(\mathbb C)$-principal bundle over $X$ which can be interpreted as a pair of holomorphic $\mathrm{GL}_n(\mathbb C)$-principal bundles over $X$ and $\varphi$ is a holomorphic equivariant section of the pull-back bundle $f^*T^\mathbb C\mathfrak B_G\otimes K_X$, which is a holomorphic bundle over $\tilde X$ with fibers isomorphic to  $\mathrm{Mat}_n(\mathbb C)\times \mathrm{Mat}_n(\mathbb{C})$.

For a description of the $\mathrm{U}(n,n)$-Higgs bundle data in terms of the projective space model for the group $\mathrm{O}_{(1,1)}(A,\bar\sigma)$, we first notice that the principal $\mathrm{GL}_n(\mathbb C)\times\mathrm{GL}_n(\mathbb C)$-bundle can be interpreted as a bundle whose fiber is a pair of two $A_\mathbb{C}=A\otimes_\mathbb R\mathbb C$-lines $l_1^\mathbb C$ and $l_2^\mathbb C$ in $A_\mathbb{C}^2$ such that $l_i^\mathbb C=l_i\otimes_\mathbb R\mathbb C$ where $l_i$ are $A$-lines in $(A^2,\omega)$ where $\omega$ is the standard indefinite orthogonal form. Moreover, $l_2=l_1^{\perp_{\omega}}$ and $\omega|_{l_1}$ is positive definite. The Higgs field can be then interpreted as a morphism $\varphi\colon l_1^\mathbb C\to l_2^\mathbb C\otimes K_X$ without any additional conditions. It also defines a unique morphism $\varphi'\colon l_2^\mathbb C\to l_1^\mathbb C\otimes K_X$ by the condition $\omega_\mathbb C(v_1,\varphi'(v_2))=\omega_\mathbb C(\varphi(v_1),v_2)$. However, in this case $A_\mathbb C$ is isomorphic to $A\times A$ and the real locus $A\subset A_\mathbb C$ maps to the diagonal $\{(a,a)\mid a\in A\}\in A\times A$ under this isomorphism, therefore, the lines $l_i^\mathbb C$ can be written as $l_i\times l_i$. Moreover, the $A\times A$-morphism $\varphi\colon l_1\times l_1\to (l_2\times l_2)\otimes K_X$ can be written as a pair of $A$-morphisms $\varphi_1\colon l_1\to l_2\otimes K_X$ and $\varphi_2\colon l_1\to l_2\otimes K_X$. Analogously, the $A\times A$-morphism $\varphi'\colon l_1\times l_1\to (l_2\times l_2)\otimes K_X$ can be written as a pair of $A$-morphisms $\varphi_1'\colon l_2\to l_1\otimes K_X$ and $\varphi_2'\colon l_2\to l_1\otimes K_X$.

\begin{prop}
    The morphisms $\varphi_1$ and $\varphi_1'$ determine the morphism $\varphi$ uniquely.
\end{prop}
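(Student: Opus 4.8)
The plan is to make the isomorphism $A_\mathbb C\cong A\times A$ fully explicit at the level of the form $\omega_\mathbb C$ and of the two morphisms, and then to extract the statement from a componentwise comparison together with a nondegeneracy argument.

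First I would record how the relevant structures transport across the $\mathbb R$-algebra isomorphism $\Theta\colon A_\mathbb C=A\otimes_\mathbb R\mathbb C\{I\}\xrightarrow{\ \sim\ }A\times A$, $a_0+a_1I\mapsto(a_0+a_1i,a_0-a_1i)$ of \cite[Appendix~B.1.1]{ABRRW}. Under $\Theta$ the real locus $A\subset A_\mathbb C$ is the diagonal, and, since $\bar\sigma_\mathbb C$ is the $\mathbb C\{I\}$-linear extension of $\bar\sigma$, a short check on the idempotents $\tfrac12(1\mp iI)$ shows that $\Theta$ intertwines $\bar\sigma_\mathbb C$ with the anti-involution $(x,y)\mapsto(\bar\sigma(y),\bar\sigma(x))$ of $A\times A$, i.e.\ $\bar\sigma$ applied componentwise and then composed with the swap of the two factors. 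Writing a vector of $A_\mathbb C^2\cong A^2\times A^2$ as $v=(v^+,v^-)$ with $v^\pm\in A^2$, the $\bar\sigma_\mathbb C$-sesquilinearity of $\omega_\mathbb C$ then forces the cross-pairing shape
\[\omega_\mathbb C\big((v^+,v^-),(w^+,w^-)\big)=\big(\omega(v^-,w^+),\,\omega(v^+,w^-)\big)\in A\times A,\]
and likewise after restricting to $l_1^\mathbb C=l_1\times l_1$ and $l_2^\mathbb C=l_2\times l_2$, where now $\omega$ is replaced by the nondegenerate restrictions $\omega|_{l_1}$ (positive definite, since $l_1\in\mathfrak P_G^+$) and $\omega|_{l_2}$ (negative definite, since $l_2=l_1^{\perp_\omega}\in\mathfrak P_G^-$). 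Establishing this cross-pairing formula is the one genuinely computational step.

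Next I would substitute the decompositions into the defining relation. An $A_\mathbb C$-linear map $\varphi\colon l_1^\mathbb C\to l_2^\mathbb C\otimes K_X$ respects the two central idempotents of $A\times A$, hence is a pair $\varphi=(\varphi_1,\varphi_2)$ with $\varphi_j\colon l_1\to l_2\otimes K_X$ acting as $\varphi(v^+,v^-)=(\varphi_1(v^+),\varphi_2(v^-))$; likewise $\varphi'=(\varphi_1',\varphi_2')$ with $\varphi_j'\colon l_2\to l_1\otimes K_X$. Plugging $\varphi,\varphi'$ and the cross-pairing formula into $\omega_\mathbb C(v_1,\varphi'(v_2))=\omega_\mathbb C(\varphi(v_1),v_2)$ and comparing the two $A$-components yields, for all $u\in l_1$ and $w\in l_2$,
\[\omega|_{l_1}\big(u,\varphi_1'(w)\big)=\omega|_{l_2}\big(\varphi_2(u),w\big)\quad\text{and}\quad\omega|_{l_1}\big(u,\varphi_2'(w)\big)=\omega|_{l_2}\big(\varphi_1(u),w\big).\]

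Finally I would conclude by nondegeneracy. Because $\omega|_{l_2}$ is nondegenerate, the first identity determines $\varphi_2(u)\in l_2\otimes K_X$ for each $u\in l_1$ uniquely from the $A$-linear functional $w\mapsto\omega|_{l_1}(u,\varphi_1'(w))$ on $l_2$; that is, $\varphi_2$ is uniquely determined by $\varphi_1'$. Since $\varphi=(\varphi_1,\varphi_2)$, the pair $(\varphi_1,\varphi_1')$ determines $\varphi$, as claimed. Symmetrically the second identity shows $\varphi_2'$ is determined by $\varphi_1$, so $\varphi\mapsto(\varphi_1,\varphi_1')$ is in fact a bijection between Higgs fields in the projective space model and pairs of morphisms $\gamma\colon l_1\to l_2\otimes K_X$, $\beta\colon l_2\to l_1\otimes K_X$, recovering the classical $\mathrm U(n,n)$-data of Definition~\ref{def:u(n,n)-classical}. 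The only real obstacle I anticipate is the bookkeeping in the first step — pinning down the exact cross-pairing formula under $\Theta$ and the correct pairing-up of the two components — after which the argument is purely formal.
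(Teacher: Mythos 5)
Your argument is correct and follows essentially the same route as the paper: transport $\bar\sigma_\mathbb{C}$ across $A_\mathbb{C}\cong A\times A$ (where it becomes componentwise $\bar\sigma$ composed with the swap of factors), split $\varphi,\varphi'$ into components, compare the two components of the relation $\omega_\mathbb{C}(v_1,\varphi'(v_2))=\omega_\mathbb{C}(\varphi(v_1),v_2)$, and conclude that $\varphi_2$ is pinned down by $\varphi_1'$. The paper carries out the same computation in a normalized basis $(v_1,v_2)$ with $\omega(v_1,v_1)=-\omega(v_2,v_2)=1$, arriving at the explicit formula $a_2=-\bar\sigma(a_1')$, while you phrase the final step basis-free via nondegeneracy of $\omega|_{l_2}$ --- a cosmetic difference only.
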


\begin{proof}
    We fix a basis $(v_1,v_2)$ of $A^2$ such that $\omega(v_1,v_1)=-\omega(v_2,v_2)=1$ and $l_1=v_1A$, $l_2=v_2A$, then \[l_1^\mathbb C\cong l_1\times l_1=v_1(A\times A), \quad l_2^\mathbb C\cong l_2\times l_2=v_2(A\times A),\] and 
    \[\varphi(v_1,v_1)=v_2(a_1,a_2)=(\varphi_1(v_1),\varphi_2(v_1)),\] for some $a_1,a_2\in A$, and 
    $$\varphi'(v_2,v_2)=v_1(a'_1,a'_2)=(\varphi'_1(v_2),\varphi'_2(v_2)).$$ 
    To prove the proposition, we need to show that $a_2$ is uniquely determined by $a_1$ and $a_1'$. Further,
    \begin{align*}
    -(\bar\sigma(a_2),\bar\sigma(a_1))=\omega_\mathbb C(\varphi(v_1),v_2)=\omega_\mathbb C(v_1,\varphi'(v_2))=(a'_1,a'_2).
    \end{align*}
    Therefore, $a_2=-\bar\sigma(a_1')$.
\end{proof}

The proposition shows that we can use maps $\varphi_1$ and $\varphi_1'$ instead of $\varphi$ to define a Higgs field because they contain exactly the same information. Now we can describe a $\mathrm U(n,n)$-Higgs bundle in terms of the projective space model as follows:

    A $\mathrm{U}(n,n)$-Higgs bundle over $X$ associated to a reductive representation $\rho\colon \pi_1(X)\to G$ with a $\rho$-equivariant harmonic map $f\colon\tilde X\to \mathfrak P_G^+\cong G/K$ is a holomorphic $\mathrm{GL}_n(\mathbb C)\times\mathrm{GL}_n(\mathbb C)$-principal bundle over $X$ which can be seen as a pair $V_1,V_2$ of rank $n$ holomorphic vector bundles over $X$. The Higgs field in terms of the projective space model is a holomorphic equivariant section of the pull-back bundle $f^*T^\mathbb C\mathfrak P_G^+\otimes K_X$ which can be seen as a pair of holomorphic morphisms 
    $\varphi\colon V_1\to V_2\otimes K_X$ and $\varphi'\colon V_2\to V_1\otimes K_X$.

\begin{rem}
    Notice that the description above agrees with the definition of a $\mathrm{U}(n,n)$-Higgs bundle seen as an $\mathrm{Sp}_2(A,\sigma)$, for $(A,\sigma)=(\mathrm{Mat}_n(\mathbb C),\overline\bullet^t)$ (cf. Section \ref{df:SpR-Higgs-proj}).
\end{rem}

\subsection{\texorpdfstring{$G=\mathrm{Sp}(n,n)$}{G=Sp(n,n)}}

As seen in (\ref{eq:Sp(n,n)}), the group $\mathrm{Sp}(n,n)$ can be interpreted as the indefinite orthogonal group $\mathrm{O}_{(1,1)}(A:=\mathrm{Mat}_n(\mathbb{H}),\sigma_1)$. From Section \ref{sec:ort_models}, the complexified tangent space of the projective space model at a point $l\in\mathfrak{P}_G^\pm$ is given by
\begin{align*}
    T_l^\mathbb{C}\mathfrak{P}_G^\pm&=\{Q\colon l_\mathbb C\to l_\mathbb C^\perp\}.
\end{align*}

The stabilizer of any $l\in \mathfrak{P}_G^\pm$  is a maximal compact subgroup which is conjugated to  $\mathrm{Sp}(n)\times\mathrm{Sp}(n)=\mathrm{O}(A,\sigma_1)\times\mathrm{O}(A,\sigma_1)$, where $\mathrm{O}(A,\sigma_1)=\{ a\in A^\times\mid \sigma_1(a)a=\mathrm{Id}\}$ is the orthogonal group of $A$. Its complexification is $\mathrm{Sp}_{2n}(\mathbb C)\times\mathrm{Sp}_{2n}(\mathbb C)$. Using the geometric interpretation of polystable $\mathrm{O}_{(1,1)(A, \sigma)}$-Higgs bundles in the projective space model from Section \ref{subsec:indefi_defs}, we can directly give an interpretation of a $\mathrm{Sp}(n,n)$-Higgs bundle associated to a reductive representation $\rho\colon\pi_1(S)\to\mathrm{Sp}(n,n)$:

    An $\mathrm{Sp}(n,n)$-Higgs bundle over $X$ is a triple $(V_1,V_2,\varphi)$, where $V_i$ are two holomorphic vector bundles of rank $2n$ equipped with symplectic forms $\omega_i$ over $X$, and $\varphi\colon V_1\to V_2\otimes K_X$ is a holomorphic morphism.

\begin{rem}
    Since the forms $\omega_1$ and $\omega_2$ are non-degenerate, there is a unique morphism $\varphi'\colon V_2\to V_1\otimes K_X$ defined by   \begin{equation}\label{eq:sp(n,n)-higgs}
   \omega_1(v_1,\varphi'(v_2))=\omega_2(\varphi(v_1),v_2),   \end{equation}
   for $v_1\in V_1$, $v_2\in V_2$. 
    
\end{rem}

\subsection{\texorpdfstring{$G=\mathrm{SO}^*(4n)$}{G=SO*(4n)}}

We realize the group $\mathrm{SO}^*(4n)$ as an $\mathrm{Sp}_2(A,\sigma)$, for $A:=\mathrm{Mat}_n(\mathbb{H})$ with the Hermitian  anti-involution $\sigma:=\sigma_1$. The complexified algebra $A_\mathbb C$ is isomorphic to $\mathrm{Mat}_{2n}(\mathbb C)$, the complex linear extension $\sigma_\mathbb C$ of $\sigma$ corresponds to the following map: 
$$m\mapsto -\begin{pmatrix}
0 & \mathrm{Id}\\
-\mathrm{Id} & 0
\end{pmatrix} m^t \begin{pmatrix}
0 & \mathrm{Id}\\
-\mathrm{Id} & 0
\end{pmatrix}\ \ (\text{cf.~\cite[Appendix B.1.2]{ABRRW}}).$$ In particular, \[A_\mathbb C^{\sigma_\mathbb C}=\left\{\begin{pmatrix} a & b \\ c& a^t\end{pmatrix}\midwd a\in\mathrm{Mat}_{n}(\mathbb C),\; b,c\in\mathrm{Skew}_{n}(\mathbb C)\right\}.\]

By \eqref{eq:compl.tangent.P.Sp}, the complexified tangent space of the projective space model at a point $l\in\mathfrak{P}_G^\pm$ is given by
\begin{align*}
    T_l^\mathbb C\mathfrak{P}_G^\pm&=\left\{(Q,Q')\in\mathrm{Hom}(l,\bar l)\times \mathrm{Hom}(\bar l,l)\midwd
    \begin{aligned}
       &\ \omega_{\mathbb C}(Q(x),x)\in A^{\sigma_\mathbb C}_\mathbb C,\\ 
       &\ \omega_{\mathbb C}(Q'(x),x)\in A^{\sigma_\mathbb C}_\mathbb C
    \end{aligned}\right\} \\
    &=T_l\mathfrak{P}_G^\pm\oplus T_{\bar l}\mathfrak{P}_G^\mp.
\end{align*}

Notice that the condition $\omega_{\mathbb C}(x,y)=\sigma_\mathbb C(x)^t\begin{pmatrix}
0 & \mathrm{Id} \\
-\mathrm{Id} & 0
\end{pmatrix}y \in A^{\sigma_\mathbb C}_\mathbb C$, where $x,y\in A_\mathbb C^2$, is equivalent to 
$$\omega'(x,y)=x^t\begin{pmatrix}
0 & 0 & 0 & \mathrm{Id} \\ 
0 & 0 & -\mathrm{Id} & 0 \\ 
0 & -\mathrm{Id} & 0 & 0 \\ 
\mathrm{Id} & 0 & 0 & 0 \\ \end{pmatrix}y\in\mathrm{Skew}_{2n}(\mathbb C).$$

The stabilizer of any $l\in \mathfrak{P}_G^\pm$  is a maximal compact subgroup which is conjugated to  $\mathrm{U}(2n)$. Its complexification is $\mathrm{GL}_{2n}(\mathbb C)$. From the description in Section \ref{df:SpR-Higgs-proj}, we can directly understand  a $\mathrm{SO}^*(4n)$-Higgs bundle associated to a reductive representation $\rho\colon\pi_1(S)\to\mathrm{SO}^*(4n)$. Namely: 

    An $\mathrm{SO}^*(4n)$-Higgs bundle over $X$ is a tuple $(V_1,V_2,\omega,\varphi_1,\varphi_2)$, where $V_1$, $V_2$ are holomorphic vector bundles of rank $2n$ over $X$, $\omega$ is a non-degenerate pairing between $V_1$ and $V_2$, i.e. $\omega\colon V_1\otimes V_2\to\mathbb C$, and $\varphi_1\colon V_1\to V_2\otimes K_X$ and $\varphi_2\colon V_2\to V_1\otimes K_X$ are two holomorphic $\omega$-skew-symmetric morphisms, i.e. forms $\omega(\cdot,\varphi_2(\cdot))$ and $\omega(\varphi_1(\cdot),\cdot)$ are skew-symmetric on $V_1$ resp. $V_2$.

\begin{rem}
    Since $\omega$ provides the bundle isomorphism $V_2\cong V_1^*$, the above description is equivalent to the classical one given in \cite[Section 4.2]{BGG06}, that is,  an $\mathrm{SO}^*(4n)$-Higgs bundle over $X$ is a triple $(V,\varphi_1,\varphi_2)$ with $V$ a holomorphic bundle of rank $2n$, and $\varphi_1: V\to V^*\otimes K_X$ and $\varphi_2: V^*\to V\otimes K_X$ are two skew-symmetric morphisms.
\end{rem}

\subsection{\texorpdfstring{$G=\mathrm{SO}_{0}(2,n)$}{G=SO(2,n)} and \texorpdfstring{$G=E_{7{(-25)}}$}{G=E7}}

The groups $G = \mathrm{SO}_{0}(2,n)$ and $G = E_{7(-25)}$ are also Hermitian of tube type, and classical Higgs bundle definitions for these have been given in \cite{BGG06} and \cite{BGR}, respectively. Note, however, that these groups cannot be described as groups of the form $\mathrm{Sp}_2(A, \sigma)$ for a Hermitian algebra $(A, \sigma)$. The group $\mathrm{Spin}_{0}(2,n)$, which is a double cover of $\mathrm{SO}_{0}(2,n)$, does admit a realization as a symplectic group $\mathrm{Sp}_2$ over a certain Hermitian submonoid of the Clifford algebra of signature $(1,n-1)$. Models of the symmetric space for $\mathrm{Spin}_{0}(2,n)$ still exist and have been described in \cite[Section 5]{R25}. However, the description of the complexified tangent bundle of these symmetric spaces lies beyond the scope of this article. As for the group $E_{7(-25)}$, no realization in terms of $\mathrm{Sp}_2$ has been provided in the literature to date (cf.~\cite[Section 3.2.1]{R25}).

\section{Complexified quaternions and the three standard involutions}\label{sec:cq_to_mat2}

Let $(A,\sigma)$ be a real involutive algebra. We denote $A_\mathbb H:=A\otimes_\mathbb R\mathbb H$, where \(\mathbb H\) is the real quaternion algebra with generators
\[
\mathbf i^2=\mathbf j^2=\mathbf k^2=-1,
\qquad
\mathbf i\mathbf j=\mathbf k,
\qquad
\mathbf j\mathbf i=-\mathbf k.
\] We also extend the anti-involution $\sigma$ in two ways 
$$\sigma_0(a_0+a_1\mathbf i+a_2\mathbf j+a_3\mathbf k):=\sigma(a_0)+\sigma(a_1)\mathbf i+\sigma(a_2)\mathbf j+\sigma(a_3)\mathbf k$$
and 
$$\sigma_1(a_0+a_1\mathbf i+a_2\mathbf j+a_3\mathbf k):=\sigma(a_0)-\sigma(a_1)\mathbf i-\sigma(a_2)\mathbf j-\sigma(a_3)\mathbf k.$$

We denote by \(I\) the imaginary unit in the scalar field
\(\mathbb C\), in order to distinguish it from the quaternionic generator
\(\mathbf i\).  Put
\[
A_{\mathbb{HC}}:=A_\mathbb H\otimes_{\mathbb R}\mathbb C.
\]
Slightly abusing our notation, let \(\sigma_i\) be the \(\mathbb C\)-linear extensions of $\sigma_i$ to
\(A_{\mathbb{HC}}\), and \(\bar\sigma_i\) be the \(\mathbb C\)-anti-linear extensions of $\sigma_i$ to
\(A_{\mathbb{HC}}\).

Define a \(A_\mathbb C\)-algebra homomorphism
\[
\Phi:A_{\mathbb{HC}}\longrightarrow \operatorname{Mat}_2(A_\mathbb C)
\]
by
\[
\Phi(\mathbf i)=
\begin{pmatrix}
I&0\\
0&-I
\end{pmatrix},
\qquad
\Phi(\mathbf j)=
\begin{pmatrix}
0&I\\
I&0
\end{pmatrix},
\qquad
\Phi(\mathbf k)=
\begin{pmatrix}
0&-1\\
1&0
\end{pmatrix}.
\]
These matrices satisfy the quaternionic relations
\[
\Phi(\mathbf i)^2=\Phi(\mathbf j)^2=\Phi(\mathbf k)^2=-\mathrm{Id},
\qquad
\Phi(\mathbf i)\Phi(\mathbf j)=\Phi(\mathbf k),
\qquad
\Phi(\mathbf j)\Phi(\mathbf i)=-\Phi(\mathbf k).
\]
Hence \(\Phi\) is an isomorphism of complex algebras
\[
A_{\mathbb{HC}}\cong \operatorname{Mat}_2(A_\mathbb C).
\]
Explicitly, for
\[
x=z_0+z_1\mathbf i+z_2\mathbf j+z_3\mathbf k,
\qquad z_\ell\in A\otimes_\mathbb R\mathbb C,
\]
one has
\[
\Phi(x)=
\begin{pmatrix}
z_0+ z_1I &  z_2I-z_3\\
 z_2I+z_3 & z_0- z_1I
\end{pmatrix}.
\]

Under this identification, the anti-involution \(\sigma_0\) becomes
ordinary transposition composed with $\sigma$ applied componentwise:
\[
\Phi(\sigma_0(x))=\sigma(\Phi(x)^t)
\qquad
\text{for all }x\in A_{\mathbb{HC}}.
\]
Indeed,
\[
\Phi(\mathbf i)^t=\Phi(\mathbf i),
\qquad
\Phi(\mathbf j)^t=\Phi(\mathbf j),
\qquad
\Phi(\mathbf k)^t=-\Phi(\mathbf k),
\]
which is exactly the defining action of \(\sigma_0\) on
\(\mathbf i,\mathbf j,\mathbf k\).

Further:
\[
\Phi(\sigma_1(x))
=
\sigma(K\,\Phi(x)^t\,K^{-1})
\qquad
\text{for all }x\in A_{\mathbb{HC}}.
\]
where
\[
K=\Phi(\mathbf k)=
\begin{pmatrix}
0&-1\\
1&0
\end{pmatrix},
\qquad
K^{-1}=-K.
\]
Thus, for
\[
M=
\begin{pmatrix}
a&b\\
c&d
\end{pmatrix},
\]
the corresponding matrix anti-involution is
\[
M\longmapsto K M^tK^{-1}
=
\begin{pmatrix}
\sigma(d)&-\sigma(b)\\
-\sigma(c)&\sigma(a)
\end{pmatrix}.
\]

Finally:
\[
\Phi(\bar\sigma_1(x))
=
\bar\sigma\!\left({\Phi(x)}^{t}\right)
\qquad
\text{for all }x\in A_{\mathbb{HC}}.
\]

\bigskip

\noindent\small{\textsc{School of Mathematics, Nanjing University}\\
		 Nanjing 210093, China}\\
\emph{E-mail address}:  \texttt{pfhwangmath@gmail.com}

\medskip

\noindent\small{\textsc{Department of Mathematics, University of Patras}\\
  University Campus, Patras 26504, Greece}\\
\emph{E-mail address}:  \texttt{gkydonakis@math.upatras.gr}

\medskip

\noindent\small{\textsc{School of Mathematics, Korea Institute for Advanced Study}\\
  85 Hoegi-ro, Dongdaemun-gu, Seoul 02455, Republic of Korea}\\
\emph{E-mail address}:  \texttt{erogozinnikov@gmail.com}

\medskip

\noindent\small{\textsc{Max Planck Institute for Mathematics in the Sciences}\\
  Inselstraße 22, 04103 Leipzig, Germany}\\
\emph{E-mail address}:  \texttt{wienhard@mis.mpg.de}

\bigskip


\begin{thebibliography}{9}
\bibitem{ABRRW} 
D. Alessandrini, A. Berenstein, V. Retakh, E. Rogozinnikov and A. Wienhard, Symplectic groups over noncommutative algebras. \emph{Sel. Math., New Ser.} \textbf{28} (2022), No. 4, Paper No. 82, 119 p. 

\bibitem{ABCGPGO}
M. Aparicio-Arroyo, S. Bradlow, B. Collier, O. Garc\'ia-Prada, P. Gothen and A. Oliveira, $\mathrm{SO}(p,q)$-Higgs bundles and higher Teichm\"uller components. \emph{Invent. Math.} \textbf{218} (2019), no. 1, 197--299.

\bibitem{BR1}
A. Berenstein and V. Retakh, Noncommutative double Bruhat cells and their factorizations. \emph{Int. Math. Res. Not.} 2005, No. 8, 477--516. 

\bibitem{BR2}
A. Berenstein and V. Retakh, Lie algebras and Lie groups over noncommutative rings. \emph{Adv. Math.} \textbf{218} (2008), No. 6, 1723--1758. 

\bibitem{BGR}
O. Biquard, O. Garc\'{i}a-Prada and R. Rubio, Higgs bundles, the Toledo invariant and the Cayley correspondence. 
\emph{J. Topol.} \textbf{10} (2017), no. 3, 795--826.

\bibitem{BGG06}
S. Bradlow, O. Garc\'{i}a-Prada and P. B. Gothen, Maximal surface group representations in isometry groups of classical Hermitian symmetric spaces. \emph{Geom. Dedicata} \textbf{122} (2006), 185--213.

\bibitem{BGG12}
S. Bradlow, O. Garc\'{i}a-Prada and P. B. Gothen, Deformations of maximal representations in $\mathrm{Sp}(4, \mathbb{R})$. \emph{Quart. J. Math.} \textbf{63} (2012), 795--843.

\bibitem{BR90}
F. E. Burstall and J. H. Rawnsley, \emph{Twistor theory for Riemannian symmetric spaces: with applications to harmonic maps of Riemann surfaces}. Lecture Notes in Mathematics, vol. 1424, Springer-Verlag, Berlin (1990).

\bibitem{Corlette} 
K. Corlette, Flat $G$-bundles with canonical metrics. \emph{J. Differential Geom.} \textbf{28} (1988), no. 3, 361--382.

\bibitem{FK}
J. Faraut and A. Kor\'{a}nyi,  \emph{Analysis on symmetric cones}. Oxford Mathematical Monographs, Oxford University Press, Oxford (1994).

\bibitem{FoFu}
A. Fomenko and D. Fuchs, Homotopical topology. Translated from the Russian. 2nd edition. Graduate Texts in Mathematics \textbf{273}. xi+627 p. (2016). 

\bibitem{GGM} 
O. Garc\'ia-Prada, P. Gothen and I. Mundet i Riera, Higgs bundles and surface group representations in the real symplectic group. \emph{J. Topol.} \textbf{6} (2013), 64--118. 

\bibitem{GPR}
O. Garc\'ia-Prada, A. Pe\'on-Nieto and S. Ramanan, Higgs bundles for real groups and the Hitchin--Kostant--Rallis section. \emph{Trans. Amer. Math. Soc.} \textbf{370} (2017), 2907--2953.

\bibitem{GW09}
R. Goodman, N. R. Wallach, \emph{Symmetry, representations, and invariants}, Graduate Texts in Mathematics, Springer, New York, NY, 2009.

\bibitem{GuiWie}
O. Guichard, A. Wienhard, Generalizing Lusztig's total positivity. \emph{Invent. Math.} \textbf{239} (2025), no. 3, 707--799. 

\bibitem{KR71}
B. Kostant, S. Rallis, Orbits and representations associated with symmetric spaces, \emph{Amer. J. Math.} \textbf{93} (1971), 753-809.

\bibitem{KP96}
H. Kraft, C. Procesi, \emph{Classical invariant theory: a primer}, Preliminary version, July 1996.

\bibitem{LR15}
S. Lawton, D. Ramras, Covering spaces of character varieties. \emph{New York J. Math.} \textbf{21} (2015), 383-416.

\bibitem{Li}
J. Li, The space of surface group representations. \emph{manus. math.} \textbf{78} (1993), no. 3, 223-243.


\bibitem{MaXi}
E. Markman and E. Z. Xia, The moduli of flat $\mathrm{PU}(p, p)$ structures with large Toledo invariants. \emph{Math. Z.} \textbf{240} (2002), no.1, 95--109.


\bibitem{Peon}
A. P\'{e}on-Nieto, \emph{Higgs bundles, real forms and the Hitchin fibration}. PhD thesis, Universidad Aut\'{o}noma de Madrid (2013).

\bibitem{Rog}
E. Rogozinnikov, \emph{Symplectic groups over noncommutative rings and maximal representations}. PhD thesis,
Ruprecht-Karls-Universit\"{a}t Heidelberg (2020).

\bibitem{R25}
E. Rogozinnikov, \emph{Symplectic groups over Lie subgroups of involutive algebras}. Preprint: arXiv:2510.11267.

\bibitem{Sagman}
N. Sagman, Infinite energy equivariant harmonic maps, domination, and anti-de Sitter 3-manifolds. \emph{J. Differential Geom.} \textbf{124} (2023), No. 3, 553--598. 



\bibitem{Schmitt}
A. H. W. Schmitt, \emph{Geometric invariant theory and decorated principal bundles}. Z\"{u}rich Lectures in Advanced Mathematics, European Mathematical Society 2008, pp. vii +389. 


\end{thebibliography}
\end{document}